\newcommand{\ubar}[1]{\underaccent{\bar}{#1}}
\renewcommand{\algorithmiccomment}[1]{\bgroup\hfill//~#1\egroup}
\numberwithin{equation}{section}
\newlength{\fixboxwidth}
\def\V{\mathfrak{V}}
\def\R{\mathbb{R}}
\def\N{\mathcal{N}}
\def\E{\mathbb{E}}
\def\F{\mathcal{F}}
\def\I{\mathcal{I}}
\def\J{\mathcal{J}}
\def\L{\mathcal{L}}
\def\W{\mathfrak{W}}
\def\T{\mathcal{T}}
\def\<{\big\langle}
\def\>{\big\rangle}
\def\Img{\operatorname{Im}}
\def\Inv{\operatorname{Inv}}
\def\Ker{\operatorname{Ker}}
\def\Cond{\operatorname{Cond}}
\def\Span{\operatorname{span}}
\def\diiv{\operatorname{div}}
\def\dist{\operatorname{dist}}
\def\Tr{\operatorname{Trace}}
\def\diam{\operatorname{diam}}
\def\supp{{\operatorname{support}}}
\def\dim{{\operatorname{dim}}}
\def\dx{\,{\rm d}x}
\def\loc{{\rm loc}}
\def\homo{{\rm hom}}
\def\er{\mathcal{E}}
\def\app{{\mathrm{app}}}
\definecolor{red}{rgb}{0.9, 0, 0}
\newtheorem{Theorem}{Theorem}[section]
\newtheorem{Proposition}[Theorem]{Proposition}
\newtheorem{Lemma}[Theorem]{Lemma}
\newtheorem{Corollary}[Theorem]{Corollary}
\newtheorem{Remark}[Theorem]{Remark}
\newtheorem{Example}{Example}[section]
\newtheorem{Definition}[Theorem]{Definition}
\newtheorem{Construction}[Theorem]{Construction}
\begin{document}
\title{Multigrid with rough coefficients\\
and Multiresolution operator decomposition\\ from Hierarchical Information Games}

\date{\today}

\author{Houman Owhadi\footnote{California Institute of Technology, Computing \& Mathematical Sciences , MC 9-94 Pasadena, CA 91125, owhadi@caltech.edu}}

\maketitle

\begin{abstract}
We introduce a near-linear complexity (geometric and meshless/algebraic) multigrid/multiresolution method for PDEs with rough ($L^\infty$) coefficients with rigorous a-priori accuracy and performance estimates.
The method is discovered through a decision/game theory formulation of the problems of (1) identifying restriction and interpolation operators (2) recovering a signal from incomplete measurements based on norm constraints on its image under a linear operator (3) gambling on the value of the solution of the PDE based on a hierarchy of nested measurements of its solution or source term.
The resulting elementary gambles form a hierarchy of (deterministic)  basis functions of $H^1_0(\Omega)$  (gamblets) that (1) are orthogonal  across subscales/subbands with respect to the scalar product induced by the energy norm of the PDE (2) enable sparse compression of the solution space in $H^1_0(\Omega)$ (3) induce an orthogonal multiresolution operator decomposition.
The operating diagram of the multigrid method is that of an inverted pyramid in which gamblets are computed locally (by virtue of their exponential decay), hierarchically (from fine to coarse scales) and the PDE is decomposed into a hierarchy of independent linear systems with uniformly bounded condition numbers. The
resulting algorithm is  parallelizable both in space (via localization) and in bandwith/subscale (subscales can be computed independently from each other). Although the method is deterministic it has a natural Bayesian interpretation under the measure of probability emerging (as a mixed strategy) from the information game formulation and multiresolution approximations form a martingale with respect to the filtration induced by the hierarchy of nested measurements.
\end{abstract}

\section{Introduction}\label{secdt}
\subsection{Scientific discovery as a decision theory problem}
The process of scientific discovery is oftentimes based on intuition, trial and error and plain guesswork. This paper is motivated by the question of the existence of a rational decision framework that could be used to facilitate/guide this process, or turn it, to some degree, into an algorithm.
In exploring this question, we will  consider the problem of finding a method for solving (up to a pre-specified level of accuracy)
PDEs with rough ($L^\infty$) coefficients as fast as possible with the following prototypical  PDE (and its possible discretization over a fine mesh) as an  example
\begin{equation}\label{eqn:scalar}
\begin{cases}
    -\diiv \big(a(x)  \nabla u(x)\big)=g(x) \quad  x \in \Omega;\, g \in L^2(\Omega),\text{ or }g \in H^{-1}(\Omega) \\
    u=0 \quad \text{on}\quad \partial \Omega,
    \end{cases}
\end{equation}
where $\Omega$ is a bounded subset of $\R^d$ (of arbitrary dimension $d\in \mathbb{N}^*$) with piecewise Lipschitz boundary, $a$ is a symmetric, uniformly elliptic $d\times d$ matrix with entries in $L^\infty(\Omega)$ and such that for all $x\in \Omega$ and $l\in \R^d$,
\begin{equation}
\lambda_{\min}(a) |l|^2 \leq l^T a(x) l \leq \lambda_{\max}(a)|l|^2.
\end{equation}
Although multigrid methods \cite{Fedorenko:1961, Brandt:1973, Hackbusch:1978, Hackbusch:1985, Stuben:2001} are now well known as the fastest for solving elliptic boundary-problems and have successfully been generalized to other types of PDEs and computational problems \cite{Yavneh:2006}, their convergence rate can be severely affected by the lack of regularity of the coefficients  \cite{EngquistLuo:1997, wan2000}. Furthermore, although significant  progress  has been achieved in the development of multigrid methods that are, to some degree, robust with respect to meshsize and  lack of smoothness
(we refer in particular to algebraic multigrid \cite{RugeStuben1987}, multilevel finite element splitting \cite{Yserentant1986}, hierarchical basis multigrid \cite{BankYserentant88, ChowVassilevski2003},  multilevel preconditioning \cite{Vassilevski89}, stabilized hierarchical basis methods \cite{Vassilevski1997sirev, VassilevskiWang1997a, VassilevskiWang1998},   energy minimization  \cite{Mandel1999, wan2000, Xu2004, XuZhu2008, Vassilevski2010} and homogenization based methods \cite{EngquistLuo:1997, Efendiev2011}),  the design of multigrid methods that are provably robust with respect to rough ($L^\infty$) coefficients has remained an open problem of practical importance \cite{BraWu09}.

Alternative hierarchical strategies for the resolution of \eqref{eqn:scalar} are (1) wavelet based methods \cite{Beylkin:1995,Beylkin:1998,Beylkin:1998b, DorobantuEngquist1988, EngquistRunborg02}  (2)
the Fast Multipole Method \cite{GreengardRokhlin:1987} and (3) Hierarchical matrices \cite{HackbuschGrasedyck:2002, Bebendorf:2008}. Although methods based on (classical) wavelets  achieve a multiresolution compression of the solution space of \eqref{eqn:scalar} in $L^2$ and although approximate wavelets and approximate $L^2$  projections can stabilize  hierarchical basis methods \cite{VassilevskiWang1997a, VassilevskiWang1998}, their applications to \eqref{eqn:scalar} are limited by the facts that (a) the  underlying wavelets  can perform arbitrarily badly  \cite{BaOs:2000} in their $H^1_0(\Omega)$ approximation of the solution space and (b) the operator \eqref{eqn:scalar} does not preserve the orthogonality between subscales/subbands with classical wavelets.
The Fast Multipole Method and hierarchical matrices exploit the property that sub-matrices of the inverse discrete operator are low rank away from the diagonal. This low rank property can be rigorously proven   for \eqref{eqn:scalar} (based on the approximation of its Green's function by sums of products of harmonic functions \cite{Bebendorf2005}) and leads to  provable convergence (with rough coefficients), up to the pre-specified level of accuracy $\epsilon$ in $L^2$-norm,  in $\mathcal{O}(N \ln^{6} N \ln^{2d+2} \frac{1}{\epsilon})$ operations (\cite{Bebendorf2005} and \cite[Thm.~2.33 and Thm.~4.28]{Bebendorf:2008}).
Can the problem of finding a fast solver for \eqref{eqn:scalar} be, to some degree, reformulated as an Uncertainty Quantification/Decision Theory problem that could, to some degree, be solved as such in an automated fashion? Can discovery be computed?
Although these questions may seem unorthodox  their answer appears to be positive: this paper shows that
this reformulation is possible and leads to a  multigrid/multiresolution method/algorithm  solving  \eqref{eqn:scalar}, up to the pre-specified level of accuracy $\epsilon$ in $H^1$-norm (i.e. finding $u^{\app}$ such that $\|u-u^{\app}\|_{H^1_0(\Omega)}\leq \epsilon \|g\|_{H^{-1}(\Omega)}$ for an arbitrary $g$ decomposed over $N$ degrees of freedom), in $\mathcal{O}\big(N \ln^{3d}\big( \max(\frac{1}{\epsilon},N^{1/d})\big)\big)$ operations (for $\epsilon\sim N^{-1/d}$, the hierarchical matrix method achieves $\epsilon$-accuracy in $L^2$ norm in $\mathcal{O}(N \ln^{2d+8} N)$ operations and the proposed multiresolution method achieves  $\epsilon$-accuracy in $H^1$ norm in $\mathcal{O}(N \ln^{3d} N)$ operations). For subsequent solves (i.e. if \eqref{eqn:scalar} needs to be solved for more than one $g$) then the proposed multiresolution method achieves accuracy $\epsilon \approx N^{-\frac{1}{d}}$ in $H^1$-norm in $\mathcal{O}(N \ln^{d+1} N)$ operations (we refer to Subsection \ref{subseccomplexity} and in particular to Table \ref{tabcomplexity} for a detailed complexity analysis of the proposed method, which can also achieve sublinear complexity if one only requires $L^2$-approximations).

The core mechanism supporting the  complexity of the  method presented here is the fast decomposition of $H^1_0(\Omega)$ into a direct sum of linear subspaces that are orthogonal (or near-orthogonal)  with respect to the energy scalar product and over which \eqref{eqn:scalar} has uniformly bounded condition numbers.  It is, to some degree, surprising that  this decomposition can be achieved in near linear complexity and not in the complexity of an eigenspace decomposition. Naturally \cite{OwZh:2016}, this  decomposition can be applied to the fast simulation of the wave and parabolic equations associated to \eqref{eqn:scalar}  or to its fast diagonalization.

The essential step behind the automation of the discovery/design of scalable numerical  solvers is the observation that fast computation requires repeated computation with partial information (and limited  resources) over hierarchies of levels of complexity and the reformulation of this process as that of playing underlying hierarchies of adversarial information games \cite{VNeumann28, VonNeumann:1944}.

Although the problem of finding a fast solver for \eqref{eqn:scalar} may appear disconnected from that of finding statistical estimators or making decisions from data sampled from an underlying unknown probability distribution, the proposed  game theoretic reformulation is, to some degree, analogous to the one developed in Wald's Decision Theory \cite{Wald:1945}, evidently influenced by Von Neumann's Game Theory \cite{VNeumann28, VonNeumann:1944} (the generalization of worst case Uncertainty Quantification analysis \cite{ouq2010} to sample data/model uncertainty requires an analogous game theoretic formulation \cite{OwhadiScovel:2015w}, see also  \cite{OwhadiScovel:2013} for how the underlying calculus  could be used to guide the discovery of new Selberg identities).
We also refer to subsection \ref{subsecone3} for a review of the correspondence between statistical inference and numerical approximation.

\subsection{Outline of the paper}
The essential difficulty in generalizing the multigrid concept to PDEs with rough coefficients lies in the fact that the interpolation (downscaling) and restriction (upscaling) operators are, a priori, unknown. Indeed, in this situation,  piecewise linear finite-elements  can perform arbitrarily badly  \cite{BaOs:2000} and the design of the interpolation operator requires the  identification of accurate basis elements adapted to the microstructure $a(x)$.

This identification problem has also been the essential difficulty in numerical homogenization
 \cite{WhHo87, BaOs:1983, BaCaOs:1994, Beylkin:1995, HoWu:1997, EEngquist:2003,   OwZh:2007a, BraWu09}.
 Although inspired by classical homogenization ideas and concepts (such as oscillating test functions \cite{Mur78, EfHo:2007, EfGiHouEw:2006}, cell problems/correctors and effective coefficients \cite{BeLiPa78,  PapanicolaouVaradhan:1981, Abdulle:2004,NoPaPi:2008, EnSou08, AnGlo06}, harmonic coordinates \cite{Kozlov:1979, BaOs:1983, BaCaOs:1994, Owhadi:2003, BenarousOwhadi:2003, AllBri05, OwZh:2007a}, compactness by compensation \cite{Spagnolo:1968,Gio75, Muratb:1978, BeOw:2010}) an essential goal of numerical homogenization has been the numerical approximation of the solution space of \eqref{eqn:scalar} with arbitrary rough coefficients \cite{OwZh:2007a}, i.e., in particular, without the assumptions found in classical homogenization, such as scale separation, ergodicity at fine scales and $\epsilon$-sequences of operators (otherwise the resulting method could lack robustness to rough coefficients, even under the assumption that coefficients are stationary \cite{BalJing10}).
Furthermore, to envisage applications to multigrid methods, the computation of these basis functions must also be provably localized  \cite{ BaLip10, OwZh:2011,  MaPe:2012, GrGrSa2012, OwhadiZhangBerlyand:2014, HouLiu2015} and compatible with nesting strategies \cite{OwhadiZhangBerlyand:2014}.
In \cite{Owhadi:2014}, it has been shown that this process of identification (of accurate basis elements for numerical homogenization), could, in principle, be guided through its reformulation as a Bayesian Inference problem in which the source term $g$ in \eqref{eqn:scalar} is replaced by noise $\xi$ and one tries to estimate the value of the solution at a given point based on a finite number of observations. In particular it was found that Rough Polyharmonic Splines \cite{OwhadiZhangBerlyand:2014} and Polyharmonic Splines  \cite{Harder:1972, Duchon:1976,Duchon:1977,Duchon:1978} can be re-discovered as  solutions of  Gaussian filtering problems. This paper is inspired by the suggestion that this link between numerical homogenization and Bayesian Inference (and the link between Numerical Quadrature  and Bayesian Inference \cite{Poincare:1896, Diaconis:1988, Shaw:1988, Hagan:1991, Hagan:1992}) are not coincidences but particular instances of mixed strategies for underlying information games and that optimal or near optimal methods could be obtained by identifying such games and their optimal strategies.

The process of identification of these games starts with the (Information Based Complexity \cite{Woniakowski2009}) notion that computation can only be done with partial information.
For instance, since the operator \eqref{eqn:scalar} is infinite dimensional, one cannot directly compute with $u\in H^1_0(\Omega)$ but only with finite-dimensional \emph{features} of $u$. An example of such finite-dimensional features is the $m$-dimensional vector $u_m:=(\int_{\Omega}u\phi_1,\ldots,\int_{\Omega}u\phi_m)$ obtained by integrating  the solution $u$ of \eqref{eqn:scalar} against $m$ test/measurement functions $\phi_i \in L^2(\Omega)$. However to achieve an accurate approximation of $u$ through computation with $u_m$ one must fill the information gap between $u_m$ and $u$ (i.e. construct an interpolation operator giving  $u$ as a function of $u_m$). We will, therefore, reformulate the identification of this interpolation operator  as a non-cooperative (min max) game where Player I chooses the source term $g$ \eqref{eqn:scalar} in an admissible set/class (e.g. the unit ball of $L^2(\Omega)$) and Player II is shown $u_m$ and must approximate $u$ from these incomplete measurements. Using the energy norm
\begin{equation}\label{eqenergynorm}
\|u\|_a^2:=\int_{\Omega}\nabla u^T(x) a(x)\nabla u(x)\,dx,
\end{equation}
to quantify the accuracy of the recovery and calling $u^*$ Player I's bet (on the value of $u$),
the objective of Player I is to maximize the approximation error $\|u-u^*\|_{a}$, while the objective of Player II is to minimize it.
A remarkable result from Game Theory (as developed by Von Neumann \cite{VNeumann28}, Von Neumann and Morgenstern \cite{VonNeumann:1944} and Nash \cite{Nash:1951}) is that optimal strategies for deterministic zero sum finite games are mixed (i.e. randomized) strategies. Although the information game described above is zero sum, it is not finite. Nevertheless, as in Wald's Decision Theory \cite{Wald:1945}, under sufficient regularity conditions it can be made compact and therefore approximable by a finite game.
Therefore although the information game described above is purely deterministic (and has no a priori connection to statistical estimation), under compactness (and continuity of the loss function), the best strategy for Player I is to play at random by placing a  probability distribution $\pi_{I}$ on the set of candidates for $g$ (and select $g$ as a sample from $\pi_{I}$) and the optimal strategy for Player II is to place a  probability distribution $\pi_{II}$ on the set of candidates for $g$ and approximate the solution of \eqref{eqn:scalar} by the expectation of $u$ (under $\pi_{II}$ used as a prior distribution)  conditioned on the measurements $\int_{\Omega} u\phi_i$.

Although the estimator employed by Player II may be called Bayesian, the game described here is not (i.e. the choice of Player I might be distinct from that of Player II) and Player II must solve a min max optimization problem over $\pi_{I}$ and $\pi_{II}$ to identify an optimal prior distribution for the Bayesian estimator (a careful choice of the prior also appears to be important due to the possible high sensitivity of posterior distributions \cite{OSS:2013, OwhadiScovel:2013, owhadiBayesiansirev2013}).
Although solving the min max problem over $\pi_{I}$ and $\pi_{II}$ may be one way of determining the strategy of Player II, it will not be the method employed here. We will instead analyze the  error of Player II's approximation as a function of Player II's prior and the source term $g$ picked by Player I. Furthermore, to preserve the linearity of the calculations we will restrict Player II's decision space (the set of possible priors $\pi_{II}$) to Gaussian priors on the source term $g$.
Since the resulting analysis is independent of the structure of \eqref{eqn:scalar} and solely depends on its linearity we will first perform this investigation, in Section \ref{sec:problem}, in the algebraic framework of linear systems of equations,  identify Player II's  optimal mixed strategy  and show that it is characterized by deterministic optimal recovery and accuracy properties.
The mixed strategy identified in Section \ref{sec:problem} will then be applied in
 \ref{sec:contcase} to the numerical homogenization of \eqref{eqn:scalar} and the discovery of interpolation interpolators. In particular, it will be shown that the resulting elementary gambles form a set of  deterministic basis functions (gamblets) characterized by (1)  optimal recovery and accuracy properties (2)  exponential decay (enabling their localized computation) (3) robustness to high contrast.

To compute fast, the game presented above must not be limited to filling the information gap between $u_m\in \R^m$ and $u\in H^1_0(\Omega)$. This game must be played (and repeated) over hierarchies of levels of complexity (e.g.  one must fill  information gaps between $\R^{4}$ and $\R^{16}$, then $\R^{16}$ and $\R^{64}$, etc...). We will therefore, in Section \ref{sechnh}, consider the (hierarchical) game where Player I chooses the r.h.s of  \eqref{eqn:scalar} and Player II must (iteratively) gamble on the value of its solution  based on a hierarchy of nested measurements of $u$ (from coarse to fine measurements).
 Under Player II's mixed strategy (identified in Section \ref{sec:problem}  and used in Section \ref{sec:contcase}), the resulting
sequence of multi-resolution approximations  forms a martingale. Conditioning and the independence of martingale increments lead to the hierarchy of nested interpolation operators and to the multiresolution orthogonal decomposition of \eqref{eqn:scalar} into independent linear systems of uniformly bounded condition numbers.
The resulting elementary gambles (gamblets) (1) form a hierarchy of nested basis functions leading to the orthogonal decomposition (in the scalar product of the energy norm) of $H^1_0(\Omega)$ (2) enable the sparse compression of the solution space of  \eqref{eqn:scalar} (3) can be computed and stored in near-linear complexity by solving a nesting of linear systems with uniformly bounded condition numbers (4) enable the computation of the solution of \eqref{eqn:scalar} (or its hyperbolic or parabolic analogues) in near-linear complexity. The  implementation and complexity of the algorithm are discussed in Section \ref{secnumimple} with numerical illustrations.

\subsection{On the correspondence between statistical inference and numerical approximation}\label{subsecone3}
As exposed by Diaconis \cite{Diaconis:1988}, the investigation of the correspondence between statistical inference and numerical approximation can be traced back to  Poincar{\'e}'s course in Probability Theory  \cite{Poincare:1896}. It is useful to recall Diaconis' compelling example \cite{Diaconis:1988} as an illustration of this conection. Let $f:[0,1]\rightarrow \R$ be a given function and assume that we are interested in the numerical approximation of $\int_0^1 f(t)\,dt$. The Bayesian approach to this quadrature problem  is to (1) Put a prior (probability distribution) on continuous functions $\mathcal{C}[0,1]$ (2) Calculate $f$ at $x_1,x_2,\ldots,x_n$ (to obtain the data $(f(x_1),\ldots,f(x_n))$) (3) Compute a posterior (4) Estimate $\int_0^1 f(t)\,dt$ by the Bayes rule. If the prior on $\mathcal{C}[0,1]$ is that of a Brownian Motion (i.e. $f(t)=B_t$ where $B_t$ is a Brownian motion and $B_0$ is normal), then $\E\big[f(x)\big|f(x_1),\ldots,f(x_n)\big]$ is the piecewise linear interpolation of $f$ between the points $x_1,\ldots,x_n$ and one re-discovers the trapezoidal quadrature rule. If the prior on $\mathcal{C}[0,1]$ is that of the first integral of a Brownian Motion (i.e. $f(t)\sim \int_0^t B_s\,ds$) then the posterior $\E\big[f(x)\big|f(x_1),\ldots,f(x_n)\big]$ is the cubic spline interpolant and integrating $k$ times yields splines of order $2k+1$.

Subsequent to Poincar{\'e}'s early discovery  \cite{Poincare:1896},  Sul'din \cite{Suldin1959} and (in particular)  Larkin \cite{Larkin1972} initiated the systematic investigation of the correspondence between conditioning Gaussian measures/processes and numerical approximation. As noted by Larkin \cite{Larkin1972}, despite Sard's introduction of probabilistic concepts in the theory of linear approximation \cite{Sard1963}, and Kimeldorf and Wahba's exposition \cite{Kimeldorf70} of the correspondence between Bayesian estimation and spline smoothing/interpolation, the application of probabilistic concepts and techniques to numerical integration/approximation ``attracted little attention among numerical analysts'' (perhaps due to the counterintuitive nature of the process of randomizing a known function). However, a natural framework for understanding this  process of randomization can be found
in the pioneering works of Wo{\'z}niakowski \cite{Woniakowski1986}, Packel \cite{Packel1987}, and Traub, Wasilkowski,
and Wo\'{z}niakowski \cite{Traub1988} on Information Based Complexity  \cite{Nemirovsky1992, Woniakowski2009}, the branch of computational complexity that studies the complexity of approximating continuous mathematical operations with discrete and finite ones up a to specified level of accuracy.
Indeed the concept  that numerical implementation requires computation with partial information and limited resources emerges naturally from Information Based Complexity, where it is also augmented by concepts of contaminated and priced information associated with, for example, truncation errors and the cost of numerical operations. In this framework, the performance of an algorithm operating on incomplete information can be analysed in the usual worst case setting or the average case (randomized) setting \cite{Ritter2000, Novak2010} with respect to the missing information.
Although the measure of probability (on the solution space) employed in the average case setting  may be arbitrary,
as observed by Packel \cite{Packel1987}, if that measure is chosen carefully (as the solution of a game theoretic problem) then
the average case setting can be interpreted as lifting a (worst case) min max problem (where saddle points of pure strategies do not, in general, exist) to a min max problem over mixed (randomized) strategies (where saddle points do exist \cite{VNeumann28, VonNeumann:1944}).
As exposed by Diaconis \cite{Diaconis:1988} (see also Shaw \cite{Shaw:1988}) the randomized setting  also establishes a correspondence
between Numerical Analysis and Bayesian Inference  providing a natural framework for the statistical description of numerical errors (in which confidence intervals can be derived from posterior distributions). Furthermore \cite{PalastiRenyi1956, Diaconis:1988}, classical  min max numerical quadrature rules  can be formulated as solutions of  Bayesian inference problems with carefully chosen priors \cite{Diaconis:1988} and, as shown by O'Hagan \cite{Hagan:1991, Hagan:1992},  this correspondence can be exploited to discover new and useful numerical quadratures rules.
As envisioned by Skilling \cite{Skilling1992}, by placing a (carefully chosen) probability distribution  on the solution space of an ODE  and conditioning on quadrature points, one obtains a posterior distribution on the solution whose mean may coincide with classical numerical integrators such as Runge-Kutta methods \cite{schober2014nips}. As shown in \cite{ChkrebtiiCampbell2015} the statistical approach is particularly well suited for chaotic dynamical systems for which deterministic worst case error bounds may provide little information.
 While in \cite{Skilling1992, schober2014nips, ChkrebtiiCampbell2015} the probability distribution is directly placed on the solutions space, for PDEs  \cite{Owhadi:2014} argues that the prior distribution must be placed on source terms
(or on the image space of an integro-differential operator) and propagated/filtered through the inverse operator to reflect the structure of the solution space. In particular \cite{Owhadi:2014} shows that this process of filtering noise with the inverse operator, when combined with conditioning, produces accurate finite-element basis functions for the solution space whose  deterministic  worst case errors can be bounded by standard deviation errors using the reproducing kernel structure of the covariance function of the filtered Gaussian field.  As already witnessed in \cite{ChkrebtiiCampbell2015, schober2014nips, Owhadi:2014,Hennig2015, Hennig2015b, Briol2015, Conrad2015}, it is natural to expect that the possibilities offered by combining numerical uncertainties/errors with  model uncertainties/errors in a unified framework  will stimulate a resurgence of  the statistical inference approach to numerical analysis.

\section{Linear Algebra with incomplete information}\label{sec:problem}
\subsection{The recovery problem}
The problem of identifying interpolation operators for \eqref{eqn:scalar} is equivalent (after discretization or in the algebraic setting) to that of recovering or approximating the
solution of a linear system of equations from an incomplete set of measurements (coarse variables) given known norm constraints on the image of the solution.

Let $n\geq 2$ and $A$ be a known real invertible $n\times n$ matrix.   Let $b$ be an unknown element of $\R^n$.
Our purpose is to approximate the solution $x$ of
\begin{equation}\label{eqn:scalargeneral}
A x=b
\end{equation}
based  on the information that (1) $x$ solves
\begin{equation}\label{eq:ekiudiu3d}
\Phi x =y,
\end{equation}
where $\Phi$ (the measurement matrix) is a known, rank $m$, $m\times n$ real matrix  such that $m<n$ and  $y$ (the measurement vector) is a known vector of $\R^m$, and (2) the norm $b^T T^{-1} b$ of $b$ is known or bounded by a known constant (e.g., $b^T T^{-1} b \leq 1$), where $T^{-1}$ is a known positive definite  $n\times n$ matrix (with $T^{-1}$ being the identity matrix as a prototypical example).
Observe that since $m<n$, the measurements \eqref{eq:ekiudiu3d} are, a priori, not sufficient to recover the exact value $x$.

As described in Section \ref{secdt}, by formulating this recovery problem as a (non-cooperative) information game (where Player I chooses $b$ and
 Player II chooses an approximation $x^*$ of $x$ based on the observation $\Phi x$), one (Player II) is naturally lead to search for mixed strategy in the Bayesian class by placing a prior distribution on $b$. The purpose of this section is to analyze the resulting approximation error and select the prior distribution accordingly. To preserve the linearity (i.e. simplicity and computational efficiency) of calculations we will restrict Player II's decision space to Gaussian priors.

\subsection{Player I's mixed strategy}
We will therefore, in the first step of the analysis, replace $b$ in \eqref{eqn:scalargeneral} by $\xi$, a centered Gaussian vector of $\R^n$ with covariance matrix $Q$ (which may be distinct from $T$) and consider the following stochastic linear system
\begin{equation}\label{eqn:noisy}
A X=\xi\,.
\end{equation}
The Bayesian answer (a mixed strategy for Player II) to the recovery problem of Section \ref{sec:problem} is to approximate $x$ by the conditional expectation
$\E[X|\Phi X=y]$.

\begin{Theorem}\label{thm:Gaussian}
 The solution $X$  of \eqref{eqn:noisy} is a centered Gaussian vector of $\R^n$ with covariance matrix
\begin{equation}\label{eqhbddjdhh}
K=A^{-1} Q (A^{-1})^T\,.
\end{equation}
Furthermore,  $X$ conditioned on the value $\Phi X=y$ is a Gaussian vector of $\R^n$ with mean $\E[X|\Phi X=y]= \Psi y$,
and of covariance matrix $K^{\Phi}$, where $\Psi$ is the $n \times m$ matrix
\begin{equation}\label{eq:phiidef}
\Psi:=K \Phi^T (\Phi K \Phi^T)^{-1},
\end{equation}
and $K^{\Phi}$ is the rank $n-m$  positive  $n\times n$ symmetric matrix defined by $K^{\Phi}:=  K  -   \Psi \Phi K$.
\end{Theorem}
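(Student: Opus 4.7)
The plan is to derive both assertions from the fact that $X$ is simply a linear image of $\xi$, and then invoke the standard formula for the conditional law of a jointly Gaussian vector. The proof should be essentially a computation; the only non-routine point is a short verification of the rank of $K^\Phi$.

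For the first assertion, since $A$ is invertible, equation \eqref{eqn:noisy} gives $X = A^{-1}\xi$. As a linear image of the centered Gaussian vector $\xi$ of covariance $Q$, $X$ is again centered Gaussian, with covariance $\E[XX^T] = A^{-1}\E[\xi\xi^T](A^{-1})^T = A^{-1}Q(A^{-1})^T = K$, exactly \eqref{eqhbddjdhh}.

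For the second assertion I would consider the joint vector $(X,\Phi X)$, which is centered Gaussian by linearity and has cross-covariance $\Cov(X,\Phi X) = K\Phi^T$ and marginal $\Cov(\Phi X,\Phi X) = \Phi K\Phi^T$. Because $Q$ is positive definite and $\Phi$ has full row rank $m$, the matrix $\Phi K\Phi^T$ is symmetric positive definite and thus invertible, so $\Psi$ in \eqref{eq:phiidef} is well-defined. The classical conditioning formula for jointly Gaussian vectors then yields
\[
\E[X\mid \Phi X=y]=K\Phi^T(\Phi K\Phi^T)^{-1}y=\Psi y,
\]
and conditional covariance $K - K\Phi^T(\Phi K\Phi^T)^{-1}\Phi K = K-\Psi\Phi K = K^\Phi$, with the conditional distribution still Gaussian.

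Finally I would verify the stated properties of $K^\Phi$. Symmetry follows at once from the symmetry of $K$ and of $\Phi K\Phi^T$, since $(\Psi\Phi K)^T = K^T\Phi^T(\Phi K\Phi^T)^{-T}\Phi K^T = \Psi\Phi K$. Positive semi-definiteness is automatic from its interpretation as a conditional covariance matrix, but can also be read off directly by completing the square. The only step requiring a few extra lines is the rank claim: a direct calculation gives $\Phi K^\Phi = \Phi K - \Phi K\Phi^T(\Phi K\Phi^T)^{-1}\Phi K = 0$, so the column space of $K^\Phi$ lies in $\Ker \Phi$, a space of dimension $n-m$; conversely, if $K^\Phi w = 0$, multiplying by $K^{-1}$ shows $w = \Phi^T(\Phi K\Phi^T)^{-1}\Phi K w \in \Img(\Phi^T)$, and every $w = \Phi^T z$ indeed satisfies $K^\Phi w = 0$, so $\Ker K^\Phi = \Img(\Phi^T)$ has dimension $m$ and $K^\Phi$ has rank exactly $n-m$. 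No step here looks like a real obstacle; the argument is essentially the textbook Gaussian conditioning computation applied to the linear map $A^{-1}$.
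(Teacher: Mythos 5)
Your proof is correct, and it reaches the same formulas by a mildly different route. The paper does not quote the textbook conditioning formula: it derives the conditional mean from scratch by noting that, for a Gaussian vector, $\E[X\mid \Phi X=y]$ is linear in $y$, and then minimizing the mean-squared error $\E\big[|X-M\Phi X|^2\big]=\Tr[K]+\Tr[M\Phi K\Phi^T M^T]-2\Tr[\Phi K M]$ over all $n\times m$ matrices $M$, which yields $M=\Psi$; the conditional covariance is then read off from $v^TK^\Phi v=\E\big[|v^TX-v^T\Psi\Phi X|^2\big]$. You instead invoke the classical joint-Gaussian (Schur-complement) conditioning formula for the pair $(X,\Phi X)$, which is equivalent; your version buys brevity at the price of citing the general formula, while the paper's optimization argument is self-contained and foreshadows the variational characterizations used later (Theorem \ref{lem:minimizingproperty}). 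Two points you handle more explicitly than the paper: you justify invertibility of $\Phi K\Phi^T$ (full row rank of $\Phi$ plus positive definiteness of $K$), and you actually prove the rank-$(n-m)$ claim for $K^\Phi$ by showing $\Ker(K^\Phi)=\Img(\Phi^T)$, a statement the paper asserts in the theorem but does not verify in its proof; both additions are correct and welcome.
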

\begin{proof}
\eqref{eqhbddjdhh} simply follows from $X=A^{-1}\xi$. Since $X$ is a Gaussian vector,
 $\E[X|\Phi X=y]= \Psi y$ where $\Psi$ is a $n\times m$ matrix minimizing the mean squared error
$\E\big[|X- M\Phi X|^2\big]$
over all $n\times m$ matrices $M$. We have
$\E\big[|X- M\Phi X|^2\big]= \Tr[K]+ \Tr[M \Phi K \Phi^T M^T] -2 \Tr[\Phi K M]$
whose minimum is achieved for $M=\Psi$ as defined by \eqref{eq:phiidef}. The covariance matrix of $X$ given $\Phi X=y$ is then obtained by observing that for $v\in \R^n$, $v^T K^{\Phi} v=\E\big[|v^T X-  v^T \Psi \Phi X|^2\big]= v^T K v -  v^T \Psi \Phi K v$.
\end{proof}

\subsection{Variational/optimal recovery properties and approximation error}\label{subsecvarprop}
For a $n\times n$ symmetric positive definite  matrix $M$ let $\<\cdot,\cdot\>_{M}$ be the (scalar) product on $\R^n$ defined by: for $u, v\in \R^n$,
$ \<u, v\>_{M} := u^T M v$ and write $\|v\|_{M}:=\<v, v\>_M^\frac{1}{2}$ the corresponding norm. When $M$ is the identity matrix then we write $\<u, v\>$ and $ \|v\|$ the corresponding scalar product and norm.
For a linear subspace $V$ of $\R^n$ we write $P_{V, M}$ for the orthogonal projections onto $V$ with respect to the scalar product $\<\cdot,\cdot\>_{M}$.
For a (possibly rectangular) matrix $B$ we write $\Img(B)$ the image (range) of $B$ and $\Ker(B)$ the null space of $B$.
For an integer $n$, let $I_n$ be the $n\times n$ identity matrix.

\begin{Theorem}\label{lem:minimizingproperty}
For $w\in \R^m$, $\Psi w$ is the unique minimizer of the following quadratic problem
\begin{equation}\label{eq:dueihdbis}
\begin{cases}
\text{Minimize }  &\<v, v\>_{K^{-1}}\\
\text{Subject to } &\Phi v=w\text{ and } v \in \R^n\,.
\end{cases}
\end{equation}
In particular, $v=\Psi y$, the Bayesian approximation of the solution of \eqref{eqn:scalargeneral}, is the unique minimizer of $\|A v\|_{Q^{-1}}$ under the measurement constraints $\Phi v=y$.  Furthermore, it also holds true that (1) $\Phi \Psi=I_m$
(2) $\Img(\Psi)$ is the orthogonal complement of $\Ker(\Phi)$ with respect to the product $\<\cdot,\cdot\>_{K^{-1}}$ and (3) $\Psi\Phi=P_{\Img(K \Phi^T), K^{-1}}$ and  $I_n-\Psi\Phi=P_{\Ker(\Phi), K^{-1}}$.
\end{Theorem}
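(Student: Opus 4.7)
My plan is to treat the four assertions separately, deriving each from direct linear-algebraic manipulations once the identity $K^{-1} = A^T Q^{-1} A$ (which follows from \eqref{eqhbddjdhh}) is on the table. For the minimization \eqref{eq:dueihdbis}, positive definiteness of $K^{-1}$ makes the objective strictly convex, so uniqueness is automatic, and I would apply Lagrange multipliers: any stationary point satisfies $K^{-1} v = \Phi^T \lambda$ for some $\lambda \in \R^m$, equivalently $v = K\Phi^T \lambda$; substituting into the constraint $\Phi v = w$ and using invertibility of $\Phi K \Phi^T$ (guaranteed by the rank-$m$ assumption on $\Phi$ together with positive definiteness of $K$) yields $\lambda = (\Phi K \Phi^T)^{-1} w$, hence $v = \Psi w$. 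The ``In particular'' statement then drops out from the identity $\|Av\|_{Q^{-1}}^2 = v^T A^T Q^{-1} A v = v^T K^{-1} v$, which recasts the Bayesian recovery as an instance of \eqref{eq:dueihdbis} with $w = y$.

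For property (1) I would just compute $\Phi \Psi = \Phi K \Phi^T (\Phi K \Phi^T)^{-1} = I_m$ directly from \eqref{eq:phiidef}. For (2), a vector $u$ is $\langle \cdot, \cdot \rangle_{K^{-1}}$-orthogonal to $\Ker(\Phi)$ iff $K^{-1} u$ lies in the Euclidean orthogonal complement of $\Ker(\Phi)$, namely $\Img(\Phi^T)$; equivalently $u \in K \Img(\Phi^T) = \Img(K \Phi^T)$, and since $(\Phi K \Phi^T)^{-1}$ is a bijection on $\R^m$ this last image coincides with $\Img(\Psi)$. Finally, for (3), $(\Psi \Phi)^2 = \Psi (\Phi \Psi) \Phi = \Psi \Phi$ by (1), so $\Psi \Phi$ is an idempotent; its range is $\Img(\Psi)$ and its kernel is $\Ker(\Phi)$ (the latter because $\Phi \Psi = I_m$ forces $\Psi$ to be injective). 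These two subspaces have dimensions $m$ and $n-m$, are $\langle \cdot, \cdot \rangle_{K^{-1}}$-orthogonal by (2), and so jointly span $\R^n$; hence $\Psi \Phi$ is precisely the $K^{-1}$-orthogonal projection onto $\Img(K\Phi^T)$ and $I_n - \Psi \Phi$ is the complementary projection onto $\Ker(\Phi)$.

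The steps are mostly routine once the duality $K^{-1} = A^T Q^{-1} A$ has been noted; the only mildly delicate point is the identification in (2) of $\Img(\Psi)$ with the $K^{-1}$-orthogonal complement of $\Ker(\Phi)$, which is where the use of the \emph{weighted} inner product $\langle \cdot, \cdot \rangle_{K^{-1}}$ (as opposed to the Euclidean one) is essential and what makes the projection formulas in (3) fall out cleanly.
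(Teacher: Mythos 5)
Your proof is correct, but it reaches the conclusions by a somewhat different route than the paper. For the minimization \eqref{eq:dueihdbis} you argue via first-order (Lagrange/KKT) conditions: stationarity forces $K^{-1}v\in \Img(\Phi^T)$, i.e.\ $v=K\Phi^T\lambda$, and the constraint pins down $\lambda=(\Phi K\Phi^T)^{-1}w$, with strict convexity giving uniqueness; the paper instead establishes the orthogonality statement (2) first, by computing $\<\Psi z,v\>_{K^{-1}}=z^T(\Phi K\Phi^T)^{-1}\Phi v$ and counting dimensions, and then deduces optimality from the Pythagorean identity $\<v,v\>_{K^{-1}}=\<\Psi w,\Psi w\>_{K^{-1}}+\<v-\Psi w,v-\Psi w\>_{K^{-1}}$, which also hands over uniqueness and the exact error decomposition reused later (Theorem \ref{thm:gedgdgjdh}, Remark \ref{rmk:accurkm1}). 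Your treatment of (2) — characterizing the $K^{-1}$-orthogonal complement of $\Ker(\Phi)$ as $K\,\Img(\Phi^T)=\Img(K\Phi^T)=\Img(\Psi)$ — is a clean equivalent of the paper's computation, and for (3) you replace the paper's explicit decomposition $f=K\Phi^T z+g$, $g\in\Ker(\Phi)$, by the abstract observation that $\Psi\Phi$ is an idempotent whose range $\Img(\Psi)$ and kernel $\Ker(\Phi)$ are $\<\cdot,\cdot\>_{K^{-1}}$-orthogonal complements, hence it is the $K^{-1}$-orthogonal projection; both arguments are sound, with yours slightly more structural and the paper's more self-contained and constructive. You also make explicit the identity $K^{-1}=A^TQ^{-1}A$ behind the ``in particular'' clause, which the paper leaves implicit; the only points you gloss over are existence of the minimizer (coercivity of the positive definite quadratic on the nonempty affine constraint set) and sufficiency of stationarity, both immediate from convexity, so no real gap remains.
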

\begin{proof}
First observe that \eqref{eq:phiidef} implies that $\Phi \Psi=I_m$ where $I_m$ is the identity $m\times m$ matrix. Therefore $\Phi (\Psi w)=w$.
Note that \eqref{eq:phiidef} implies that for all $z\in \R^m$, $\<\Psi z,v\>_{K^{-1}} =z^T \big(\Phi K \Phi^T\big)^{-1} \Phi v$.
Therefore if $v\in \Ker(\Phi)$ then $\<\Psi z,v\>_{K^{-1}}=0$  for all $z\in \R^m$. Conversely if $\<\Psi z,v\>_{K^{-1}}=0$ for all $z\in \R^m$ then $v$ must belong to $\Ker(\Phi)$.
Since the dimension of  $\Img(\Psi)$ is $m$ and that of $\Ker(\Phi)$ is $n-m$ we conclude that $\Img(\Psi)$ is the orthogonal complement $\Ker(\Phi)$ with respect to the product $\<\cdot,\cdot\>_{K^{-1}}$ and in particular,
\begin{equation}\label{eqn:orthogonalitybis}
 \<\Psi  w,v\>_{K^{-1}} =0, \quad \forall w\in \R^m \text{ and } \forall v\in \R^n \text{ such that }\Phi v=0\,.
\end{equation}
Let $w\in \R^m$ and   $v\in \R^n$ such that $\Phi v=w$. Since $\Psi w -v \in \Ker(\Phi)$, it follows from \eqref{eqn:orthogonalitybis} that
$ \<v,v\>_{K^{-1}} = \<\Psi w,\Psi w\>_{K^{-1}} +\<v-\Psi w,v-\Psi w\>_{K^{-1}}$.
Therefore $\Psi w$ is the unique minimizer of $\<v,v\>_{K^{-1}}$
over all $v\in \R^n$ such that $\Phi v=w$.
Now consider $f\in \R^n$, since $\Img(\Psi)=\Img(K\Phi^T)$ and $\Img(\Psi)$ is the orthogonal complement of $\Ker(\Phi)$ with respect to the product $\<\cdot,\cdot\>_{K^{-1}}$,
 there exists a unique $z\in \R^m$ and a unique $g\in \Ker(\Phi)$ such that $f=K \Phi^T  z +g$.
Since $\Psi \Phi =K \Phi^T (\Phi K \Phi^T)^{-1} \Phi$,
it follows that $\Psi \Phi f= K \Phi^T  z \text{ and }(I_n-\Psi \Phi)f=g$.
We conclude by observing that $g=P_{\Ker(\Phi), K^{-1}} f$.
\end{proof}

\begin{Theorem}\label{thm:gedgdgjdh}
For $v\in \R^n$, $w^*=\Phi v$ is the unique minimizer of $\|v- \Psi w\|_{K^{-1}}$
over all $w\in \R^m$. In particular, $\|v-\Psi \Phi v\|_{K^{-1}}=\min_{z\in \R^m} \|v-K\Phi^T z\|_{K^{-1}}$
and  if  $x$ is the solution of the original equation \eqref{eqn:scalargeneral}, then
$\|x-\Psi y\|_{K^{-1}} =\min_{w\in \R^m} \|x-\Psi w\|_{K^{-1}} =\min_{z\in \R^m} \|x-K\Phi^T z\|_{K^{-1}}$.
\end{Theorem}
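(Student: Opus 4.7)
My plan is to deduce this theorem directly from the projection identities already established in Theorem~\ref{lem:minimizingproperty}, rather than redoing any variational calculation. The key observation is that $\Psi\Phi = P_{\Img(K\Phi^T), K^{-1}}$ is, by definition, the orthogonal projection onto $\Img(K\Phi^T)$ in the scalar product $\langle\cdot,\cdot\rangle_{K^{-1}}$, so $\Psi\Phi v$ is automatically the closest element of $\Img(K\Phi^T)$ to $v$ in the $\|\cdot\|_{K^{-1}}$ norm.

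First I would check that $\Img(\Psi) = \Img(K\Phi^T)$. This is immediate from $\Psi = K\Phi^T(\Phi K \Phi^T)^{-1}$ since $(\Phi K\Phi^T)^{-1}$ is an invertible $m\times m$ matrix. Combined with the projection identity above, this shows that $\Psi\Phi v$ minimizes $\|v - u\|_{K^{-1}}$ over $u \in \Img(\Psi)$, and equivalently, that it minimizes the same quantity over $u \in \Img(K\Phi^T)$. This gives both the identity $\|v-\Psi\Phi v\|_{K^{-1}}=\min_{z\in\R^m}\|v-K\Phi^T z\|_{K^{-1}}$ and the inequality $\|v-\Psi\Phi v\|_{K^{-1}}\le \|v-\Psi w\|_{K^{-1}}$ for every $w\in\R^m$.

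Next I would use the injectivity of $\Psi$ to pull the minimizer back to the $w$-variable. Injectivity follows for free from $\Phi\Psi=I_m$ (also in Theorem~\ref{lem:minimizingproperty}): if $\Psi w = \Psi\Phi v$, then applying $\Phi$ to both sides yields $w = \Phi v$. Hence the minimum over $w\in \R^m$ of $\|v-\Psi w\|_{K^{-1}}$ is attained at the unique point $w^*=\Phi v$. Specializing to $v = x$, the solution of \eqref{eqn:scalargeneral}, and using $\Phi x = y$, we get $\Psi\Phi x = \Psi y$ and the final chain of equalities claimed in the statement.

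I do not anticipate any real obstacle: once the identification $\Img(\Psi)=\Img(K\Phi^T)$ is in place, every claim in the theorem is a direct consequence of the projection and left-inverse identities from the preceding theorem. The only thing to be careful about is not to confuse the two equivalent parametrizations $w\mapsto \Psi w$ and $z\mapsto K\Phi^T z$ of the same subspace, and to invoke $\Phi\Psi=I_m$ explicitly when extracting uniqueness of $w^*$.
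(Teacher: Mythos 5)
Your proposal is correct and follows essentially the same route as the paper: both arguments reduce the theorem to the orthogonality/projection structure established in Theorem \ref{lem:minimizingproperty} (the paper phrases it as $v-\Psi\Phi v\in\Ker(\Phi)$ being $\<\cdot,\cdot\>_{K^{-1}}$-orthogonal to $\Img(\Psi)$, you phrase it via $\Psi\Phi=P_{\Img(K\Phi^T),K^{-1}}$, which is the same fact) together with $\Img(\Psi)=\Img(K\Phi^T)$. Your explicit use of $\Phi\Psi=I_m$ to get injectivity of $\Psi$ and hence uniqueness of $w^*=\Phi v$ is a welcome detail the paper leaves implicit.
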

\begin{proof}
The proof follows by observing that  $v-\Psi \Phi v$ belongs to the null space of $\Phi$ which,  from  Theorem \ref{lem:minimizingproperty},  is the orthogonal complement of the image of $\Psi$ with respect to the scalar product defining the norm  $\|\cdot\|_{K^{-1}}$. Observe also that the image of $\Psi$ is equal to that of $K\Phi^T$.
\end{proof}

\begin{Remark}\label{rmk:accurkm1}
Observe that,  from Theorem \ref{lem:minimizingproperty}, $v-\Psi \Phi v$ spans the null space of $\Phi$, and
$\|v\|_{K^{-1}}^2= \big\|v-\Psi \Phi v\big\|_{K^{-1}}^2+\big\|\Psi \Phi v\big\|_{K^{-1}}^2$. Therefore if
 $D$ is a symmetric positive definite $n\times n$ matrix then
$\sup_{v \in \R^n} \big\|v-\Psi \Phi v\big\|_{D}/\|v\|_{K^{-1}}=\sup_{v\in \R^n,\, \Phi v=0}\|v\|_{D}/\| v\|_{K^{-1}}$.
In particular, if  $x$ is the solution of  \eqref{eqn:scalargeneral} and $y$ the vector in \eqref{eq:ekiudiu3d}, then
$\big\|x-\Psi y\big\|_{D}/\|b\|_{Q^{-1}}\leq \sup_{v\in \R^n,\, \Phi v=0} \|v\|_{D}/\| v\|_{K^{-1}}$
and the right hand side is the smallest constant for which the inequality holds (for all $b$).
\end{Remark}

\begin{Remark}
A simple calculation (based on the reproducing Kernel property $\<v,K_{\cdot,i}\>_{K^{-1}}=v_i$) shows that if $x$ is the solution of  \eqref{eqn:scalargeneral} and $y$ the vector in \eqref{eq:ekiudiu3d}, then\\
$\Big|(x-\Psi y)_i\Big| \leq \big(K^\Phi_{i,i}\big)^\frac{1}{2}  \|b\|_{Q^{-1}}$, i.e.
 the variance of  the $i$th entry of the solution of the stochastic system \eqref{eqn:noisy}  conditioned on   $\Phi X=y$,  controls the accuracy of the approximation of the $i$th entry of the solution of the deterministic system \eqref{eqn:scalargeneral}. In that sense, the role of
$K^\Phi$  is analogous to that of the power function in radial basis function interpolation \cite{Wendland:2005, Fasshauer:2005}
 and that of the  Kriging function \cite{WuSchback:93} in geostatistics \cite{Myers:1992}.
\end{Remark}

\subsection{Energy norm estimates and selection of the prior}\label{subsecpriorselection}
We will from now on assume that  $A$ is symmetric positive definite. Observe that in this situation
 the energy norm $\|\cdot\|_A$ is of practical significance
 for quantifying the approximation error and  Theorem \ref{thm:gedgdgjdh} leads to the estimate
$\|x-\Psi y\|_{K^{-1}} =\min_{z\in \R^m} \|Q^{-\frac{1}{2}}b- Q^{-\frac{1}{2}} A^{\frac{1}{2}} K^\frac{1}{2}\Phi^T z\|$ which simplifies to the energy norm estimate expressed by Corollary \eqref{cor:geaaedgjdh} under the choice  $Q=A$ (note that $K^{-1}=A$ under that choice).

\begin{Corollary}\label{cor:geaaedgjdh}
If $A$ is symmetric positive definite  and $Q=A$, then for $v\in \R^n$, $\|v-\Psi \Phi v\|_{A}=\min_{z\in \R^m} \|v-A^{-1}\Phi^T z\|_{A}$.
Therefore, if   $x$ is the solution of  \eqref{eqn:scalargeneral} and $y$ the vector in \eqref{eq:ekiudiu3d}, then
$\|x-\Psi y\|_{A} =\min_{w\in \R^m} \|x-\Psi w\|_{A} =\min_{z\in \R^m} \|x-A^{-1}\Phi^T z\|_{A}$. In particular
\begin{equation}\label{eq:gdugeueydd}
\|x-\Psi y\|_{A} =\min_{z\in \R^m} \|A^{-\frac{1}{2}} b-A^{-\frac{1}{2}}\Phi^T z\|\,.
\end{equation}
\end{Corollary}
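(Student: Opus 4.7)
The plan is to specialize Theorem \ref{thm:gedgdgjdh} to the case $Q=A$ and then rewrite the resulting $A$-norm estimate in the Euclidean norm via the symmetric square root $A^{1/2}$.

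First I would compute the prior covariance under the choice $Q=A$. Since by \eqref{eqhbddjdhh} we have $K = A^{-1} Q (A^{-1})^T$, and $A$ is symmetric, the choice $Q=A$ gives $K = A^{-1} A A^{-1} = A^{-1}$, and therefore $K^{-1} = A$. In particular $\|\cdot\|_{K^{-1}} = \|\cdot\|_A$, so the energy norm appearing in the variational problem is exactly the scalar product associated with the prior.

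Next I would invoke Theorem \ref{thm:gedgdgjdh} verbatim with $K=A^{-1}$ and $K^{-1}=A$: for every $v\in\R^n$,
\begin{equation*}
\|v-\Psi\Phi v\|_A = \min_{z\in\R^m}\|v-A^{-1}\Phi^T z\|_A,
\end{equation*}
and, applied to the solution $x$ of \eqref{eqn:scalargeneral} (which satisfies $\Phi x = y$, so $\Psi\Phi x = \Psi y$),
\begin{equation*}
\|x-\Psi y\|_A = \min_{w\in\R^m}\|x-\Psi w\|_A = \min_{z\in\R^m}\|x-A^{-1}\Phi^T z\|_A.
\end{equation*}
These are just the conclusions of Theorem \ref{thm:gedgdgjdh} transcribed into the $A$-norm.

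Finally I would derive \eqref{eq:gdugeueydd}. Using the spectral square root of $A$, for any $u\in\R^n$ we have $\|u\|_A^2 = u^T A u = \|A^{1/2}u\|^2$. Apply this to $u = x - A^{-1}\Phi^T z$ to obtain
\begin{equation*}
\|x-A^{-1}\Phi^T z\|_A = \|A^{1/2}x - A^{-1/2}\Phi^T z\|,
\end{equation*}
and then use $A x = b$, i.e.\ $A^{1/2}x = A^{-1/2}b$, to replace $A^{1/2}x$ by $A^{-1/2}b$. Minimizing over $z\in\R^m$ yields \eqref{eq:gdugeueydd}.

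There is no real obstacle here: the statement is a one-line specialization of the previous theorem, and the only substantive verification is that $Q=A$ implies $K=A^{-1}$, which is immediate from symmetry of $A$. The rewriting in \eqref{eq:gdugeueydd} is a direct application of the definition of the $A$-norm and of $Ax=b$.
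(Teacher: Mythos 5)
Your proposal is correct and follows the same route as the paper: the corollary is stated there as a direct specialization of Theorem \ref{thm:gedgdgjdh} under the choice $Q=A$ (so that $K=A^{-1}$, $K^{-1}=A$), with \eqref{eq:gdugeueydd} obtained by rewriting the $A$-norm via $A^{1/2}$ and $Ax=b$, exactly as you do.
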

\begin{Remark}\label{rmk:galerkinproj}
Therefore, according to Corollary \ref{cor:geaaedgjdh}, if $Q=A$, then $\Psi y$ is the Galerkin approximation of $x$, i.e.~the best approximation of $x$ in $\|\cdot\|_A$-norm in the image of $\Psi$ (which is equal to the image of $A^{-1}\Phi^T$). This is interesting because $\Psi y$ is obtained without the prior knowledge of $b$.
\end{Remark}
Corollary \ref{cor:geaaedgjdh} and Remark \ref{rmk:galerkinproj} motivate us to select $Q=A$ as the covariance matrix of the Gaussian prior distribution (mixed strategy of Player II).

\subsection{Impact and selection of the measurement matrix $\Phi$}
It is natural to wonder how good this recovery strategy is (under the choice $Q=A$) compared to the best possible function of  $y$ and how the approximation error is impacted by the measurement matrix $\Phi$. If the energy norm is used to quantify accuracy, then the recovery problem can be expressed as finding the function $\theta$ of the measurements $y$ minimizing the (worst case) approximation error $\inf_{\theta}\sup_{\|b\|\leq 1} \|x - \theta(y)\|_A/\|b\|$
with $x=A^{-1}b$ and $y=\Phi A^{-1} b$. Writing $0<\lambda_1(A)\leq \cdots \leq \lambda_n(A)$, the eigenvalues of $A$ in increasing order, and $a_1,\ldots,a_n$, the corresponding eigenvectors, it is easy to obtain that (1) the best choice for $\Phi$ would correspond to measuring the projection of $x$ on $\operatorname{span}\{a_1,\ldots,a_m\}$ and would lead to the worst approximation error $1/\sqrt{\lambda_{m+1}}$
and (2) the worst choice would correspond to measuring the projection of $x$ on a subspace orthogonal to $a_1$  and would lead to the worst approximation error $1/\sqrt{\lambda_{1}}$.
Under the decision $Q=A$  the minimal value of \eqref{eq:gdugeueydd} is also $1/\sqrt{\lambda_{m+1}}$ and achieved for  $\Img(\Phi^T)=\operatorname{span}\{a_1,\ldots,a_m\}$ and the maximal value of \eqref{eq:gdugeueydd} is $1/\sqrt{\lambda_{1}}$ and achieved when  $\Img(\Phi^T)$ is orthogonal to $a_1$. The following theorem, which is a direct application of \eqref{eq:gdugeueydd} and the estimate derived in \cite[p.~10]{HalkoMartinssonTropp:2011} (see also \cite{MartinssonRokhlinTygert:2011}), shows that, the subset of  measurement matrices that are not \emph{nearly optimal} is of small measure if the rows of $\Phi^T$ are sampled independently on the unit sphere of $\R^n$.

\begin{Theorem}\label{thm:duihdi3dw}
If $\Phi$ is a $n\times m$ matrix with i.i.d. $\mathcal{N}(0,1)$ (Gaussian) entries, $Q=A$,  $x$ is the solution of the original equation \eqref{eqn:scalargeneral}, and $2\leq p$ then
with probability at least $1-3 p^{-p}$, $\|x-\Psi y\|_{A}/\|b\|\leq  (1+9 \sqrt{m+p}\sqrt{n})/\sqrt{\lambda_{m+1}}.$
\end{Theorem}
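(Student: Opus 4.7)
The plan is to reduce the bound to a direct application of the Halko--Martinsson--Tropp randomized low-rank approximation estimate, using Corollary~\ref{cor:geaaedgjdh} as the bridge between Player II's recovery error and a Gaussian sketching problem.

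First I would use Corollary~\ref{cor:geaaedgjdh}, which gives the variational identity
\begin{equation*}
\|x-\Psi y\|_A \;=\; \min_{z\in\R^m} \bigl\|A^{-\tfrac12} b - A^{-\tfrac12}\Phi^T z\bigr\|\,.
\end{equation*}
Writing $M := A^{-1/2}\Phi^T$ and letting $P$ denote the Euclidean orthogonal projection onto $\Img(M)$, the right-hand side equals $\bigl\|(I_n-P)\,A^{-1/2}b\bigr\|$, which is bounded by $\|(I_n-P) A^{-1/2}\|_{\mathrm{op}}\,\|b\|$. Hence the task reduces to controlling the operator-norm error of approximating $A^{-1/2}$ by its projection onto the column space of the random matrix $M = A^{-1/2}\Phi^T$.

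Next I would observe that, since $\Phi^T$ has i.i.d.\ $\mathcal{N}(0,1)$ entries, $M$ is precisely a Gaussian sketch of the matrix $A^{-1/2}$ in the sense of Halko--Martinsson--Tropp. Applying their probabilistic bound (cited as \cite[p.~10]{HalkoMartinssonTropp:2011}), valid for any oversampling parameter $p\geq 2$, yields
\begin{equation*}
\bigl\|(I_n-P)\,A^{-1/2}\bigr\|_{\mathrm{op}} \;\leq\; \bigl(1+9\sqrt{m+p}\sqrt{n}\bigr)\,\sigma_{m+1}\!\bigl(A^{-1/2}\bigr)
\end{equation*}
with probability at least $1-3p^{-p}$. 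Because $A$ is symmetric positive definite with eigenvalues $\lambda_1\leq\cdots\leq\lambda_n$, the matrix $A^{-1/2}$ is symmetric positive definite with eigenvalues $\lambda_n^{-1/2}\leq\cdots\leq\lambda_1^{-1/2}$, so its singular values listed in decreasing order are $\lambda_j^{-1/2}$ and in particular $\sigma_{m+1}(A^{-1/2})=1/\sqrt{\lambda_{m+1}}$. Substituting back and combining with the contraction $\|(I_n-P)A^{-1/2}b\|\leq \|(I_n-P)A^{-1/2}\|_{\mathrm{op}}\|b\|$ gives the claimed inequality.

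The main (and really only) substantive step is the correct invocation of the HMT tail bound: verifying that the hypotheses $p\geq 2$ and sketch dimensionality match the statement, and identifying the $(m+1)$-st singular value of $A^{-1/2}$ with $\lambda_{m+1}(A)^{-1/2}$. Everything else is routine linear algebra built on Corollary~\ref{cor:geaaedgjdh}; in particular, no additional probabilistic argument beyond the cited bound is required.
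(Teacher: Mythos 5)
Your proposal is correct and takes essentially the same route as the paper, whose entire proof is that the theorem ``is a direct application of \eqref{eq:gdugeueydd} and the estimate derived in [Halko--Martinsson--Tropp, p.~10]'': you simply spell out that reduction, passing from Corollary~\ref{cor:geaaedgjdh} to the projection residual $\bigl\|(I_n-P)A^{-1/2}b\bigr\|$ for $P$ the projection onto $\Img(A^{-1/2}\Phi^T)$ and identifying $\sigma_{m+1}(A^{-1/2})=\lambda_{m+1}^{-1/2}$. The one point you flag but do not actually verify---how the $m$ columns of the Gaussian test matrix and the oversampling parameter $p$ line up with the $k+p$ test vectors required by the cited bound---is left equally implicit in the paper, so your write-up matches its level of detail.
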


Although the randomization of the measurement matrix \cite{Frieze98fastmontecarlo, LeParker99, Gilbert2002,OwhadiCandes2004} can be an efficient strategy in  compressed sensing \cite{Tropp2005, CandesTao2006, CandesRombergTao2006, Donoho:2006, Gilbert:2007, Chandrasekaran2011} and in
Singular Value Decomposition/Low Rank approximation \cite{HalkoMartinssonTropp:2011}, we will not use this strategy here because the design of the interpolation operator presents the  (added) difficulty of approximating the eigenvectors associated with the smallest eigenvalues of $A$ rather than those associated with the largest ones. Furthermore, $\Psi$  has to be computed efficiently and the dependence of the approximation constant in Theorem \ref{thm:duihdi3dw} on $n$ and $m$ can be problematic if sharp convergence estimates are to be obtained.
We will instead select the measurement matrix based  on the transfer property introduced in   \cite{BeOw:2010}
and given in a discrete context in the following theorem.

\begin{Theorem}\label{thm:geaaejhdgjvvggdh}
If $A$ is symmetric positive definite, $Q=A$ and $x$ is the solution of the original equation \eqref{eqn:scalargeneral}, then for any symmetric positive definite matrix $B$, we have
\begin{equation}
\inf_{v\in \R^n}\sqrt{\frac{v^T B v}{v^T A v}}\min_{z\in \R^m} \|b-\Phi^T z\|_{B^{-1}} \leq \|x-\Psi y\|_{A} \leq \sup_{v\in \R^n}\sqrt{\frac{v^T B v}{v^T A v}}\min_{z\in \R^m} \|b-\Phi^T z\|_{B^{-1}}
\end{equation}
\end{Theorem}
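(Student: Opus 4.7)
The plan is to reduce the bilateral estimate to a pointwise norm-equivalence between $\|\cdot\|_{A^{-1}}$ and $\|\cdot\|_{B^{-1}}$, and then to apply that equivalence to the minimizing residual vectors.

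First, I would rewrite the quantity of interest in a form comparable to the right-hand sides. By Corollary \ref{cor:geaaedgjdh} (in particular \eqref{eq:gdugeueydd}), since $Q = A$,
$$\|x - \Psi y\|_A \;=\; \min_{z \in \R^m} \|A^{-\frac{1}{2}}(b - \Phi^T z)\| \;=\; \min_{z \in \R^m} \|b - \Phi^T z\|_{A^{-1}}.$$
Thus the theorem becomes a comparison between $\min_z \|b - \Phi^T z\|_{A^{-1}}$ and $\min_z \|b - \Phi^T z\|_{B^{-1}}$, weighted by extremal Rayleigh quotients of the pencil $(B,A)$.

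Second, I would establish the key lemma: for every nonzero $w \in \R^n$,
$$\inf_{v \in \R^n} \frac{v^T B v}{v^T A v} \;\leq\; \frac{w^T A^{-1} w}{w^T B^{-1} w} \;\leq\; \sup_{v \in \R^n} \frac{v^T B v}{v^T A v}.$$
The clean way to see this is through the substitutions $v = A^{-1/2} s$ and $w = B^{1/2} s$, which convert both Rayleigh quotients into quotients governed by the symmetric matrix $M := A^{-1/2} B A^{-1/2}$: the left ratio becomes $s^T M s / \|s\|^2$ while the right ratio becomes $\|s\|^2 / (s^T M^{-1} s)$. Both attain the same extrema over $s \neq 0$, namely $\lambda_{\min}(M)$ and $\lambda_{\max}(M)$. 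Equivalently, this expresses the fact that the pencils $(B,A)$ and $(A^{-1},B^{-1})$ have identical spectra, since $A^{-1}B$ and $BA^{-1}$ are similar.

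Third, I would plug this pointwise bound into the two minimizers. For the upper bound, let $z_B^\star$ minimize $\|b - \Phi^T z\|_{B^{-1}}$, and use $w = b - \Phi^T z_B^\star$:
$$\|x - \Psi y\|_A \;\leq\; \|b - \Phi^T z_B^\star\|_{A^{-1}} \;\leq\; \sqrt{\sup_v \tfrac{v^T B v}{v^T A v}} \;\min_{z \in \R^m}\|b - \Phi^T z\|_{B^{-1}}.$$
For the lower bound, let $z_A^\star$ minimize $\|b - \Phi^T z\|_{A^{-1}}$, apply the lemma to $w = b - \Phi^T z_A^\star$, and then bound $\|b - \Phi^T z_A^\star\|_{B^{-1}} \geq \min_z \|b - \Phi^T z\|_{B^{-1}}$ to conclude.

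The main obstacle is the second step: cleanly producing the norm-equivalence with the asserted constants (as opposed to looser ones). The key insight is that one must compare the two Rayleigh quotients not in the same variable but via the paired substitution above, which exploits the duality between the pencils $(B,A)$ and $(A^{-1},B^{-1})$; everything else is a direct consequence of Corollary \ref{cor:geaaedgjdh} applied to the two choices of weighted norm.
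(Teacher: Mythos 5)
Your proof is correct and takes essentially the same route as the paper: rewrite $\|x-\Psi y\|_A=\min_{z\in\R^m}\|b-\Phi^T z\|_{A^{-1}}$ via Corollary \ref{cor:geaaedgjdh}, then transfer the extremal constants of the pencil $(B,A)$ to the pencil $(A^{-1},B^{-1})$ and compare the two minimizers (the paper simply cites the fact that $\alpha_1 B\leq A\leq \alpha_2 B$ implies $\alpha_2^{-1}B^{-1}\leq A^{-1}\leq \alpha_1^{-1}B^{-1}$, where you prove it spectrally). One cosmetic slip: the substitution $w=B^{1/2}s$ gives the Rayleigh quotient of $B^{1/2}A^{-1}B^{1/2}$ (it is $w=A^{1/2}s$ that yields $\|s\|^2/(s^T M^{-1}s)$), but since both matrices are similar to $M=A^{-1/2}BA^{-1/2}$ the extrema, and hence your key lemma, are unaffected.
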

\begin{proof}
Corollary \ref{cor:geaaedgjdh} implies that  if  $x$ is the solution of the original equation \eqref{eqn:scalargeneral}, then
$\|x-\Psi y\|_{A} =\min_{z\in \R^m} \|b-\Phi^T z\|_{A^{-1}}$. We finish the proof by observing that
if $A$ and $B$ are symmetric positive definite matrices such that $\alpha_1 B \leq A \leq \alpha_2 B$ for some constants $\alpha_1,\alpha_2>0$ then
$\alpha_2^{-1} B^{-1} \leq A^{-1} \leq \alpha_1^{-1} B^{-1}$.
\end{proof}

Therefore according to Theorem \ref{thm:geaaejhdgjvvggdh}, once a good measurement matrix $\Phi$ has been identified for a symmetric positive definite matrix $B$ such that $\alpha_1 B \leq A $, the same measurement matrix can be used for $A$ at the cost of an  increase of the  bound on the error by the multiplicative factor $\alpha_1^{-1/2}$.
As a prototypical example, one may consider a (stiffness) matrix $A$ obtained from a finite element discretization of the PDE \eqref{eqn:scalar} and
 $B$ may be the stiffness matrix of the finite element discretization of the Laplace Dirichlet PDE
\begin{equation}\label{eqn:LapDir}
    -\Delta u'(x)=g(x) \text{ on } \Omega \text{ with } u'=0  \text{ on }\partial \Omega,
\end{equation}
obtained from the same finite-elements (e.g. piecewise-linear nodal basis functions over the same fine mesh $\T_h$). Using the energy norm \eqref{eqenergynorm}, Theorem \ref{thm:geaaejhdgjvvggdh} and Remark \ref{rmk:galerkinproj} imply the following proposition
\begin{Proposition}\label{prop:guguyuyh}
Let $u_h$ (resp. $u_h'$)  be the finite element approximation of the solution $u$ of \eqref{eqn:scalar} (resp. the solution $u'$ of \eqref{eqn:LapDir}) over the finite nodal elements of $\T_h$. Let $u_H$ (resp. $u_H'$) be the finite element approximation of the solution $u$ of \eqref{eqn:scalar} (resp. the solution $u'$ of \eqref{eqn:LapDir}) over linear space spanned by the rows of $A^{-1} \Phi^T$ (resp. over the linear space spanned by the rows of $B^{-1} \Phi^T$). It holds true that
\begin{equation}\label{eq:dgiegduygd}
\frac{1}{\sqrt{\lambda_{\max}(a)}} \|u_h'-u_H'\|_{H^1_0(\Omega)} \leq \|u_h-u_H\|_a\leq \frac{1}{\sqrt{\lambda_{\min}(a)}} \|u_h'-u_H'\|_{H^1_0(\Omega)}
\end{equation}
\end{Proposition}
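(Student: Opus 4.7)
The proof is a direct application of Theorem \ref{thm:geaaejhdgjvvggdh} (combined with Remark \ref{rmk:galerkinproj}) once the PDE setting is translated into the algebraic framework. The plan is as follows. First, fix the nodal basis $\{\varphi_i\}$ of $\T_h$ and let $A$, $B$ denote the stiffness matrices associated with \eqref{eqn:scalar} and \eqref{eqn:LapDir} respectively, with common load vector $b_i=\int_\Omega g\varphi_i$. Then $u_h$ has coefficient vector $x=A^{-1}b$ and $u_h'$ has coefficient vector $x'=B^{-1}b$; moreover $u_H$ has coefficients $\Psi y=A^{-1}\Phi^T(\Phi A^{-1}\Phi^T)^{-1}\Phi x$ and $u_H'$ has coefficients $\Psi' y'=B^{-1}\Phi^T(\Phi B^{-1}\Phi^T)^{-1}\Phi x'$. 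That these are the Galerkin projections on the stated coarse spaces is exactly the content of Remark \ref{rmk:galerkinproj} applied with $Q=A$ and with $Q=B$, so $\|u_h-u_H\|_a=\|x-\Psi y\|_A$ and $\|u_h'-u_H'\|_{H^1_0(\Omega)}=\|x'-\Psi' y'\|_B$.

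Next, by Corollary \ref{cor:geaaedgjdh} applied to each of the two systems,
\begin{equation*}
\|u_h-u_H\|_a^2=\min_{z\in\R^m}\|b-\Phi^T z\|_{A^{-1}}^2,\qquad \|u_h'-u_H'\|_{H^1_0(\Omega)}^2=\min_{z\in\R^m}\|b-\Phi^T z\|_{B^{-1}}^2.
\end{equation*}
The uniform ellipticity assumption $\lambda_{\min}(a)|l|^2\leq l^T a(x)l\leq \lambda_{\max}(a)|l|^2$ integrated against gradients of any finite-element function gives the quadratic-form comparison $\lambda_{\min}(a) B\leq A\leq \lambda_{\max}(a) B$ on $\R^n$. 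Inverting (which reverses the order) yields $\lambda_{\max}(a)^{-1} B^{-1}\leq A^{-1}\leq \lambda_{\min}(a)^{-1} B^{-1}$, so that
\begin{equation*}
\lambda_{\max}(a)^{-1}\|b-\Phi^T z\|_{B^{-1}}^2\leq \|b-\Phi^T z\|_{A^{-1}}^2\leq \lambda_{\min}(a)^{-1}\|b-\Phi^T z\|_{B^{-1}}^2
\end{equation*}
for every $z\in\R^m$. Taking the minimum over $z$ in each of these inequalities (it preserves order) and then square roots produces exactly \eqref{eq:dgiegduygd}. Equivalently, one invokes Theorem \ref{thm:geaaejhdgjvvggdh} directly with this choice of $B$, noting that $\sup_v (v^T B v)/(v^T A v)\leq \lambda_{\min}(a)^{-1}$ and $\inf_v (v^T B v)/(v^T A v)\geq \lambda_{\max}(a)^{-1}$.

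There is no real obstacle here; the only subtle point worth stating carefully is the passage from the pointwise bound on $a(x)$ to the global quadratic-form comparison $\lambda_{\min}(a)B\leq A\leq\lambda_{\max}(a)B$, which is immediate because both $v^T A v$ and $v^T B v$ are integrals over $\Omega$ of the same gradient field weighted by $a$ and by the identity, respectively. Everything else is bookkeeping: translating the FE norms to stiffness-matrix-induced norms, applying the algebraic identities already proved, and inverting an order relation between symmetric positive definite matrices.
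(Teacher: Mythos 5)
Your proof is correct and follows essentially the same route as the paper, which states Proposition \ref{prop:guguyuyh} as a direct consequence of Theorem \ref{thm:geaaejhdgjvvggdh} and Remark \ref{rmk:galerkinproj} without further detail: identify the fine and coarse FE solutions with $x=A^{-1}b$, $x'=B^{-1}b$ and their Galerkin projections $\Psi y$, $\Psi' y'$, reduce both errors to $\min_z\|b-\Phi^T z\|_{A^{-1}}$ and $\min_z\|b-\Phi^T z\|_{B^{-1}}$ via Corollary \ref{cor:geaaedgjdh}, and compare via $\lambda_{\min}(a)B\leq A\leq \lambda_{\max}(a)B$ and the inverted order relation. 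Your expansion supplies exactly the bookkeeping the paper leaves implicit, so there is nothing to correct.
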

Observe that the right hand side of \eqref{eq:dgiegduygd} does not depend on $\lambda_{\max}(a)$, therefore if $\lambda_{\min}(a)=1$, then the error bound on $\|u_h-u_H\|_a$ does not depend on the contrast of $a$ (i.e. $\lambda_{\max}(a)/\lambda_{\min}(a)$).

\section{Numerical homogenization and design of the interpolation operator in the continuous case}\label{sec:contcase}

We will now generalize the results and continue the analysis of Section \ref{sec:problem} in the continuous  case and design the interpolation operator for \eqref{eqn:scalar} in  the context of numerical homogenization.

\subsection{Information Game and Gamblets}
As in Section \ref{sec:problem} we will identify the interpolation operator (that will be used for the multigrid algorithm) through a non cooperative
game formulation where Player I chooses the source term $g$ \eqref{eqn:scalar}  and Player II tries to approximate the solution $u$ of \eqref{eqn:scalar} based on a finite number of measurements $(\int_{\Omega} u\phi_i)_{1\leq i \leq m}$ obtained from linearly independent test functions $\phi_i\in L^2(\Omega)$.
As in Section \ref{sec:problem}, this game formulation, motivates the search for a mixed strategy for Player II that can be expressed by replacing the source term $g$ with noise $\xi$. We will therefore consider the following SPDE

\begin{equation}\label{eqn:scalarspde}
\begin{cases}
    -\diiv \Big(a(x)  \nabla v(x)\Big)=\xi(x) \quad  x \in \Omega;\\
    v=0 \quad \text{on}\quad \partial \Omega,
    \end{cases}
\end{equation}
where $\Omega$ and $a$ are the domain and conductivity of  \eqref{eqn:scalar}. As in Section \ref{sec:problem}, to preserve the computational efficiency of the interpolation operator we will assume that $\xi$ is a centered Gaussian field on $\Omega$. The decision space of Player II is therefore the covariance function of $\xi$.
Write $\L$ the differential operator $-\diiv(a\nabla)$ with zero Dirichlet boundary condition mapping $H^1_0(\Omega)$ onto $H^{-1}(\Omega)$. Motivated by the analysis (Remark \ref{rmk:galerkinproj}) of Subsection \ref{subsecpriorselection} (which can be reproduced in the continuous case) we will select the covariance function of $\xi$ (Player II's decision) to be $\L$.  Therefore, under that choice,
for all $f\in H^1_0(\Omega)$, $\int_{\Omega}f(x)\xi(x)\,dx$ is a Gaussian random variable with mean 0 and variance $\int_{\Omega} f\L f=\|f\|_a^2$  where $\|f\|_a$ is the energy norm of $f$  defined in \eqref{eqenergynorm}. Introducing  the scalar product on $H^1_0(\Omega)$ defined by
\begin{equation}\label{eqsp}
\<v,w\>_a:=\int_{\Omega} (\nabla v)^T a \nabla w\,,
\end{equation}
recall that if $(e_1,e_2,\ldots)$ is an orthonormal basis of $(H^1_0(\Omega),\|\cdot\|_a)$ diagonalizing $\L$, then
$\xi$ can formally be represented as $\xi=\sum_{i=1}^\infty (\L e_i) X_i$ (where the $X_i$ are i.i.d. $\mathcal{N}(0,1)$ random variables) and, therefore, $\xi$ can
 also be identified as the linear isometry mapping $H^1_0(\Omega)$ onto a Gaussian space and $f=\sum_{i=1}^\infty \<f,e_i\>_a e_i$ onto $\int_{\Omega}f(x)\xi(x)\,dx=\sum_{i=1}^\infty \<f, e_i\>_a X_i$.

Observe also that  \cite{Owhadi:2014}, if $\xi'$ is White Noise on $\Omega$ (i.e. a Gaussian field with covariance function $\delta(x-y)$) then $\xi$ can be represented as $\xi=\L^{-\frac{1}{2}}\xi'$. Furthermore \cite[Prop.~3.1]{Owhadi:2014} the solution of \eqref{eqn:scalarspde} is Gaussian field with covariance function $G(x,y)$ (where $G$ is  the Green's function of the PDE \eqref{eqn:scalar}, i.e. $\L G(x,y)=\delta(x-y)$ with $G(x,y)=0$ for $y\in \partial \Omega$).

Let $\F$ be the $\sigma$-algebra generated by the random variables $\int_{\Omega}v(x)\phi_i$ for $i\in \{1,\ldots,m\}$ (with $v$ solution of \eqref{eqn:scalarspde}). We will identify the interpolation basis elements by conditioning the solution of \eqref{eqn:scalarspde} on $\F$. Observe that the covariance matrix of the measurement vector $(\int_{\Omega}v(x)\phi_i)_{1\leq i \leq m}$ is  the $m\times m$  symmetric matrix $\Theta$ defined by
\begin{equation}\label{eqtheta0}
\Theta_{i,j}:=\int_{\Omega^2}\phi_i(x)G(x,y) \phi_j(y)\,\dx\,dy
\end{equation}
Note that for $l\in \R^m$, $l^T \Theta l=\|w\|_a^2$ where $w$ is the solution of \eqref{eqn:scalar} with right hand side $g=\sum_{i=1}^m l_i \phi_i$.
Therefore (since the test functions $\phi_i$ are linearly independent) $\Theta$ is positive definite and we will write $\Theta^{-1}$ its inverse.
Write $\delta_{i,j}$ the Kronecker's delta ($\delta_{i,i}=1$ and $\delta_{i,j}=0$ for $i\not=j$).

\begin{Theorem}\label{thmgufufg0}
Let $v$ be the solution of \eqref{eqn:scalarspde}. It holds true that
\begin{equation}\label{eqmeanvspde0}
\E\big[v(x)\big| \F \big]=\sum_{i=1}^m \psi_i(x) \int_{\Omega} v(y)\phi_i (y)\,dy
\end{equation}
where the functions $\psi_i \in H^1_0(\Omega)$ are defined by
\begin{equation}\label{eqgamblet}
\psi_i(x):=\E\Big[v(x)\Big| \int_{\Omega} v(y)\phi_j(y)\,dy=\delta_{i,j},\,j\in \{1,\ldots,m\}\Big]
\end{equation}
and admit the following representation formula
\begin{equation}\label{eq:doehdd}
\psi_i(x)=\sum_{j=1}^m \Theta_{i,j}^{-1} \int_{\Omega}G(x,y)\phi_j(y)\,dy\,.
\end{equation}
Furthermore, the distribution of $v$ conditioned on $\F$ is that of a Gaussian field with mean \eqref{eqmeanvspde0} and covariance function
$\Gamma(x,y)=G(x,y)+ \sum_{i,j=1}^m \psi_i(x) \psi_j(y) \Theta_{i,j}$\\ $- \sum_{i=1}^m \psi_i(x) \int_{\Omega} G(y,z)\phi_i (z)\,dz- \sum_{i=1}^m \psi_i(y) \int_{\Omega} G(x,z)\phi_i (z)\,dz\,.$
\end{Theorem}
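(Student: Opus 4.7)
The plan is to treat this statement as the direct continuous analogue of Theorem \ref{thm:Gaussian}, with the role of the finite-dimensional covariance matrix $K$ played by the Green's function $G(x,y)$ and the role of $\Phi K \Phi^T$ played by the matrix $\Theta$ defined in \eqref{eqtheta0}. The key input, stated in the paragraph preceding the theorem, is that the solution $v$ of the SPDE \eqref{eqn:scalarspde} is a centered Gaussian field with covariance function $G(x,y)$. Since the $\sigma$-algebra $\F$ is generated by the finitely many (jointly Gaussian) measurements $\int_\Omega v\,\phi_i$, whose covariance matrix $\Theta$ is positive definite, the standard Gaussian conditioning machinery applies pointwise in $x$: the conditional distribution of $v$ given $\F$ is Gaussian, and the conditional mean is the $L^2(\P)$-orthogonal projection of $v(x)$ onto $\operatorname{span}\bigl\{\int_\Omega v\,\phi_j : 1\le j\le m\bigr\}$.

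First I would derive \eqref{eq:doehdd}. By Gaussianity the conditional mean is linear in the measurements, so I may write $\E[v(x)\mid \F]=\sum_{i=1}^m c_i(x)\int_\Omega v(y)\phi_i(y)\,dy$ and determine the coefficients by the normal equations, i.e.\ by requiring $\E\bigl[(v(x)-\E[v(x)\mid \F])\int_\Omega v\,\phi_k\bigr]=0$ for every $k$. Using $\E[v(x)v(y)]=G(x,y)$ this reduces to the linear system
\begin{equation*}
\int_\Omega G(x,y)\phi_k(y)\,dy=\sum_{i=1}^m c_i(x)\,\Theta_{i,k},\qquad k=1,\dots,m,
\end{equation*}
which inverts (since $\Theta\succ0$) to give $c_i(x)=\sum_j \Theta^{-1}_{i,j}\int_\Omega G(x,y)\phi_j(y)\,dy$. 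This is exactly the claimed representation \eqref{eq:doehdd} for a function that I temporarily call $\tilde\psi_i$, and establishes \eqref{eqmeanvspde0} with $\tilde\psi_i$ in place of $\psi_i$.

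Next I would reconcile $\tilde\psi_i$ with the definition \eqref{eqgamblet}. A direct computation with the representation formula shows $\int_\Omega \tilde\psi_i(x)\phi_k(x)\,dx=\sum_j \Theta^{-1}_{i,j}\Theta_{j,k}=\delta_{i,k}$. Substituting $\int_\Omega v\,\phi_j=\delta_{i,j}$ into \eqref{eqmeanvspde0} therefore returns $\tilde\psi_i(x)$, so $\tilde\psi_i$ coincides with the $\psi_i$ of \eqref{eqgamblet} and the representation formula \eqref{eq:doehdd} is proved. That $\psi_i\in H^1_0(\Omega)$ follows from the same identity, since it exhibits $\psi_i$ as a linear combination of functions $x\mapsto\int_\Omega G(x,y)\phi_j(y)\,dy$, each of which solves \eqref{eqn:scalar} with source $\phi_j\in L^2(\Omega)\subset H^{-1}(\Omega)$.

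Finally, for the conditional covariance I would expand
\begin{equation*}
\Gamma(x,y)=\E\!\left[\bigl(v(x)-\E[v(x)\mid\F]\bigr)\bigl(v(y)-\E[v(y)\mid\F]\bigr)\right]
\end{equation*}
using the just-obtained expression for the conditional mean and the identities $\E[v(x)v(y)]=G(x,y)$, $\E[v(x)\int v\,\phi_i]=\int G(x,z)\phi_i(z)\,dz$, and $\E[\int v\,\phi_i\int v\,\phi_j]=\Theta_{i,j}$. Collecting the four resulting terms reproduces the stated formula for $\Gamma(x,y)$. The only real subtlety is the infinite-dimensional conditioning, but this is harmless because $\F$ is generated by finitely many jointly Gaussian variables with non-degenerate covariance $\Theta$, so the standard finite-dimensional projection/conditioning argument extends to each pointwise evaluation $v(x)$ without additional work; I expect this (the justification of pointwise Gaussian conditioning for the random field) to be the only step requiring any care beyond direct calculation.
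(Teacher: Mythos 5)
Your proposal is correct and follows essentially the same route as the paper, which simply delegates the Gaussian-conditioning computation to \cite[Thm.~3.5]{Owhadi:2014}, identifies the covariance by expanding $\Gamma(x,y)=\E\big[(v(x)-\E[v(x)|\F])(v(y)-\E[v(y)|\F])\big]$, and notes that \eqref{eq:doehdd} gives $\psi_i\in H^1_0(\Omega)$ --- precisely the normal-equations/projection argument, covariance expansion, and regularity observation you carry out explicitly.
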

\begin{proof}
The proof is similar to that of \cite[Thm.~3.5]{Owhadi:2014}. The identification of the covariance function follows from the expansion of
$\Gamma(x,y)=\E\Big[\big(v(x)-\E\big[v(x)\big| \F \big]\big)\big(v(y)-\E\big[v(y)\big| \F \big]\big) \Big]$. Note that \eqref{eq:doehdd} proves that $\psi_i \in H^1_0(\Omega)$.
\end{proof}

Since, according to \eqref{eqgamblet} and the discussion preceding \eqref{eqn:scalarspde}, each $\psi_i$ is an elementary gamble (bet) on value of the solution
of \eqref{eqn:scalar} given the information $\int_{\Omega}\phi_j u=\delta_{i,j}$ for  $j=1,\ldots,m$ we will refer to
the basis functions $(\psi_i)_{1\leq i \leq m}$, as \emph{gamblets}. According to \eqref{eqmeanvspde0}, once gamblets have been identified, they form a basis for betting on the value of the solution of \eqref{eqn:scalar} given the measurements $(\int_{\Omega}\phi_j u)_{1\leq i \leq m}$.

\subsection{Optimal recovery properties}
Although gamblets admit the representation formula \eqref{eq:doehdd}, we will not use it for their practical (numerical) computation. Instead we will work with variational properties inherited from the conditioning of the Gaussian field $v$. To guide our intuition, note that
since  $\L$ is the precision function (inverse of the covariance function) of  $v$,
the conditional expectation of $v$ can be identified by minimizing  $\int_{\Omega}\psi\L \psi $ given measurements constraints.
 This observation motivates us to consider, for $i\in \{1,\ldots, m\}$, the following quadratic optimization problem
\begin{equation}\label{eq:dueihdbewdaisq}
\begin{cases}
\text{Minimize }  &\|\psi\|_a\\
\text{Subject to } &\psi \in H^1_0(\Omega)\text{ and }\int_{\Omega}\phi_j \psi=\delta_{i,j}\text{ for } j=1,\ldots,m
\end{cases}
\end{equation}
where $\|\psi\|_a$ is the energy norm of $\psi$  defined in \eqref{eqenergynorm}.

The following theorem shows that \eqref{eq:dueihdbewdaisq} can be used to identify $\psi_i$ and that gamblets are characterized by optimal (variational) recovery properties.
\begin{Theorem}\label{thm:dkdehgjdhdgh0}
It holds true that (1) The optimization problem \eqref{eq:dueihdbewdaisq} admits a unique minimizer $\psi_i$ defined by \eqref{eqgamblet} and \eqref{eq:doehdd}
(2) For $w\in \R^m$, $\sum_{i=1}^m w_i \psi_i$ is the unique minimizer of $\|\psi\|_a$ subject to $\int_{\Omega} \psi(x) \phi_j(x)=w_j$ for $j\in \{1,\ldots,m\}$ and  (3) (using the scalar product defined in \eqref{eqsp})  $\<\psi_i,\psi_j\>_a=\Theta_{i,j}^{-1}$.
\end{Theorem}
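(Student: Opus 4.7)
The plan is to attack \eqref{eq:dueihdbewdaisq} directly by Lagrange multipliers, identify the minimizer with the right-hand side of \eqref{eq:doehdd}, and derive (2) and (3) as corollaries.

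\medskip
\noindent\textbf{Step 1 (Existence and uniqueness).} The feasible set
$\mathcal{A}_i:=\{\psi\in H^1_0(\Omega): \int_\Omega \phi_j\psi=\delta_{i,j},\, j=1,\dots,m\}$
is a non-empty (since $\phi_1,\dots,\phi_m$ are linearly independent, the linear map $\psi\mapsto(\int_\Omega \phi_j\psi)_j$ from $H^1_0(\Omega)$ to $\R^m$ is surjective) closed affine subspace of $H^1_0(\Omega)$. Since $\|\cdot\|_a$ is equivalent to the $H^1_0$ norm (by uniform ellipticity), $\|\cdot\|_a^2$ is strictly convex and coercive on $\mathcal{A}_i$, so \eqref{eq:dueihdbewdaisq} admits a unique minimizer, which I will call $\tilde\psi_i$.

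\medskip
\noindent\textbf{Step 2 (Euler--Lagrange characterization).} By the Lagrange multiplier theorem, $\tilde\psi_i$ is characterized by the existence of multipliers $\lambda_1,\ldots,\lambda_m\in\R$ such that
\begin{equation*}
\<\tilde\psi_i,\varphi\>_a=\sum_{j=1}^m \lambda_j\int_\Omega \phi_j\varphi\,dx \quad \text{for all } \varphi\in H^1_0(\Omega),
\end{equation*}
i.e.\ $\L \tilde\psi_i=\sum_j \lambda_j \phi_j$ in $H^{-1}(\Omega)$. Inverting via the Green's function gives
\begin{equation*}
\tilde\psi_i(x)=\sum_{j=1}^m \lambda_j \int_\Omega G(x,y)\phi_j(y)\,dy.
\end{equation*}
Imposing the constraints $\int_\Omega \phi_k \tilde\psi_i=\delta_{i,k}$ and using the definition \eqref{eqtheta0} of $\Theta$ yields the linear system $\sum_j \Theta_{k,j}\lambda_j=\delta_{i,k}$, whose unique solution is $\lambda_j=\Theta^{-1}_{i,j}$. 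Substituting back recovers exactly \eqref{eq:doehdd}, so $\tilde\psi_i=\psi_i$. Agreement with the conditional expectation \eqref{eqgamblet} is already asserted in Theorem \ref{thmgufufg0}.

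\medskip
\noindent\textbf{Step 3 (Part (2), linearity of the minimizer in $w$).} For given $w\in\R^m$, let $\psi_w:=\sum_{i=1}^m w_i\psi_i$. Since each $\psi_i$ satisfies $\int_\Omega \phi_j\psi_i=\delta_{i,j}$, the function $\psi_w$ satisfies the constraint $\int_\Omega\phi_j\psi_w=w_j$. Any other admissible $\psi$ differs from $\psi_w$ by an element $h\in H^1_0(\Omega)$ with $\int_\Omega \phi_j h=0$ for all $j$. From Step 2, $\psi_w\in \operatorname{span}\{\L^{-1}\phi_1,\dots,\L^{-1}\phi_m\}$, and for any such $h$ one has $\<\psi_w,h\>_a=\sum_j \mu_j\int_\Omega\phi_j h=0$ (for appropriate multipliers $\mu_j$). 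Hence $\|\psi_w+h\|_a^2=\|\psi_w\|_a^2+\|h\|_a^2$, so $\psi_w$ is the unique minimizer; this is the exact analogue of Theorem \ref{lem:minimizingproperty}.

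\medskip
\noindent\textbf{Step 4 (Part (3), Gram matrix computation).} Applying the Euler--Lagrange identity of Step 2 with $\varphi=\psi_j$ and $\lambda_k=\Theta^{-1}_{i,k}$ gives
\begin{equation*}
\<\psi_i,\psi_j\>_a=\sum_{k=1}^m \Theta^{-1}_{i,k}\int_\Omega \phi_k\psi_j\,dx=\sum_{k=1}^m \Theta^{-1}_{i,k}\delta_{j,k}=\Theta^{-1}_{i,j},
\end{equation*}
which proves (3).

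\medskip
The only subtle point is Step 1 (surjectivity of the measurement map and strict convexity/coercivity); everything else is mechanical Lagrange-multiplier calculus combined with the representation \eqref{eq:doehdd}. No new technical machinery beyond what is already invoked in Theorem \ref{thmgufufg0} and the analogue Theorem \ref{lem:minimizingproperty} is needed.
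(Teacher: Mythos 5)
Your proof is correct, and its mathematical core coincides with the paper's: everything rests on the identity $\<\L^{-1}\phi_j,h\>_a=\int_\Omega\phi_j h$, which makes $\operatorname{span}\{\L^{-1}\phi_j\}$ $a$-orthogonal to the constraint kernel and yields the Pythagorean identity \eqref{eq:hdkhdkjh33e}. The difference is one of direction. The paper runs a pure verification: it takes the candidate $\psi_i$ already given by the representation formula \eqref{eq:doehdd} (established in Theorem \ref{thmgufufg0}), checks the constraints directly from the definition of $\Theta$, obtains $\<\psi_w,\varphi\>_a=\sum_{i,j}w_i\Theta^{-1}_{i,j}\int_\Omega\phi_j\varphi$ by integration by parts, and concludes optimality and uniqueness from Pythagoras in one stroke, with (3) falling out of the same displayed identity with $\varphi=\psi_j$ (exactly your Step 4); no existence argument or multiplier rule is needed because the minimizer is exhibited. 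You instead derive the minimizer: an abstract existence/uniqueness step (surjectivity of the measurement map, strict convexity and coercivity of $\|\cdot\|_a^2$ on the closed affine feasible set) followed by the Euler--Lagrange/Lagrange-multiplier characterization $\L\tilde\psi_i=\sum_j\lambda_j\phi_j$, and then the constraints force $\lambda_j=\Theta^{-1}_{i,j}$, recovering \eqref{eq:doehdd}. Your route costs slightly more machinery (the Hilbert-space multiplier rule, which your surjectivity remark does justify) but has the pedagogical advantage of explaining where the representation formula comes from rather than assuming it; the paper's route is shorter precisely because \eqref{eq:doehdd} is already available. Both treatments of parts (2) and (3) are then identical in substance.
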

\begin{proof}
 Let $w\in \R^m$ and $\psi_w=\sum_{i=1}^m w_i \psi_i$ with $\psi_i$ defined as in \eqref{eq:doehdd}. The definition of $\Theta$ implies that $\int_{\Omega} \psi_w(x) \phi_j(x)=w_j$ for $j\in \{1,\ldots,m\}$. Furthermore we obtain by integration by parts that
 for all $\varphi\in H^1_0(\Omega)$, $\<\psi_w,\varphi\>_a=\sum_{i,j=1}^m  w_i \Theta_{i,j}^{-1} \int_{\Omega} \phi_j \varphi$.
Therefore, if $\psi \in H^1_0(\Omega)$  is such that $\int_{\Omega} \psi(x) \phi_j(x)=w_j$ for $j\in \{1,\ldots,m\}$ then
$\<\psi_w,\psi-\psi_w\>_a=0$ and
\begin{equation}\label{eq:hdkhdkjh33e}
\|\psi\|_a^2=\|\psi_w\|_a^2+\|\psi-\psi_w\|_a^2
\end{equation}
which finishes the proof of optimality of $\psi_i$ and $\psi_w$.
\end{proof}

\subsection{Optimal accuracy of the recovery}\label{subsecoptaccuracy}
 Define
\begin{equation}\label{eqmawybdysedsd}
u^*(x):= \sum_{i=1}^m \psi_i(x) \int_{\Omega} u(y) \phi_i(y)\,dy
\end{equation}
where $u$ is the solution of \eqref{eqn:scalar} and $\psi_i$ are the gamblets defined by \eqref{eqgamblet} and \eqref{eq:doehdd}. Note $u^*$ corresponds to  Player II's bet on the value of $u$ given the measurements $(\int_{\Omega} u(y) \phi_i(y)\,dy)_{1\leq i \leq m}$. In particular,  if
$v$ is the solution of \eqref{eqn:scalarspde} then
\begin{equation}\label{eqmawybdyd}
u^*(x)=\E\big[v(x)\big| \int_{\Omega} v(y)\phi_i (y)\,dy=\int_{\Omega} u(y)\phi_i (y)\,dy \big]
\end{equation}

For  $\phi \in H^{-1}(\Omega)$ write $\L^{-1}\phi$ the solution of \eqref{eqn:scalar} with $g=\phi$.
The following Theorem shows that $u^*$ is the best approximation (in energy norm) of $u$ in $\operatorname{span} \{\L^{-1} \phi_i :  i\in \{1,\ldots,m\}\}$.
\begin{Theorem}\label{thm:dkdehgjdhdgh02}
Let $u$ be the solution of \eqref{eqn:scalar}, $u^*$ defined in \eqref{eqmawybdysedsd} and \eqref{eqmawybdyd}. It holds true that
\begin{equation}\label{sidasaeweddaud02}
    \|u - u^*\|_{a}=\inf_{\psi \in \operatorname{span} \{\L^{-1} \phi_i :  i\in \{1,\ldots,m\}\}}\|u-\psi\|_a
\end{equation}
\end{Theorem}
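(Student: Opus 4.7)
The plan is to prove \eqref{sidasaeweddaud02} by showing that $u^*$ lies in the span $V:=\operatorname{span}\{\L^{-1}\phi_i : i=1,\ldots,m\}$ and that $u-u^*$ is orthogonal to $V$ with respect to the energy scalar product $\langle\cdot,\cdot\rangle_a$. By standard Hilbert-space theory (applied to $(H^1_0(\Omega),\langle\cdot,\cdot\rangle_a)$), these two properties characterize $u^*$ as the best approximation of $u$ in $V$.

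First, I would verify that $u^*\in V$. The representation formula \eqref{eq:doehdd} can be rewritten as $\psi_i = \sum_{j=1}^m \Theta_{i,j}^{-1}\L^{-1}\phi_j$, so each gamblet lies in $V$, and consequently $u^*=\sum_i(\int_\Omega u\phi_i)\psi_i$ does as well. (In fact, since $\Theta^{-1}$ is invertible, $\operatorname{span}\{\psi_i\}=V$.)

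The core step is the orthogonality $\langle u-u^*,\L^{-1}\phi_k\rangle_a=0$ for each $k\in\{1,\ldots,m\}$. The key identity, obtained by integration by parts using $u,\psi_i\in H^1_0(\Omega)$ and $\L(\L^{-1}\phi_k)=\phi_k$, is
\begin{equation*}
\langle f,\L^{-1}\phi_k\rangle_a=\int_\Omega f(x)\phi_k(x)\,dx\qquad\text{for all }f\in H^1_0(\Omega).
\end{equation*}
Applying this with $f=u$ gives $\langle u,\L^{-1}\phi_k\rangle_a=\int_\Omega u\phi_k$, while applying it with $f=\psi_i$ and using the constraint from \eqref{eqgamblet} (equivalently \eqref{eq:dueihdbewdaisq}) yields $\langle\psi_i,\L^{-1}\phi_k\rangle_a=\int_\Omega\psi_i\phi_k=\delta_{i,k}$. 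Substituting the definition \eqref{eqmawybdysedsd} of $u^*$ then gives
\begin{equation*}
\langle u^*,\L^{-1}\phi_k\rangle_a=\sum_{i=1}^m\Big(\int_\Omega u\phi_i\Big)\delta_{i,k}=\int_\Omega u\phi_k=\langle u,\L^{-1}\phi_k\rangle_a,
\end{equation*}
which establishes the orthogonality.

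No step here is a real obstacle; the only subtlety is the use of integration by parts to convert the energy inner product against $\L^{-1}\phi_k$ into the $L^2$ pairing with $\phi_k$, which is exactly what makes the duality between the measurement functionals $\int_\Omega\cdot\,\phi_i$ and the basis $\{\L^{-1}\phi_i\}$ compatible with the variational characterization of gamblets in Theorem~\ref{thm:dkdehgjdhdgh0}. (Alternatively, the same conclusion follows from Theorem~\ref{thm:dkdehgjdhdgh0}(2): any $\psi\in V$ can be written as $\sum_i w_i\psi_i$ with $w_i=\int_\Omega\psi\phi_i$, the constraints $\int_\Omega u\phi_i=\int_\Omega u^*\phi_i$ hold by construction of $u^*$, and the Pythagorean identity \eqref{eq:hdkhdkjh33e} applied to $u-\psi$ reduces the minimization of $\|u-\psi\|_a$ over $V$ to the same minimization problem solved in Theorem~\ref{thm:dkdehgjdhdgh0}(2).)
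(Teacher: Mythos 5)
Your proof is correct and follows essentially the same route as the paper: both establish that $u^*$ lies in $\operatorname{span}\{\L^{-1}\phi_i\}=\operatorname{span}\{\psi_i\}$ and that $u-u^*$ is $\<\cdot,\cdot\>_a$-orthogonal to this span because the measurements of $u$ and $u^*$ agree, the orthogonality resting on the same integration-by-parts duality $\<f,\L^{-1}\phi_k\>_a=\int_\Omega f\phi_k$ that the paper uses implicitly via $\int_\Omega(u-u^*)\phi_j=0$.
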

\begin{proof}
By Theorem \ref{thmgufufg0} $\operatorname{span} \{\L^{-1} \phi_i :  i\in \{1,\ldots,m\}\}=\operatorname{span}\{\psi_1,\ldots,\psi_m\}$ and \eqref{sidasaeweddaud02} follows from the fact that
 $\int_{\Omega}(u-u^*)\phi_j=0$ for all $j$ implies that $u-u^*$ is orthogonal to $\operatorname{span}\{\psi_1,\ldots,\psi_m\}$ with respect to the scalar product $\<\cdot,\cdot\>_a$.
\end{proof}

\subsection{Transfer property and selection of the measurement functions}
We will now select the measurement (test) functions $\phi_i$ by
extending the  result of Proposition \ref{prop:guguyuyh}  to
  the continuous case. For $V$, a finite dimensional linear subspace of $H^{-1}(\Omega)$, define
\begin{equation}\label{ksjjseddesel3}
(\diiv a \nabla)^{-1} V:=\operatorname{span} \{(\diiv a\nabla)^{-1} \phi \,:\,  \phi\in V\}.
\end{equation}
where $(\diiv a\nabla)^{-1} \phi$ is the solution of \eqref{eqn:scalar} with $g=-\phi$.
 Similarly define $\Delta^{-1}V:=\operatorname{span} \{\Delta^{-1} \phi \,:\,  \phi\in V\}$ where $\Delta^{-1}\phi$ is the solution of \eqref{eqn:LapDir} with $g=-\phi$.
\begin{Proposition}\label{prop:deihdidu}
If $u$ and $u'$ are the solutions of \eqref{eqn:scalar} and \eqref{eqn:LapDir} (with the same r.h.s. $g$) and $V$ is a finite dimensional linear subspace of $H^{-1}(\Omega)$, then
\begin{equation}\label{sidasasedsssddaud}
\frac{1}{\sqrt{\lambda_{\max}(a)}}\inf_{v\in \Delta^{-1}V}   \|u' - v\|_{H^1_0(\Omega)} \leq  \inf_{v\in (\diiv a\nabla)^{-1} V}   \|u - v\|_{a}\leq \frac{1}{\sqrt{\lambda_{\min}(a)}}\inf_{v\in \Delta^{-1}V}   \|u' - v\|_{H^1_0(\Omega)}
\end{equation}
\end{Proposition}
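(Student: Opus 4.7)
The plan is to mimic the proof of the discrete transfer property (Theorem \ref{thm:geaaejhdgjvvggdh}) in the continuous setting, exploiting the fact that for any $\phi \in V$ the elements $(\diiv a\nabla)^{-1}\phi \in (\diiv a\nabla)^{-1}V$ and $\Delta^{-1}\phi \in \Delta^{-1}V$ are parametrized by the \emph{same} source $\phi$. Write $\L_a := -\diiv(a\nabla)$ and $\L_1 := -\Delta$, both viewed as isomorphisms from $H^1_0(\Omega)$ to $H^{-1}(\Omega)$, so that $u = \L_a^{-1}g$, $u' = \L_1^{-1}g$, and (up to an irrelevant sign absorbed into $V$) $(\diiv a\nabla)^{-1}V = \L_a^{-1}V$ and $\Delta^{-1}V = \L_1^{-1}V$.

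First, for any $\phi \in V$, set $\eta := g-\phi \in H^{-1}(\Omega)$. Then $u - \L_a^{-1}\phi = \L_a^{-1}\eta$ and $u' - \L_1^{-1}\phi = \L_1^{-1}\eta$, so by integration by parts
\begin{equation*}
\|u - \L_a^{-1}\phi\|_a^2 = \langle \L_a^{-1}\eta,\eta\rangle, \qquad \|u' - \L_1^{-1}\phi\|_{H^1_0(\Omega)}^2 = \langle \L_1^{-1}\eta,\eta\rangle,
\end{equation*}
where $\langle\cdot,\cdot\rangle$ denotes the $H^{-1}$--$H^1_0$ duality pairing. The key identity is the Rayleigh-quotient characterization
\begin{equation*}
\langle \L_a^{-1}\eta,\eta\rangle = \sup_{w\in H^1_0(\Omega)\setminus\{0\}} \frac{\langle w,\eta\rangle^2}{\|w\|_a^2},
\end{equation*}
obtained by Cauchy--Schwarz in $\<\cdot,\cdot\>_a$ with equality at $w = \L_a^{-1}\eta$, and similarly for $\L_1$ with $\|w\|_a$ replaced by $\|w\|_{H^1_0(\Omega)}$.

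Next, insert the uniform ellipticity bound $\lambda_{\min}(a)\|w\|_{H^1_0(\Omega)}^2 \le \|w\|_a^2 \le \lambda_{\max}(a)\|w\|_{H^1_0(\Omega)}^2$ into the Rayleigh quotient. This yields the pointwise (in $\eta$) comparison
\begin{equation*}
\frac{1}{\lambda_{\max}(a)}\langle \L_1^{-1}\eta,\eta\rangle \;\leq\; \langle \L_a^{-1}\eta,\eta\rangle \;\leq\; \frac{1}{\lambda_{\min}(a)}\langle \L_1^{-1}\eta,\eta\rangle.
\end{equation*}
Substituting $\eta = g-\phi$ gives, for every $\phi\in V$,
\begin{equation*}
\frac{1}{\lambda_{\max}(a)}\,\|u' - \L_1^{-1}\phi\|_{H^1_0(\Omega)}^2 \;\leq\; \|u - \L_a^{-1}\phi\|_a^2 \;\leq\; \frac{1}{\lambda_{\min}(a)}\,\|u' - \L_1^{-1}\phi\|_{H^1_0(\Omega)}^2.
\end{equation*}

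Finally, because the same $\phi\in V$ parametrizes both subspaces, taking the infimum over $\phi$ on each side of the two inequalities separately (using that if $F(\phi)\le cG(\phi)$ for all $\phi$ then $\inf F \le c\inf G$) and then taking square roots produces exactly \eqref{sidasasedsssddaud}. There is no serious obstacle: the only subtle point is making sure that, when passing to the infimum, the left and right comparisons are handled one at a time so that a single minimizer (which may differ between the $a$-norm and $H^1_0$-norm problems) is not forced on both sides simultaneously.
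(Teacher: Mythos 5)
Your proof is correct and takes essentially the same route as the paper's: for each fixed $\phi\in V$ you identify both squared errors with the quadratic form of the corresponding inverse operator evaluated at $g-\phi$ (equivalently, the Green's-function quadratic form), compare the two forms using the uniform ellipticity bounds, and only then take the infimum over the common parameter $\phi$. The sole difference is that where the paper invokes the monotonicity of the Green's function as a quadratic form by citation, you prove that same comparison directly through the dual-norm/Rayleigh-quotient characterization $\langle \L_a^{-1}\eta,\eta\rangle=\sup_{w\neq 0}\langle w,\eta\rangle^2/\|w\|_a^2$, which is a valid, self-contained substitute.
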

\begin{proof}
Write $G$ the Green's function of \eqref{eqn:scalar} and $G^*$ the Green's function of  \eqref{eqn:LapDir}. Observe that for $f\in V$ and $v=(\diiv a\nabla)^{-1}f$,
$\|u - v\|_{a}^2=\int_{\Omega^2}(g(x)-f(x))G(x,y) (g(y)-f(y))\,dx\,dy$.  The monotonicity of Green's function as a quadratic form  (see for instance \cite[Lemma~4.13]{BenarousOwhadi:2003}), implies $ \int_{\Omega^2}(g(x)-f(x))G(x,y) (g(y)-f(y))\,dx\,dy \leq \frac{1}{\lambda_{\min}(a)} \int_{\Omega^2}(g(x)-f(x))G^*(x,y) (g(y)-f(y))\,dx\,dy$ (with a similar inequality on the l.h.s.) which concludes the proof.
\end{proof}
 This extension, which is also directly related to the transfer property of the flux-norm (introduced in  \cite{BeOw:2010} and generalized in \cite{Sym12}, see also \cite{Wang:2012}), allows us to select accurate finite dimensional bases for the approximation of the solution space of \eqref{eqn:scalar}.

\begin{Construction}\label{cons:odoehdduhd}
Let $(\tau_i)_{1\leq i \leq m}$ be a partition of $\Omega$ such that each $\tau_i$ is Lipschitz, convex and of diameter at most $H$. Let $(\phi_i)_{1\leq i \leq m}$ be elements of $L^2(\Omega)$ such that for each $i$, the support of
$\phi_i$ is contained in the closure of $\tau_i$ and $\int_{\tau_i} \phi_i\not=0$.
\end{Construction}

\begin{Proposition}\label{prop:gegddgdjdef}
Let $(\phi_i)_{1\leq i \leq m}$ be the elements of Construction \ref{cons:odoehdduhd} and let $u$ be the solution of \eqref{eqn:scalar}. If $V=\operatorname{span} \{\phi_i\,:\, 1\leq i\leq m\}$ then
\begin{equation}\label{sidasaeweddaud2}
 \inf_{v\in (\diiv a\nabla)^{-1} V}   \|u - v\|_{a}\leq C H \|g\|_{L^2(\Omega)}
\end{equation}
with $C=\big(\pi \sqrt{\lambda_{\min}(a)}\big)^{-1} \Big(1+\max_{1 \leq i \leq m} \big(\frac{ \frac{1}{|\tau_i|}\int_{\tau_i} \phi_i^2}{(\frac{1}{|\tau_i|}\int_{\tau_i} \phi_i)^2})^\frac{1}{2}\Big)$ (writing $|\tau_i|$ the volume of $\tau_i$).
\end{Proposition}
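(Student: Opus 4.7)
The plan is to combine the transfer property (Proposition \ref{prop:deihdidu}) with a careful choice of approximant in $\Delta^{-1}V$, built from a local projection of $g$ onto $V$ that is orthogonal to constants on each $\tau_i$, and then to invoke the Poincaré--Wirtinger inequality on convex subdomains with the sharp Payne--Weinberger constant $H/\pi$.

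First, I would apply Proposition \ref{prop:deihdidu} to reduce \eqref{sidasaeweddaud2} to a purely harmonic estimate: it suffices to show
\begin{equation*}
\inf_{v\in \Delta^{-1}V}\|u'-v\|_{H^1_0(\Omega)} \leq \frac{H}{\pi}\Bigl(1+\max_i \alpha_i\Bigr)\|g\|_{L^2(\Omega)},
\end{equation*}
where $u'$ solves \eqref{eqn:LapDir} and $\alpha_i := \bigl(\tfrac{\,|\tau_i|^{-1}\int_{\tau_i}\phi_i^2\,}{(|\tau_i|^{-1}\int_{\tau_i}\phi_i)^2}\bigr)^{1/2}$. This is exactly the desired bound after dividing by $\sqrt{\lambda_{\min}(a)}$.

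Next, I would construct a concrete $f \in V$ by setting $f := \sum_{i=1}^m c_i\phi_i$ with $c_i := \int_{\tau_i} g \,/\int_{\tau_i}\phi_i$ (well-defined by Construction \ref{cons:odoehdduhd}). Because $\operatorname{support}(\phi_i)\subset \overline{\tau_i}$ and the $\tau_i$ partition $\Omega$, this choice gives $\int_{\tau_i}(g-f)=0$ for every $i$. Letting $v \in \Delta^{-1}V$ be the Laplace solution with right-hand side $f$ (so that $-\Delta(u'-v) = g-f$), the energy identity yields
\begin{equation*}
\|u'-v\|_{H^1_0(\Omega)}^2 = \int_{\Omega}(u'-v)(g-f) = \sum_i \int_{\tau_i}\bigl((u'-v)-\overline{(u'-v)}_{\tau_i}\bigr)(g-f),
\end{equation*}
where the last equality uses the zero-mean property of $g-f$ on each $\tau_i$. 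Now I would apply Cauchy--Schwarz on each $\tau_i$ together with the Payne--Weinberger inequality for convex domains, $\|w-\overline{w}_{\tau_i}\|_{L^2(\tau_i)} \leq \tfrac{\operatorname{diam}(\tau_i)}{\pi}\|\nabla w\|_{L^2(\tau_i)} \leq \tfrac{H}{\pi}\|\nabla w\|_{L^2(\tau_i)}$, and sum with a further Cauchy--Schwarz step across $i$ to obtain $\|u'-v\|_{H^1_0(\Omega)} \leq \tfrac{H}{\pi}\|g-f\|_{L^2(\Omega)}$.

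Finally, I would bound $\|g-f\|_{L^2(\Omega)} \leq (1+\max_i\alpha_i)\|g\|_{L^2(\Omega)}$ by splitting $\|g-c_i\phi_i\|_{L^2(\tau_i)} \leq \|g\|_{L^2(\tau_i)} + |c_i|\,\|\phi_i\|_{L^2(\tau_i)}$ and estimating $|c_i|\,\|\phi_i\|_{L^2(\tau_i)} \leq \alpha_i\|g\|_{L^2(\tau_i)}$ via Cauchy--Schwarz applied to $(\int_{\tau_i}g)^2 \leq |\tau_i|\int_{\tau_i}g^2$; squaring, summing, and taking the supremum of $(1+\alpha_i)^2$ closes the estimate. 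The main obstacle is the sharp Poincaré--Wirtinger constant on convex subdomains: this is precisely why convexity is imposed in Construction \ref{cons:odoehdduhd}, so that the Payne--Weinberger bound with constant $1/\pi$ (rather than a dimension-dependent worse constant) can be invoked and the factor $\pi^{-1}$ in $C$ is recovered exactly.
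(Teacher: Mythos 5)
Your proposal is correct and follows essentially the same route as the paper's proof: reduction to the Laplace case via Proposition \ref{prop:deihdidu}, the choice $c_i=\int_{\tau_i}g/\int_{\tau_i}\phi_i$ to kill local averages, the Payne--Weinberger Poincar\'{e} inequality on the convex $\tau_i$'s, and Cauchy--Schwarz, yielding $\|u'-v\|_{H^1_0(\Omega)}\leq \frac{H}{\pi}\|g-\sum_i c_i\phi_i\|_{L^2(\Omega)}$. The only (immaterial) difference is that you bound $\|g-\sum_i c_i\phi_i\|_{L^2}$ subdomain by subdomain before summing, whereas the paper uses a global triangle inequality; both give the same constant.
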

\begin{proof}
Using Proposition \ref{prop:deihdidu} it is sufficient to complete proof when $a$ is the constant identity matrix. Let $u'$ be the solution of \eqref{eqn:LapDir} and $v\in \Delta^{-1} V$. Note that $\Delta v=\sum_{i=1}^m c_i \phi_i$, therefore $\|u' - v\|_{H^1_0(\Omega)}^2= -\int_{\Omega} (u'-v) (g-\sum_{i=1}^m c_i \phi_i)$.
Taking  $c_i=\int_{\tau_i} g/ \int_{\tau_i} \phi_i $
 we obtain that $\int_{\tau_i} (g- \sum_{j=1}^m c_j \phi_j)=0$ and, writing $|\tau_i|$ the volume of $\tau_i$,
$ \|u' - v\|_{H^1_0(\Omega)}^2=-\sum_{i=1}^m \int_{\tau_i} (u'-v-\frac{1}{|\tau_i|}\int_{\tau_i}(u'-v)) (g- \sum_{j=1}^m c_j \phi_j)$
which by Poincar\'{e}'s inequality (see \cite{PayneWeinberger:1960} for the optimal constant $1/\pi$ used here) lead to
$ \|u' - v\|_{H^1_0(\Omega)}^2 \leq \frac{H}{\pi} \sum_{i=1}^m \big(\int_{\tau_i} |\nabla (u'-v)|^2\big)^\frac{1}{2}
 \big(\int_{\tau_i} (g-\sum_{j=1}^m c_j \phi_j)^2\big)^\frac{1}{2}$.
Therefore, by using Cauchy-Schwartz inequality and simplifying,
$\|u' - v\|_{H^1_0(\Omega)} \leq \frac{H}{\pi} \|g-\sum_{i=1}^m c_i \phi_i\|_{L^2(\Omega)}\,.$
Now, since each $\phi_i$ has support in $\tau_i$ we have
$ \|\sum_{i=1}^m c_i \phi_i\|_{L^2(\Omega)}^2=\sum_{i=1}^m (\int_{\tau_i} g)^2 \frac{ \int_{\tau_i} \phi_i^2}{(\int_{\tau_i} \phi_i)^2}\leq \|g\|_{L^2(\Omega)}^2 \max_{1 \leq i \leq m} \frac{ \frac{1}{|\tau_i|}\int_{\tau_i} \phi_i^2}{(\frac{1}{\tau_i}\int_{\tau_i} \phi_i)^2}$,
which concludes the proof.
\end{proof}

The value of the constant $C$ in Proposition \ref{prop:gegddgdjdef} motivates us to modify Construction \ref{cons:odoehdduhd} as follows.
\begin{Construction}\label{cons:odoehdduhdI}
Let  $(\phi_i)_{1\leq i \leq m}$ be the elements constructed in \ref{cons:odoehdduhd} under the additional assumptions that (a) each $\phi_i$ is equal to one  on $\tau_i$ and zero elsewhere (b) there exists $\delta \in (0,1)$ such that for each $i\in \{1,\ldots,m\}$, $\tau_i$ contains a ball of diameter $\delta H$.
\end{Construction}

Let $(\phi_i)_{1\leq i \leq m}$ be as in Construction \ref{cons:odoehdduhdI}. Note that the additional assumption (a) implies that the constant $C$ in Proposition \ref{prop:gegddgdjdef} is equal to $2/(\pi \sqrt{\lambda_{\min}(a)})$. Assumption (b)  will be used for localization purposes in subsections \ref{subsec:expdecay} and \ref{subsec:localization} (and is not required for Theorem \ref{thm:dkdehgjdhdgh}). The following theorem is  a direct consequence of  Proposition \ref{prop:gegddgdjdef} and Theorem \ref{thm:dkdehgjdhdgh02}.

\begin{Theorem}\label{thm:dkdehgjdhdgh}
 If $u$ is the solution of \eqref{eqn:scalar} and $(\psi_i)_{i=1}^m$ are the gamblets identified in \eqref{eqgamblet}, \eqref{eq:dueihdbewdaisq} and \eqref{eq:doehdd}  then
\begin{equation}\label{sidasaeweddaud}
 \inf_{v\in \operatorname{span}\{\psi_1,\ldots,\psi_m\}}   \|u - v\|_{a}\leq \frac{2}{\pi \sqrt{\lambda_{\min}(a)}} H \|g\|_{L^2(\Omega)}
\end{equation}
and the minimum in the l.h.s of \eqref{sidasaeweddaud} is achieved for $v=u^*$ defined in \eqref{eqmawybdysedsd} and \eqref{eqmawybdyd}.
\end{Theorem}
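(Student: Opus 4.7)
The plan is to simply combine the three ingredients already assembled: the representation of gamblets in Theorem \ref{thmgufufg0}, the approximation estimate of Proposition \ref{prop:gegddgdjdef}, and the optimality property of Theorem \ref{thm:dkdehgjdhdgh02}. No new technical work should be required; the content of this theorem is the bookkeeping of the constant and the identification of the relevant subspace.

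First, I would observe that the representation formula \eqref{eq:doehdd} shows that each $\psi_i$ is a linear combination of the functions $x\mapsto \int_\Omega G(x,y)\phi_j(y)\,dy=\L^{-1}\phi_j(x)$, and conversely these functions can be recovered from the $\psi_j$ by inverting the nonsingular matrix $\Theta$. Hence, with $V:=\operatorname{span}\{\phi_1,\ldots,\phi_m\}$, one has the identity $\operatorname{span}\{\psi_1,\ldots,\psi_m\}=(\diiv a\nabla)^{-1}V$ in the notation \eqref{ksjjseddesel3}. Theorem \ref{thm:dkdehgjdhdgh02} then guarantees that the infimum in \eqref{sidasaeweddaud} is attained at $u^*$.

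Next, I would apply Proposition \ref{prop:gegddgdjdef} to this $V$, yielding the estimate $\inf_{v\in(\diiv a\nabla)^{-1}V}\|u-v\|_a\leq C H\|g\|_{L^2(\Omega)}$ with $C=(\pi\sqrt{\lambda_{\min}(a)})^{-1}(1+\max_i(\tfrac{|\tau_i|^{-1}\int_{\tau_i}\phi_i^2}{(|\tau_i|^{-1}\int_{\tau_i}\phi_i)^2})^{1/2})$. The remaining step is to evaluate this constant under Construction \ref{cons:odoehdduhdI}. Since each $\phi_i$ is the indicator of $\tau_i$, one has $\phi_i^2=\phi_i$ on $\tau_i$, so $\int_{\tau_i}\phi_i^2=\int_{\tau_i}\phi_i=|\tau_i|$, and both $|\tau_i|^{-1}\int_{\tau_i}\phi_i^2$ and $(|\tau_i|^{-1}\int_{\tau_i}\phi_i)^2$ equal $1$. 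The ratio in the maximum equals $1$ for every $i$, whence $C=2/(\pi\sqrt{\lambda_{\min}(a)})$, which is exactly the constant in \eqref{sidasaeweddaud}.

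There is essentially no obstacle here; the only point that requires a moment of care is the identification $\operatorname{span}\{\psi_1,\ldots,\psi_m\}=(\diiv a\nabla)^{-1}V$, which depends on the invertibility of $\Theta$, already established in the discussion preceding Theorem \ref{thmgufufg0}. Once this identification is made and the specific form of $\phi_i$ from Construction \ref{cons:odoehdduhdI} is plugged into the constant of Proposition \ref{prop:gegddgdjdef}, the theorem follows immediately, with the identification of the minimizer supplied by Theorem \ref{thm:dkdehgjdhdgh02}.
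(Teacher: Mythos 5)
Your proposal is correct and follows exactly the paper's route: the paper states this theorem as a direct consequence of Proposition \ref{prop:gegddgdjdef} (with the constant reducing to $2/(\pi\sqrt{\lambda_{\min}(a)})$ under Construction \ref{cons:odoehdduhdI}, since the indicator functions make the ratio in the constant equal to one) and Theorem \ref{thm:dkdehgjdhdgh02}, which supplies the identification $\operatorname{span}\{\psi_1,\ldots,\psi_m\}=(\diiv a\nabla)^{-1}V$ and the attainment of the infimum at $u^*$. Nothing is missing.
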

\begin{Remark}\label{rmkiegudyg3}
The assumption of convexity of the subdomains $\tau_i$ is only used  to derive sharper constants via Poincar\'{e}'s inequality for convex domains  (without it, approximation error bounds remain valid after multiplication by $\pi$). Similarly,  the transfer property can be used to derive constructions that are distinct from \ref{cons:odoehdduhd} and \ref{cons:odoehdduhdI}.
\end{Remark}

\begin{Remark}
Gamblets defined via the constrained energy minimization problems \eqref{eq:dueihdbewdaisq} are analogous to the \emph{energy minimizing bases} of  \cite{Mandel1999, wan2000, Xu2004, XuZhu2008} and in particular \cite{Vassilevski2010}. However
they form a different set of basis functions when global constraints are added: the (total) \emph{energy minimizing bases} of  \cite{Mandel1999, wan2000, Xu2004, XuZhu2008, Vassilevski2010} are defined by minimizing the total energy $\sum_i \|\psi_i\|_a^2$ subject to the constraint $\sum_i \psi_i(x)=1$ related to the local preservation of constants.
Numerical experiments \cite{ Xu2004} suggest that  total energy minimizing basis functions could lead to a $\mathcal{O}(\sqrt{H})$ convergence rate (with rough coefficients). Note that \eqref{eq:dueihdbewdaisq} is also analogous to the constrained minimization problems associated  with Polyharmonic Splines  \cite{Harder:1972, Duchon:1976,Duchon:1977,Duchon:1978, OwhadiZhangBerlyand:2014}, which can be recovered with a Gaussian prior (on $\xi$) with covariance function $\delta(x-y)$ (corresponding to exciting \eqref{eqn:scalarspde} with white noise). We suspect that the basis functions obtained in the orthogonal decomposition of \cite{MaPe:2012} can also  be recovered via the variational formulation \eqref{eq:dueihdbewdaisq} by identifying the null space of the Clement quasi-interpolation operator with that of appropriately chosen measurement functions $\phi_i$.
\end{Remark}

\subsection{Exponential decay of gamblets}\label{subsec:expdecay}
Theorems \ref{thm:dkdehgjdhdgh0} and \ref{thm:dkdehgjdhdgh02} show that the gamblets $\psi_i$ have optimal recovery properties analogous to the discrete case of Theorem \ref{lem:minimizingproperty} and Corollary \ref{cor:geaaedgjdh}. However one may wonder why one should compute these gamblets rather than the elements $(\diiv a\nabla)^{-1} \phi_i$ since they span the same linear space (by the representation formula \eqref{eq:doehdd}).
The answer lies in the fact that each gamblet $\psi_i$ decays exponentially as a function of the distance from the support of $\phi_i$ and its computation can therefore be localized to a subdomain of diameter $  \mathcal{O}(H \ln \frac{1}{H})$  without impacting the order of accuracy \eqref{sidasaeweddaud}. Consider the construction \ref{cons:odoehdduhdI}. Let $\psi_i$ be defined as in Theorem \ref{thm:dkdehgjdhdgh0} and let $x_i$ be an element of $\tau_i$. Write $B(x,r)$ the ball of center $x$ and radius $r$.
\begin{Theorem}\label{thm:expdecay}{\bf Exponential decay of the basis elements $\psi_i$.}
It holds true that
\begin{equation}
\int_{\Omega \cap (B(x_i,r))^c}(\nabla \psi_i)^T a \nabla \psi_i\leq e^{1-\frac{ r}{l H}} \int_{\Omega}(\nabla \psi_i)^T a \nabla \psi_i
\end{equation}
with $l=1+ (e/\pi) \sqrt{\lambda_{\max}(a)/\lambda_{\min}(a)} (1+2^\frac{3}{2}(2/\delta)^{1+d/2})$ (where $e$ is Euler's number).
\end{Theorem}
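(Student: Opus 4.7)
The strategy is to derive a one-step contraction of the tail energy and iterate. Define
\begin{equation*}
S(r):=\int_{\Omega \cap B(x_i,r)^c} (\nabla \psi_i)^T a \nabla \psi_i,
\end{equation*}
and aim for an annular estimate of the form $S(r)\leq (l-1)\bigl(S(r-H)-S(r)\bigr)$, so that $S(r)\leq (1-1/l)S(r-H)\leq e^{-1/l}S(r-H)$, which by iteration over $\lfloor r/H\rfloor$ annuli yields the claimed bound $S(r)\leq e^{1-r/(lH)}S(0)$. The variational input I will use is the $\<\cdot,\cdot\>_a$-orthogonality of $\psi_i$ to the subspace $V_0:=\{\varphi\in H^1_0(\Omega):\int_{\tau_j}\varphi=0\text{ for all }j\}$, which is Theorem \ref{thm:dkdehgjdhdgh0}(2) rephrased as a Galerkin orthogonality.

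I would next fix $r\geq 2H$ and introduce a Lipschitz cutoff $\eta$ with $\eta\equiv 1$ on $\Omega\cap B(x_i,r)^c$, $\eta\equiv 0$ on $B(x_i,r-H)$, and $|\nabla\eta|\leq 1/H$. Since $\eta^2\psi_i$ does not lie in $V_0$, I correct it: set $c_k:=\int_{\tau_k}\eta^2\psi_i$ and note that $\varphi:=\eta^2\psi_i-\chi$, with $\chi:=\sum_k c_k\psi_k$, lies in $V_0$ because $\int_{\tau_j}\psi_k=\delta_{jk}$. The crucial observation is that $c_k=0$ whenever $\tau_k\subset B(x_i,r-H)$ (since $\eta$ vanishes there) and $c_k=\int_{\tau_k}\psi_i=0$ for $k\neq i$ whenever $\tau_k\cap B(x_i,r)=\emptyset$ (since $\eta\equiv 1$ and the constraint $\int_{\tau_k}\psi_i=\delta_{ki}$ is zero), so only cells meeting the annulus $A:=B(x_i,r)\setminus B(x_i,r-H)$ produce nontrivial corrections.

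Plugging $\varphi$ into $\<\psi_i,\varphi\>_a=0$ gives
\begin{equation*}
\int_\Omega \eta^2 (\nabla\psi_i)^T a\nabla\psi_i
= -2\int_\Omega \eta\psi_i (\nabla\psi_i)^T a\nabla\eta + \<\psi_i,\chi\>_a .
\end{equation*}
The left-hand side dominates $S(r)$. The gradient-of-$\eta$ term is controlled by Cauchy--Schwarz together with the Poincar\'e inequality on each convex $\tau_k\subset A$ (applied with mean zero since $\int_{\tau_k}\psi_i=0$ for $k\neq i$, with optimal constant $1/\pi$), and the transfer from $\|\cdot\|$ to $\|\cdot\|_a$ costs a factor $\sqrt{\lambda_{\max}(a)/\lambda_{\min}(a)}$; this produces a bound of the form $C\pi^{-1}\sqrt{\lambda_{\max}/\lambda_{\min}}\bigl(S(r-H)-S(r)\bigr)$. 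For the correction $\<\psi_i,\chi\>_a$ I use the minimizing property: since $\chi$ is $\|\cdot\|_a$-minimal among functions with constraints $(c_k)$, we have $\<\psi_i,\chi\>_a=\<\psi_i,\widetilde\chi\>_a$ for every competitor $\widetilde\chi$ (their difference lies in $V_0$). Choosing $\widetilde\chi=\sum_k c_k h_k$, where $h_k$ is a smooth bump of unit integral supported in the inscribed ball of diameter $\delta H$ inside $\tau_k$, one gets $\|h_k\|_a\lesssim \sqrt{\lambda_{\max}}\,(\delta H)^{-1-d/2}$ and $|c_k|^2\lesssim |\tau_k|(H/\pi)^2\lambda_{\min}^{-1}\int_{\tau_k}(\nabla\psi_i)^T a\nabla\psi_i$ by Cauchy--Schwarz and Poincar\'e; disjointness of supports then yields $|\<\psi_i,\widetilde\chi\>_a|\lesssim \pi^{-1}\sqrt{\lambda_{\max}/\lambda_{\min}}\,(2/\delta)^{1+d/2}\bigl(S(r-H)-S(r)\bigr)$.

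Assembling these two bounds gives $S(r)\leq(l-1)(S(r-H)-S(r))$ with $l$ exactly as stated, and iteration finishes the proof. The main obstacle is the correction term $\<\psi_i,\chi\>_a$: one cannot estimate it naively via $\|\psi_i\|_a\|\chi\|_a$ (this would ruin exponential decay), so it is essential to first rewrite it as $\<\psi_i,\widetilde\chi\>_a$ using $V_\psi\perp_a V_0$, then exploit the compact support of $\widetilde\chi$ in the annulus together with the diameter condition $\tau_k\supset B(\cdot,\delta H)$ to extract the explicit $(2/\delta)^{1+d/2}$ factor. A secondary obstacle is combining the two error terms with sharp constants to recover precisely the $e/\pi$ prefactor of the theorem, which requires Poincar\'e with constant $1/\pi$ (hence the convexity of the $\tau_k$) and the elementary inequality $1-1/l\leq e^{-1/l}$ for the iteration step.
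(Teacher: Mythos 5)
Your proposal is sound and lands on the same Caccioppoli-type skeleton as the paper (cutoff, control of the constraint term cell by cell, geometric iteration), but it handles the key technical step by a genuinely different device. The paper works with the strong form: by \eqref{eq:doehdd}, $-\diiv(a\nabla\psi_i)$ is piecewise constant ($=\Theta^{-1}_{i,j}$ on $\tau_j$), so after testing with $\eta\psi_i$ and subtracting the cell averages of $\eta$ only buffer cells contribute, and the crucial cell-wise bound $\|\diiv(a\nabla\psi_i)\|_{L^2(\tau_j)}\leq C H^{-1}\big(\int_{\tau_j}(\nabla\psi_i)^Ta\nabla\psi_i\big)^{1/2}$ is obtained in Lemma \ref{lemdhkedjhdkjh} (see \eqref{eqhhdhdgdjhdg3e}, \eqref{equsiheiuhdihd}) via monotonicity of the Green's function and the Brownian exit-time computation, which is where $\delta$ enters. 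You instead stay in weak form: you use the $\<\cdot,\cdot\>_a$-orthogonality of $\psi_i$ to the zero-cell-average subspace (Theorem \ref{thm:dkdehgjdhdgh0}), test with $\eta^2\psi_i$ corrected by $\chi=\sum_k c_k\psi_k$, and then replace $\chi$ by compactly supported bumps $h_k$ with the same cell averages, so that $\<\psi_i,\chi\>_a=\<\psi_i,\widetilde\chi\>_a$ localizes to the annulus cells; the inradius $\delta H$ enters through $\|h_k\|_a\lesssim\sqrt{\lambda_{\max}(a)}(\delta H)^{-1-d/2}$ rather than through exit times. This is a correct and arguably more elementary substitute for Lemma \ref{lemdhkedjhdkjh} (it is the dual estimate: $\Theta^{-1}_{i,k}=\<\psi_i,h_k\>_a$), and your scaling $H\cdot H^{-1-d/2}\cdot H^{d/2}=1$ indeed reproduces the $\pi^{-1}\sqrt{\lambda_{\max}(a)/\lambda_{\min}(a)}(2/\delta)^{1+d/2}$ structure of the contraction factor. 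One caveat on the bookkeeping you flagged yourself: with cutoff transitions of width $H$, the cells straddling the annulus force the one-step inequality to involve a fattened annulus (width $\approx 3H$), and the $\eta^2$ cutoff produces an extra factor $2$ in the cross term, so your iteration yields an explicit constant of the same form but not necessarily the precise $l=1+(e/\pi)\sqrt{\lambda_{\max}(a)/\lambda_{\min}(a)}(1+2^{3/2}(2/\delta)^{1+d/2})$; the paper avoids this by letting the cutoff transition over a width $lH$ built from whole cells (this is exactly where the factor $e$ and the ``$+1$'' in $l$ come from). To match the stated constant you should adopt that choice of transition width; otherwise your argument proves the theorem with a slightly different explicit $l$, which is all that is used downstream.
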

\begin{proof}
 Let $k,l \in \mathbb{N}^*$ and $i\in \{1,\ldots,m\}$.  Let $S_0$ be the union of all the domains $\tau_j$ that are contained in the closure of
$  B(x_i, k l H)\cap \Omega$, let $S_{1}$ be the union of all the domains $\tau_j$ that are contained in the closure of
$(B(x_i, (k+1) l H))^c\cap \Omega$ and let $S^*=S_0^c \cap S_{1}^c \cap \Omega$ (be the union of the remaining elements $\tau_j$ not contained in $S_0$ or $S_{1}$). Let $\eta$ be the function on $\Omega$ defined by $\eta(x)=\operatorname{dist}(x,S_0)/(\operatorname{dist}(x,S_0)+\operatorname{dist}(x,S_1))$.
Observe that (1) $0\leq \eta \leq 1$ (2) $\eta$ is equal to zero on $S_0$ (3) $\eta$ is equal to one on $S_{1}$ (4) $\|\nabla \eta \|_{L^\infty(\Omega)}\leq \frac{1}{ l H}$. Observe that
$- \int_{\Omega}\eta \psi_i \diiv( a \nabla \psi_i)=\int_{\Omega}\nabla (\eta \psi_i) a \psi_i=\int_{\Omega} \eta (\nabla \psi_i)^T a \nabla \psi_i+\int_{\Omega} \psi_i (\nabla \eta)^T a \nabla \psi_i$. Therefore $\int_{S_1} (\nabla \psi_i)^T a \nabla \psi_i\leq I_1+I_2$ with
\begin{equation}\label{eqleldkdlkdjd}
I_1=\|\nabla \eta\|_{L^\infty(\Omega)}
\big(\sum_{\tau_j \subset S^*}\int_{\tau_j}\psi_i^2\big)^\frac{1}{2}
\big(\int_{S^*}(\nabla \psi_i)^T a \nabla \psi_i\big)^\frac{1}{2} \sqrt{\lambda_{\max}(a)}
\end{equation}
 and
$I_2=- \int_{\Omega}\eta \psi_i \diiv( a \nabla \psi_i)$. By \eqref{eq:doehdd}, $- \diiv( a \nabla \psi_i)$ is piecewise constant and equal to $\Theta_{i,j}^{-1}$ on $\tau_j$. By the constraints of \eqref{eq:dueihdbewdaisq} $\int_{\tau_j}\psi_i=0$ for $i\not=j$. Therefore (writing $\eta_j$ the volume average of $\eta$ over $\tau_j$) we have
\begin{equation}\label{eqkjkhkjhkejhccd}
I_2 \leq -\sum_{\tau_j \subset S_1 \cup S^*} \int_{\tau_j}(\eta-\eta_j) \psi_i \diiv( a \nabla \psi_i) \leq \frac{1}{l} \sum_{\tau_j \subset  S^*} \|\psi_i\|_{L^2(\tau_j)} \|\diiv( a \nabla \psi_i)\|_{L^2(\tau_j)}.
\end{equation}
We will now need the following lemma

\begin{Lemma}\label{lemdhkedjhdkjh}
If $v\in \operatorname{span}\{\psi_1,\ldots,\psi_m\}$ then
\begin{equation}
 \|\diiv( a \nabla v)\|_{L^2(\Omega)} \leq H^{-1}  \|v\|_a  (\lambda_{\max}(a) 2^{5+d}/\delta^{2+d})^\frac{1}{2}
\end{equation}
\end{Lemma}
\begin{proof}
Let $c\in \R^m$ and $v=\sum_{i=1}^m c_i \psi_i$.
Observing that $-\diiv(a\nabla v)=\sum_{i=1}^m c_i \Theta_{i,j}^{-1}$ in $\tau_j$ and using the decomposition
$ \|\diiv( a \nabla v)\|_{L^2(\Omega)}^2 =\sum_{i=1}^m  \|\diiv( a \nabla v)\|_{L^2(\tau_j)}^2$, we obtain that
\begin{equation}\label{eqhuhh7rfrtrd}
 \|\diiv( a \nabla v)\|_{L^2(\Omega)}^2 = \sum_{j=1}^m (\sum_{i=1}^m c_i \Theta_{i,j}^{-1})^2 |\tau_j|
\end{equation}
Furthermore, $v$ can be
decomposed over $\tau_j$ as $v=v_1+v_2$, where $v_1$ solves $-\diiv(a\nabla v_1)=\sum_{i=1}^m c_i \Theta_{i,j}^{-1}$ in $\tau_j$ with $v_1=0$ on $\partial\tau_j$, and $v_2$ solves $-\diiv(a\nabla v_2)=0$ in $\tau_j$ with $v_2=v$ on $\partial \tau_j$. Using the notation $|\xi|_a^2=\xi^T a \xi$,
observe that $\int_{\tau_j} |\nabla v|_a^2=\int_{\tau_j} |\nabla v_1|_a^2+ \int_{\tau_j} |\nabla v_2|_a^2$. Furthermore,
$\int_{\tau_j} |\nabla v_1|_a^2=\sum_{i=1}^m c_i \Theta_{i,j}^{-1} \int_{\tau_j} v_1$. Writing $G_j$ the Green's function of the operator $-\diiv(a\nabla \cdot)$ with Dirichlet boundary condition on $\partial \tau_j$, note that $\int_{\tau_j} v_1= (\sum_{i=1}^m c_i \Theta_{i,j}^{-1}) \int_{\tau_j^2}  G_j(x,y)\,dx\,dy$.
 Using the monotonicity of the Green's function as a quadratic form (as in the proof of Proposition \ref{prop:deihdidu}), we have $\int_{\tau_j^2}  G_j(x,y)\,dx\,dy \geq \frac{1}{\lambda_{\max}(a)}\int_{\tau_j^2}  G_j^*(x,y)\,dx\,dy $ where $G^*_j$ is the Green's function of the operator $-\Delta$ with Dirichlet boundary condition on $\partial \tau_j$.
 Recall that $2\int_{\tau_j}  G_j^*(x,y)\,dy$ is the mean exit time (from $\tau_j$) of a Brownian motion started from $x$ and the mean exit time of a Brownian motion started from $x$ to exit a ball of center $x$ and radius $r$ is $r^2$ (see for instance \cite{BenarousOwhadi:2003}). Since $\tau_j$ contains a ball of diameter $\delta H$, it follows that $2 \int_{\tau_j^2}  G_j^*(x,y)\,dx\,dy \geq (\delta H/4)^{2+d} V_d$   (where $V_d$ is the volume of the $d$-dimensional unit ball). Therefore (after  using $|\tau_j|\leq V_d (H/2)^d$ and simplification),
 \begin{equation}\label{eqhhdhdgdjhdg3e}
 \int_{\tau_j} |\nabla v_1|_a^2\geq (\sum_{i=1}^m c_i \Theta_{i,j}^{-1})^2   |\tau_j|  H^2 \delta^{2+d}/(2^{5+d}\lambda_{\max}(a)),
 \end{equation}
  which finishes the proof after taking the sum over $j$.
\end{proof}
Now observe that since $\int_{\tau_j}\psi_i=0$ for $i\not=j$, we obtain, using Poincar\'{e}'s inequality (with the optimal constant of \cite{PayneWeinberger:1960}), that  $\|\psi_i\|_{L^2(\tau_j)} \leq \|\nabla \psi_i\|_{L^2(\tau_j)} H/\pi$.
Therefore, combining \eqref{eqleldkdlkdjd}, \eqref{eqkjkhkjhkejhccd} and the result of Lemma \ref{lemdhkedjhdkjh}, we obtain after simplification
\begin{equation}
\int_{S_1}(\nabla \psi_i)^T a \nabla \psi_i\leq \frac{1}{\pi l} \sqrt{\lambda_{\max}(a)/\lambda_{\min}(a)} (1+2^\frac{3}{2}(2/\delta)^{1+d/2})\int_{S^*}(\nabla \psi_i)^T a \nabla \psi_i
\end{equation}
Taking $l\geq  \frac{e}{\pi } \sqrt{\lambda_{\max}(a)/\lambda_{\min}(a)} (1+2^\frac{3}{2}(2/\delta)^{1+d/2}) $ and enlarging the integration domain on the right hand side we obtain
$
 \int_{S_{1}}(\nabla \psi_i)^T a \nabla \psi_i\leq e^{-1} \int_{S^*\cup S_{1}}(\nabla \psi_i)^T a \nabla \psi_i.
$
We conclude the proof via straightforward iteration on $k$.
\end{proof}

\subsection{Localization of the basis elements}\label{subsec:localization}
Theorem \ref{thm:expdecay} allows us to localize the construction of basis elements $\psi_i$ as follows. For $r>0$ let $S_r$ be the union of the subdomains $\tau_j$ intersecting $B(x_i,r)$ (recall that $x_i$ is an element of $\tau_i$) and let $\psi_i^{\loc,r}$ be the minimizer of the following quadratic problem
\begin{equation}\label{eq:dwehhsiuhssq}
\begin{cases}
\text{Minimize }  &\int_{S_r} (\nabla \psi)^T a \nabla \psi\\
\text{Subject to } &\psi \in H^1_0(S_r)\text{ and }\int_{S_r}\phi_j \psi=\delta_{i,j}\text{ for }j \text{ such that } \tau_j \subset S_r.
\end{cases}
\end{equation}
We will naturally identify $\psi_i^{\loc,r}$ with its extension to $H^1_0(\Omega)$ by setting $\psi_i^{\loc,r}=0$ outside of $S_r$.
From now on,  to simplify the expression of constants, we will assume without loss of generality that the domain is rescaled so that $\diam(\Omega)\leq 1$.
\begin{Theorem}\label{thm:hieuhdds}
It holds true that
\begin{equation}
\|\psi_i-\psi_i^{\loc,r}\|_{a} \leq  C e^{-\frac{r}{2l H}},
\end{equation}
where $l$ is defined in Theorem \ref{thm:expdecay},
$C=  (\lambda_{\max}(a)/\sqrt{ \lambda_{\min}(a)}) H^{-\frac{d}{2}-2} 2^{2d+9}/(\sqrt{V_d} \delta^{d+2})$ and $V_d$ is the volume of the $d$-dimensional unit ball.
\end{Theorem}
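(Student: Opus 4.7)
The plan is to bound $\|\psi_i-\psi_i^{\loc,r}\|_a$ by an energy estimate for a concrete competitor in the local constrained problem~\eqref{eq:dwehhsiuhssq}, and then to control the competitor via the exponential decay of Theorem~\ref{thm:expdecay}. First, note that $\psi_i^{\loc,r}$, extended by zero outside $S_r$, still belongs to $H^1_0(\Omega)$ and satisfies the full measurement constraints defining $\psi_i$: for $\tau_j\subset S_r$ this is \eqref{eq:dwehhsiuhssq}, while for $\tau_j\not\subset S_r$ one has $\tau_j\cap S_r=\emptyset$ (by definition of $S_r$) and $i\neq j$ (since $\tau_i\subset S_r$), so both $\int_{\tau_j}\psi_i^{\loc,r}$ and $\delta_{i,j}$ vanish. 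Hence both $\psi_i$ and $\psi_i^{\loc,r}$ lie in the affine space $\mathcal{K}_i:=\{\psi\in H^1_0(\Omega): \int_{\tau_j}\psi=\delta_{i,j}\text{ for every }j\}$, and the Pythagoras identity~\eqref{eq:hdkhdkjh33e} proved inside Theorem~\ref{thm:dkdehgjdhdgh0} gives
\[
\|\psi_i-\psi_i^{\loc,r}\|_a^2=\|\psi_i^{\loc,r}\|_a^2-\|\psi_i\|_a^2\leq \|w-\psi_i\|_a^2
\]
for every $w\in H^1_0(S_r)$ satisfying the local constraints in~\eqref{eq:dwehhsiuhssq}, because any such $w$ (zero-extended) lies in $\mathcal{K}_i$ and its energy dominates $\|\psi_i^{\loc,r}\|_a^2$.

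Next I would build $w$ by multiplying $\psi_i$ by a cut-off and restoring the mass constraints inside the transition annulus. Choose a Lipschitz cut-off $\eta:\Omega\to[0,1]$ equal to $1$ on $B(x_i,r-2lH)$, equal to $0$ outside $B(x_i,r-lH)$, with $\|\nabla\eta\|_\infty\leq 2/(lH)$; this is possible as soon as $r\geq 3lH$, and the small-$r$ regime is trivial after adjusting the constant $C$. The product $\eta\psi_i$ is then supported inside $B(x_i,r-lH)\subset S_r$, and $\int_{\tau_j}\eta\psi_i=\delta_{i,j}$ is automatic for every $\tau_j$ lying entirely inside $\{\eta\equiv 1\}$. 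For each of the finitely many $\tau_j\subset S_r$ meeting the transition annulus I would add $c_j\chi_j$, where $c_j:=\delta_{i,j}-\int_{\tau_j}\eta\psi_i$ and $\chi_j\in H^1_0(\tau_j)$ is a normalized bump with $\int_{\tau_j}\chi_j=1$, supported in the ball of diameter $\delta H$ inscribed in $\tau_j$ provided by Construction~\ref{cons:odoehdduhdI}, with the standard scaling $\|\chi_j\|_a^2\lesssim \lambda_{\max}(a)/(\delta H)^{d+2}$. Since the $\chi_j$'s have pairwise disjoint supports inside $S_r$, the resulting $w$ lies in $H^1_0(S_r)$ and meets every local constraint of~\eqref{eq:dwehhsiuhssq} exactly.

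Finally, I would expand $w-\psi_i=(\eta-1)\psi_i+\sum_j c_j\chi_j$ and estimate each piece. For $(\eta-1)\psi_i$, the Leibniz rule and Young's inequality reduce $\|(\eta-1)\psi_i\|_a^2$ to $\int_{B(x_i,r-2lH)^c}(\nabla\psi_i)^T a\nabla\psi_i$ (bounded by Theorem~\ref{thm:expdecay}) plus a boundary-layer term $\|\nabla\eta\|_\infty^2\lambda_{\max}(a)\int_{\mathrm{annulus}}\psi_i^2$; using $\int_{\tau_j}\psi_i=0$ in each transition cell with $j\neq i$, Poincar\'e brings the latter back to the same exponentially small integral of $|\nabla\psi_i|^2$, the $1/(lH)^2$ from $\|\nabla\eta\|_\infty^2$ absorbing the $H^2$ from Poincar\'e up to a dimensionless constant. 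For the correction, Cauchy--Schwarz and Poincar\'e on $\tau_j$ give $c_j^2\lesssim |\tau_j|(H/\pi)^2\|\nabla\psi_i\|_{L^2(\tau_j)}^2$, and together with disjointness of the $\chi_j$'s and the bump bound this yields
\[
\Big\|\sum_j c_j\chi_j\Big\|_a^2\lesssim \frac{\lambda_{\max}(a)}{\lambda_{\min}(a)\,\delta^{d+2}}\,\|\psi_i\|_a^2\,e^{-r/(lH)+O(1)}.
\]
Finally, $\|\psi_i\|_a$ is controlled by inserting a normalized bump supported in the inscribed ball inside $\tau_i$ as a trial function in~\eqref{eq:dueihdbewdaisq}, giving $\|\psi_i\|_a^2\lesssim \lambda_{\max}(a)/(\delta H)^{d+2}$. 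Collecting factors and taking square roots produces the stated bound. The main obstacle is the geometric/algebraic bookkeeping of three competing scales: the buffer of $\eta$ must be proportional to $lH$ so that $\|\nabla\eta\|_\infty^2$ cancels the Poincar\'e $H^2$ while still placing the inner ball at radius $r-O(lH)$, which is precisely what converts the $e^{1-(r-O(lH))/(lH)}$ of Theorem~\ref{thm:expdecay} into $e^{-r/(2lH)}$ after taking square roots; simultaneously, the $H$- and $\delta$-powers coming from the bump estimates on $\chi_j$ and on $\psi_i$ must be tracked carefully to reproduce the stated form of the constant $C$.
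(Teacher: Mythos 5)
Your argument is correct, and it shares the paper's skeleton (the Pythagorean identity \eqref{eq:hdkhdkjh33e} applied twice to reduce everything to comparing an admissible competitor of the localized problem \eqref{eq:dwehhsiuhssq} with $\psi_i$, the exponential decay of Theorem \ref{thm:expdecay} applied to a cutoff of $\psi_i$, Poincar\'{e} on the zero-mean cells, and the bump bound \eqref{eqgguguyg6} for $\|\psi_i\|_a$), but the mechanism by which you restore the measurement constraints is genuinely different. The paper takes $\eta\psi_i$ with a transition layer of width $H$ near $\partial S_r$, reads off its measurement vector $w_j=\int_\Omega \eta\psi_i\phi_j$, and uses the \emph{localized gamblets} $\psi^{i,r}_j$ with that data as the competitor; this forces an expansion with a cross term $2\sum_{\tau_j\subset S^*}\Theta^{i,-1}_{i,j}w_j$ which is then controlled through the off-diagonal stiffness bound \eqref{equsiheiuhdihd} (Lemma \ref{lem:dihidue23}) together with Theorem \ref{thm:expdecay}. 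You instead repair the constraint violations of $\eta\psi_i$ explicitly, cell by cell, with disjointly supported unit-mass bumps $c_j\chi_j$ in the inscribed balls guaranteed by Construction \ref{cons:odoehdduhdI}, and then use the clean inequality $\|\psi_i-\psi_i^{\loc,r}\|_a^2\le\|w-\psi_i\|_a^2$ for any local-feasible $w$; this avoids the inner localized gamblets and the cross-term/off-diagonal machinery altogether, at the price of the (routine) bump-energy and Poincar\'{e} bookkeeping you describe, and it is self-contained apart from Theorem \ref{thm:expdecay} and \eqref{eqgguguyg6}. Two small caveats: your scaling actually comes out as $H^{-(d+2)/2}$ rather than the stated $H^{-d/2-2}$, i.e.\ at least as good, but since the theorem asserts a specific numerical constant ($2^{2d+9}$, etc.) you would still need to collect the accumulated factors ($e$'s, $\pi$'s, powers of $2$, the $|\tau_j|\le V_d(H/2)^d$ bound) and check they sit below the stated ones; and the ``small $r$'' regime you dismiss needs the one-line verification that the trivial bound $\|\psi_i-\psi_i^{\loc,r}\|_a\le\|\psi_i^{\loc,r}\|_a$, estimated by the same bump trial function in \eqref{eq:dwehhsiuhssq}, fits under $Ce^{-r/(2lH)}$ when $r=\mathcal{O}(lH)$ --- both checks go through.
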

\begin{proof}
We will need the following lemma.
\begin{Lemma}\label{lem:dihidue23}
It holds true that
\begin{equation}\label{eqgguguyg6}
\|\psi_i\|_a \leq  (H \delta)^{-\frac{d}{2}-1} \sqrt{\lambda_{\max}(a)} 2^{\frac{3}{2}d+2} (V_d)^{-\frac{1}{2}}
\end{equation}
where $V_d$ is the volume of the $d$-dimensional unit ball, and,
\begin{equation}\label{eqbbuybuybfcc}
|\<\psi_i,\psi_j\>_a|\leq e^{-\frac{ r_{i,j}}{2 l H}} H^{-2-d}  \lambda_{\max}(a)  2^{\frac{5d+11}{2}}/(V_d \delta^{d+2})
\end{equation}
where $l$ is the constant of Theorem \ref{thm:expdecay} and $r_{i,j}$ is the distance between $\tau_i$ and $\tau_j$.
\end{Lemma}
\begin{proof}
Since $\tau_i$ contains a ball $B(x_i,\delta H/2)$ of center $x_i\in \tau_i$ and diameter $\delta H/2$, there exists a piece-wise differentiable function $\eta$,
 equal to $1$ on $B(x_i,\delta H/4)$, equal to $0$ on $(B(x_i,\delta H/2))^c$ and such that $0\leq \eta \leq 1$ with $\|\nabla \eta\|_{L^\infty(\Omega)}\leq \frac{4}{H \delta}$. Since  $\psi=\eta/ (\int_{\tau_i}\eta)$ satisfies the constrains of the minimization problem \eqref{eq:dueihdbewdaisq} we have $\|\psi_i\|_a \leq \|\psi\|_a$, which proves \eqref{eqgguguyg6}.  Theorem \ref{thm:dkdehgjdhdgh0} implies that $\<\psi_i,\psi_j\>_a=\Theta_{i,j}^{-1}$.
 Observing that $-\diiv( a \nabla \psi_i)$ is piecewise constant and equal to $\Theta_{i,j}^{-1}$ on $\tau_j$ and applying \eqref{eqhhdhdgdjhdg3e} (with $v=\psi_i$ and using $\int_{\tau_j} |\nabla v_1|_a^2\leq \int_{\tau_j} |\nabla v|_a^2$),
 we obtain that
 \begin{equation} \label{equsiheiuhdihd}
 |\Theta_{i,j}^{-1}| \leq \big(\lambda_{\max}(a) 2^{5+d}/(\delta^{2+d} |\tau_j|)\big)^\frac{1}{2} H^{-1}  \big(\int_{\tau_j}(\nabla \psi_i)^T a \nabla \psi_i\big)^\frac{1}{2}.
\end{equation}
 which leads to   \eqref{eqbbuybuybfcc} by the exponential decay obtained in Theorem \ref{thm:expdecay} and \eqref{eqgguguyg6}.
\end{proof}
Let us now prove Theorem \ref{thm:hieuhdds}.
Let $S_0$ be the union of the subdomains $\tau_j$ not contained in $S_r$ and let $S_1$ be the union of the subdomains $\tau_j$ that are at distance at least
$H$ from $S_0$ (for $S_0=\emptyset$ the proof is trivial, so we may assume that $S_0\not=\emptyset$, similarly it is no restriction to assume that $S_1\not=\emptyset$).
Let $\eta$ be the function on $\Omega$ defined by $\eta(x)=\operatorname{dist}(x,S_0)/(\operatorname{dist}(x,S_0)+\operatorname{dist}(x,S_1))$. Observe that is a piecewise differentiable function on $\Omega$ such that (1) $\eta$ is equal to one on $S_1$ and zero on $S_0$ (2) $\|\nabla \eta\|_{L^\infty(\Omega)}\leq \frac{1}{H}$ and (3) $0\leq \eta \leq 1$.
Since $\psi_i^{\loc,r}$ satisfies the constraints of \eqref{eq:dueihdbewdaisq}, we have from \eqref{eq:hdkhdkjh33e},
\begin{equation}\label{eqkjhdkjdhdkjh}
\|\psi_i-\psi_i^{\loc,r}\|_a^2 =\|\psi_i^{\loc,r}\|_a^2-\|\psi_i\|_a^2.
\end{equation}
Let $\psi_k^{i,r}$ be the minimizer of $\int_{S_r} (\nabla \psi)^T a \nabla \psi$ subject to  $\psi \in H^1_0(S_r)$ and $\int_{S_r}\phi_j \psi=\delta_{k,j}$ for $\tau_j \subset S_r$.
Write $w_j=\int_{\Omega} \eta \psi_i \phi_j$. Let $\psi_w^{i,r}:=\sum_{j=1}^m w_j \psi_j^{i,r}$.
 Noting that $\psi_w^{i,r}=\psi_i^{\loc,r}+\sum_{\tau_j \subset S^*}w_j \psi_j^{i,r}$, where $S^*$ is the union of $\tau_j\subset S_r$ not contained in $S_1$, and using  property  (3) of Theorem \ref{thm:dkdehgjdhdgh0} (with $\Theta^{i,-1}_{k,k'}=\int_{S_r} (\nabla \psi_k^{i,r})^T a \nabla \psi_{k'}^{i,r}$) it follows that
\begin{equation}\label{eqkswkjh}
\|\psi_w^{i,r}\|_a^2 =\|\psi_i^{\loc,r}\|_a^2+ \|\sum_{\tau_j \subset S^*}w_j \psi_j^{i,r}\|^2_a+2 \sum_{\tau_j \subset S^*} \Theta^{i,-1}_{i,j} w_j\,.
\end{equation}
  Noting that $\eta \psi_i \in H^1_0(S_r)$,
 Theorem \ref{thm:dkdehgjdhdgh0} implies that $\|\psi_w^{i,r}\|_a \leq \|\eta \psi_i\|_a$, which, combined with  \eqref{eqkswkjh} and \eqref{eqkjhdkjdhdkjh} leads to $\|\psi_i-\psi_i^{\loc,r}\|_a^2 \leq \|\eta \psi_i\|_a^2-\|\psi_i\|_a^2-2 \sum_{\tau_j \subset S^*} \Theta^{i,-1}_{i,j} w_j$ and (using $\|\eta \psi_i\|_a^2-\|\psi_i\|_a^2\leq \int_{S*}\nabla (\eta \psi_i)^T a \nabla (\eta \psi_i)$)
 \begin{equation}\label{eqkjhdkgygejdhdkjh}
\|\psi_i-\psi_i^{\loc,r}\|_a^2 \leq \int_{S*}\nabla (\eta \psi_i)^T a \nabla (\eta \psi_i)+2 |\sum_{\tau_j \subset S^*} \Theta^{i,-1}_{i,j} w_j|\,.
\end{equation}
Now observe that $ \frac{1}{2}\int_{S*}\nabla (\eta \psi_i)^T a \nabla (\eta \psi_i)\leq  \int_{\Omega \cap (B(x_i,r-2 H))^c}(\nabla \psi_i)^T a \nabla \psi_i
 +\frac{\lambda_{\max}(a)}{H^2}   \int_{S^*}|\psi_i|^2$.
Applying Poincar\'{e}'s inequality  we obtain
$\int_{S^*}|\psi_i|^2 \leq \frac{1}{\pi^2} H^2\sum_{\tau_j \subset S^*} \int_{\tau_j}|\nabla \psi_i|^2$ (since $\int_{\tau_j} \psi_i=0$ for $\tau_j \subset S^*$), and $\int_{S^*}|\psi_i|^2 \leq
\frac{H^2}{ \pi^2 \lambda_{\min}(a)} \int_{\Omega \cap (B(x_i,r-2 H))^c}(\nabla \psi_i)^T a \nabla \psi_i$.
Combining  these equations with the exponential decay of Theorem \ref{thm:expdecay} we deduce
\begin{equation}\label{eqkjhhggjdhdkjh}
\int_{S*}\nabla (\eta \psi_i)^T a \nabla (\eta \psi_i)  \leq
 2 \Big(1+\lambda_{\max}(a)/\big(\pi^2 \lambda_{\min}(a)\big) \Big)   e^{1-\frac{ r-2H}{l H}} \|\psi_i\|_a^2\,.
\end{equation}
Similarly, using Cauchy-Schwartz and Poincar\'e inequalities we have for $\tau_j \subset S^*$,\\
$|w_j|\leq |\tau_j|^\frac{1}{2} \|\psi_i\|_{L^2(\tau_j)} \leq |\tau_j|^\frac{1}{2} (\int_{\tau_j} (\nabla \psi_i)^Ta(\nabla \psi_i))^\frac{1}{2} /\sqrt{\lambda_{\min}(a)}$
and \\
$
|\sum_{\tau_j \subset S^*} \Theta^{i,-1}_{i,j} w_j|\leq  |\sum_{\tau_j \subset S^*} (\Theta^{i,-1}_{i,j})^2|\tau_j| |^\frac{1}{2}
\big (\int_{S^*} (\nabla \psi_i)^Ta(\nabla \psi_i)/\lambda_{\min}(a)\big)^\frac{1}{2}.
$
 Using \eqref{equsiheiuhdihd}  we obtain that
$
 |\sum_{\tau_j \subset S^*} (\Theta^{i,-1}_{i,j})^2 |\tau_j| |^\frac{1}{2} \leq \big(\lambda_{\max}(a) 2^{5+d}/\delta^{2+d} \big)^\frac{1}{2} H^{-1} \big(\int_{S^*}(\nabla \psi_i^{i,r})^T a \nabla \psi_i^{i,r}\big)^\frac{1}{2},
$
which by the exponential decay of Theorem \eqref{thm:expdecay} (and $\|\psi_i\|_a\leq \|\psi_i^{i,r}\|_a$) leads to
\begin{equation}\label{eqhgygjhhhh}
|\sum_{\tau_j \subset S^*} \Theta^{i,-1}_{i,j} w_j|\leq   \big(\frac{\lambda_{\max}(a) 2^{5+d}}{\lambda_{\min}(a)\delta^{2+d}} \big)^\frac{1}{2} H^{-1}
\|\psi_i^{i,r}\|_a^2 e^{1-\frac{ r-2H}{l H}}\,.
 \end{equation}
Using \eqref{eqgguguyg6} to bound $\|\psi_i^{i,r}\|_a$ and combining \eqref{eqhgygjhhhh} with \eqref{eqkjhhggjdhdkjh} and \eqref{eqkjhdkgygejdhdkjh} concludes the proof.
\end{proof}

The following theorem shows that gamblets preserve the $\mathcal{O}(H)$ rate of convergence (in energy norm) after localization to sub-domains of size $\mathcal{O}(H \ln(1/H))$. They can therefore be used as localized basis functions in numerical homogenization
\cite{ BaLip10, OwZh:2011,  MaPe:2012, OwhadiZhangBerlyand:2014}. Section \ref{sechnh} will show that they can also be computed hierarchically at near linear complexity.

\begin{Theorem}\label{thm:dhfkdehgjdhdgh}
 Let  $u$ be the solution of \eqref{eqn:scalar} and $(\psi_1^{\loc,r})_{1\leq i \leq m}$ the localized gamblets identified in \eqref{eq:dwehhsiuhssq},
  then for $r \geq  H (C_1 \ln \frac{1}{H} + C_2) $ we have
\begin{equation}\label{sidasaeuyweddaud}
 \inf_{v\in \operatorname{span}\{\psi_1^{\loc,r},\ldots,\psi_m^{\loc,r}\}}   \|u - v\|_{a}\leq \frac{1}{ \sqrt{\lambda_{\min}(a)}} H \|g\|_{L^2(\Omega)}\,.
\end{equation}
The constants are $C_1=(d+4)l$ and $C_2=2l \ln \Big(\frac{\lambda_{\max}(a)}{ \lambda_{\min}(a)}\frac{ 2^{\frac{3}{2}d+11} }{\delta^{d+2} }\Big)$ where $l$ is the constant of Theorem \ref{thm:hieuhdds}.
Furthermore, the inequality \eqref{sidasaeuyweddaud} is achieved for $v=\sum_{i=1}^m \psi_i^{\loc,r}\int_{\Omega} u \phi_i$.
\end{Theorem}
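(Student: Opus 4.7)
Set $w_i:=\int_\Omega u\,\phi_i$ and consider the concrete candidate $v^*:=\sum_{i=1}^m w_i\,\psi_i^{\loc,r}$, which lies in $\operatorname{span}\{\psi_1^{\loc,r},\ldots,\psi_m^{\loc,r}\}$ so that $\inf_v\|u-v\|_a\leq \|u-v^*\|_a$. Writing $u^*:=\sum_i w_i\,\psi_i$ for the non-localized analogue of Theorem~\ref{thm:dkdehgjdhdgh02}, split by the triangle inequality
\begin{equation*}
\|u-v^*\|_a \leq \|u-u^*\|_a+\|u^*-v^*\|_a.
\end{equation*}
Theorem~\ref{thm:dkdehgjdhdgh} already bounds the first term by $(2/\pi)H\|g\|_{L^2(\Omega)}/\sqrt{\lambda_{\min}(a)}$. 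Since $2/\pi<1$, the plan is to absorb the remaining slack $(1-2/\pi)H\|g\|_{L^2(\Omega)}/\sqrt{\lambda_{\min}(a)}$ into $\|u^*-v^*\|_a$ by making $r$ large enough to drive the exponential decay of Theorem~\ref{thm:hieuhdds} past the polynomial-in-$H$ constants that will appear.

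For the localization error, expand $u^*-v^*=\sum_i w_i\,(\psi_i-\psi_i^{\loc,r})$ and combine the triangle inequality with Cauchy--Schwarz:
\begin{equation*}
\|u^*-v^*\|_a \leq \Big(\sum_{i=1}^m w_i^2\Big)^{\!1/2}\sqrt{m}\,\max_i\|\psi_i-\psi_i^{\loc,r}\|_a .
\end{equation*}
The last factor is at most $C\,e^{-r/(2lH)}$ by Theorem~\ref{thm:hieuhdds}, with $C$ proportional to $H^{-d/2-2}$. For the weights, $\phi_i=\mathbf{1}_{\tau_i}$ and Cauchy--Schwarz give $w_i^2\leq |\tau_i|\,\|u\|_{L^2(\tau_i)}^2$; summing over the disjoint $\tau_i$'s with $|\tau_i|\leq V_d (H/2)^d$ yields $\big(\sum_i w_i^2\big)^{1/2}\lesssim H^{d/2}\|u\|_{L^2(\Omega)}$. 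Construction~\ref{cons:odoehdduhdI}(b) bounds the number of cells by $m\lesssim(\delta H)^{-d}$, and a standard energy estimate for \eqref{eqn:scalar} (test against $u$) together with Poincar\'{e}'s inequality gives $\|u\|_{L^2(\Omega)}\lesssim \|g\|_{L^2(\Omega)}/\lambda_{\min}(a)$. Assembling these factors yields
\begin{equation*}
\|u^*-v^*\|_a \leq \widetilde C\,H^{-d/2-2}\,e^{-r/(2lH)}\,\|g\|_{L^2(\Omega)}/\lambda_{\min}(a)
\end{equation*}
for a fully explicit dimensional constant $\widetilde C$ depending on $\delta$, $d$, and $\lambda_{\max}(a)/\lambda_{\min}(a)$.

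The final step is to require $\widetilde C\,H^{-d/2-2}e^{-r/(2lH)}/\sqrt{\lambda_{\min}(a)}\leq (1-2/\pi)H$. Taking logarithms reduces this to $r/(2lH)\geq (d/2+3)\ln(1/H)+\mathrm{const}$, i.e.\ a threshold of the form $r\geq H(C_1\ln(1/H)+C_2)$ as stated. I expect the only obstacle to be the final bookkeeping that nails down the precise values $C_1=(d+4)l$ and the explicit $C_2$: the powers of $H$ must cancel exactly, which may require a slightly sharper aggregation than the crude $\sqrt{m}$ factor above (for instance, exploiting the spatial concentration of each $\psi_i-\psi_i^{\loc,r}$ near $\tau_i$ to replace the uniform $\max_i$ by an $\ell^2$-type bound, or a tighter use of Poincar\'{e}/energy estimates on $\|u\|_{L^2(\Omega)}$). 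Once that accounting is correct, the proof is a direct assembly of Theorems~\ref{thm:dkdehgjdhdgh} and~\ref{thm:hieuhdds} together with the elementary estimates above, and the choice $v=v^*$ exhibits the minimizer achieving~\eqref{sidasaeuyweddaud}.
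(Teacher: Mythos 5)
Your proposal is correct and follows essentially the same route as the paper's proof: the paper takes the same candidate $v=\sum_{i=1}^m\big(\int_\Omega u\,\phi_i\big)\psi_i^{\loc,r}$, splits against $\sum_i\big(\int_\Omega u\,\phi_i\big)\psi_i$ by the triangle inequality, bounds the first term via Theorem \ref{thm:dkdehgjdhdgh} and the second by $\max_i\|\psi_i-\psi_i^{\loc,r}\|_a\sum_i|\int_\Omega u\,\phi_i|$ with $\sum_i|\int_\Omega u\,\phi_i|\le\int_\Omega|u|\le 2^{-d/2}V_d^{1/2}\|g\|_{L^2(\Omega)}/\lambda_{\min}(a)$ (an $\ell^1$ aggregation that avoids your $\sqrt{m}$ factor but produces the same $H^{-d/2-2}$ scaling), and then chooses $r$ so that the exponential factor of Theorem \ref{thm:hieuhdds} absorbs the slack $1-2/\pi$. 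Your bookkeeping concern is not a real gap: with either aggregation the requirement is $r/(2lH)\ge(\tfrac{d}{2}+3)\ln\tfrac{1}{H}+\mathrm{const}$, i.e.\ exactly the claimed form $r\ge H(C_1\ln\tfrac1H+C_2)$, and the paper's own proof likewise stops at "conclude using Theorem \ref{thm:hieuhdds}" without spelling out the extraction of the displayed $C_1,C_2$.
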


\begin{proof}
Let $v_1:=\sum_{i=1}^m c_i \psi_i$ and  $v_2=\sum_{i=1}^m c_i \psi_i^{\loc,r}$ with $c_i=\int_{\Omega} u \phi_i$. Theorem \ref{thm:dkdehgjdhdgh} implies that
$\|u-v_1\|_a\leq 2/(\pi \sqrt{\lambda_{\min}(a)}) H \|g\|_{L^2(\Omega)}$.
Observe that $\|u - v_2\|_{a} \leq \|u - v_1\|_{a}+\|v_1 - v_2\|_{a}$ and $\|v_1 - v_2\|_{a}\leq \max_{i}\|\psi_i-\psi_i^{\loc,r}\|_{a} \sum_{i=1}^m |c_i|$.
Using Poincar\'e's inequality $\|u\|_{L^2(\Omega)}\leq \diam(\Omega) \|\nabla u\|_{L^2(\Omega)}$ (with $\diam(\Omega)\leq 1$) we obtain
 $\sum_{i=1}^m  |c_i| \leq  \int_{\Omega} |u| \leq  \|g\|_{L^2(\Omega)} 2^{-d/2}V_d^\frac{1}{2} /\lambda_{\min}(a) $. We conclude using Theorem \ref{thm:hieuhdds} to bound $\max_{i}\|\psi_i-\psi_i^{\loc,r}\|_{a}$.
 \end{proof}

\section{Multiresolution operator decomposition}\label{sechnh}

Building on the analysis of Section \ref{sec:contcase}, we will now gamble on the approximation of the solution of \eqref{eqn:scalar} based on measurements
performed at different levels of resolution. The resulting hierarchical (and nested) games will then be used to derive a
 multiresolution  decomposition of \eqref{eqn:scalar} (orthogonal across subscales) and  a near-linear complexity multiresolution algorithm with a priori error bounds.

\subsection{Hierarchy of nested measurement functions}\label{subsecdomdecomphi}

In order to define the hierarchy of games we will first define a hierarchy of nested measurement functions.
\begin{Definition}\label{defindextree}
We say that $\I$ is an index tree of depth $q$ if it is a finite set of $q$-tuples of the form $i=(i_1,\ldots,i_q)$ with $1\leq i_1 \leq m_0$ and
$1\leq i_j \leq m_{(i_1,\ldots,i_{j-1})}$ for $j\geq 2$, where $m_0$ and $m_{(i_1,\ldots,i_{j-1})}$ are strictly positive integers. For $1\leq k \leq q$ and $i=(i_1,\ldots,i_q)\in \I$, we write
$i^{(k)}:=(i_1,\ldots,i_k)$  and $\I^{(k)}:=\{i^{(k)}\,:\, i\in \I\}$.  For $k\leq k'\leq q$ and $j=(j_1,\ldots,j_{k'})\in \I^{(k')}$ we write
$j^{(k)}:=(j_1,\ldots,j_k)$. For $i\in \I^{(k)}$ and $k\leq k'\leq q$ we write
 $i^{(k,k')}$  the set of elements  $j\in \I^{(k')}$ such that $j^{(k)}=i$.
\end{Definition}

\begin{Construction}\label{defmulires}
Let $\I$ be an index tree of depth $q$. Let $\delta \in (0,1)$ and $0< H_q <\cdots <H_1<1$. Let $(\tau_i^{(k)}, k\in \{1,\ldots,q\}, i\in \I^{(k)})$ be a collection of subsets of $\Omega$ such that (1) for $1\leq k \leq q$, $(\tau_i^{(k)}, i\in \I^{(k)})$ is a partition of $\Omega$ such that each $\tau_i^{(k)}$ is a  Lipschitz, convex subset of $\Omega$ of diameter at most $H_k$ and contains a ball of diameter $\delta H_k$ (2) the sequence of partitions is nested, i.e.  for $k\in \{1,\ldots,q-1\}$ and $i\in \I^{(k)}$, $\tau_i^{(k)}:=\cup_{j\in i^{(k,k+1)}} \,\tau_j^{(k+1)}$.
\end{Construction}
As in Remark \ref{rmkiegudyg3}, the assumption of convexity of the subdomains $\tau_i^{(k)}$ is not necessary to the results presented here and is only used to derive sharper/simpler constants.
Let $\phi_i^{(k)}$ be the indicator function of the set $\tau_i^{(k)}$ (i.e. $\phi_i^{(k)}=1$ if $x\in \tau_i^{(k)}$ and $\phi_i^{(k)}=0$ if $x\not\in \tau_i^{(k)}$). Note that the nesting of the domain decomposition implies that of the measurement functions, i.e. for $k\in \{1,\ldots,q-1\}$ and $i\in \I^{(k)}$,
\begin{equation}\label{eq:eigdeiud3dd}
\phi^{(k)}_i=\sum_{j\in \I^{(k+1)}}\pi^{(k,k+1)}_{i,j}  \phi^{(k+1)}_j
\end{equation}
where  $\pi^{(k,k+1)}$ is the $\I^{(k)}\times \I^{(k+1)}$ matrix defined by $\pi^{(k,k+1)}_{i,j}=1$ if $j\in i^{(k,k+1)}$ and $\pi^{(k,k+1)}_{i,j}=0$ if $j\not\in i^{(k,k+1)}$. We will assume without loss of generality  that $\|\phi_i^{(k)}\|_{L^2(\Omega)}^2=|\tau_i^{(k)}|$ is constant in $i$ (for the general case,  rescale/renormalize each $\phi_i^{(k)}$   and the entries of $\pi^{(k,k+1)}$ by the corresponding multiplicative factors, we will keep track of the dependence of some of the constants on $\max_{i,j}|\tau_i^{(k)}|/|\tau_j^{(k)}|$).

\subsection{Hierarchy of nested gamblets and multiresolution approximations}
Let us now consider the problem of recovering the solution of \eqref{eqn:scalar} based on the nested measurements $(\int_{\Omega} u \phi_i^{(k)})_{i\in  \I^{(k)}}$ for $k\in \{1,\ldots,q\}$. As in Section \ref{sec:contcase} we are lead to investigate the mixed strategy (for Player II) expressed by replacing the source term $g$ with a centered Gaussian field with covariance function $\L=-\diiv(a\nabla)$. Under that mixed strategy, Player II's bet on the value of the solution of \eqref{eqn:scalar}, given the measurements $(\int_{\Omega} u(y)\phi^{(k)}_i(y)\,dy)_{i\in \I^{(k)}}$, is (see Subsection \ref{subsecoptaccuracy})
\begin{equation}\label{eqdefuk}
u^{(k)}(x):=\sum_{i\in \I^{(k)}} \psi^{(k)}_i(x) \int_{\Omega} u(y)\phi^{(k)}_i (y)\,dy,
\end{equation}
where (see Theorem \ref{thm:dkdehgjdhdgh0}), for  $k\in \{1,\ldots,q\}$ and $i\in  \I^{(k)}$, $\psi^{(k)}_i$ is the minimizer of
\begin{equation}\label{eq:dfddeytfewdaisq}
\begin{cases}
\text{Minimize }  &\|\psi\|_a\\
\text{Subject to } &\psi \in H^1_0(\Omega)\text{ and }\int_{\Omega}\phi_j^{(k)} \psi=\delta_{i,j}\text{ for } j\in \I^{(k)}\,.
\end{cases}
\end{equation}
Define $\V^{(q+1)}:=H^1_0(\Omega)$ and, for  $k\in \{1,\ldots,q\}$,
\begin{equation}\label{eqdefvk}
\V^{(k)}:=\operatorname{span}\{\psi^{(k)}_i \mid i\in \I^{(k)}\} .
\end{equation}

By Theorem \ref{thmgufufg0} $\operatorname{span}\{\psi_i^{(k)} \mid  i\in  \I^{(k)}\}=\operatorname{span} \{\L^{-1} \phi_i^{(k)} \mid  i\in \I^{(k)}\}$, and the nesting \eqref{eq:eigdeiud3dd} of the measurement functions implies the nesting of the spaces $\V^{(k)}$. The following theorem is (which is a direct application of theorems \ref{thm:dkdehgjdhdgh02} and \ref{thm:dkdehgjdhdgh}) shows that $u^{(k)}$  is the best (energy norm) approximation of the solution of \eqref{eqn:scalar} in $\V^{(k)}$.

\begin{Theorem}\label{thmgugyug0}
 It holds true that (1) for $k\in \{1,\ldots,q\}$, $\V^{(k)} \subset \V^{(k+1)}$ and
$\V^{(k)}=\operatorname{span} \{\L^{-1} \phi_i^{(k)} \mid  i\in \I^{(k)}\}$
  and (2) If $u$ is the solution of \eqref{eqn:scalar} and $u^{(k)}$ defined in \eqref{eqdefuk} then
\begin{equation}\label{eqdhekjddkjhdjk}
\|u - u^{(k)}\|_{a}=\inf_{v\in \V^{(k)}}   \|u - v\|_{a}\leq \frac{2}{\pi \sqrt{\lambda_{\min}(a)}} H_k \|g\|_{L^2(\Omega)}
\end{equation}
\end{Theorem}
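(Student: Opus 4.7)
The proof is essentially a bookkeeping exercise combining results already established. My plan is to treat the two items separately, deriving each from a single theorem proved earlier, and to spend most of the effort on the nesting inclusion, which is the only point requiring a small observation.

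For item (1), the identification $\V^{(k)}=\operatorname{span}\{\L^{-1}\phi_i^{(k)}\mid i\in \I^{(k)}\}$ is immediate from the representation formula \eqref{eq:doehdd} in Theorem \ref{thmgufufg0}: each $\psi_i^{(k)}$ is a linear combination of the functions $\L^{-1}\phi_j^{(k)}$ via the invertible matrix $\Theta^{(k),-1}$ (applied at level $k$), and conversely each $\L^{-1}\phi_j^{(k)}$ is a linear combination of the $\psi_i^{(k)}$, so the two spans coincide. For the nesting $\V^{(k)}\subset \V^{(k+1)}$ I would use the nested structure \eqref{eq:eigdeiud3dd} of the measurement functions: applying $\L^{-1}$ to both sides gives
\begin{equation*}
\L^{-1}\phi_i^{(k)}=\sum_{j\in \I^{(k+1)}}\pi^{(k,k+1)}_{i,j}\,\L^{-1}\phi_j^{(k+1)},
\end{equation*}
which shows $\L^{-1}\phi_i^{(k)}\in \operatorname{span}\{\L^{-1}\phi_j^{(k+1)}\mid j\in \I^{(k+1)}\}=\V^{(k+1)}$. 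Combined with the span identity just established, this yields $\V^{(k)}\subset \V^{(k+1)}$.

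For item (2), I would apply the results of Section \ref{sec:contcase} at each fixed level $k$, regarding the family $(\phi_i^{(k)})_{i\in \I^{(k)}}$ as the measurement functions and $(\tau_i^{(k)})_{i\in \I^{(k)}}$ as the underlying partition from Construction \ref{cons:odoehdduhdI}. Theorem \ref{thm:dkdehgjdhdgh02} then gives the Galerkin (best approximation) identity
\begin{equation*}
\|u-u^{(k)}\|_a=\inf_{v\in \operatorname{span}\{\L^{-1}\phi_i^{(k)}\mid i\in \I^{(k)}\}}\|u-v\|_a=\inf_{v\in \V^{(k)}}\|u-v\|_a,
\end{equation*}
using the span identification of item (1). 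The quantitative bound in \eqref{eqdhekjddkjhdjk} is then exactly the conclusion of Theorem \ref{thm:dkdehgjdhdgh} applied at level $k$ with mesh size $H_k$: since each $\phi_i^{(k)}$ is the indicator of $\tau_i^{(k)}$ (constant value one on its support), the constant $C$ of Proposition \ref{prop:gegddgdjdef} reduces to $2/(\pi\sqrt{\lambda_{\min}(a)})$, giving the stated estimate.

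The only step that is not a pure quotation is the derivation of the nesting \eqref{eq:eigdeiud3dd} into an inclusion of the spaces $\V^{(k)}$, and this is a one-line linearity argument. Consequently I do not foresee any genuine obstacle; the proof is essentially a concatenation of Theorem \ref{thmgufufg0}, Theorem \ref{thm:dkdehgjdhdgh02}, Theorem \ref{thm:dkdehgjdhdgh}, and the definition of $\pi^{(k,k+1)}$.
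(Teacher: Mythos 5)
Your proposal is correct and follows essentially the same route as the paper, which presents this theorem as a direct consequence of Theorem \ref{thmgufufg0} (span identification via \eqref{eq:doehdd}), the nesting \eqref{eq:eigdeiud3dd} of the measurement functions (which, as you note, passes through $\L^{-1}$ by linearity to give $\V^{(k)}\subset\V^{(k+1)}$), and Theorems \ref{thm:dkdehgjdhdgh02} and \ref{thm:dkdehgjdhdgh} applied at level $k$ with mesh size $H_k$. Your explicit one-line linearity argument for the nesting is exactly the intended (unwritten) step, so there is nothing to correct.
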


\subsection{Nested games  and martingale/multiresolution  decomposition}\label{subsecmaringaleopdecomposition}
 As in Section \ref{sec:contcase} we  consider the mixed strategy (for Player II) expressed by replacing the source term $g$ with a centered Gaussian field with covariance function $\L$. Under this mixed strategy, Player II's bet \eqref{eqdefuk} on the value of the solution of \eqref{eqn:scalar}, given the measurements $(\int_{\Omega} u(y)\phi^{(k)}_i(y)\,dy)_{i\in \I^{(k)}}$, can also be obtained by conditioning the solution  $v$
of the SPDE  \eqref{eqn:scalarspde} (see \eqref{eqmawybdyd}), i.e.
\begin{equation}\label{eqkdkjdkjdh}
u^{(k)}(x)=\E\Big[v(x)\Big| \int_{\Omega} v(y)\phi^{(k)}_i(y)\,dy=\int_{\Omega} u(y)\phi^{(k)}_i(y)\,dy ,\,i\in \I^{(k)}\Big]
\end{equation}
Furthermore, each gamblet $\psi_i^{(k)}$ represents Player II's bet on the value of the solution of \eqref{eqn:scalar} given the measurements $\int_{\Omega} u(y)\phi^{(k)}_j(y)\,dy=\delta_{i,j}$, i.e.
\begin{equation}\label{eqrepphii}
\psi^{(k)}_i=\E\Big[v\Big| \int_{\Omega} v(y)\phi^{(k)}_j(y)\,dy=\delta_{i,j},\,j\in  \I^{(k)}\Big]
\end{equation}

Now consider the nesting of non-cooperative games where Player I chooses $g$ in \eqref{eqn:scalar} and Player II is shown the measurements $(\int_{\Omega} u \phi_i^{(k)})_{i\in \I^{(k)}}$, step by step, in a hierarchical manner, from coarse ($k=1$) to fine ($k=q$) and must, at each step $k$ of the game, gamble on the value of solution $u$. The following theorem and \eqref{eqkdkjdkjdh} show that the resulting sequence of approximations  $u^{(k)}$ form the realization of a martingale with independent increments.

\begin{Theorem}\label{thmdgdjdgygugyd}
Let $\F_k$  be the $\sigma$-algebra generated by the random variables $(\int_{\Omega}v(x)\phi_i^{(k)})_{i\in  \I^{(k)}}$ and
\begin{equation}\label{eqmdedeanvspde}
v^{(k)}(x):=\E\big[v(x)\big| \F_k \big]=\sum_{i\in  \I^{(k)}} \psi^{(k)}_i(x) \int_{\Omega} v(y)\phi^{(k)}_i (y)\,dy
\end{equation}
It holds true that (1) $\F_1,\ldots,\F_q$ forms a filtration, i.e. $\F_k\subset \F_{k+1}$
(2) For $x\in \Omega$, $v^{(k)}(x)$ is a martingale with respect to the filtration $(\F_k)_{k\geq 1}$, i.e.
$v^{(k)}(x)=\E\big[v^{(k+1)}(x)\big| \F_{k}\big]$ (3) $v^{(1)}$ and the increments $(v^{(k+1)}-v^{(k)})_{k\geq 1}$ are independent Gaussian fields.
\end{Theorem}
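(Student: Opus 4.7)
The three claims are the standard Gaussian/martingale consequences of conditioning the field $v$ on a nested family of linear measurements, so the plan is to exploit the nesting \eqref{eq:eigdeiud3dd} together with the tower property and the Gaussian characterization of independence.

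For part (1), the plan is to read off the filtration property directly from \eqref{eq:eigdeiud3dd}: since $\phi_i^{(k)} = \sum_{j\in\I^{(k+1)}} \pi^{(k,k+1)}_{i,j}\phi_j^{(k+1)}$, each generator $\int_\Omega v\phi_i^{(k)}$ of $\F_k$ is an explicit linear combination of the generators $\int_\Omega v\phi_j^{(k+1)}$ of $\F_{k+1}$, hence is $\F_{k+1}$-measurable. This gives $\F_k\subset\F_{k+1}$ with no further work.

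For part (2), the plan is to invoke the tower property. By construction $v^{(k)}(x)=\E[v(x)\mid\F_k]$ (this identification is exactly \eqref{eqkdkjdkjdh}, established via the gamblet representation \eqref{eqrepphii} of the conditional means). Using part (1),
\begin{equation*}
\E\bigl[v^{(k+1)}(x)\bigm|\F_k\bigr]=\E\bigl[\E[v(x)\mid\F_{k+1}]\bigm|\F_k\bigr]=\E[v(x)\mid\F_k]=v^{(k)}(x),
\end{equation*}
which is precisely the martingale identity.

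For part (3), the key observation is that the entire family $\{v(x),\,v^{(1)}(x),\ldots,v^{(q)}(x)\,:\,x\in\Omega\}$ is jointly Gaussian: each $v^{(k)}(x)$ is, by \eqref{eqmdedeanvspde}, a fixed linear functional of the Gaussian field $v$, so finite linear combinations are Gaussian. Consequently the increments $v^{(k+1)}-v^{(k)}$ (and $v^{(1)}$) are jointly Gaussian fields, and independence reduces to showing vanishing cross-covariances. Because $\xi$ is centered, $v$ and every $v^{(k)}$ are centered, so it suffices to prove that for $k<k'$ and any $x,y\in\Omega$,
\begin{equation*}
\E\bigl[(v^{(k+1)}(x)-v^{(k)}(x))(v^{(k'+1)}(y)-v^{(k')}(y))\bigr]=0,
\end{equation*}
together with the analogous identity involving $v^{(1)}(x)$. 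The first factor is $\F_{k+1}$-measurable, hence $\F_{k'}$-measurable by part (1); conditioning on $\F_{k'}$ and using the martingale property from part (2) yields $\E[v^{(k'+1)}(y)-v^{(k')}(y)\mid\F_{k'}]=0$, so the expectation vanishes. The case of $v^{(1)}$ against a later increment is identical since $v^{(1)}(x)$ is $\F_1\subset\F_k$-measurable.

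None of the steps present a real obstacle: the only subtle point is making explicit that joint Gaussianity of the family $(v^{(k)})_k$ allows one to upgrade the orthogonality of martingale increments (which holds for any square-integrable martingale) to genuine independence. This is where the choice of a Gaussian mixed strategy pays off, and it is the ingredient that, together with the nesting of the $\F_k$, makes the multiresolution decomposition orthogonal in the sense needed for the later complexity analysis.
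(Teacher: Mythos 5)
Your proposal is correct and follows essentially the same route as the paper: the filtration property from the nesting \eqref{eq:eigdeiud3dd}, the martingale identity from the tower property applied to $v^{(k)}=\E[v\mid\F_k]$, and independence of the increments obtained by combining joint Gaussianity (all $v^{(k)}$ lie in the same Gaussian space as $v$) with the vanishing cross-covariances deduced by conditioning on the coarser $\sigma$-algebra. No gaps to report.
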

\begin{proof}
The nesting \eqref{eq:eigdeiud3dd} of the measurement functions implies  $\F_k \subset \F_{k+1}$ and $(\F_k)_{k\geq 1}$ is therefore filtration. The fact that $v^{(k)}$ is a martingale follows from  $v^{(k)}=\E\big[v\big| \F_k \big]$. Since $v^{(1)}$ and the increments $(v^{(k+1)}-v^{(k)})_{k\geq 1}$ are  Gaussian fields belonging to the same Gaussian space their independence is equivalent to zero covariance, which follows from the martingale property, i.e. for $k\geq 1$
$\E\big[v^{(1)}(v^{(k+1)}-v^{(k)})\big]=\E\Big[\E\big[v^{(1)}(v^{(k+1)}-v^{(k)})\big|\F_k\big]\Big]=\E\Big[v^{(1)} \E\big[(v^{(k+1)}-v^{(k)})\big|\F_k\big]\Big]=0$
and for $k>j\geq 1$,
$\E\big[(v^{(j+1)}-v^{(j)})(v^{(k+1)}-v^{(k)})\big]=\E\Big[(v^{(j+1)}-v^{(j)}) \E\big[(v^{(k+1)}-v^{(k)})\big|\F_k\big]\Big]=0$.
\end{proof}
\begin{Remark}
 Theorem \ref{thmdgdjdgygugyd}   enables the application of classical  results concerning martingales to the numerical analysis of $v^{(k)}$ (and $u^{(k)}$). In particular (1) Martingale (concentration) inequalities can  be used to control the fluctuations of $v^{(k)}$ (2) Optimal stopping times can be used to derive optimal strategies for stopping numerical simulations  based on  loss functions mixing computation costs with the cost of imperfect decisions (3) Taking $q=\infty$ in the construction of the basis elements $\psi^{(k)}_i$ (with a sequence $H_k$ decreasing towards 0) and using the martingale convergence theorem imply that,  for all $\varphi\in C_0^\infty(\Omega)$, $\int_{\Omega} v^{(k)}\varphi \rightarrow \int_{\Omega} v\varphi$
 as $k\rightarrow \infty$ (a.s. and in $L^1$).
\end{Remark}

The independence of the increments $v^{(k+1)}-v^{(k)}$ is related to the following orthogonal multiresolution decomposition of the operator \eqref{eqn:scalar}. For $\V^{(k)}$ defined as in \eqref{eqdefvk} and for $k\in \{2,\ldots,q+1\}$ let $\W^{(k)}$ be the orthogonal complement of $\V^{(k-1)}$ within $\V^{(k)}$ with respect to the scalar product $\<\cdot,\cdot\>_a$. Write $\oplus_a$ the orthogonal direct sum with respect to the scalar product $\<\cdot,\cdot\>_a$.
Note that by Theorem \ref{thmgugyug0},  $u^{(k)}$ defined by \eqref{eqdefuk} is the finite element solution of \eqref{eqn:scalar} in $\V^{(k)}$ (in particular we will write $u^{(q+1)}=u$).

\begin{Theorem}\label{thmgugyug2}
 It holds true that (1) For $k\in \{2,\ldots,q+1\}$,
\begin{equation}\label{eqdedhhiuhe3}
\V^{(k)}=\V^{(1)}\oplus_a \W^{(2)} \oplus_a  \cdots \oplus_a \W^{(k)},
\end{equation}
(2) for $k\in \{1,\ldots,q\}$, $u^{(k+1)}-u^{(k)}$ belongs to $\W^{(k+1)}$ and
\begin{equation}\label{equdecom}
u=u^{(1)}+(u^{(2)}-u^{(1)})+\cdots+(u^{(q)}-u^{(q-1)})+(u-u^{(q)})
\end{equation}
is the orthogonal decomposition of $u$ in
$H^1_0(\Omega)=\V^{(1)}\oplus_a \W^{(2)}\oplus_a \cdots \oplus_a \W^{(q)} \oplus_a \W^{(q+1)}$,
and (3) $u^{(k+1)}-u^{(k)}$ is the finite element solution of \eqref{eqn:scalar} in $\W^{(k+1)}$.
\end{Theorem}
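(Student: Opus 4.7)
\textbf{Proof plan for Theorem \ref{thmgugyug2}.}

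The plan is to treat the three assertions sequentially, using the nesting $\V^{(k)}\subset\V^{(k+1)}$ from Theorem \ref{thmgugyug0} together with the Galerkin optimality of $u^{(k)}$ in $\V^{(k)}$ (also from Theorem \ref{thmgugyug0}) as the two essential ingredients. Nothing deep is required; the statement is essentially a multilevel restatement of the familiar orthogonality between the Galerkin approximation error and the approximation subspace.

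For (1), I would proceed by induction on $k$. By definition, $\W^{(k)}$ is the $\<\cdot,\cdot\>_a$-orthogonal complement of $\V^{(k-1)}$ inside $\V^{(k)}$, so $\V^{(k)} = \V^{(k-1)} \oplus_a \W^{(k)}$. Iterating from $k$ down to the base case $\V^{(1)}$ gives \eqref{eqdedhhiuhe3}. Setting $k=q+1$ and recalling $\V^{(q+1)}=H^1_0(\Omega)$ yields the orthogonal decomposition of the whole space.

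For (2), the key observation is that Theorem \ref{thmgugyug0} states $\|u-u^{(k)}\|_a=\inf_{v\in\V^{(k)}}\|u-v\|_a$, i.e.\ $u^{(k)}$ is the $\<\cdot,\cdot\>_a$-Galerkin projection of $u$ onto $\V^{(k)}$, hence $\<u-u^{(k)},v\>_a=0$ for every $v\in\V^{(k)}$. The same identity holds at level $k+1$, so for any $v\in\V^{(k)}\subset\V^{(k+1)}$,
\begin{equation*}
\<u^{(k+1)}-u^{(k)},v\>_a = \<u^{(k+1)}-u,v\>_a + \<u-u^{(k)},v\>_a = 0.
\end{equation*}
Since $u^{(k+1)}-u^{(k)}\in\V^{(k+1)}$ by the nesting, this shows $u^{(k+1)}-u^{(k)}\in\W^{(k+1)}$. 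The telescoping identity \eqref{equdecom} is then immediate, and by (1) it coincides with the unique orthogonal decomposition of $u$ in $\V^{(1)}\oplus_a\W^{(2)}\oplus_a\cdots\oplus_a\W^{(q+1)}$.

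For (3), I need to verify that $w^{(k+1)}:=u^{(k+1)}-u^{(k)}$ solves the Galerkin problem in $\W^{(k+1)}$: find $w\in\W^{(k+1)}$ with $\<w,\varphi\>_a=\int_\Omega g\varphi$ for every $\varphi\in\W^{(k+1)}$. For $\varphi\in\W^{(k+1)}\subset\V^{(k+1)}$ the Galerkin property of $u^{(k+1)}$ gives $\<u^{(k+1)},\varphi\>_a=\int_\Omega g\varphi$, while $\varphi\perp_a\V^{(k)}$ by definition of $\W^{(k+1)}$ and $u^{(k)}\in\V^{(k)}$ yield $\<u^{(k)},\varphi\>_a=0$. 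Subtracting gives the desired identity.

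There is no real obstacle in this proof; the only thing to be careful about is keeping straight which Galerkin orthogonality is invoked at each step (level $k$ versus level $k+1$), and making sure the base case $k=1$ of (1) is covered by the convention $\V^{(1)}\oplus_a\W^{(2)}=\V^{(2)}$.
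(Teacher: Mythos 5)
Your proposal is correct, and all three parts go through as you wrote them: (1) is immediate from the definition of $\W^{(k)}$ as the $\<\cdot,\cdot\>_a$-orthogonal complement of $\V^{(k-1)}$ in $\V^{(k)}$ (the paper does not even argue this explicitly), and your verifications of (2) and (3) are sound, using that $u^{(k)}$ is the Galerkin/best $\|\cdot\|_a$-approximation of $u$ in $\V^{(k)}$, which is exactly part (2) of Theorem \ref{thmgugyug0} together with the remark preceding the theorem (and trivially covers the level $q+1$ since $u^{(q+1)}=u$). The route differs from the paper's in the mechanism used for the key orthogonality in (2): you invoke Galerkin orthogonality twice, $\<u-u^{(k)},v\>_a=0$ and $\<u-u^{(k+1)},v\>_a=0$ for $v\in\V^{(k)}\subset\V^{(k+1)}$, and subtract, which is the standard abstract argument for any nested family of Galerkin subspaces. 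The paper instead never uses the best-approximation property here; it works at the level of measurements, showing via the gamblet constraints in \eqref{eq:dfddeytfewdaisq}, the definition \eqref{eqdefuk} and the nesting \eqref{eq:eigdeiud3dd} that $\int_{\Omega}(u^{(k+1)}-u^{(k)})\phi_i^{(k)}=0$ for all $i\in\I^{(k)}$, and then converts this to $\<u^{(k+1)}-u^{(k)},\psi_i^{(k)}\>_a=0$ using that $\L\V^{(k)}$ is spanned by the $\phi_i^{(k)}$ (property (1) of Theorem \ref{thmgugyug0} plus integration by parts). Your argument is shorter and more general; the paper's yields the slightly stronger intermediate statement that the increment has vanishing level-$k$ measurements, which is the deterministic counterpart of the martingale-increment independence of Theorem \ref{thmdgdjdgygugyd} and mirrors how the decomposition is exploited later in the algorithm. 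Either proof is acceptable.
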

\begin{proof}
Observe that since  the  $\V^{(k)}$ are nested (Theorem \ref{thmgugyug0}) $u^{(k+1)}-u^{(k)}$ belongs to $\V^{(k+1)}$.
 Furthermore (by Property (1) of Theorem \ref{thmgugyug0} and integration by parts), for $i\in  \I^{(k)}$,
$\<u^{(k+1)}-u^{(k)}, \psi_i^{(k)}\>_a$ belongs to $\operatorname{span}\{\int_{\Omega}(u^{(k+1)}-u^{(k)}) \phi_i^{(k)}\mid i\in \I^{(k)} \}$.
Finally, \eqref{eqdefuk}, the constraints of \eqref{eq:dfddeytfewdaisq} and the nesting property \eqref{eq:eigdeiud3dd} imply that for $i\in  \I^{(k)}$,
 $\int_{\Omega}(u^{(k+1)}-u^{(k)}) \phi_i^{(k)}=\sum_{j\in \I^{(k+1)}}\pi^{(k,k+1)}_{i,j}\int_{\Omega}u \phi_{j}^{(k+1)} -\int_{\Omega}u \phi_i^{(k)}=0$
which implies that $u^{(k+1)}-u^{(k)}$ belongs to $\W^{(k+1)}$.
\end{proof}

\subsection{Interpolation and restriction matrices/operators}

Since the  spaces $\V^{(k)}$ are nested there exists a $\I^{(k)}\times  \I^{(k+1)}$   matrix $R^{(k,k+1)}$ such that
for $1\leq k \leq q-1$ and $i\in \I^{(k)}$
\begin{equation}\label{eq:ftfytftfx}
\psi^{(k)}_i=\sum_{j \in  \I^{(k+1)}} R_{i,j}^{(k,k+1)} \psi_j^{(k+1)}
\end{equation}
We will refer to $R^{(k,k+1)}$ as the restriction matrix and to its transpose
$R^{(k+1,k)}:=(R^{(k,k+1)})^T$ as the interpolation/prolongation matrix.
The following theorem shows that (see Figure \ref{fig:bets}) $R^{(k,k+1)}_{i,j}$ is Player II's best bet on the value of $\int_{\Omega} u\phi^{(k+1)}_j$ given the information that $\int_{\Omega} u\phi^{(k)}_s=\delta_{i,s},\,s\in  \I^{(k)}$).
\begin{Theorem}\label{eqhjgjhgjgjg}
It holds true
that for $i\in \I^{(k)}$ and $j\in \I^{(k+1)}$,\\
$R^{(k,k+1)}_{i,j}= \int_{\Omega} \psi^{(k)}_i \phi_j^{(k+1)}= \E\big[\int_{\Omega} v(y)\phi^{(k+1)}_j(y)\,dy\big| \int_{\Omega} v(y)\phi^{(k)}_l(y)\,dy=\delta_{i,l},\,l\in \I^{(k)}\big]$.
\end{Theorem}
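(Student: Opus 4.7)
The plan is to establish the two equalities separately, both via short direct computations starting from structural results already in the paper.

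First, for the equality $R^{(k,k+1)}_{i,j} = \int_\Omega \psi^{(k)}_i \phi^{(k+1)}_j$, I would take the defining relation \eqref{eq:ftfytftfx}, namely
$\psi^{(k)}_i = \sum_{j' \in \I^{(k+1)}} R^{(k,k+1)}_{i,j'} \psi^{(k+1)}_{j'}$,
and test it against $\phi^{(k+1)}_j$ by integrating over $\Omega$. The constraints in the minimization problem \eqref{eq:dfddeytfewdaisq} defining the gamblets at level $k+1$ say precisely that $\int_\Omega \phi^{(k+1)}_j \psi^{(k+1)}_{j'} = \delta_{j,j'}$ for $j, j' \in \I^{(k+1)}$. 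Substituting, the sum on the right collapses to $R^{(k,k+1)}_{i,j}$, which yields the first equality.

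Second, for the equality $\int_\Omega \psi^{(k)}_i \phi^{(k+1)}_j = \E\big[\int_\Omega v(y) \phi^{(k+1)}_j(y)\,dy \,\big|\, \int_\Omega v(y)\phi^{(k)}_l(y)\,dy = \delta_{i,l},\, l \in \I^{(k)}\big]$, I would invoke the conditional-expectation representation \eqref{eqrepphii} of the gamblet, which gives $\psi^{(k)}_i(y) = \E\big[v(y) \,\big|\, \int_\Omega v(z) \phi^{(k)}_l(z)\,dz = \delta_{i,l},\, l \in \I^{(k)}\big]$. Multiplying both sides by $\phi^{(k+1)}_j(y)$, integrating over $y \in \Omega$, and then exchanging the deterministic integral against $\phi^{(k+1)}_j$ with the conditional expectation (this is justified either by Fubini for Gaussian fields, or, more concretely, by noting that both sides are continuous linear functionals of the Gaussian field $v$ restricted to the finite-dimensional conditioning data, so the exchange reduces to a standard property of Gaussian conditioning) gives the claimed identity.

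The main obstacle, such as it is, is the linearity/Fubini exchange in the second equality, which requires knowing that $v$ has enough integrability for the conditional expectation to commute with the spatial integral against $\phi^{(k+1)}_j \in L^2(\Omega)$. This is immediate once one notes that $v$ is a centered Gaussian field with covariance $G(x,y)$, so that $\int_\Omega v \phi^{(k+1)}_j$ is a Gaussian random variable with finite variance $\int_{\Omega^2} \phi^{(k+1)}_j(x) G(x,y) \phi^{(k+1)}_j(y)\,dx\,dy$, and the joint Gaussianity with the conditioning variables permits the exchange. Apart from this, every ingredient needed is supplied by Theorem \ref{thmgufufg0}, \eqref{eqrepphii}, and the constraint set in \eqref{eq:dfddeytfewdaisq}.
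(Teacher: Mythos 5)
Your proof is correct. The first equality is argued exactly as in the paper: integrate \eqref{eq:ftfytftfx} against $\phi^{(k+1)}_j$ and collapse the sum using the biorthogonality constraints $\int_\Omega \phi^{(k+1)}_j\psi^{(k+1)}_{j'}=\delta_{j,j'}$ from \eqref{eq:dfddeytfewdaisq}. For the second equality you take a genuinely different route: you commute the spatial integral against $\phi^{(k+1)}_j$ with the conditional expectation in \eqref{eqrepphii}, so you directly prove $\int_\Omega\psi^{(k)}_i\phi^{(k+1)}_j=\E\big[\int_\Omega v\phi^{(k+1)}_j\,\big|\,\cdot\big]$ and then conclude through the first equality. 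The paper instead exploits the filtration structure: since $\F_k\subset\F_{k+1}$, it replaces $v$ by $v^{(k+1)}$ in \eqref{eqrepphii}, expands $v^{(k+1)}$ via \eqref{eqmdedeanvspde} in the basis $(\psi^{(k+1)}_j)$, pulls the deterministic basis functions out of the conditional expectation, and identifies the resulting coefficients with those in \eqref{eq:ftfytftfx}, which proves the equality between $R^{(k,k+1)}_{i,j}$ and the conditional expectation directly (implicitly using the linear independence of the $\psi^{(k+1)}_j$ guaranteed by their constraints). What your approach buys is brevity and independence from the martingale/tower-property machinery; what it costs is the interchange of $\int_\Omega\,dy$ with $\E[\,\cdot\,|\F_k]$, which you justify correctly, though the cleaner justification is the one you give second (conditioning is an orthogonal projection in the Gaussian space of the variables $\int_\Omega v\phi$, hence commutes with the continuous linear functional $v\mapsto\int_\Omega v\phi^{(k+1)}_j$): note that for $d\geq 2$ the field $v$ is only a generalized Gaussian field ($G(y,y)$ is singular on the diagonal), so a pointwise Fubini argument should be read at the same formal level as the paper's own pointwise formulas, whereas the projection argument is fully rigorous since $\int_\Omega v\phi^{(k+1)}_j$ has finite variance $\Theta^{(k+1)}_{j,j}$.
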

\begin{proof}
The  first equality  is obtained by integrating \eqref{eq:ftfytftfx} against $\phi_j^{(k+1)}$ and using the constraints satisfied by $\psi^{(k+1)}_j$ in \eqref{eq:dfddeytfewdaisq}.
For the second equality, observe that
since $\F_k$ is a filtration we can replace $v$ in the representation formula \eqref{eqrepphii} by $v^{(k)}$ (as defined by the r.h.s. of \eqref{eqmdedeanvspde}) and obtain\\
$
 \psi^{(k)}_i(x) =\sum_{j\in \I^{(k+1)}} \psi^{(k+1)}_j(x) \E\big[\int_{\Omega} v(y)\phi^{(k+1)}_j(y)\,dy\big| \int_{\Omega} v(y)\phi^{(k)}_l(y)\,dy=\delta_{i,l},\,l\in \I^{(k)}\big]
$
which corresponds to \eqref{eq:ftfytftfx}.
\end{proof}

\subsection{Nested computation of the interpolation and stiffness matrices}

Let $v$ be the solution of \eqref{eqn:scalarspde}. Observe that $(\int_{\Omega}v(x)\phi_i^{(k)})_{i\in \I^{(k)}}$ is a Gaussian vector with (symmetric, positive definite) covariance matrix
$\Theta^{(k)}$  defined by for $i,j \in \I^{(k)}$,
\begin{equation}\label{eq:kldldje34}
\Theta^{(k)}_{i,j}:=\int_{\Omega^2 }\phi^{(k)}_i(x) G(x,y) \phi^{(k)}_j(y)\,dx\,dy\,.
\end{equation}
As in \eqref{eqtheta0}, $\Theta^{(k)}$ is invertible and we write  $\Theta^{(k),-1}$ its inverse.
Observe that, as in Theorem \ref{thm:dkdehgjdhdgh0}, $\psi_i^{(k)}$  admits the following representation formula
\begin{equation}\label{eq:doehddcasek}
\psi_i^{(k)}(x)=\sum_{j\in \I^{(k)}} \Theta_{i,j}^{(k),-1} \int_{\Omega}G(x,y)\phi_j^{(k)}(y)\,dy
\end{equation}

Observe that, as in Theorem \ref{thm:dkdehgjdhdgh0}, $\Theta^{(k),-1}=A^{(k)}$ where $A^{(k)}$ is the (symmetric, positive definite) stiffness matrix of the elements $\psi^{(k)}_i$, i.e., for $i,j \in \I^{(k)}$,
\begin{equation}\label{eq:iwihud3de}
A^{(k)}_{i,j}:=\< \psi^{(k)}_i, \psi^{(k)}_j\>_a
\end{equation}

Write $\pi^{(k+1,k)}$ the transpose of the matrix $\pi^{(k,k+1)}$ (defined below \eqref{eq:eigdeiud3dd}) and $I^{(k)}$  the $\I^{(k)}\times \I^{(k)}$ identity matrix. The following theorem enables the hierarchical/nested computation of $A^{(k)}$ from $A^{(k+1)}$.

\begin{Theorem}\label{thmhggfees5}
For $b\in \R^{\I^{(k)}}$, $R^{(k+1,k)}b$ is the (unique) minimizer  $c\in \R^{\I^{(k+1)}}$ of
\begin{equation}\label{eq:dfdytffdeytfewdaisq}
\begin{cases}
\text{Minimize }  &c^T A^{(k+1)} c \\
\text{Subject to } &\pi^{(k,k+1)}c=b
\end{cases}
\end{equation}
Furthermore $R^{(k,k+1)} \pi^{(k+1,k)}=\pi^{(k,k+1)}R^{(k+1,k)} =I^{(k)}$, $R^{(k,k+1)}=A^{(k)}\pi^{(k,k+1)}\Theta^{(k+1)}$, $\Theta^{(k)}=\pi^{(k,k+1)}\Theta^{(k+1)}\pi^{(k+1,k)}$ and
\begin{equation}\label{eqhuhiuv}
A^{(k)}= R^{(k,k+1)}A^{(k+1)}R^{(k+1,k)}\,.
\end{equation}
\end{Theorem}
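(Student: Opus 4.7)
The plan is to chain together the definitions of $\pi^{(k,k+1)}$, $R^{(k,k+1)}$, $\Theta^{(k)}$, $A^{(k)}$ with the variational characterization of $\psi^{(k)}_i$ in \eqref{eq:dfddeytfewdaisq} and the representation formula \eqref{eq:doehddcasek}. The key bookkeeping identity on which everything hinges is
\begin{equation*}
\int_\Omega \phi^{(k)}_i \psi^{(k+1)}_j = \pi^{(k,k+1)}_{i,j},
\end{equation*}
which I would establish first: expanding $\phi^{(k)}_i = \sum_l \pi^{(k,k+1)}_{i,l}\phi^{(k+1)}_l$ via \eqref{eq:eigdeiud3dd} and applying the constraint $\int \phi^{(k+1)}_l \psi^{(k+1)}_j = \delta_{l,j}$ from \eqref{eq:dfddeytfewdaisq} at level $k+1$ gives it immediately. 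Integrating the expansion \eqref{eq:ftfytftfx} against $\phi^{(k)}_l$ and using this identity gives $\delta_{i,l} = \sum_j R^{(k,k+1)}_{i,j}\pi^{(k,k+1)}_{l,j} = (R^{(k,k+1)}\pi^{(k+1,k)})_{i,l}$; transposing yields $\pi^{(k,k+1)} R^{(k+1,k)} = I^{(k)}$ as well.

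For the variational characterization, I would observe that a generic element $\sum_j c_j \psi^{(k+1)}_j$ of $\V^{(k+1)}$ has squared energy norm $c^T A^{(k+1)} c$ by \eqref{eq:iwihud3de}, and has $\phi^{(k)}$-measurement vector $\pi^{(k,k+1)} c$ by the identity above. By Theorem \ref{thm:dkdehgjdhdgh0}(2), the minimizer of $\|\cdot\|_a$ in $H^1_0(\Omega)$ under the constraints $\int \phi^{(k)}_j \psi = b_j$ is $\sum_i b_i \psi^{(k)}_i$, which already lies in $\V^{(k)} \subset \V^{(k+1)}$; hence it is also the minimizer within $\V^{(k+1)}$. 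Using \eqref{eq:ftfytftfx} to express this minimizer in the $\psi^{(k+1)}_j$ basis identifies the optimal coefficient vector as $c = R^{(k+1,k)} b$, proving \eqref{eq:dfdytffdeytfewdaisq}.

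The three matrix identities then fall out by direct substitution. For $\Theta^{(k)} = \pi^{(k,k+1)}\Theta^{(k+1)}\pi^{(k+1,k)}$, I would simply insert the expansion \eqref{eq:eigdeiud3dd} on both sides of the integral in the definition \eqref{eq:kldldje34}. For $R^{(k,k+1)} = A^{(k)}\pi^{(k,k+1)}\Theta^{(k+1)}$, I would rewrite \eqref{eq:doehddcasek} at level $k$, replace $\phi^{(k)}_j$ by its expansion into $\phi^{(k+1)}$'s, and then invert the level-$(k+1)$ representation $\int G(\cdot,y)\phi^{(k+1)}_m(y)\,dy = \sum_l \Theta^{(k+1)}_{m,l}\psi^{(k+1)}_l$ to read off the coefficients in the basis $\psi^{(k+1)}_l$; comparison with \eqref{eq:ftfytftfx} gives the formula. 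Finally \eqref{eqhuhiuv} follows either by expanding $\<\psi^{(k)}_i,\psi^{(k)}_j\>_a$ via \eqref{eq:ftfytftfx} and bilinearity, or by chaining the already-derived identities and $A^{(k+1)} = \Theta^{(k+1),-1}$. I do not anticipate any real obstacle here: the entire theorem is essentially a linear-algebraic translation of the Galerkin/conditioning structure established in Section \ref{sec:contcase}, and once the duality $\int \phi^{(k)}_i \psi^{(k+1)}_j = \pi^{(k,k+1)}_{i,j}$ is in hand, each identity follows by one or two lines of substitution.
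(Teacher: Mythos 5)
Your proposal is correct and follows essentially the same route as the paper's proof: expanding via \eqref{eq:eigdeiud3dd}, \eqref{eq:ftfytftfx} and \eqref{eq:doehddcasek}, and reducing the variational characterization to $\V^{(k+1)}$ through Theorem \ref{thm:dkdehgjdhdgh0} and the nesting $\V^{(k)}\subset\V^{(k+1)}$. The only cosmetic difference is that you obtain $R^{(k,k+1)}=A^{(k)}\pi^{(k,k+1)}\Theta^{(k+1)}$ by comparing coefficients in the $\psi^{(k+1)}$ basis rather than by invoking Theorem \ref{eqhjgjhgjgjg}, which is an equivalent computation.
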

\begin{proof}
Using the decompositions \eqref{eq:ftfytftfx} and \eqref{eq:eigdeiud3dd} in $\int_{\Omega}\phi_j^{(k)} \psi_i^{(k)}=\delta_{i,j}$ leads to\\
$R^{(k,k+1)} \pi^{(k+1,k)}=I^{(k)}$. Using \eqref{eq:doehddcasek} and \eqref{eq:eigdeiud3dd} to expand $\psi^{(k)}_i$ Theorem \ref{eqhjgjhgjgjg} leads to $R^{(k,k+1)}=A^{(k)}\pi^{(k,k+1)}\Theta^{(k+1)}$. Using \eqref{eq:eigdeiud3dd} to expand $\phi_i^{(k)}$ and $\phi_j^{(k)}$ in \eqref{eq:kldldje34} leads to $\Theta^{(k)}=\pi^{(k,k+1)}\Theta^{(k+1)}\pi^{(k+1,k)}$. Using \eqref{eq:ftfytftfx} to expand $\psi^{(k)}_i$ and $\psi^{(k)}_j$ in \eqref{eq:iwihud3de} leads to \eqref{eqhuhiuv}. Let $b\in \R^{\I^{(k)}}$. Theorem \ref{thm:dkdehgjdhdgh0} implies that $\sum_{i\in \I^{(k)}}b_i \psi_i^{(k)}$ is the unique minimizer
of $\|v\|_a^2$ subject to $v\in H^1_0(\Omega)$ and $\int_{\Omega}\phi_j^{(k)} v=b_j$ for $j\in \I^{(k)}$. Since $\V^{(k)}\subset \V^{(k+1)}$ and since the minimizer is in $\V^{(k)}$, the minimization over $v\in H^1_0(\Omega)$ can be reduced to $v\in \V^{(k+1)}$ of the form $v=\sum_{i\in \I^{(k+1)}}c_i \psi_i^{(k+1)}$, which after using \eqref{eq:eigdeiud3dd} to expand the constraint $\int_{\Omega}\phi_j^{(k)} v=b_j$, corresponds to \eqref{eq:dfdytffdeytfewdaisq}.
\end{proof}

\subsection{Multiresolution gamblets}\label{subsecmultiresbasis}
The interpolation and restriction operators are sufficient to derive a multigrid method for solving \eqref{eqn:scalar}. To design a multiresolution algorithm we need to continue the analysis and identify basis functions for the subspaces $\W^{(k)}$.
For $k=2,\ldots,q$ let $\J^{(k)}$ be the finite set of $k$-tuples of the form $i=(i_1,\ldots,i_k)$ with $1\leq i_1 \leq m_0$, $1\leq i_j \leq m_{(i_1,\ldots,i_{j-1})}$ for $ 2\leq j \leq k-1$ and $1\leq i_k \leq m_{(i_1,\ldots,i_{k-1})}-1$. Where the integers $m_\cdot$ are the same as those defining the index tree $\I$. For a matrix $M$ write $\Img(M)$ and $\Ker(M)$ its image and kernel.

\begin{Lemma}\label{lemwk}
For $k=2,\ldots,q$ let $W^{(k)}$ be a $\J^{(k)}\times \I^{(k)}$ matrix such that  $\Img(W^{(k),T})=\Ker(\pi^{(k-1,k)})$. It holds true that the elements  $(\chi_i^{(k)})_{i\in \J^{(k)}}\in \V^{(k)}$ defined as
  \begin{equation}\label{eqjkhdkdh}
\chi^{(k)}_i:=\sum_{j \in \I^{(k)}} W_{i,j}^{(k)} \psi_j^{(k)}
 \end{equation}
form a basis of $\W^{(k)}$.
\end{Lemma}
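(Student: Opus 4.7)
The plan is to (a) characterize membership in $\W^{(k)}$ via a linear condition on the coefficients in the $\psi_j^{(k)}$ basis, (b) identify that condition with exactly $\Ker(\pi^{(k-1,k)})$, and then (c) conclude by a dimension count using the hypothesis $\Img(W^{(k),T}) = \Ker(\pi^{(k-1,k)})$.

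First I would establish the key characterization: for $v = \sum_{j\in \I^{(k)}} c_j \psi_j^{(k)} \in \V^{(k)}$, one has $v \in \W^{(k)}$ if and only if $\pi^{(k-1,k)} c = 0$. To see this, fix $l \in \I^{(k-1)}$ and integrate by parts; the representation formula \eqref{eq:doehddcasek} gives $-\diiv(a\nabla \psi_l^{(k-1)}) = \sum_s \Theta^{(k-1),-1}_{l,s}\phi_s^{(k-1)}$, so $\<v,\psi_l^{(k-1)}\>_a = \sum_s \Theta^{(k-1),-1}_{l,s}\int_{\Omega} v\,\phi_s^{(k-1)}$. Since $\Theta^{(k-1)}$ is invertible, orthogonality of $v$ to all of $\V^{(k-1)}$ reduces to $\int_\Omega v\,\phi_s^{(k-1)} = 0$ for every $s\in \I^{(k-1)}$. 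Expanding $\phi_s^{(k-1)}$ via the nesting \eqref{eq:eigdeiud3dd} and using the measurement constraints $\int_\Omega \psi_j^{(k)}\phi_{j'}^{(k)} = \delta_{j,j'}$ from \eqref{eq:dfddeytfewdaisq} rewrites this precisely as $(\pi^{(k-1,k)} c)_s = 0$.

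Given this characterization, each $\chi_i^{(k)}$ lies in $\W^{(k)}$ because its coefficient vector is the $i$-th row of $W^{(k)}$, i.e.\ a column of $W^{(k),T}$, which by hypothesis lies in $\Ker(\pi^{(k-1,k)})$. For linear independence, I would note that $\pi^{(k-1,k)}$ has rank $|\I^{(k-1)}|$ (its rows are indicators of a partition of $\I^{(k)}$), so $\dim\Ker(\pi^{(k-1,k)}) = |\I^{(k)}| - |\I^{(k-1)}|$; the definition of $\J^{(k)}$ is tailored so that $|\J^{(k)}| = \sum_{(i_1,\ldots,i_{k-1})\in \I^{(k-1)}} (m_{(i_1,\ldots,i_{k-1})} - 1) = |\I^{(k)}| - |\I^{(k-1)}|$. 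Hence $W^{(k),T}:\R^{\J^{(k)}} \to \Ker(\pi^{(k-1,k)})$ is a surjection between spaces of equal dimension, therefore injective, so the rows of $W^{(k)}$ are linearly independent in $\R^{\I^{(k)}}$. Combined with linear independence of $(\psi_j^{(k)})_{j\in \I^{(k)}}$ (inherited from the fact that the stiffness matrix $A^{(k)}$ in Theorem \ref{thmhggfees5} is positive definite), this yields linear independence of $(\chi_i^{(k)})_{i\in \J^{(k)}}$.

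Finally, $\dim \W^{(k)} = \dim \V^{(k)} - \dim \V^{(k-1)} = |\I^{(k)}| - |\I^{(k-1)}| = |\J^{(k)}|$, so having produced $|\J^{(k)}|$ linearly independent elements of $\W^{(k)}$ completes the proof. The only step that requires actual work is deriving the $\pi^{(k-1,k)} c = 0$ characterization of $\W^{(k)}$; everything after that is bookkeeping and dimension counting, so I do not anticipate a real obstacle.
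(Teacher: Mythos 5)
Your proposal is correct and follows essentially the same route as the paper: both characterize membership in $\W^{(k)}$ by $L^2$-orthogonality to the level-$(k-1)$ measurement functions (the paper invokes $\L\V^{(k-1)}=\operatorname{span}\{\phi_i^{(k-1)}\}$, you derive the same fact from \eqref{eq:doehddcasek} and invertibility of $\Theta^{(k-1)}$), use the nesting \eqref{eq:eigdeiud3dd} to reduce this to $\pi^{(k-1,k)}c=0$, and conclude by the same dimension count $|\J^{(k)}|=|\I^{(k)}|-|\I^{(k-1)}|=\dim\W^{(k)}$. Your version merely spells out the intermediate steps (rank of $\pi^{(k-1,k)}$, linear independence of the $\psi_j^{(k)}$ via positive definiteness of $A^{(k)}$) that the paper leaves implicit.
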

\begin{proof}
Since $\L \V^{(k-1)}=\operatorname{span}\{\phi^{(k-1)}_i\mid i\in \I^{(k-1)}\}$,
$w\in \V^{(k)}$ belongs to $\W^{(k)}$ if and only if $\int_{\Omega} \phi^{(k-1)}_j w =0$ for all $j\in \I^{(k-1)}$, which, taking $w=\chi^{(k)}_i$ and using \eqref{eq:eigdeiud3dd}, translates into $(\pi^{(k-1,k)}W^{(k),T})_{j,i}=0$. Writing $|\J^{(k)}|$ the number of elements  of $\J^{(k)}$ (which is equal to the dimension of $\W^{(k)}$), observe that $|\J^{(k)}|=|I^{(k)}|-|I^{(k-1)}|$. Therefore $\Img(W^{(k),T})=\Ker(\pi^{(k-1,k)})$ also implies that the $|\J^{(k)}|$ elements $\chi^{(k)}_i$ are linearly independent and, therefore, form a basis of $\W^{(k)}$.
\end{proof}
\begin{Remark}\label{rmkljdlkdjiji}
Observe that since $0=\<\psi_i^{(k-1)}, \chi_j^{(k)}\>_a=(R^{(k-1,k)} A^{(k)}W^{(k),T})_{i,j}$, it also holds true that $\Img(W^{(k),T})=\Ker(R^{(k-1,k)}A^{(k)})$ and $\Img(A^{(k)} W^{(k),T})= \Ker(R^{(k-1,k)})$ .
\end{Remark}
From now on we choose, for each $k\in \{2,\ldots,q\}$, a $\J^{(k)}\times \I^{(k)}$ matrix $W^{(k)}$
as in Lemma \ref{lemwk}.  This choice is not unique and to enable fast  multiplication by $W^{(k)}$ (or its transpose) we require that for $(j,i)\in \J^{(k)}\times \I^{(k)}$, $W^{(k)}_{j,i}=0$ if $j^{(k-1)}\not=i^{(k-1)}$. Therefore, the construction of $W^{(k)}$  requires, for each $s\in \I^{(k-1)}$, to specify a number $m_s-1$ of $m_s$-dimensional vectors   $W^{(k)}_{(s,1),(s,\cdot)},\ldots, W^{(k)}_{(s,m_s-1),(s,\cdot)}$ that are linearly independent and orthogonal to the $m_s$-dimensional vector $(1,1,\ldots,1,1)$. We propose two simple constructions.

\begin{Construction}\label{const1}
For $k\in \{2,\ldots,q\}$, choose $W^{(k)}$ such (1) $W^{(k)}_{j,i}=0$ for $(j,i)\in \J^{(k)}\times \I^{(k)}$ with  $j^{(k-1)}\not=i^{(k-1)}$ and (2) for
$s\in \I^{(k-1)}$, $t\in \{1,\ldots,m_s-1\}$ and $t'\in \{1,\ldots,m_s\}$,  $W^{(k)}_{(s,t),(s,t')}=\delta_{t,t'}-\delta_{t+1,t'}$.
\end{Construction}
For $k\in \{2,\ldots,q\}$  and $i=(i_1,\ldots,i_{k-1},i_k)\in \J^{(k)}$ define $i^+:=(i_1,\ldots,i_{k-1},i_k+1)$ and observe that under construction \ref{const1},
\begin{equation}\label{eqchipsi}
\chi^{(k)}_i=\psi^{(k)}_{i}-\psi^{(k)}_{i^+}
\end{equation}
whose game-theoretic interpretation is provided in Figure \ref{fig:bets}.
 \begin{figure}[h!]
	\begin{center}
			\includegraphics[width=\textwidth]{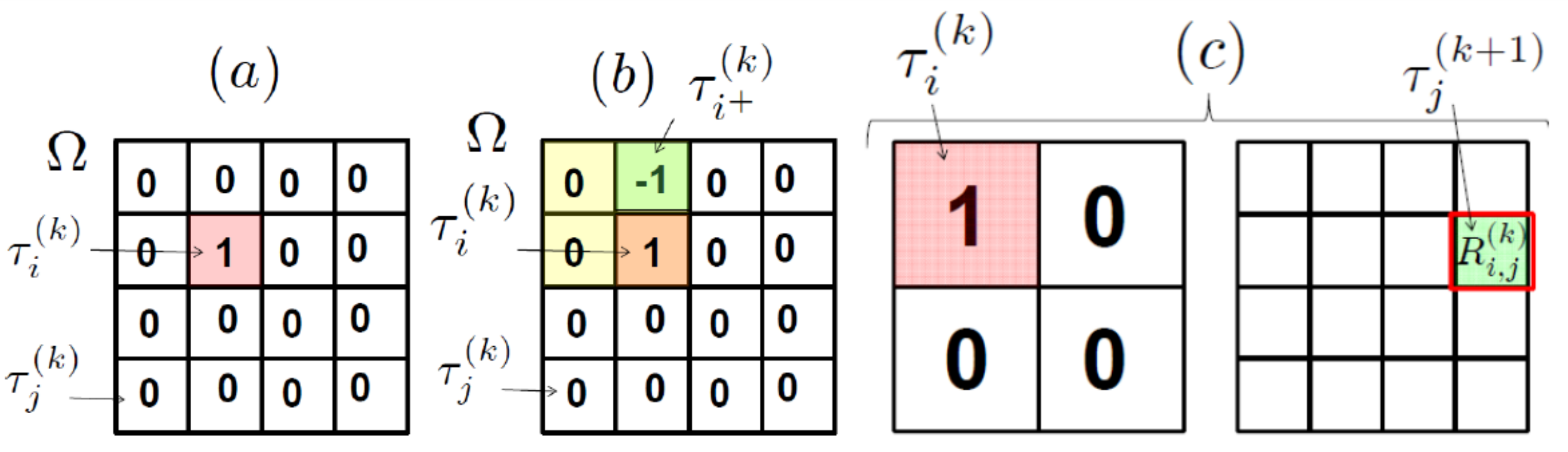}
		\caption{If $(\tau_s^{(k)}, s\in \I^{(k)})$ is a nested rectangular partition of $\Omega$ then (a) $\psi_i^{(k)}$ is Player II's best bet on the value of the solution $u$ of \eqref{eqn:scalar} given  $\int_{\tau^{(k)}_j}u=\delta_{i,j}$ for $j\in \I^{(k)}$ (b) $\chi_i^{(k)}$ is Player II's best bet on $u$ given  $\int_{\tau^{(k)}_j}u=\delta_{i,j}-\delta_{i^+,j}$ for $j\in \I^{(k)}$ (c) $R^{(k,k+1)}_{i,j}$ is Player II's best bet on  $\int_{\tau^{(k+1)}_j} u$  given  $\int_{\tau^{(k)}_j}u=\delta_{i,j}$ for $j\in \I^{(k)}$.}\label{fig:bets}
	\end{center}
\end{figure}

For the second construction we need the following lemma whose proof is trivial.
\begin{Lemma}\label{const0}
Let $U^{(n)}$ be the sequence of $n\times n$ matrices defined (1) for $n=2$ by $U^{(2)}_{1,\cdot}=(1,-1)$ and $U^{(2)}_{2,\cdot}=(1,1)$ and (2) iteratively for $n\geq 2$ by $U^{(n+1)}_{i,j}=U^{(n)}_{i,j}$ for $1\leq i,j \leq n$, $U^{(n+1)}_{n+1,j}=1$ for $1\leq j \leq n+1$, $U^{(n+1)}_{i,n+1}=0$ for $1\leq i\leq n-1$ and
$U^{(n+1)}_{n,n+1}=-n$. Then for $n\geq 2$, the rows of $U^{(n)}$ are orthogonal, $U^{(n)}_{n,j}=1$ for $1\leq j \leq n$ and we write $\bar{U}^{(n)}$ the corresponding orthonormal matrix obtained by renormalizing the rows of $U^{(n)}$.
\end{Lemma}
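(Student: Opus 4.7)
The plan is to proceed by induction on $n$, with the base case $n=2$ being immediate: the rows $(1,-1)$ and $(1,1)$ are clearly orthogonal, and the second row consists of ones as required.

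For the inductive step, suppose that $U^{(n)}$ has pairwise orthogonal rows and that its $n$-th row is $(1,\ldots,1)$. The block structure of $U^{(n+1)}$ is then
\begin{equation*}
U^{(n+1)} = \begin{pmatrix} U^{(n)} & c \\ \mathbf{1}^T & 1 \end{pmatrix},
\end{equation*}
where $c \in \R^n$ has $c_i = 0$ for $i \leq n-1$ and $c_n = -n$, and $\mathbf{1}$ denotes the all-ones vector. I would check the four types of row-pair inner products in $U^{(n+1)}$ separately. For two rows $i,i'$ with $1 \leq i < i' \leq n-1$, their last entries vanish, so the inner product equals $\<U^{(n)}_{i,\cdot}, U^{(n)}_{i',\cdot}\> = 0$ by the inductive hypothesis. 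For row $i \leq n-1$ paired with row $n$, the inner product likewise reduces to $\<U^{(n)}_{i,\cdot}, U^{(n)}_{n,\cdot}\> = 0$, since $c_i = 0$. For row $i \leq n-1$ paired with row $n+1$, the inner product is $\sum_{j=1}^{n} U^{(n)}_{i,j} = \<U^{(n)}_{i,\cdot}, U^{(n)}_{n,\cdot}\> = 0$, using that row $n$ of $U^{(n)}$ is the all-ones vector. Finally, for row $n$ paired with row $n+1$, direct computation gives $n \cdot 1 + (-n)\cdot 1 = 0$.

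It remains to verify the second claim that $U^{(n+1)}_{n+1,j} = 1$ for all $1 \leq j \leq n+1$, which is immediate from the definition of the $(n+1)$-th row. Since the rows of $U^{(n)}$ are nonzero (the last entry of row $i$, for $2 \leq i \leq n-1$, inherits a nonzero value from the construction at an earlier step, while rows $1$ and $n$ are manifestly nonzero), the renormalization producing $\bar{U}^{(n)}$ is well-defined and yields an orthonormal matrix.

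No step is expected to be a genuine obstacle; the only point requiring slight care is keeping track of the block structure and using the induction hypothesis \emph{both} for the orthogonality of the rows of $U^{(n)}$ and for the fact that its $n$-th row is the all-ones vector, since the latter is what forces the cross term between rows $i \leq n-1$ and row $n+1$ to vanish.
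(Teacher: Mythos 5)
Your induction argument is correct, and it is exactly the routine verification the paper has in mind when it states that the proof of this lemma is trivial (the paper gives no proof at all). All four cases of row-pair inner products check out, and the observation that the all-ones property of the $n$-th row of $U^{(n)}$ is what kills the cross terms with the new last row is the one genuinely needed point, which you identify correctly.
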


\begin{Construction}\label{const2}
For $k\in \{2,\ldots,q\}$, choose $W^{(k)}$ such (1) $W^{(k)}_{j,i}=0$ for $(j,i)\in \J^{(k)}\times \I^{(k)}$ with  $j^{(k-1)}\not=i^{(k-1)}$ and (2) for
$s\in \I^{(k-1)}$ and $t\in \{1,\ldots,m_s-1\}$ and $t' \in \{1,\ldots,m_s\}$ $W^{(k)}_{(s,t),(s,t')}=\bar{U}^{(m_s)}_{t,t'}$ (where $\bar{U}^{(m_s)}$ is defined in Lemma \ref{const0}).
\end{Construction}
Observe that under Construction \ref{const2} (1) the complexity of constructing $W^{(k)}$ is $|\I^{(k-1)}|\times m_s^2$ and (2) $W^{(k)}W^{(k),T}=J^{(k)}$ where $J^{(k)}$ is the $\J^{(k)}\times \J^{(k)}$ identity matrix.

\subsection{Multiresolution operator inversion}\label{subsecmultiresoperatorinversion}
We will now use the basis functions $\psi_i^{(1)}$ and $\chi^{(k)}_i$ to perform the multiresolution inversion of \eqref{eqn:scalar}. Let $B^{(k)}$ be the $\J^{(k)}\times \J^{(k)}$ (stiffness)matrix $B^{(k)}_{i,j}=\<\chi_i^{(k)},\chi_j^{(k)}\>_a$ and observe that
\begin{equation}\label{eqjgfytfjhyyyg}
B^{(k)}= W^{(k)}A^{(k)}W^{(k),T}
\end{equation}
Observe that $B^{(k)}$ is positive, symmetric, definite and write $B^{(k),-1}$ its inverse.
Let $\bar{\pi}^{(k,k+1)}$ be the $\I^{(k)}\times \I^{(k+1)}$ matrix defined by
\begin{equation}
\bar{\pi}^{(k,k+1)}_{i,j}=\pi^{(k,k+1)}_{i,j}/ (\pi^{(k,k+1)} \pi^{(k+1,k)})_{i,i}
\end{equation}
Using the notations of Definition \ref{defindextree} note that $(\pi^{(k,k+1)} \pi^{(k+1,k)})_{i,i}=m_{i}$.
Let $D^{(k,k-1)}$ be the $\J^{(k)}\times \I^{(k-1)}$ matrix defined as
\begin{equation}\label{eqdkdjhdkhse}
 D^{(k,k-1)}:=- B^{(k),-1}W^{(k)}A^{(k)}\bar{\pi}^{(k,k-1)}
\end{equation}
and write $D^{(k-1,k)}:=D^{(k,k-1),T}$ its transpose.

\begin{Theorem}
It holds true that for $k\in \{1,\ldots,q-1\}$ and $i\in \I^{(k)}$,
\begin{equation}\label{eqdidhduhh}
\psi_i^{(k)}= \sum_{l\in \I^{(k+1)}}\bar{\pi}^{(k,k+1)}_{i,l}\psi^{(k+1)}_l+ \sum_{j\in \J^{(k+1)}} D^{(k,k+1)}_{i,j} \chi^{(k+1)}_j\,.
\end{equation}
In particular,
\begin{equation}\label{eqhuhiddeuv}
R^{(k,k+1)}= \bar{\pi}^{(k,k+1)}+D^{(k,k+1)}W^{(k+1)}
\end{equation}
\end{Theorem}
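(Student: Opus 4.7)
The plan is to exploit the orthogonal splitting $\V^{(k+1)} = \V^{(k)} \oplus_a \W^{(k+1)}$ from Theorem \ref{thmgugyug2}. Since $\psi^{(k)}_i \in \V^{(k)} \subset \V^{(k+1)}$, I will write it as $\psi^{(k)}_i = v_1 + v_2$ where $v_1 := \sum_{l \in \I^{(k+1)}} \bar{\pi}^{(k,k+1)}_{i,l} \psi^{(k+1)}_l$ is the ``averaged interpolant'' that appears as the first sum in \eqref{eqdidhduhh}, and then argue that the remainder $v_2 \in \V^{(k+1)}$ necessarily lies in $\W^{(k+1)}$. Its expansion in the basis $(\chi^{(k+1)}_j)_j$ will then be identified via $a$-orthogonality, producing the $D^{(k,k+1)}$ coefficients.

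To show $v_2 \in \W^{(k+1)}$, by Lemma \ref{lemwk} applied at level $k+1$ it suffices to verify that $\int_\Omega v_2 \phi^{(k)}_s = 0$ for every $s \in \I^{(k)}$. The constraint in \eqref{eq:dfddeytfewdaisq} gives $\int_\Omega \psi^{(k)}_i \phi^{(k)}_s = \delta_{i,s}$. Expanding $\phi^{(k)}_s$ via the nesting identity \eqref{eq:eigdeiud3dd} and using $\int_\Omega \psi^{(k+1)}_l \phi^{(k+1)}_j = \delta_{l,j}$ reduces $\int_\Omega v_1 \phi^{(k)}_s$ to $\sum_l \bar{\pi}^{(k,k+1)}_{i,l}\pi^{(k,k+1)}_{s,l}$. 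The nested partition structure forces the product $\pi^{(k,k+1)}_{i,l}\pi^{(k,k+1)}_{s,l}$ to vanish in $l$ unless $i = s$, in which case the sum equals $m_i$; the row-normalization $\bar{\pi}^{(k,k+1)}_{i,l} = \pi^{(k,k+1)}_{i,l}/m_i$ then produces $\delta_{i,s}$, matching the measurement of $\psi^{(k)}_i$ and giving $\int_\Omega v_2 \phi^{(k)}_s = 0$.

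Since $(\chi^{(k+1)}_j)_{j \in \J^{(k+1)}}$ is a basis of $\W^{(k+1)}$, I write $v_2 = \sum_j D^{(k,k+1)}_{i,j}\, \chi^{(k+1)}_j$ and take the $a$-inner product with $\chi^{(k+1)}_{j'}$. The $a$-orthogonality $\V^{(k)} \perp \W^{(k+1)}$ kills $\<\psi^{(k)}_i, \chi^{(k+1)}_{j'}\>_a$, so $\<v_2, \chi^{(k+1)}_{j'}\>_a = -\<v_1, \chi^{(k+1)}_{j'}\>_a$. Expanding $\chi^{(k+1)}_{j'}$ via \eqref{eqjkhdkdh} and using $A^{(k+1)}_{l,l'} = \<\psi^{(k+1)}_l,\psi^{(k+1)}_{l'}\>_a$, the right-hand side becomes $-(\bar{\pi}^{(k,k+1)} A^{(k+1)} W^{(k+1),T})_{i,j'}$; inverting $B^{(k+1)}$ yields $D^{(k,k+1)} = -\bar{\pi}^{(k,k+1)} A^{(k+1)} W^{(k+1),T} B^{(k+1),-1}$, which by the symmetry of $A^{(k+1)}$ and $B^{(k+1)}$ is the transpose of \eqref{eqdkdjhdkhse} taken at level $k+1$. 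This establishes \eqref{eqdidhduhh}. Finally, \eqref{eqhuhiddeuv} follows by substituting $\chi^{(k+1)}_j = \sum_l W^{(k+1)}_{j,l}\psi^{(k+1)}_l$ into \eqref{eqdidhduhh}, collecting the coefficients of $\psi^{(k+1)}_l$, and comparing with the defining relation \eqref{eq:ftfytftfx}. The only delicate step is the partition-theoretic identity $\sum_l \bar{\pi}^{(k,k+1)}_{i,l}\pi^{(k,k+1)}_{s,l} = \delta_{i,s}$; everything else is standard linear algebra built on the $a$-orthogonality of the scale decomposition.
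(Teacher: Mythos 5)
Your proof is correct and follows essentially the same route as the paper: the averaged interpolant $\sum_{l}\bar{\pi}^{(k,k+1)}_{i,l}\psi^{(k+1)}_l$ is pinned down by the measurement constraints, the remaining component is identified inside $\W^{(k+1)}$ by testing against the $\chi^{(k+1)}_j$ with the $\<\cdot,\cdot\>_a$ product, which yields exactly the system $B^{(k+1)}y=-W^{(k+1)}A^{(k+1)}\bar{\pi}^{(k+1,k)}_{\cdot,i}$ and hence the $D^{(k,k+1)}$ coefficients, and \eqref{eqhuhiddeuv} follows by comparison with \eqref{eq:ftfytftfx}. The only organizational difference is that you verify directly, via the vanishing level-$k$ moments (the characterization used in the proof of Lemma \ref{lemwk}) together with the partition identity $\sum_l \bar{\pi}^{(k,k+1)}_{i,l}\pi^{(k,k+1)}_{s,l}=\delta_{i,s}$, that the remainder lies in $\W^{(k+1)}$, whereas the paper first establishes $\V^{(k+1)}=\bar{\V}^{(k)}+\W^{(k+1)}$ by a linear-independence and dimension-count argument; both are sound.
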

\begin{proof}
For $s\in \I^{(k)}$ write $\bar{\psi}_s^{(k)}:= \sum_{l\in \I^{(k+1)}}\bar{\pi}^{(k,k+1)}_{s,l}\psi^{(k+1)}_l$ and $\bar{\V}^{(k)}:=\Span\{\bar{\psi}_s^{(k)}\mid s\in \I^{(k)} \}$. Let $x\in \R^{I^{(k)}}$, $y\in \R^{\J^{(k+1)}}$ and
\begin{equation}\label{eqdhdihedu65c}
\psi =\sum_{s\in \I^{(k)}} x_s \bar{\psi}_s^{(k)} + \sum_{j\in \J^{(k+1)}}y_j \chi_j^{(k+1)}\,.
 \end{equation}
 If $\psi=0$ then integrating $\psi$ against $\phi_i^{(k)}$ for $i\in \I^{(k)}$ (and observing that $\int_{\Omega}\phi_i^{(k)} \bar{\psi}_s^{(k)}= \delta_{i,s}$) implies $x=0$ and $y=0$. Therefore the elements $\bar{\psi}_s^{(k)}, \chi_j^{(k+1)}$ form a basis for  $\bar{\V}^{(k)}+\W^{(k+1)}$.  Observing that $\operatorname{dim}(\V^{(k+1)})=\operatorname{dim}(\bar{\V}^{(k)})+\operatorname{dim}(\W^{(k+1)})$ we deduce that $\V^{(k+1)}=\bar{\V}^{(k)}+\W^{(k+1)}$. Therefore, since $\V^{(k)}\subset \V^{(k+1)}$, $\psi_i^{(k)}$ can be decomposed as in \eqref{eqdhdihedu65c}. The constraints $\int_{\Omega} \phi_s^{(k)}\psi_i^{(k)}=\delta_{i,s}$ lead to $x_s=  \delta_{i,s}$.  The orthogonality between $\psi$ and $\W^{(k+1)}$ leads to the equations $\<\psi,\chi^{(k+1)}_j\>_a=0$ for $j\in \J^{(k+1)}$, i.e.\\
$\sum_{l\in \I^{(k+1)}}\bar{\pi}^{(k,k+1)}_{i,l}\<\psi^{(k+1)}_l,\chi^{(k+1)}_j\>_a+\sum_{j'\in \J^{(k+1)}} y_{j'} \<\chi^{(k+1)}_{j'},\chi^{(k+1)}_j\>_a=0$, which translates into $ W^{(k+1)} A^{(k+1)} \bar{\pi}^{(k+1,k)}_{\cdot,i} + B^{(k+1)} y$, that is \eqref{eqdidhduhh}.
 Plugging \eqref{eqjkhdkdh} in \eqref{eqdidhduhh} and comparing with \eqref{eq:ftfytftfx} leads to \eqref{eqhuhiddeuv}.
\end{proof}

Let $g$ be the r.h.s of \eqref{eqn:scalar}. For $k\in \{1,\ldots,q\}$ let $g^{(k)}$ be the $|\I^{(k)}|$-dimensional vector defined by
$g^{(k)}_i=\int_{\Omega}\psi_i^{(k)}g \text{ for }i\in \I^{(k)}$.
Observe that $g^{(k)}$ can be computed iteratively using
\begin{equation}\label{eqyguugy6t}
g^{(k)}=R^{(k,k+1)} g^{(k+1)}\,.
\end{equation}

For $k\in \{2,\ldots,q\}$, let $w^{(k)}$ be $|\J^{(k)}|$-dimensional vector defined as the solution of
\begin{equation}\label{eqsdjoejddi1}
B^{(k)} w^{(k)}=W^{(k)} g^{(k)}
\end{equation}
Furthermore let $U^{(1)}$ be the $|\I^{(1)}|$-dimensional vector
 defined as the solution of
\begin{equation}\label{eqsdjoejddi2}
A^{(1)} U^{(1)}=g^{(1)}
\end{equation}
According to following theorem, which is a direct consequence of Theorem \ref{thmgugyug2}, the solution of \eqref{eqn:scalar} can be computed at any scale by solving the decoupled linear systems \eqref{eqsdjoejddi1} and \eqref{eqsdjoejddi2}.
\begin{Theorem}\label{thddwedmgugyug}
For $k\in \{2,\ldots,q\}$, let $u^{(k)}$ be the finite element solution of \eqref{eqn:scalar} in $\V^{(k)}$. It holds true that $u^{(k)}-u^{(k-1)}=\sum_{i\in \J^{(k)}}w^{(k)}_i \chi^{(k)}_i$ and, in particular,
\begin{equation}
u^{(k)}=\sum_{i \in \I^{(1)}} U^{(1)}_i \psi^{(1)}_i+\sum_{k'=2}^k \sum_{i\in \J^{(k')}}w^{(k')}_i \chi^{(k')}_i
\end{equation}
\end{Theorem}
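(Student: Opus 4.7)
The plan is to leverage the orthogonal multiresolution decomposition $\V^{(k)}=\V^{(1)}\oplus_a \W^{(2)}\oplus_a\cdots\oplus_a \W^{(k)}$ from Theorem \ref{thmgugyug2} together with the Galerkin characterization of $u^{(k)}$ to decouple the problem into the linear systems \eqref{eqsdjoejddi1} and \eqref{eqsdjoejddi2}. First, I would recall from Theorem \ref{thmgugyug2} that $u^{(k)}-u^{(k-1)}\in \W^{(k)}$ and is the Galerkin/finite element solution of \eqref{eqn:scalar} in $\W^{(k)}$. Since Lemma \ref{lemwk} (and the assumed choice of $W^{(k)}$) exhibits $(\chi_i^{(k)})_{i\in \J^{(k)}}$ as a basis of $\W^{(k)}$, there exist unique coefficients $w^{(k)}\in \R^{\J^{(k)}}$ with $u^{(k)}-u^{(k-1)}=\sum_{i\in \J^{(k)}}w^{(k)}_i\chi^{(k)}_i$.

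Next, I would identify $w^{(k)}$ via Galerkin orthogonality. For each $j\in \J^{(k)}$, since $u^{(k-1)}\in \V^{(k-1)}$ is $\<\cdot,\cdot\>_a$-orthogonal to $\W^{(k)}$, and since $u$ itself satisfies $\<u,\chi^{(k)}_j\>_a=\int_\Omega g\,\chi^{(k)}_j$ by the weak formulation of \eqref{eqn:scalar}, we obtain
\begin{equation}
\sum_{i\in \J^{(k)}} B^{(k)}_{j,i}\, w^{(k)}_i = \<u^{(k)}-u^{(k-1)},\chi_j^{(k)}\>_a = \int_\Omega g\,\chi^{(k)}_j.
\end{equation}
Expanding $\chi_j^{(k)}=\sum_{l\in \I^{(k)}} W^{(k)}_{j,l}\psi_l^{(k)}$ via \eqref{eqjkhdkdh} and using $g^{(k)}_l=\int_\Omega \psi_l^{(k)} g$ turns the right-hand side into $(W^{(k)}g^{(k)})_j$, recovering precisely the system \eqref{eqsdjoejddi1}. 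The same argument applied to the base case $u^{(1)}\in \V^{(1)}$, expanded as $u^{(1)}=\sum_{i\in \I^{(1)}}U^{(1)}_i\psi^{(1)}_i$, yields $A^{(1)}U^{(1)}=g^{(1)}$, which is \eqref{eqsdjoejddi2}.

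A small but necessary side-check is the recursion \eqref{eqyguugy6t} for $g^{(k)}$: by \eqref{eq:ftfytftfx}, $\psi_i^{(k)}=\sum_{j\in \I^{(k+1)}}R^{(k,k+1)}_{i,j}\psi_j^{(k+1)}$, so integrating against $g$ gives $g^{(k)}_i=\sum_{j}R^{(k,k+1)}_{i,j}g^{(k+1)}_j$. Thus $g^{(k)}$ can be propagated from the finest scale to the coarsest with the restriction operators $R^{(k,k+1)}$, making all right-hand sides of \eqref{eqsdjoejddi1}--\eqref{eqsdjoejddi2} computable once $g^{(q)}$ is known.

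Finally, telescoping $u^{(k)} = u^{(1)} + \sum_{k'=2}^{k}(u^{(k')}-u^{(k'-1)})$ and substituting the expansions just obtained produces the claimed formula. There is no genuine obstacle here — once Theorem \ref{thmgugyug2} provides the orthogonal splitting, the result is essentially an unfolding of the Galerkin equations in the $(\psi_i^{(1)},\chi_i^{(k)})$ basis. The only point demanding care is to verify that the stiffness matrix appearing in each $\W^{(k)}$-block is exactly $B^{(k)}=W^{(k)}A^{(k)}W^{(k),T}$ as defined in \eqref{eqjgfytfjhyyyg}, and that the coefficient vectors $w^{(k)}$ for distinct $k$ are independent of each other — a consequence of the orthogonality of the subspaces $\W^{(k)}$ across scales and the fact that no cross-scale coupling appears in the Galerkin system.
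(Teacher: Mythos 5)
Your proposal is correct and follows essentially the same route as the paper, which states the theorem as a direct consequence of Theorem \ref{thmgugyug2} (i.e., $u^{(k)}-u^{(k-1)}$ is the Galerkin solution in $\W^{(k)}$, so its coefficients in the basis $(\chi_i^{(k)})$ solve $B^{(k)}w^{(k)}=W^{(k)}g^{(k)}$, and similarly $A^{(1)}U^{(1)}=g^{(1)}$ for $u^{(1)}$ in $\V^{(1)}$, followed by telescoping). You simply spell out the Galerkin/orthogonality details that the paper leaves implicit, so there is nothing to correct.
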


\subsection{Uniformly bounded condition numbers across subscales/subbands}\label{subsecmultires}

Taking $q=\infty$ in Theorem \ref{thmgugyug2},  the construction of the basis elements $\psi^{(k)}_i$ leads to the  multiresolution orthogonal decomposition,
\begin{equation}
H^1_0(\Omega)=\V^{(1)}\underset{i=2}{\overset{\infty}{\oplus_a}}  \W^{(i)}.
\end{equation}
In that sense the basis elements $\psi^{(k)}_i$ and $\chi^{(k)}_i$ could be seen as a generalization of wavelets to the orthogonal decomposition of $H^1_0(\Omega)$ (rather than $L^2(\Omega)$) adapted to the solution space of the PDE \eqref{eqn:scalar}. We will now show that
this orthogonal decomposition induces a subscale decomposition of the operator $-\diiv(a\nabla)$ into layered subbands of increasing frequencies. Moreover the condition number of the operator $-\diiv(a\nabla)$ restricted to each subspace $\W^{(k)}$ will be shown to be  uniformly bounded if $H_{k-1}/H_{k}$ is uniformly bounded (e.g. if $H_k$ is a geometric sequence). Write $H_0:=1$ and let $\delta$ be defined as in Construction \ref{defmulires}.

\begin{Theorem}\label{thmuuhiuhddu}
If   $k\in \{1,\ldots,q\}$ and $v\in \V^{(k)}$ then
\begin{equation}\label{eqguygugu68lhs}
 \frac{\delta^{1+d/2}}{\sqrt{\lambda_{\max}(a)} 2^{5/2+d/2}} H_{k} \leq  \frac{\|v\|_a}{\|\diiv(a\nabla v)\|_{L^2(\Omega)}}\,.
\end{equation}
If $(k=1$ and $v\in \V^{(1)})$ or $(k\in \{2,\ldots,q\}$ and $v\in \W^{(k)})$ then
\begin{equation}\label{eqguygugu68}
  \frac{\|v\|_a}{\|\diiv(a\nabla v)\|_{L^2(\Omega)}}  \leq \frac{1}{ \sqrt{\lambda_{\min}(a)}} H_{k-1}\,.
\end{equation}
\end{Theorem}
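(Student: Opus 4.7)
The plan is to handle the two inequalities separately. The lower bound \eqref{eqguygugu68lhs} is essentially a reformulation of Lemma \ref{lemdhkedjhdkjh}, while the upper bound \eqref{eqguygugu68} is an integration-by-parts plus Poincar\'e argument, where the key point is identifying a Poincar\'e-type inequality adapted to the space to which $v$ belongs.

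For \eqref{eqguygugu68lhs}, I would simply apply Lemma \ref{lemdhkedjhdkjh} with $H=H_k$ to any $v\in \V^{(k)}=\operatorname{span}\{\psi_1^{(k)},\dots,\psi_{|\I^{(k)}|}^{(k)}\}$: the lemma gives $\|\diiv(a\nabla v)\|_{L^2(\Omega)}\leq H_k^{-1}\|v\|_a \bigl(\lambda_{\max}(a)2^{5+d}/\delta^{2+d}\bigr)^{1/2}$, and rearranging yields precisely the claimed bound $\delta^{1+d/2}H_k/(\sqrt{\lambda_{\max}(a)}\,2^{5/2+d/2})\leq \|v\|_a/\|\diiv(a\nabla v)\|_{L^2(\Omega)}$. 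No additional work is needed here.

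For \eqref{eqguygugu68}, I would begin from the integration-by-parts identity $\|v\|_a^2=-\int_\Omega v\,\diiv(a\nabla v)$, valid since $v\in H^1_0(\Omega)$, and apply Cauchy--Schwarz to get $\|v\|_a^2\leq \|v\|_{L^2(\Omega)}\|\diiv(a\nabla v)\|_{L^2(\Omega)}$. The two sub-cases then differ only in how $\|v\|_{L^2}$ is controlled by $\|\nabla v\|_{L^2}$:
\begin{itemize}
\item If $k=1$, I would use the global Poincar\'e inequality on $H^1_0(\Omega)$ together with the rescaling assumption $\diam(\Omega)\leq 1$ to get $\|v\|_{L^2(\Omega)}\leq \|\nabla v\|_{L^2(\Omega)}$, which yields $H_{k-1}=H_0=1$.
\item If $k\geq 2$ and $v\in \W^{(k)}$, I would use the fact (established in the proof of Lemma \ref{lemwk}, since $\L\V^{(k-1)}=\operatorname{span}\{\phi_i^{(k-1)}\mid i\in \I^{(k-1)}\}$ and $\phi_i^{(k-1)}=\mathbf{1}_{\tau_i^{(k-1)}}$) that $\int_{\tau_j^{(k-1)}} v=0$ for every $j\in \I^{(k-1)}$. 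On each convex subdomain $\tau_j^{(k-1)}$ of diameter $\leq H_{k-1}$ the Payne--Weinberger Poincar\'e--Wirtinger inequality gives $\|v\|_{L^2(\tau_j^{(k-1)})}\leq (H_{k-1}/\pi)\|\nabla v\|_{L^2(\tau_j^{(k-1)})}$; summing over $j$ and applying Cauchy--Schwarz yields $\|v\|_{L^2(\Omega)}\leq (H_{k-1}/\pi)\|\nabla v\|_{L^2(\Omega)}$.
\end{itemize}
In either case, combining with $\|\nabla v\|_{L^2(\Omega)}\leq \|v\|_a/\sqrt{\lambda_{\min}(a)}$ and dividing by $\|v\|_a$ delivers the bound, with a spare factor $1/\pi$ in the second case that can be absorbed into the stated constant.

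The only mildly non-routine point, which I would regard as the central step, is recognizing that membership in $\W^{(k)}$ forces the mean of $v$ on each level-$(k-1)$ cell $\tau_j^{(k-1)}$ to vanish; once that is in hand the proof reduces to a local Poincar\'e--Wirtinger argument on each cell. All the other ingredients---integration by parts, Cauchy--Schwarz, and the uniform ellipticity estimate---are direct, and the lower bound is immediate from Lemma \ref{lemdhkedjhdkjh}.
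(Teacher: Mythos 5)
Your proof is correct, and its treatment of the lower bound \eqref{eqguygugu68lhs} and of the $k=1$ case of \eqref{eqguygugu68} coincides with the paper's (direct application of Lemma \ref{lemdhkedjhdkjh}, resp.\ the global Poincar\'e inequality with $\diam(\Omega)\leq 1$). For the case $v\in\W^{(k)}$, $k\geq 2$, however, you take a genuinely different route. The paper argues abstractly: since $\W^{(k)}$ is the $\<\cdot,\cdot\>_a$-orthogonal complement of $\V^{(k-1)}$ in $\V^{(k)}$, one has $\|v\|_a=\inf_{v'\in\V^{(k-1)}}\|v-v'\|_a$, and this infimum is bounded by the approximation property of $\V^{(k-1)}$ (Theorem \ref{thmgugyug0}, itself resting on the transfer property and Proposition \ref{prop:gegddgdjdef}) applied with source term $g=-\diiv(a\nabla v)\in L^2(\Omega)$, giving the constant $\frac{2}{\pi\sqrt{\lambda_{\min}(a)}}H_{k-1}$. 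You instead use the concrete characterization of $\W^{(k)}$ from the proof of Lemma \ref{lemwk} — $\int_{\tau_j^{(k-1)}}v=0$ for all $j\in\I^{(k-1)}$, since the $\phi_j^{(k-1)}$ are (multiples of) indicators of the convex cells $\tau_j^{(k-1)}$ — and then run integration by parts, Cauchy--Schwarz, and the cell-wise Payne--Weinberger inequality directly on $v$. This is more self-contained (it bypasses the transfer property and the approximation theorem entirely) and in fact yields the slightly sharper intermediate constant $\frac{1}{\pi\sqrt{\lambda_{\min}(a)}}H_{k-1}$ versus the paper's $\frac{2}{\pi\sqrt{\lambda_{\min}(a)}}H_{k-1}$, both of which are absorbed into the stated bound; what the paper's argument buys in exchange is that it reuses already-established machinery and makes the mechanism (orthogonality across scales plus coarse-space approximation) explicit, which is the pattern reused elsewhere, e.g.\ in the discrete generalization discussed in Section \ref{secnumimple}.
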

\begin{proof}
\eqref{eqguygugu68lhs} is a direct consequence of Lemma \ref{lemdhkedjhdkjh}. For $k=1$ \eqref{eqguygugu68} is a simple consequence of Poincar\'{e}'s inequality.
Let $k\in \{2,\ldots,q\}$.   $\V^{(k)}=\V^{(k-1)}\oplus_a \W^{(k)}$ and Theorem \ref{thmgugyug0} imply\\
$
\sup_{v\in \W^{(k)}} \frac{\|v\|_a}{\|\diiv(a\nabla v)\|_{L^2(\Omega)}} \leq  \sup_{v\in \V^{(k)}} \inf_{v' \in \V^{(k-1)}} \frac{\|v-v'\|_a}{\|\diiv(a\nabla v)\|_{L^2(\Omega)}} \leq \frac{2}{\pi \sqrt{\lambda_{\min}(a)}} H_{k-1}.
$\\
\end{proof}

Write  $|c|$ the Euclidean norm of $c$ and for $k\in \{1,\ldots,q\}$ let
\begin{equation}\label{eqgam1}
\ubar{\gamma}_k:=\inf_{c\in \R^{\I^{(k)}}} \frac{\| \sum_{i\in \I^{(k)}} c_i  \,  \phi_i^{(k)} \|_{L^2(\Omega)}^2}{|c|^2} \text{ and }\bar{\gamma}_k:=\sup_{c\in \R^{\I^{(k)}}} \frac{\| \sum_{i\in \I^{(k)}} c_i \,   \phi_i^{(k)} \|_{L^2(\Omega)}^2}{|c|^2}
\end{equation}
Write $|\tau|$ the volume of a set $\tau$ and note that $\bar{\gamma}_k\leq \max_{i\in \I^{(k)}} |\tau_i^{(k)}|$ and $\ubar{\gamma}_k\geq \min_{i\in \I^{(k)}} |\tau_i^{(k)}|$, therefore $\bar{\gamma}_k/\ubar{\gamma}_k \leq \delta^{-d}$.

For a given matrix $M$, write $\operatorname{Cond}(M):=\sqrt{\lambda_{\max}(M^T M)}/\sqrt{\lambda_{\min}(M^T M)}$ its  condition number.
\begin{Theorem}\label{thmodhehiudhehd}
It holds true that
\begin{equation}\label{eqcond1}
\operatorname{Cond}(A^{(1)})\leq \frac{1}{H_{1}^2}\frac{\lambda_{\max}(a) 2^{5+d}}{\lambda_{\min}(a)  \delta^{2+2 d} },
\end{equation}
and for $k\in \{2,\ldots,q\}$,
\begin{equation}\label{eqcond2}
\operatorname{Cond}(B^{(k)})\leq  \big(\frac{ H_{k-1}}{ H_{k}}\big)^{2} \big(\frac{\lambda_{\max}(a)}{\lambda_{\min}(a)}\big)^2
 \frac{ 2^{11+2d}}{\delta^{4+7d} \pi^2  } \operatorname{Cond}(W^{(k)}W^{(k),T})\,.
\end{equation}
Furthermore,  $\operatorname{Cond}(W^{(k)}W^{(k),T})=1$ under Construction \ref{const2} and
$\operatorname{Cond}(W^{(k)}W^{(k),T})\leq 2 \big(H_{k-1}/(\delta H_k)\big)^{2d}$ under Construction \ref{const1}.
\end{Theorem}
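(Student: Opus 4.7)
The plan is to handle the three inequalities via a common strategy: translate matrix Rayleigh quotients into energy-norm expressions using the PDE representation $-\diiv(a\nabla\psi_i^{(k)})=\sum_j A^{(k)}_{i,j}\phi_j^{(k)}$ from \eqref{eq:doehddcasek} and then invoke Theorem~\ref{thmuuhiuhddu} on the appropriate subspace ($\V^{(k)}$ or $\W^{(k)}$).

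For \eqref{eqcond1}, I take $c\in\R^{\I^{(1)}}$, set $v=\sum_i c_i\psi_i^{(1)}\in\V^{(1)}$ and note that $c^T A^{(1)}c=\|v\|_a^2$, while disjointness of the supports of the indicators $\phi_j^{(1)}$ gives $\|\diiv(a\nabla v)\|_{L^2}^2=\sum_j(A^{(1)}c)_j^2|\tau_j^{(1)}|$, a quantity that lies in $[\ubar{\gamma}_1|A^{(1)}c|^2,\bar\gamma_1|A^{(1)}c|^2]$. Inserting these into \eqref{eqguygugu68lhs} and into the Poincar\'e upper bound \eqref{eqguygugu68} with $H_0=1$, and specialising to eigenvectors of $A^{(1)}$, produces $\lambda_{\min}(A^{(1)})\ge\lambda_{\min}(a)/\bar\gamma_1$ and $\lambda_{\max}(A^{(1)})\le \lambda_{\max}(a)\,2^{5+d}/(\delta^{2+d}H_1^2\ubar{\gamma}_1)$. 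The quotient, together with the volume comparison $\bar\gamma_k/\ubar{\gamma}_k\le\delta^{-d}$ inherited from the ball-of-diameter-$\delta H_k$ hypothesis, yields \eqref{eqcond1}.

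For \eqref{eqcond2}, given $c\in\R^{\J^{(k)}}$ I set $\tilde c = W^{(k),T}c$; Lemma~\ref{lemwk} gives $\tilde c\in\Ker(\pi^{(k-1,k)})$ and $v=\sum_j\tilde c_j\psi_j^{(k)}\in\W^{(k)}$. The factorisation
\begin{equation*}
\frac{c^T B^{(k)}c}{|c|^2}=\frac{\tilde c^T A^{(k)}\tilde c}{|\tilde c|^2}\cdot\frac{c^T W^{(k)}W^{(k),T}c}{|c|^2},
\end{equation*}
together with the observation that the second factor lies in $[\lambda_{\min}(W^{(k)}W^{(k),T}),\lambda_{\max}(W^{(k)}W^{(k),T})]$, gives $\operatorname{Cond}(B^{(k)})\le(\nu/\mu)\,\operatorname{Cond}(W^{(k)}W^{(k),T})$, where $\nu$ and $\mu$ are the sup and inf of $\tilde c^T A^{(k)}\tilde c/|\tilde c|^2$ over $\Ker(\pi^{(k-1,k)})$. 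The crucial lower bound on $\mu$ exploits the $\W^{(k)}$ structure: the constraint $\int\phi_l^{(k)}\psi_j^{(k)}=\delta_{l,j}$ gives $\tilde c_j=\int_{\tau_j^{(k)}}v$, Cauchy--Schwarz then yields $|\tilde c|^2\le\bar\gamma_k\|v\|_{L^2}^2$, and because $v\in\W^{(k)}$ has vanishing mean on every coarse cell $\tau_i^{(k-1)}$ (it is $a$-orthogonal to $\V^{(k-1)}$), the Payne--Weinberger Poincar\'e inequality applied cell-by-cell produces $\|v\|_{L^2}^2\le(H_{k-1}^2/(\pi^2\lambda_{\min}(a)))\|v\|_a^2$, whence $\mu\ge\pi^2\lambda_{\min}(a)/(\bar\gamma_k H_{k-1}^2)$. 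For $\nu$ it suffices to use $\nu\le\lambda_{\max}(A^{(k)})$ and replay the eigenvector argument of Step~1 for arbitrary $k$, obtaining $\lambda_{\max}(A^{(k)})\le\lambda_{\max}(a)\,2^{5+d}/(\delta^{2+d}H_k^2\ubar{\gamma}_k)$. Multiplying $\nu/\mu$ by $\operatorname{Cond}(W^{(k)}W^{(k),T})$ and bounding $\bar\gamma_k/\ubar{\gamma}_k\le\delta^{-d}$ delivers \eqref{eqcond2}.

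For the two constructions, under Construction~\ref{const2} the blocks $\bar U^{(m_s)}$ of $W^{(k)}$ have orthonormal rows by Lemma~\ref{const0}, so $W^{(k)}W^{(k),T}=J^{(k)}$ and the condition number equals $1$. Under Construction~\ref{const1} each block of $W^{(k)}W^{(k),T}$ is the $(m_s-1)\times(m_s-1)$ tridiagonal matrix with $2$ on the diagonal and $-1$ on the off-diagonals, with classical eigenvalues $4\sin^2(j\pi/(2m_s))$; the estimate $\sin(\pi/(2m_s))\ge 1/m_s$ bounds each block condition number by $m_s^2$, and comparing volumes $m_s\cdot\min_j|\tau_j^{(k)}|\le|\tau_s^{(k-1)}|$ gives $m_s\le(H_{k-1}/(\delta H_k))^d$. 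The main obstacle is Step~2: the target scaling $(H_{k-1}/H_k)^2$ (as opposed to the catastrophic $1/H_k^4$ that chaining two uses of $\lambda_{\max}(A^{(k)})$ would produce) is only available because $v\in\W^{(k)}$ has vanishing integrals against every coarse indicator $\phi_i^{(k-1)}$, which lets Poincar\'e act at the coarse scale $H_{k-1}$ rather than the fine scale $H_k$; without this observation the subband condition numbers would blow up with $k$ and the multigrid complexity bound of Section~\ref{secnumimple} would collapse.
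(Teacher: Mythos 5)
Your proposal is correct, and for \eqref{eqcond2} it takes a genuinely different route from the paper's. The parts you share with the paper are \eqref{eqcond1}, the bound $\lambda_{\max}(A^{(k)})\leq \lambda_{\max}(a)2^{5+d}/(\delta^{2+d}H_k^2\ubar{\gamma}_k)$ obtained from \eqref{eqguygugu68lhs}, the volume bound $m_s\leq (H_{k-1}/(\delta H_k))^d$, and the block structure of $W^{(k)}W^{(k),T}$ (your explicit tridiagonal eigenvalues $4\sin^2(j\pi/(2m_s))$ with $\sin(\pi/(2m_s))\geq 1/m_s$ replace the paper's change-of-variables bound $\lambda_{\min}(M^{(n)})\geq 2/(n(n+1))$, giving the same $m_s^2$ scaling). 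The genuine divergence is the lower bound on $\lambda_{\min}(B^{(k)})$: the paper substitutes $c=B^{(k),-1}y$, introduces $N^{(k)}=-A^{(k)}W^{(k),T}B^{(k),-1}$ and the projection $P^{(k)}=\pi^{(k,k-1)}R^{(k-1,k)}$, and spends Lemmas \ref{lemfdhgdf}, \ref{lemdjoidjdi} and \ref{lemddjoj3ir}, together with a bound on $\lambda_{\max}(\pi^{(k,k-1)}\pi^{(k-1,k)})$, to control $\lambda_{\max}(N^{(k),T}N^{(k)})$, with the Poincar\'{e} mechanism acting on $\Theta^{(k)}$ restricted to $\Ker(\pi^{(k-1,k)})$. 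You instead factor the Rayleigh quotient of $B^{(k)}$ through $\tilde c=W^{(k),T}c\in\Ker(\pi^{(k-1,k)})$ and bound the restricted Rayleigh quotient of $A^{(k)}$ directly: $\tilde c_j=\int_{\tau_j^{(k)}}v$, $\pi^{(k-1,k)}\tilde c=0$ gives zero mean of $v$ on every coarse cell (exactly the characterization of $\W^{(k)}$ used in Lemma \ref{lemwk}), Cauchy--Schwarz gives $|\tilde c|^2\leq \bar{\gamma}_k\|v\|_{L^2(\Omega)}^2$, and Payne--Weinberger at scale $H_{k-1}$ yields $\mu\geq \pi^2\lambda_{\min}(a)/(\bar{\gamma}_kH_{k-1}^2)$; this is the same mechanism as Lemma \ref{lemddjoj3ir} but applied to $A^{(k)}$ rather than $\Theta^{(k)}$. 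Your shortcut avoids $B^{(k),-1}$, $N^{(k)}$ and $P^{(k)}$ entirely and in fact delivers a sharper constant (a single power of the contrast $\lambda_{\max}(a)/\lambda_{\min}(a)$ and lower powers of $1/\delta$), which implies the stated \eqref{eqcond2}. What the paper's longer detour buys is reuse downstream: $N^{(k)}$, $\|P^{(k)}\|_{\Ker(\pi^{(k-1,k)})}$ and the estimate \eqref{eqdihduhuq2he} are recycled in the localization analysis (Lemma \ref{lembase}, item (5), bounding $\|R^{(k-1,k)}\|_2$), whereas your argument, cleaner for this theorem in isolation, does not produce those by-products.
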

\begin{proof}
Let $k\in \{1,\ldots,q\}$ and $c\in \R^{\I^{(k)}}$. Write $v=\sum_{i\in \I^{(k)}} c_i \psi_i^{(k)}$. Observing that $\|v\|_a^2=c^T A^{(k)} c$ and
$\|\diiv(a\nabla v)\|_{L^2(\Omega)}^2=\|\sum_{i\in \I^{(k)}} (A^{(k)} c)_i \phi_i^{(k)}\|_{L^2(\Omega)}^2 \geq \ubar{\gamma}_k |A^{(k)} c|^2$, \eqref{eqguygugu68lhs} implies that $\ubar{\gamma}_k H_k^2 \delta^{2+d}/(\lambda_{\max}(a) 2^{5+d}) \leq c^T A^{(k)}c/ |A^{(k)} c|^2$, which, after taking the minimum in $c$ leads to (for $k\geq 1$)
\begin{equation}\label{eqkhiduhdf7d}
 \lambda_{\max}(A^{(k)})\leq \lambda_{\max}(a) 2^{5+d}/(H_{k}^2 \delta^{2+d}  \ubar{\gamma}_k ),
\end{equation}
and for $k\geq 2$ (using \eqref{eqjgfytfjhyyyg})
\begin{equation}\label{eqkhiduhde2df7d}
 \lambda_{\max}(B^{(k)})\leq  \lambda_{\max}(W^{(k)}W^{(k),T}) \lambda_{\max}(a) 2^{5+d}/(H_{k}^2 \delta^{2+d}  \ubar{\gamma}_k )\,.
\end{equation}
Similarly for $k=1$ \eqref{eqguygugu68} leads to  $\lambda_{\min}(A^{(1)})\geq \lambda_{\min}(a)/\bar{\gamma}_1$.
Now let us consider $k\in \{2,\ldots,q\}$ and $c\in \R^{\J^{(k)}}$. Write $w=\sum_{i\in \J^{(k)},\,j \in \I^{(k)}} c_i W_{i,j}^{(k)} \psi_j^{(k)}$.
\eqref{eqjkhdkdh} and \eqref{eqjgfytfjhyyyg} imply that $\|w\|_a^2=c^T B^{(k)} c$ and (using \eqref{eqgam1})\\  $\|\diiv(a\nabla w) \|_{L^2(\Omega)}=\| \sum_{i\in \J^{(k)},\,j \in \I^{(k)}}  (A^{(k)} W^{(k),T} c)_{j}  \phi_j^{(k)}\|_{L^2(\Omega)}\leq \bar{\gamma}_k |A^{(k)} W^{(k),T} c|^2$. Observing that $w\in \W^{(k)}$,
\eqref{eqguygugu68} implies that
$\frac{c^T B^{(k)} c}{c^T W^{(k)} (A^{(k)})^2 W^{(k),T} c} \leq \bar{\gamma}_k\,\frac{1}{ \lambda_{\min}(a)} H_{k-1}^2$.
Taking  $c=B^{(k),-1} y$ for $y\in \R^{\J^{(k)}}$ we deduce that
$ \frac{y^T B^{(k),-1} y}{|A^{(k)} W^{(k),T} B^{(k),-1} y|^2 } \leq \bar{\gamma}_k\,\frac{1}{ \lambda_{\min}(a)} H_{k-1}^2$.
Writing $N^{(k)}=-A^{(k)} W^{(k),T} B^{(k),-1}$, we have obtained that
\begin{equation}\label{eqjgjhdjhgdy}
\lambda_{\min}(a)/\big(H_{k-1}^2 \bar{\gamma}_k\lambda_{\max}(N^{(k),T}N^{(k)})\big) \leq \lambda_{\min}(B^{(k)})\,.
\end{equation}
For $k\in \{2,\ldots,q\}$ let $P^{(k)}:=\pi^{(k,k-1)} R^{(k-1,k)}$. Using $R^{(k-1,k)}=A^{(k-1)}\pi^{(k-1,k)}\Theta^{(k)}$ and
$\pi^{(k-1,k)}\Theta^{(k)}\pi^{(k,k-1)}=\Theta^{(k-1)}$ (Theorem \ref{thmhggfees5}) we obtain that $(P^{(k)})^2=P^{(k)}$, i.e. $P^{(k)}$ is a projection. Write $\|P^{(k)}\|_{\Ker(\pi^{(k-1,k)})}:=\sup_{x\in \Ker(\pi^{(k-1,k)})} |P^{(k)} x|/|x|$.

\begin{Lemma}\label{lemfdhgdf}
It holds true that for $k\in \{2,\ldots,q\}$,
\begin{equation}
\lambda_{\max}(N^{(k),T}N^{(k)})\leq \frac{1+\|P^{(k)}\|_{\Ker(\pi^{(k-1,k)})}^2}{\lambda_{\min}(W^{(k)} W^{(k),T})}\,.
\end{equation}
\end{Lemma}
\begin{proof}
Since $\Img(W^{(k),T})$ and $\Img(\pi^{(k,k-1)})$ are orthogonal and $\dim(\R^{\I^{(k)}})=\dim\big(\Img(W^{(k),T})\big)+\dim\big(\Img(\pi^{(k,k-1)})\big)$,
for $x\in \R^{\I^{(k)}}$ there exists a unique $y\in \R^{\J^{(k)}}$ and $z\in \R^{\I^{(k-1)}}$ such that
$x=W^{(k),T}y+\pi^{(k,k-1)} z$
and
$
|x|^2=|W^{(k),T}y|^2+|\pi^{(k,k-1)} z|^2.
$
Observe that $W^{(k)} x=W^{(k)}W^{(k),T}y$ (since $W^{(k)}\pi^{(k,k-1)}=0$) and  $R^{(k-1,k)}x =R^{(k-1,k)} W^{(k),T}y+z$ (since $R^{(k-1,k)}\pi^{(k,k-1)}=I^{(k-1)}$ from Theorem \ref{thmhggfees5}). Therefore,
$
|x|^2=|W^{(k),T} y|^2+|P^{(k)} (x-W^{(k),T}y)|^2
$
with $y=(W^{(k)} W^{(k),T})^{-1}W^{(k)} x$.
Let $v\in \R^{\J^{(k)}}$. Taking $x=A^{(k)}W^{(k),T}v$  and observing that $P^{(k)}x=0$ (since $R^{(k-1,k)}A^{(k)}W^{(k),T}=0$ from the $\<\cdot,\cdot\>_a$-orthogonality between $\V^{(k-1)}$ and $\W^{(k)}$)
 leads to
$
|A^{(k)}W^{(k),T}v|^2=|W^{(k),T} y |^2+ |P^{(k)} W^{(k),T}y |^2
$
with $y=(W^{(k)} W^{(k),T})^{-1} B^{(k)} v$. Therefore
$
|A^{(k)}W^{(k),T}v|^2\leq (1+\|P^{(k)}\|_{\Ker(\pi^{(k-1,k)})}^2)\frac{|B^{(k)}v|^2}{\lambda_{\min}(W^{(k)} W^{(k),T})},
$
which concludes the proof after taking $v=B^{(k),-1}v'$ and maximizing the l.h.s. over $|v'|=1$.
\end{proof}
\begin{Lemma}\label{lemdjoidjdi}
Writing $\|M\|_2:=sup_x |M x|/x$ the spectral norm, we have
\begin{equation}
\|P^{(k)}\|_{\Ker(\pi^{(k-1,k)})}^2 \leq \|\pi^{(k,k-1)}A^{(k-1)}\pi^{(k-1,k)}\|_2  \sup_{x\in \Ker(\pi^{(k-1,k)})}\frac{x^T \Theta^{(k)} x}{x^T x}
\end{equation}
\end{Lemma}
\begin{proof}
Let $x\in \Ker(\pi^{(k-1,k)})$. Using
$P^{(k)}=\pi^{(k,k-1)}A^{(k-1)}\pi^{(k-1,k)}\Theta^{(k)}$
 we obtain that
$
|P^{(k)}x|=\|\pi^{(k,k-1)}A^{(k-1)}\pi^{(k-1,k)}(\Theta^{(k)})^\frac{1}{2}\|_2 |(\Theta^{(k)})^\frac{1}{2} x|
$.
Observing that   for \\$M=\pi^{(k-1,k)}(\Theta^{(k)})^\frac{1}{2}$ we have $M M^T=\Theta^{(k-1)}$ and for $N=\pi^{(k,k-1)}A^{(k-1)}\pi^{(k-1,k)}(\Theta^{(k)})^\frac{1}{2}$ we have $\lambda_{\max}(N^T N)=\lambda_{\max}(N N^T)$ we deduce
\\$\|\pi^{(k,k-1)}A^{(k-1)}\pi^{(k-1,k)}(\Theta^{(k)})^\frac{1}{2}\|_2^2=\|\pi^{(k,k-1)}A^{(k-1)}\pi^{(k-1,k)}\|_2$ and conclude by taking the supremum over
$x\in \Ker(\pi^{(k-1,k)})$.
\end{proof}
\begin{Lemma}\label{lemddjoj3ir}
It holds true that
\begin{equation}
 \sup_{x\in \Ker(\pi^{(k-1,k)})}\frac{x^T \Theta^{(k)} x}{x^T x} \leq    H_{k-1}^2   \frac{\bar{\gamma}_k^2}{\ubar{\gamma}_k\pi^2 \lambda_{\min}(a)}
\end{equation}
\end{Lemma}
\begin{proof}
Let $y\in \R^{\J^{(k)}}$ and $\alpha \in \R$. Let $x=\alpha W^{(k),T} y$. Write $\phi=\sum_{i\in \I^{(k)}} x_i \phi_i^{(k)}$ and $\psi=(-\diiv(a\nabla \cdot))^{-1} \phi$. Observe that $ \|\psi\|^2_a=x^T \Theta^{(k)} x \geq \alpha y^T W^{(k)}\Theta^{(k)} W^{(k),T}y $.  Using $\int_{\Omega}\phi_i^{(k)} \phi_l^{(k)} =0$ for $i\not=l$ and selecting $\alpha=\|\phi_i^{(k)}\|_{L^2(\Omega)}^{-2}$ (assuming, without loss of generality,  that $\|\phi_i^{(k)}\|_{L^2(\Omega)}^2=|\tau_i^{(k)}|$ is constant in $i$, for the general case,  rescale each $\phi_i^{(k)}$ by a multiplicative constant) we obtain that for $j\in \I^{(k-1)}$,
$\int_{\Omega}\phi \phi_j^{(k-1)} =\sum_{i\in \I^{(k)}} x_i  \|\phi_i^{(k)}\|_{L^2(\Omega)}^2 \pi^{(k-1,k)}_{j,i}=(\pi^{(k-1,k)} W^{(k),T} y)_j=0
$.
Therefore, since $\|\psi\|^2_a=\int_{\Omega}\phi \psi$,  we have for $\psi'\in \operatorname{span}\{\phi_i^{(k-1)}\mid i\in \I^{(k-1)}\}$
$
\|\psi\|^2_a =\int_{\Omega}\phi (\psi-\psi') \leq \|\phi\|_{L^2(\Omega)} \|\psi-\psi'\|_{L^2(\Omega)}
$.
Choosing $\psi'= \sum_{i\in \I^{(k-1)}} \phi_i^{(k-1)} \int_{\Omega}\psi \phi_i^{(k-1)}/ \|\phi_i^{(k-1)}\|_{L^2(\Omega)}^2$ we obtain (via Poincar\'{e} and Cauchy-Schwartz inequalities as in the proof of Proposition \ref{prop:gegddgdjdef}) that
$\|\psi-\psi'\|_{L^2(\Omega)} \leq  H_{k-1} \|\psi\|_a/(\pi \sqrt{\lambda_{\min}(a)})$ and deduce $\|\psi\|_a \leq  H_{k-1} \|\phi\|_{L^2(\Omega)} /(\pi \sqrt{\lambda_{\min}(a)})$.
 Observing that $\|\phi\|_{L^2(\Omega)}^2\leq |x|^2 \bar{\gamma}_k$  and $ \ubar{\gamma}_k \leq \alpha^{-1} \leq \bar{\gamma}_k$ we
summarize and obtain that\\
$
y^T W^{(k)}\Theta^{(k)} W^{(k),T}y \leq   H_{k-1}^2 |x|^2 \bar{\gamma}_k^2 /(\pi^2 \lambda_{\min}(a)) \leq      H_{k-1}^2  |W^{(k),T}y|^2 \bar{\gamma}_k^2 /(\ubar{\gamma}_k\pi^2 \lambda_{\min}(a))
$, which concludes the proof of the lemma (since $\Ker(\pi^{(k-1,k)})=\Img(W^{(k),T})$).
\end{proof}

Observing that $ \|\pi^{(k,k-1)}A^{(k-1)}\pi^{(k-1,k)}\|_2 \leq \lambda_{\max}(\pi^{(k,k-1)}\pi^{(k-1,k)}) \lambda_{\max}(A^{(k-1)})$ and using \eqref{eqkhiduhdf7d}, we derive from lemmas \ref{lemdjoidjdi} and \ref{lemddjoj3ir} that
\begin{equation}\label{eqdihduhuq2he}
\|P^{(k)}\|_{\Ker(\pi^{(k-1,k)})}^2 \leq \lambda_{\max}(\pi^{(k,k-1)}\pi^{(k-1,k)})      \frac{  \bar{\gamma}_k^2  2^{5+d} \lambda_{\max}(a)}{  \ubar{\gamma}_k \ubar{\gamma}_{k-1} \delta^{2+d}   \pi^2 \lambda_{\min}(a)}\,.
\end{equation}
Observing that $\pi^{(k,k-1)}\pi^{(k-1,k)}$ is block-diagonal and using the notations of Definition \ref{defindextree} we have
 $\lambda_{\max}(\pi^{(k,k-1)}\pi^{(k-1,k)})=\max_{j\in \I^{(k-1)}}\sup_{x\in \R^{m_j}} |\sum_{i=1}^{m_j} x_i|^2/|x|^2=\max_{j\in \I^{(k-1)}} m_j$. Noting that a set $\tau_j^{(k-1)}$ can contain at most $(\max_{j\in \I^{(k-1)}} |\tau_j^{(k-1)}|)/ (\min_{i\in \I^{(k)}} |\tau_i^{(k)}|)$ subsets $\tau_i^{(k)}$ we have
 \begin{equation}\label{eqddkhjji}
 \max_{j\in \I^{(k-1)}} m_j \leq \big(H_{k-1}/(\delta H_k)\big)^d
 \end{equation}
  and conclude that $\lambda_{\max}(\pi^{(k,k-1)}\pi^{(k-1,k)}) \leq \big(H_{k-1}/(\delta H_k)\big)^d$.
 Therefore \eqref{eqjgjhdjhgdy} and Lemma \ref{lemfdhgdf} imply, after simplification, that
\begin{equation}\label{eqjgjhdjghgfhgdy}
 \lambda_{\min}(B^{(k)}) \geq \frac{\lambda_{\min}(a)}{H_{k-1}^2 \bar{\gamma}_k} \lambda_{\min}(W^{(k)} W^{(k),T}) \frac{ H_{k}^{d} \ubar{\gamma}_{k-1} \ubar{\gamma}_k \delta^{2+2d}   \pi^2 \lambda_{\min}(a)}{ H_{k-1}^{d} \bar{\gamma}_k^2  2^{6+d} \lambda_{\max}(a)}\,.
\end{equation}
Recalling that $\bar{\gamma}_k/\ubar{\gamma}_k \leq \delta^{-d}$, using $\bar{\gamma}_k/\ubar{\gamma}_{k-1}\leq H_k^d/ (H_{k-1} \delta)^d$, and summarizing we conclude the proof of \eqref{eqcond1} and \eqref{eqcond2}. Recall that under construction \ref{const2} we have $W^{(k)}W^{(k),T}=J^{(k)}$ which implies $\operatorname{Cond}(W^{(k)}W^{(k),T})=1$.  Under
 construction \ref{const1}, $W^{(k)}W^{(k),T}$ is block diagonal with for $j\in \I^{(k-1)}$, diagonal blocks corresponding to $(m_j-1)\times (m_j-1)$ matrices $M^{(m_j-1)}$ such that (1) for $n=1$ and $x\in \R$, $x^T M^{(1)}x=2 x^2$ (2) for $n=2$ and $x\in \R^2$, $x^T M^{(2)}x=x_1^2+(x_2-x_1)^2+x_2^2$ and (3) for
 for $n\geq 3$, and $x\in \R^n$, $x^T M^{(n)} x=x_1^2+\sum_{i=1}^{n-2} (x_{i}-x_{i+1})^2+ x_{n}^2$. Note that for all $n\geq 1$, $\lambda_{\max}(M^{(n)})\leq 3$. Furthermore, for $n\geq 3$ ($n\leq 2$ is trivial), introducing the variables $y_2=x_2-x_1,\ldots,y_n=x_n-x_{n-1}$ we obtain that
 $x^T M^{(n)} x=x_1^2+y_2^2+\cdots+y_n^2+ x_n^2$ and $|x|^2=x_1^2+(x_1+y_2)^2+\cdots+(x_1+y_2+\cdots+y_n)^2 \leq (x^T M^{(n)} x) n(n+1)/2$. Therefore,
 $\lambda_{\min}(M^{(n)})\geq  2/(n(n+1))$. We conclude that under   construction \ref{const1} $\operatorname{Cond}(W^{(k)}W^{(k),T})\leq \max_{j\in \I^{(k-1)}} 3 (m_j-1)m_j/2$ and bound $m_j$ as in \eqref{eqddkhjji}.
\end{proof}

\subsection{Well conditioned relaxation across subscales}\label{subseccg}
If $H_k$ is a geometric sequence or if $H_{k-1}/H_k$ is uniformly bounded, then, by Theorem \eqref{thmodhehiudhehd}, the linear systems  (\eqref{eqsdjoejddi1} and \eqref{eqsdjoejddi2})  entering in the calculation of the gamblets $\chi^{(k)}_i$ (and therefore $\psi_i^{(k)}$) and the subband/subscale solutions $u^{(1)}$ and $u^{(k+1)}-u^{(k)}$ have uniformly bounded condition numbers (in particular, these condition numbers are bounded independently from  mesh size/resolution and the regularity of $a(x)$).
Therefore these systems can be solved efficiently using iterative methods.
One such methods is the Conjugate Gradient (CG) method \cite{HestenesStiefel1952}.
Recall \cite{Shewchuk1994} that the application of  the CG method to a linear system $A x=b$ (where $A$ is a $n\times n$ symmetric positive definite matrix) with initial guess $x^{(0)}$, yields a sequence of approximations $x^{(l)}$ satisfying (writing $|e|_A^2:=e^T A e$)
$|x-x^{(l)}|_A \leq 2 \Big(\frac{\sqrt{\Cond(A)}-1}{\sqrt{\Cond(A)}+1}\Big)^l|x-x^{(0)}|_A$
where $\Cond(A):=\lambda_{\max}(A)/\lambda_{\min}(A)$.
Recall \cite{Shewchuk1994} also that the maximum number of iterations required to reduce the error by a factor $\epsilon$ ($|x-x^{(l)}|_A \leq \epsilon |x-x^{(0)}|_A$) is bounded by $\frac{1}{2}\sqrt{\Cond(A)}\ln \frac{2}{\epsilon}$ and has complexity (number of required arithmetic operations)
$\mathcal{O}(\sqrt{\Cond(A)} N_A)$ (writing $N_A$ the number of non-zero entries of $A$).

\subsection{Hierarchical localization and error propagation across scales}\label{sechierarloc}
Although the multi-resolution decomposition presented  in this section leads to well conditioned linear systems, the resulting matrices $B^{(k)}$ and $A^{(k)}$ are dense and to achieve near-linear complexity in the resolution of  \eqref{eqn:scalar} these matrices must be truncated by localizing the computation of the basis functions $\psi_i^{(k)}$ (and therefore $\chi_i^{(k)}$).
The approximation error induced by these localization/truncation steps is controlled by the exponential decay of gamblets and the uniform bound on the condition numbers of the matrices $B^{(k)}$. To make this control explicit and derive a bound the size of the localization sub-domains we need to quantify the propagation of truncation/localization errors across scales and this is the purpose of this subsection.

For $k\in \{1,\ldots,q\}$, $\rho\geq 1$ and $i\in \I^{(k)}$ we define (1) $i^{\rho}$ as the subset of indices $j\in \I^{(k)}$ whose corresponding subdomains $\tau_j^{(k)}$ are at distance at most $H_k \rho$ from $\tau_i^{(k)}$ and (2) $S^i_\rho:=\cup_{j\in i^{\rho}}\tau_j^{(k)}$.
Let $\rho_1,\ldots,\rho_q\geq 1$. For $k\in \{1,\ldots,q-1\}$ and $i\in \I^{(k)}$, write $i^{\rho,k+1}$ as the subset of indices $j\in \I^{(k+1)}$ such that
$j^{(k)}\in i^{\rho}$.
For $i\in \I^{(q)}$ let $\V^{(q+1),\loc}_i:=H^1_0(S^i_{\rho_q})$. For $k\in \{1,\ldots,q\}$ and $i\in \I^{(k)}$, let $\psi^{(k),\loc}_i$ be the minimizer of
\begin{equation}\label{eqpsiikloc}
\text{Minimize } \|\psi\|_a  \text{ subject to } \psi\in \V^{(k+1),\loc}_i \text{ and } \int_{\Omega}\psi \phi_j^{(k)}=\delta_{i,j} \text{ for }j\in i^{\rho_k}
\end{equation}
where for $k\in \{1,\ldots,q-1\}$ and $i\in \I^{(k)}$, $\V^{(k+1),\loc}_i$ is defined (via induction) by $\V^{(k+1),\loc}_i:=\Span\{\psi_j^{(k+1),\loc}\mid j\in i^{\rho_k,k+1}\}$.

From now on we will assume that $H_k=H^k$ for some $H\in (0,1)$  (or simply that $H_k$ is uniformly bounded from below and above by $H^k$).
To simplify the presentation,  we will also, from now on, write $C$ any constant that depends only  $d, \Omega, \lambda_{\min}(a), \lambda_{\max}(a), \delta$  (e.g., $2 C \lambda_{\max}(a)$ will still be written $C$). The following theorem allows us to control the localization error propagation across scales.
 For $k\in \{1,\ldots,q\}$, let $A^{(k),\loc}$ be the $\I^{(k)}\times \I^{(k)}$ matrix defined by $A^{(k),\loc}_{i,j}:=\<\psi^{(k),\loc}_i,\psi^{(k),\loc}_j\>_a$ and let $\er(k)$ be the (localization) error
 $\er(k):=\big(\sum_{i\in \I^{(k)}} \|\psi_i^{(k)}-\psi_i^{(k),\loc}\|_a^2\big)^\frac{1}{2}$.

\begin{Theorem}\label{thmerrorpropagation}
For $k\in \{1,\ldots,q-1\}$, we have
\begin{equation}
\er(k)\leq C H^{-\frac{d}{2}} \er(k+1)+ C  e^{-\rho_{k}/C}  H^{\frac{d}{2}-(k+1)(d+1)}.
\end{equation}
\end{Theorem}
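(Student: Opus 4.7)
The plan is to split $\er(k)$ into a pure localization error at level $k$ (imagining the exact basis $\{\psi_j^{(k+1)}\}$ were available) plus a propagated error coming from having to use $\{\psi_j^{(k+1),\loc}\}$ in place of $\{\psi_j^{(k+1)}\}$ in the recursive definition of $\psi_i^{(k),\loc}$. To this end I introduce the intermediate ``partially localized'' gamblet $\bar{\psi}_i^{(k)}$, defined as the minimizer of $\|\psi\|_a$ over $\psi\in\Span\{\psi_j^{(k+1)}\mid j\in i^{\rho_k,k+1}\}$ subject to $\int_\Omega \psi\,\phi_j^{(k)}=\delta_{i,j}$ for $j\in i^{\rho_k}$, write
\[
\psi_i^{(k)}-\psi_i^{(k),\loc}=\bigl(\psi_i^{(k)}-\bar{\psi}_i^{(k)}\bigr)+\bigl(\bar{\psi}_i^{(k)}-\psi_i^{(k),\loc}\bigr),
\]
and use the $\ell^2$ triangle inequality to reduce bounding $\er(k)$ to estimating each sum separately.

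For the first summand I would repeat the argument of Theorem~\ref{thm:hieuhdds} at scale $H_k$ rather than $H_1$. The gamblet $\psi_i^{(k)}$ enjoys exponential decay away from $\tau_i^{(k)}$ at rate $\mathcal{O}(1/(lH_k))$: the Caccioppoli/cut-off proof of Theorem~\ref{thm:expdecay} transposes verbatim after replacing $\psi_i$ by $\psi_i^{(k)}$ and $\tau_j$ by $\tau_j^{(k)}$, using that $\int_{\tau_j^{(k)}}\psi_i^{(k)}=\delta_{i,j}$ and that $-\diiv(a\nabla\psi_i^{(k)})$ is piecewise constant on the partition $(\tau_j^{(k)})$. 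Combined with the bound $\|\psi_i^{(k)}\|_a\leq C H^{-k(d/2+1)}$ (analogue of Lemma~\ref{lem:dihidue23} at scale $H_k$) and the same telescoping computation used in the proof of Theorem~\ref{thm:hieuhdds}, one obtains $\|\psi_i^{(k)}-\bar{\psi}_i^{(k)}\|_a \leq C\, e^{-\rho_k/C}\, H^{-(k+1)(d/2+1)}$; squaring, summing the $|\I^{(k)}|\leq C H^{-kd}$ terms and collecting exponents produces exactly the $C\,e^{-\rho_k/C}\,H^{d/2-(k+1)(d+1)}$ contribution.

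For the second summand, $\bar{\psi}_i^{(k)}$ and $\psi_i^{(k),\loc}$ solve the \emph{same} finite-dimensional constrained quadratic program, the only difference being that the stiffness entries of the underlying basis are $\<\psi_j^{(k+1)},\psi_{j'}^{(k+1)}\>_a$ versus $\<\psi_j^{(k+1),\loc},\psi_{j'}^{(k+1),\loc}\>_a$ and that the constraint functionals $\int \psi_j^{(k+1)}\phi_l^{(k)}=\pi^{(k,k+1)}_{l,j}$ are similarly perturbed by $\int(\psi_j^{(k+1)}-\psi_j^{(k+1),\loc})\phi_l^{(k)}$. Expanding $\bar{\psi}_i^{(k)}=\sum_j \alpha_j\psi_j^{(k+1)}$ and $\psi_i^{(k),\loc}=\sum_j \beta_j\psi_j^{(k+1),\loc}$, a Cea-type stability estimate for constrained minimizers (with respect to perturbations of both the bilinear form and the constraint linear map) bounds $\|\bar\psi_i^{(k)}-\psi_i^{(k),\loc}\|_a$ by $\|\alpha\|_\infty\cdot\max_{j\in i^{\rho_k,k+1}}\|\psi_j^{(k+1)}-\psi_j^{(k+1),\loc}\|_a$ plus lower-order cross terms. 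The coefficients $\alpha_j$ are controlled by inverting the local stiffness matrix, whose smallest eigenvalue is bounded below uniformly in $k$ by Theorem~\ref{thmodhehiudhehd}; the only surviving scale factor is the $L^2$-to-coefficient equivalence $\bar{\gamma}_{k+1}^{-1/2}\sim H^{-(k+1)d/2}$, which after summing over $i\in \I^{(k)}$ and comparing the power of $H$ to that of $\er(k+1)$ reduces to the claimed $C H^{-d/2}$ prefactor.

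The main obstacle is the coefficient-stability step: one has to verify that the data-to-minimizer map of the local constrained problem~\eqref{eqpsiikloc} is Lipschitz with a constant involving only the uniformly-bounded condition number of Theorem~\ref{thmodhehiudhehd} and the volume factors $\bar\gamma_{k+1},\ubar\gamma_{k+1}$, with no hidden factor of $H^{-1}$ accumulating as $k$ decreases. This requires some care because the localized basis $\{\psi_j^{(k+1),\loc}\}$ only approximately obeys the orthogonality/nesting identities that drive the clean conditioning estimates of Theorem~\ref{thmodhehiudhehd}, so a perturbation argument controlling $A^{(k+1),\loc}-A^{(k+1)}$ (restricted to the local index set $i^{\rho_k,k+1}$) by a suitable combination of $\er(k+1)$ and $\|\psi^{(k+1)}\|_a$ must be invoked before the perturbation bound can be fed into the Cea estimate.
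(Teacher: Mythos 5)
Your first summand is essentially sound: bounding $\sum_{i\in\I^{(k)}}\|\psi_i^{(k)}-\bar{\psi}_i^{(k)}\|_a^2$ by exponential decay of level-$k$ gamblets plus the energy bound $\|\psi_j^{(k+1)}\|_a\le CH^{-(k+1)(d/2+1)}$ is exactly the content the paper packages into Lemma \ref{lemfyfyfyvh} (note only that Theorem \ref{thm:hieuhdds} does not transpose ``verbatim'', since your admissible set is $\Span\{\psi_j^{(k+1)}\mid j\in i^{\rho_k,k+1}\}$ rather than $H^1_0(S_r)$; one argues through the competitor $\sum_{j\in i^{\rho_k,k+1}}R^{(k,k+1)}_{i,j}\psi_j^{(k+1)}$ and the tail of $R^{(k,k+1)}$). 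The genuine gap is your second summand. There you must compare the minimizers of two \emph{different} constrained quadratic programs (Gram matrix $A^{(k+1)}$ versus $A^{(k+1),\loc}$ on the local index set), and the Cea-type stability estimate that would deliver the level-independent prefactor $CH^{-d/2}$ is precisely what you leave unproved -- you yourself call it ``the main obstacle''. Moreover the conditioning input you cite is the wrong one: Theorem \ref{thmodhehiudhehd} bounds $\Cond(A^{(1)})$ and $\Cond(B^{(k)})$, not the level-$(k+1)$ Gram matrix $A^{(k+1)}$ (or its principal submatrices) that controls your coefficients $\alpha_j,\beta_j$; $\Cond(A^{(k+1)})$ grows with the level (this is why the paper works with $B^{(k)}$ at all), so a naive perturbation bound yields a $k$-dependent propagation factor, and any such growth, fed into the iteration of Theorem \ref{thmdjjuud}, would force larger localization radii than $\rho_k=O\big(\ln\max(\epsilon^{-1},H^{-k})\big)$ and break the complexity estimates. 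Since obtaining exactly $CH^{-d/2}$, uniformly in $k$, is the whole content of the theorem, asserting it without proof is a real gap, not a technicality. (A minor remark: in the continuous setting the constraint functionals are in fact \emph{not} perturbed -- by induction $\int_\Omega\psi_j^{(k+1),\loc}\phi_s^{(k+1)}=\delta_{j,s}$ for all $s$ -- so only the quadratic form changes; this simplifies but does not remove the difficulty.)

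The paper avoids comparing two minimizers altogether, and your argument can be repaired the same way: by the Pythagorean identity \eqref{eq:hdkhdkjh33e}, $\psi_i^{(k),\loc}$ is the $\|\cdot\|_a$-best approximation of $\psi_i^{(k)}$ itself within its admissible set, so it suffices to exhibit a single admissible competitor. Take $\psi^*=\sum_{j\in i^{\rho_k,k+1}}R^{(k,k+1)}_{i,j}\psi_j^{(k+1),\loc}$, admissible because $R^{(k,k+1)}\pi^{(k+1,k)}=I^{(k)}$ (Theorem \ref{thmhggfees5}) and the localized level-$(k+1)$ gamblets satisfy the level-$(k+1)$ constraints. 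Then $\|\psi_i^{(k),\loc}-\psi_i^{(k)}\|_a\le\|\psi^*-\psi_i^{(k)}\|_a$, and $\psi^*-\psi_i^{(k)}$ splits into $\sum_{j\in\I^{(k+1)}}R^{(k,k+1)}_{i,j}(\psi_j^{(k+1),\loc}-\psi_j^{(k+1)})$, whose squared sum over $i$ is bounded by $\|R^{(k,k+1)}\|_2^2\,\er(k+1)^2$ with $\|R^{(k,k+1)}\|_2\le CH^{-d/2}$ (Lemma \ref{lembase}), and the discarded tail $\sum_{j\in\I^{(k+1)}\setminus i^{\rho_k,k+1}}R^{(k,k+1)}_{i,j}\psi_j^{(k+1),\loc}$, handled by Lemma \ref{lemfyfyfyvh} together with $\sqrt{\Tr[A^{(k+1),\loc}]}\le\er(k+1)+\sqrt{\Tr[A^{(k+1)}]}$. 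This produces both terms of the stated inequality with no stability analysis of the data-to-minimizer map, which is the step your proposal is missing.
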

\begin{proof}
We will need the following lemma summarizing and simplifying some results obtained in Theorem \ref{thmuuhiuhddu} when $H_k=H^k$.
\begin{Lemma}\label{lembase}
Let $H_k=H^{k}$ and $W^{(k)}$ be as in Construction \ref{const1} or Construction \ref{const2}.
It holds true that for $k\in \{q,\ldots,2\}$ (1) $\|W^{(k)}\|_2\leq \sqrt{3}$ (2) $1/\lambda_{\min}(W^{(k)} W^{(k),T}) \leq C H^{-2d}$
 (3) $\|\bar{\pi}^{(k-1,k)}\|_2\leq C H^\frac{d}{2}$ (4)  $\|\pi^{(k-1,k)}\|_2 \leq  C H^{-d/2}$
(5) $\|R^{(k-1,k)}\|_2 \leq C H^{-d/2}$ (6) $\Cond(B^{(k)})\leq C H^{-2-2d}$ (7)  $ \lambda_{\max}(B^{(k)})\leq C H^{-k(2+d)}$\\ (8) $ 1/\lambda_{\min}(B^{(k)})\leq C H^{k (2+d)-2-2d}$. Furthermore, (9) $\operatorname{Cond}(A^{(1)})\leq C H^{-2}$ (10) $1/\lambda_{\min}(A^{(1)})\leq C H^d$
and for $k\in \{1,\ldots,q\}$ (11) $\lambda_{\max}(A^{(k)})\leq C H^{-k(2+d)}$.
\end{Lemma}
\begin{proof}
From the proof of Theorem \ref{thmodhehiudhehd} we have (1) and
$1/\lambda_{\min}(W^{(k)} W^{(k),T})\leq$ \\$ \max_{i\in \I^{(k-1)}}  (m_i-1)m_i/2$, which implies (2). For (3),
noting that $\bar{\pi}^{(k-1,k)}_{i,j}=0$ if $j^{(k-1)}\not=i$ and $\bar{\pi}^{(k-1,k)}_{i,j}=1/m_i$ otherwise, we have $\|\bar{\pi}^{(k-1,k)}\|_2=\max_{i\in \I^{(k-1)}}1/\sqrt{m_i}\leq C H^\frac{d}{2}$. (4) follows from $\lambda_{\max}(\pi^{(k,k-1)}\pi^{(k-1,k)})= \max_{i\in \I^{(k-1)}} m_i\leq C H^{-d}$.
Let us now prove (5). Using \eqref{eqhuhiddeuv}, \eqref{eqdkdjhdkhse} and defining $N^{(k)}=-A^{(k)} W^{(k),T} B^{(k),-1}$ as in Lemma \ref{lemfdhgdf}, we have
$R^{(k-1,k)}= \bar{\pi}^{(k-1,k)}+ \bar{\pi}^{(k-1,k)}N^{(k)}  W^{(k)}$, which leads to
$\|R^{(k-1,k)}\|_2 \leq \|\bar{\pi}^{(k-1,k)}\|_2 (1+ \|N^{(k)}\|_2 \|W^{(k)}\|_2)$.
Using Lemma \ref{lemfdhgdf} and  \eqref{eqdihduhuq2he} we obtain that $\lambda_{\max}(N^{(k),T}N^{(k)})\leq \big(1+C \lambda_{\max}(\pi^{(k,k-1)}\pi^{(k-1,k)}) H^{d}\big)/\lambda_{\min}(W^{(k)} W^{(k),T})$ and therefore $\|N^{(k)}\|_2 \leq C H^{-d}$. Summarizing we have obtained (5).
(6), (7), (8) and (11) follow from   Theorem \ref{thmuuhiuhddu}  and in particular \eqref{eqkhiduhde2df7d}, \eqref{eqjgjhdjghgfhgdy} and \eqref{eqkhiduhdf7d}. See \eqref{eqcond1} and the proof of Theorem \ref{thmodhehiudhehd} for (9) and (10).
\end{proof}
We will also need the following lemma.
\begin{Lemma}\label{lemfyfyfyvh}
Let $k\in \{1,\ldots,q-1\}$ and let $R$ be the $\I^{(k)}\times \I^{(k+1)}$ matrix defined by and $R_{i,j}=0$ for $j \in i^{\rho_k,k+1}$ and $R_{i,j}=R_{i,j}^{(k,k+1)}$ for $j \in \I^{(k+1)}/i^{\rho_k,k+1}$. It holds true that $\|R\|_2 \leq  C H^{d/2}  e^{- \rho_k/C }$.
\end{Lemma}
\begin{proof}
Observe that $\|R\|_2^2 \leq |\I^{(k)}| \max_{i\in \I^{(k)}}\sum_{j\in \I^{(k+1)}/i^{\rho_k,k+1}} |R_{i,j}^{(k,k+1)}|^2$ with $|\I^{(k)}|\leq C H_k^{-d}$.
Let $i\in \I^{(k)}$. Using Theorem \ref{eqhjgjhgjgjg} and Cauchy-Schwartz inequality we have $|R_{i,j}^{(k,k+1)}|\leq  \|\psi^{(k)}_i\|_{L^2(\tau_j^{(k+1)})} \|\phi_j^{(k+1)}\|_{L^2(\tau_j^{(k+1)})}$. Therefore
 $\sum_{j\in \I^{(k+1)}/i^{\rho_k,k+1}} |R_{i,j}^{(k,k+1)}|^2 \leq C H_{k+1}^d \sum_{j\in \I^{(k+1)}/i^{\rho_k,k+1}} \|\psi^{(k)}_i\|_{L^2(\tau_j^{(k+1)})}^2$.
 Observe that $\sum_{j\in \I^{(k+1)}/i^{\rho_k,k+1}} \|\psi^{(k)}_i\|_{L^2(\tau_j^{(k)})}^2=\sum_{s\in \I^{(k)}/i^{\rho_k}} \|\psi^{(k)}_i\|_{L^2(\tau_s^{(k)})}^2$. Since $\int_{\tau_s^{(k)}}\psi^{(k)}_i =0$ for $s\not=i$ we obtain
 from  Poincar\'{e}'s inequality that $\|\psi^{(k)}_i\|_{L^2(\tau_s)}\leq C \|\nabla \psi^{(k)}_i\|_{L^2(\tau_s^{(k)})} H_k $. Therefore
 $\sum_{j\in \I^{(k+1)}/i^{\rho_k,k+1}} |R_{i,j}^{(k,k+1)}|^2 \leq C H_{k+1}^d H_k^2 \sum_{s\in \I^{(k)}/i^{\rho_k}}  \|\nabla \psi^{(k)}_i\|_{L^2(\tau_s^{(k)})}^2$.
Using Theorem \ref{thm:expdecay} we obtain that\\ $\sum_{s\in \I^{(k)}/i^{\rho_k}}  \|\nabla \psi^{(k)}_i\|_{L^2(\tau_s^{(k)})}^2 \leq C e^{-C^{-1} \rho_k} \|\psi_i^{(k)}\|_a^2$. Using \eqref{eqgguguyg6} we have $\|\psi_i^{(k)}\|_a^2\leq C H_k^{-d-2}$, therefore
$\sum_{j\in \I^{(k+1)}/i^{\rho_k,k+1}} |R_{i,j}^{(k,k+1)}|^2 \leq  C H^d  e^{-C^{-1} \rho_k}$.
\end{proof}
Let us now prove Theorem \ref{thmerrorpropagation}.
We obtain by induction (using the constraints in \eqref{eqpsiikloc}) that for $k\in \{1,\ldots,q\}$ and $i\in \I^{(k)}$, $\psi_i^{(k),\loc}$ satisfies the constraints of \eqref{eq:dfddeytfewdaisq}. Moreover  \eqref{eq:hdkhdkjh33e} implies that if $\psi$ satisfies the constraints of \eqref{eq:dfddeytfewdaisq} then
$\|\psi\|_a^2= \|\psi_i^{(k)}\|_a^2+\|\psi- \psi_i^{(k)}\|_a^2$.
Therefore, for $k\in \{2,\ldots,q-1\}$,  $\psi_i^{(k),\loc}$ is also the minimizer of $\|\psi- \psi_i^{(k)}\|_a$ over functions $\psi$ of the form $\psi=\sum_{j \in i^{\rho_k,k+1}} c_j \psi_j^{(k+1),\loc}$ satisfying the constraints of \eqref{eqpsiikloc}. Thus, writing $\psi^*:= \sum_{j \in i^{\rho_k,k+1}} R_{i,j}^{(k,k+1)} \psi_j^{(k+1),\loc} $,  we have (since $\psi^*$ satisfies the constraints of \eqref{eqpsiikloc})
$\| \psi_i^{(k),\loc}-\psi_i^{(k)}\|_a\leq \| \psi^*-\psi_i^{(k)}\|_a$. Write
  $\psi_1:= \sum_{j \in \I^{(k+1)}} R_{i,j}^{(k,k+1)} \psi_j^{(k+1),\loc} $ and  $\psi_2:=\sum_{j \in \I^{(k+1)}/i^{\rho_k,k+1}} R_{i,j}^{(k,k+1)} \psi_j^{(k+1),\loc}$.
Observing that $\psi^*=\psi_1-\psi_2$ we deduce that $\| \psi_i^{(k),\loc}-\psi_i^{(k)}\|_a^2 \leq 2 \| \psi_1-\psi_i^{(k)}\|_a^2 + 2 \| \psi_2\|_a^2$ and after summing over $i$,
$\big(\er(k)\big)^2\leq 2(I_1+I_2)$ with $I_1= \sum_{i \in  \I^{(k)}} \|\sum_{j \in  \I^{(k+1)}} R_{i,j}^{(k,k+1)} (\psi_j^{(k+1)}-\psi_j^{(k+1),\loc})\|_a^2$
and $I_2=\sum_{i \in  \I^{(k)}}  \|\sum_{j \in \I^{(k+1)}/i^{\rho_k,k+1}} R_{i,j}^{(k,k+1)} \psi_j^{(k+1),\loc}\|_a^2$.
Writing $S$ the $\I^{(k+1)}\times \I^{(k+1)}$ symmetric positive matrix with entries $S_{i,j}=\<\psi_i^{(k+1)}-\psi_i^{(k+1),\loc},\psi_j^{(k+1)}-\psi_j^{(k+1),\loc}\>_a$, note that $I_1=\Tr[R^{(k,k+1)}S R^{(k+1,k)}]$. Writing $S^\frac{1}{2}$ the matrix square root of $S$, observe that for a matrix $U$, using the cyclic property of the trace, $\Tr[U S U^T]=\Tr[S^\frac{1}{2} U^T U S^\frac{1}{2}]\leq \lambda_{\max}(U^T U) \Tr[S]$, which (observing that $\Tr[S]=(\er(k+1))^2$ and $\lambda_{\max}(U^T U)=\|U\|_2^2$) implies
$I_1 \leq \|R^{(k,k+1)}\|_2^2 \big(\er(k+1)\big)^2$. Therefore (using Lemma \ref{lembase}) we have  $\sqrt{I_1} \leq C H^{\frac{d}{2}} \er(k+1)$.
Let us now bound $I_2$. Let $R$ be defined as in Lemma \ref{lemfyfyfyvh}. Noting that $\<\psi_i^{(k+1),\loc},\psi_j^{(k+1),\loc}\>_a=A^{(k+1),\loc}_{i,j}$ we have (as above)
$I_2=\Tr[R A^{(k+1),\loc} R^T]\leq \lambda_{\max}(R^T R) \Tr[A^{(k+1),\loc}]$. Summarizing and using Lemma \ref{lemfyfyfyvh} we deduce that
$\er(k)\leq C H^{\frac{d}{2}} \er(k+1)+ C H^{\frac{d}{2}} e^{-\rho_{k}/C}  \sqrt{\Tr[A^{(k+1),\loc}]}$. Observing that
$\sqrt{\Tr[A^{(k+1),\loc}]}\leq \er(k+1)+\sqrt{\Tr[A^{(k+1)}]}$ and using $\Tr[A^{(k+1)}]\leq C H_{k+1}^{-d} \max_{i\in \I^{(k+1)}}\|\psi_i^{(k+1)}\|_{a}^2 $
and (Lemma \ref{lem:dihidue23}) $\|\psi_i^{(k+1)}\|_a \leq  C H_{k+1}^{-\frac{d}{2}-1}$,
we conclude the proof of the theorem.
\end{proof}
Let  $u^{(1),\loc}$ be the finite element solution of \eqref{eqn:scalar} in $\V^{(1),\loc}:=\Span\{\psi_j^{(k),\loc}\mid j\in \I^{(1)}\}$.
For $k\in \{2,\ldots,q\}$, let $W^{(k)}$ be defined as in Construction \ref{const1} or Construction \ref{const2}.
For $i\in \J^{(k)}$, let $\chi^{(k),\loc}_i:=\sum_{j \in \I^{(k)}} W_{i,j}^{(k)} \psi_j^{(k),\loc}$.
For $k\in \{2,\ldots,q\}$ let $u^{(k),\loc}-u^{(k-1),\loc}$ be the finite element solution of \eqref{eqn:scalar} in $\W^{(k),\loc}:=\Span\{\chi_j^{(k),\loc}\mid j\in \J^{(k)}\}$.
For $k\in \{2,\ldots,q\}$, write $u^{(k),\loc}:=u^{(1),\loc}+\sum_{j=2}^k (u^{(j),\loc}-u^{(j-1),\loc})$.
Let $B^{(k),\loc}$ be the $\J^{(k)}\times \J^{(k)}$ matrix defined by $B^{(k),\loc}_{i,j}:=\<\chi^{(k),\loc}_i,\chi^{(k),\loc}_j\>_a$.
Observe that $B^{(k),\loc}= W^{(k)}A^{(k),\loc}W^{(k),T}$.
Write for $k\in \{2,\ldots,q\}$, $\er(k,\chi):=\big(\sum_{j\in \J^{(k)}} \|\chi_j^{(k)}-\chi_j^{(k),\loc}\|_a^2\big)^\frac{1}{2}$. The following theorem allows
us to control the effect of the localization error on the approximation of the solution of \eqref{eqn:scalar}.
\begin{Theorem}\label{thmdhdjh3}
It holds true that for $k\in \{2,\ldots,q\}$ (1) $\er(k,\chi) \leq C H^{-d/2}\er(k)$. Furthermore for $k\in \{2,\ldots,q\}$ and $\er(k,\chi)\leq C^{-1} H^{-k (1+d/2)+1+d}$  we have\\ (2)  $\Cond(B^{(k),\loc})\leq C H^{-2-2d}$, and (3) $\|u^{(k)}-u^{(k-1)}-(u^{(k),\loc}-u^{(k-1),\loc})\|_a \leq$\\$ C \er(k,\chi) \|g\|_{H^{-1}(\Omega)}
 H^{k (1+d/2)-3d-3}$. Similarly for $\er(1)\leq C^{-1}H^{-d/2} $,  we have\\ (4)   $\Cond(A^{(1),\loc})\leq C H^{-2}$, and (5)
$\|u^{(1)}-u^{(1),\loc}\|_a \leq C \er(1) \|g\|_{H^{-1}(\Omega)} H^{-2+d/2}$.
\end{Theorem}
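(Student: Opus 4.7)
The plan is to establish (1) by a matrix trace bound, (2) and (4) by matrix perturbation, and (3), (5) by a Strang-type estimate with an intermediate element. For (1), observe that $W^{(k)}$ appears identically in the local and non-local definitions via \eqref{eqjkhdkdh}, so $\chi^{(k)}_j - \chi^{(k),\loc}_j = \sum_i W^{(k)}_{j,i}(\psi^{(k)}_i - \psi^{(k),\loc}_i)$. Letting $E$ be the $\I^{(k)}\times\I^{(k)}$ Gram matrix with entries $E_{i,i'} := \<\psi^{(k)}_i - \psi^{(k),\loc}_i, \psi^{(k)}_{i'} - \psi^{(k),\loc}_{i'}\>_a$, so that $\Tr(E) = \er(k)^2$, I expand $\er(k,\chi)^2 = \Tr(W^{(k)} E W^{(k),T}) \leq \|W^{(k)}\|_2^2\,\Tr(E)$ via cyclicity of trace and positivity of $E$, and apply $\|W^{(k)}\|_2 \leq \sqrt{3}$ from Lemma \ref{lembase} to conclude.

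For (2) and (4), I write $B^{(k),\loc} - B^{(k)} = Q + Q^T + R$ with $Q_{i,j} := \<\chi^{(k),\loc}_i - \chi^{(k)}_i,\, \chi^{(k)}_j\>_a$ and $R_{i,j} := \<\chi^{(k),\loc}_i - \chi^{(k)}_i,\, \chi^{(k),\loc}_j - \chi^{(k)}_j\>_a$. A direct Cauchy--Schwarz in $\<\cdot,\cdot\>_a$, using $\|\sum_i c_i (\chi^{(k),\loc}_i - \chi^{(k)}_i)\|_a \leq |c|\,\er(k,\chi)$ and $\|\sum_j c_j \chi^{(k)}_j\|_a \leq \sqrt{\lambda_{\max}(B^{(k)})}\,|c|$, yields $\|Q\|_2 \leq \er(k,\chi)\sqrt{\lambda_{\max}(B^{(k)})}$ and $\|R\|_2 \leq \er(k,\chi)^2$. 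Combined with Lemma \ref{lembase}, the hypothesis $\er(k,\chi)\leq C^{-1} H^{-k(1+d/2)+1+d}$ makes $\|B^{(k),\loc}-B^{(k)}\|_2$ a constant-fraction perturbation of $\lambda_{\min}(B^{(k)})$, so both $\lambda_{\min}(B^{(k),\loc}) \gtrsim \lambda_{\min}(B^{(k)})$ and $\lambda_{\max}(B^{(k),\loc}) \lesssim \lambda_{\max}(B^{(k)})$, yielding $\Cond(B^{(k),\loc}) \leq C\Cond(B^{(k)}) \leq C H^{-2-2d}$. Part (4) is the identical argument with $(A^{(1)}, \er(1))$ in place of $(B^{(k)}, \er(k,\chi))$.

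For (3) and (5), I use a Strang-type estimate. Writing $\bar u^{(k)} := u^{(k)} - u^{(k-1)} = \sum_i w^{(k)}_i \chi^{(k)}_i$ and $\bar u^{(k),\loc} := u^{(k),\loc} - u^{(k-1),\loc} = \sum_i w^{(k),\loc}_i \chi^{(k),\loc}_i$ with $B^{(k)} w^{(k)} = \tilde g^{(k)}$, $B^{(k),\loc} w^{(k),\loc} = \tilde g^{(k),\loc}$ and $\tilde g^{(k)}_i := \int g\, \chi^{(k)}_i$, I insert the intermediate element $\tilde v := \sum_i w^{(k)}_i \chi^{(k),\loc}_i \in \W^{(k),\loc}$. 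The triangle inequality gives $\|\bar u^{(k)} - \tilde v\|_a \leq |w^{(k)}|\, \er(k,\chi)$ (Cauchy--Schwarz) and $\|\tilde v - \bar u^{(k),\loc}\|_a \leq \sqrt{\lambda_{\max}(B^{(k),\loc})}\, |w^{(k)} - w^{(k),\loc}|$. For the second piece, the perturbation identity $B^{(k),\loc}(w^{(k)} - w^{(k),\loc}) = (B^{(k),\loc} - B^{(k)}) w^{(k)} + \tilde g^{(k)} - \tilde g^{(k),\loc}$, combined with $|\tilde g^{(k)} - \tilde g^{(k),\loc}| \leq C \er(k,\chi)\|g\|_{H^{-1}(\Omega)}$ (via Cauchy--Schwarz and $\|\cdot\|_{H^1_0}\leq \|\cdot\|_a/\sqrt{\lambda_{\min}(a)}$) and the part (2) bound on $\|B^{(k),\loc}-B^{(k)}\|_2$, estimates $|w^{(k)} - w^{(k),\loc}|$. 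Crucially, I bound $|w^{(k)}| \leq \|\bar u^{(k)}\|_a/\sqrt{\lambda_{\min}(B^{(k)})} \leq C\|g\|_{H^{-1}(\Omega)} H^{k(1+d/2)-1-d}$ via the Galerkin optimality $\|\bar u^{(k)}\|_a \leq \|u\|_a \leq C\|g\|_{H^{-1}(\Omega)}$. Assembling all exponents using Lemma \ref{lembase} produces the stated $H^{k(1+d/2)-3d-3}$. Part (5) is the analog with $(\V^{(1)}, A^{(1)}, U^{(1)}, g^{(1)}, \psi^{(1)}_i, \er(1))$ replacing $(\W^{(k)}, B^{(k)}, w^{(k)}, \tilde g^{(k)}, \chi^{(k)}_i, \er(k,\chi))$, yielding $H^{-2+d/2}$.

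The hardest step is the exponent bookkeeping in (3): a crude estimate $|w^{(k)}| \lesssim |\tilde g^{(k)}|/\lambda_{\min}(B^{(k)})$ with $|\tilde g^{(k)}| \lesssim \sqrt{\Tr(B^{(k)})}\|g\|_{H^{-1}(\Omega)}$ loses an extra factor of roughly $H^{-k(1+d/2)+2+2d}$ and produces a substantially worse exponent; only the Galerkin-optimality route $\|\bar u^{(k)}\|_a \leq \|u\|_a \leq C\|g\|_{H^{-1}(\Omega)}$ (which exploits the energy-orthogonality between subbands from Theorem \ref{thmgugyug2}) gives the sharp $k(1+d/2)-3d-3$.
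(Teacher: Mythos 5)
Your overall architecture coincides with the paper's (which routes (2)--(5) through its perturbation Lemma \ref{lemshgjhgdhg3e} and the bounds of Lemma \ref{lembase}), and parts (1), (3), (5) are essentially sound: the trace bound $\er(k,\chi)^2=\Tr(W^{(k)}EW^{(k),T})\leq \|W^{(k)}\|_2^2\,\Tr(E)$ is correct and even avoids the $H^{-d/2}$ factor the paper picks up, and your Strang-type assembly with the intermediate element $\tilde v$, the identity $B^{(k),\loc}(w^{(k)}-w^{(k),\loc})=(B^{(k),\loc}-B^{(k)})w^{(k)}+\tilde g^{(k)}-\tilde g^{(k),\loc}$, and the Galerkin bound $|w^{(k)}|\leq C\|g\|_{H^{-1}(\Omega)}/\sqrt{\lambda_{\min}(B^{(k)})}$ reproduces exactly the computation behind Lemma \ref{lemshgjhgdhg3e}(4) and the exponent $k(1+d/2)-3d-3$. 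The genuine gap is the step carrying (2) and (4). You bound $\|B^{(k),\loc}-B^{(k)}\|_2\leq 2\|Q\|_2+\|R\|_2\leq 2\,\er(k,\chi)\sqrt{\lambda_{\max}(B^{(k)})}+\er(k,\chi)^2$ and then claim this is a constant fraction of $\lambda_{\min}(B^{(k)})$ under the hypothesis $\er(k,\chi)\leq C^{-1}H^{-k(1+d/2)+1+d}$. But that hypothesis only places $\er(k,\chi)$ at the scale of $\sqrt{\lambda_{\min}(B^{(k)})}$ (Lemma \ref{lembase}(8)), so your perturbation bound is of order $\sqrt{\lambda_{\min}(B^{(k)})\lambda_{\max}(B^{(k)})}=\lambda_{\min}(B^{(k)})\sqrt{\Cond(B^{(k)})}\sim \lambda_{\min}(B^{(k)})H^{-1-d}$, which is not $O(\lambda_{\min}(B^{(k)}))$ as $H\to 0$; Weyl's inequality then yields no lower bound on $\lambda_{\min}(B^{(k),\loc})$ whatsoever. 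The same loss occurs for (4), where the bound is of size $\er(1)\sqrt{\lambda_{\max}(A^{(1)})}\sim H^{-1-d}$ against $\lambda_{\min}(A^{(1)})\sim H^{-d}$. Since (3) and (5) invoke $\lambda_{\min}(B^{(k),\loc})\gtrsim\lambda_{\min}(B^{(k)})$ and $\lambda_{\max}(B^{(k),\loc})\lesssim\lambda_{\max}(B^{(k)})$, they inherit this gap as written.

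The repair is the paper's own argument: perturb the square roots of the extreme eigenvalues rather than the matrix in spectral norm. Since $\sqrt{\lambda_{\min}(B^{(k),\loc})}=\inf_{|x|=1}\|\sum_i x_i\chi_i^{(k),\loc}\|_a\geq \inf_{|x|=1}\|\sum_i x_i\chi_i^{(k)}\|_a-\sup_{|x|=1}\|\sum_i x_i(\chi_i^{(k),\loc}-\chi_i^{(k)})\|_a\geq \sqrt{\lambda_{\min}(B^{(k)})}-\er(k,\chi)$, and likewise $\sqrt{\lambda_{\max}(B^{(k),\loc})}\leq\sqrt{\lambda_{\max}(B^{(k)})}+\er(k,\chi)$, the hypothesis (which is precisely $\er(k,\chi)\leq\tfrac12\sqrt{\lambda_{\min}(B^{(k)})}$ up to the choice of $C$) immediately gives $\Cond(B^{(k),\loc})\leq 8\,\Cond(B^{(k)})\leq CH^{-2-2d}$, and the analogous statement for $A^{(1),\loc}$; this exploits the Gram structure of $Q$ and $R$ that is discarded when you pass to $\|Q\|_2\leq\er(k,\chi)\sqrt{\lambda_{\max}(B^{(k)})}$. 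With that substitution, your proof of (3) and (5) goes through and matches the paper's.
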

\begin{proof}
We will  need the following lemma.
\begin{Lemma}\label{lemshgjhgdhg3e}
Let $\chi_1,\ldots,\chi_m$ be linearly independent elements of $H^1_0(\Omega)$. Let $\chi_1',\ldots,\chi_m'$ be another set of linearly independent elements of $H^1_0(\Omega)$. Write $\er:=\big(\sum_{i=1}^m \|\chi_i-\chi_i'\|_a^2\big)^\frac{1}{2}$. Let $B$ (resp. $B'$) be the $m\times m$  matrix defined by
$B_{i,j}=\<\chi_i,\chi_j\>_a$ (resp. $B_{i,j}'=\<\chi_i',\chi_j'\>_a$). Let $u_m$ (resp. $u_m'$) be the solution of \eqref{eqn:scalar} in $\Span\{\chi_i\mid i=1,\ldots,m\}$ (resp. $\Span\{\chi_i'\mid i=1,\ldots,m\})$. It holds true that for  $\er \leq \sqrt{\lambda_{\min}(B)} /2$ (1)  $\Cond(B')\leq 8 \Cond(B)$  (2)
$\|B-B'\|_2 \leq 3 \sqrt{\lambda_{\max}(B)} \er$ (3) $\|B^{-1}-(B')^{-1}\|_2 \leq 12 \sqrt{\lambda_{\max}(B)}  \big(\lambda_{\min}(B)\big)^{-2} \er$  and (4)
$\|u_m-u_m'\|_a\leq C \er \|g\|_{H^{-1}(\Omega)} \frac{\Cond(B)}{\sqrt{\lambda_{\min}(B)}}$.
\end{Lemma}
\begin{proof}
For (1) observe that $\sqrt{\lambda_{\max}(B')}=\sup_{|x|=1}\|\sum_{i=1}^m x_i \chi_i'\|_a\leq \sqrt{\lambda_{\max}(B)}+ \er$ and $\sqrt{\lambda_{\min}(B')}=\inf_{|x|=1}\|\sum_{i=1}^m x_i \chi_i'\|_a\geq \sqrt{\lambda_{\min}(B)}
- \er$. For (2) observe that for $x,y\in \R^m$ with $|x|=|y|=1$ we have $y^T(B-B')x=\<\sum_{i=1}^m y_i (\chi_i-\chi_i'),\sum_{i=1}^m x_i \chi_i\>_a-\<\sum_{i=1}^m y_i \chi_i',\sum_{i=1}^m x_i (\chi_i'-\chi_i)\>_a\leq (\sqrt{\lambda_{\max}(B')}+\sqrt{\lambda_{\max}(B)})\er$.
(3) follows from (2) and $\|B^{-1}-(B')^{-1}\|_2 \leq \|B-B'\|_2/\big(\lambda_{\min}(B) \lambda_{\min}(B')\big)$.
For (4) observe that $u_m=\sum_{i=1}^m w_i\chi_i$ (resp. $u_m'=\sum_{i=1}^m w_i'\chi_i'$) where $w=B^{-1} b$ with $b_i=\int_{\Omega} g \chi_i$ (resp. $w'=(B')^{-1} b'$ with $b_i'=\int_{\Omega} g \chi_i'$). Therefore $\|u_m-u_m'\|_a\leq |w| \er+|w-w'|\sqrt{\lambda_{\max}(B)}$. $w-w'=B^{-1}(b-b')-B^{-1}(B-B')w'$ leads to
$|w-w'|\leq C (\|g\|_{H^{-1}(\Omega)}\er +\|B-B'\|_2 |w'|)/\lambda_{\min}(B)$. Using (2),
  $\lambda_{\min}(B) |w|^2\leq \|\sum_{i=1}^m w_i\chi_i\|_a^2 \leq \|u\|_a^2 \leq C \|g\|_{H^{-1}(\Omega)}^2$, and $\lambda_{\min}(B') |w'|^2\leq C \|g\|_{H^{-1}(\Omega)}^2$ we conclude the proof of (4) after simplification.
\end{proof}
Let us now prove Theorem \ref{thmdhdjh3}.
Using $\chi_j^{(k)}-\chi_j^{(k),\loc}=\sum_{i\in \I^{(k)}} W^{(k)}_{j,i} (\psi_i^{(k)}-\psi_i^{(k),\loc})$ and noting that $W^{(k)}_{j,i}=0$ for $i^{(k-1)}\not=j^{(k-1)}$ we have
$\big(\er(k,\chi)\big)^2 \leq \sum_{j\in \J^{(k)}}$\\ $\big(\sum_{i\in \I^{(k)}} (W^{(k)}_{j,i})^2 \sum_{i\in \I^{(k)}, i^{(k-1)}=j^{(k-1)}} \|\psi_i^{(k)}-\psi_i^{(k),\loc}\|_a^2\big)$. Therefore,
$\big(\er(k,\chi)\big)^2 \leq$\\ $\big(\er(k)\big)^2 \max_{i\in \I^{(k-1)}}m_i \max_{j\in \J^{(k)}} \sum_{i\in \I^{(k)}} (W^{(k)}_{j,i})^2  $.
Observing that (see \eqref{eqddkhjji})\\ $\max_{i\in \I^{(k-1)}}m_i \leq 1/(H \delta)^d$ and $\sum_{i\in \I^{(k)}} (W^{(k)}_{j,i})^2 \leq \lambda_{\max}(W^{(k)}W^{(k),T})\leq 3$ (see Lemma \ref{lembase}) we conclude that (1) holds true with $C=(3/\delta^d)^\frac{1}{2}$. (2) and (3) are a direct application of lemmas \ref{lemshgjhgdhg3e} and \ref{lembase}. For (3), observe that
$u^{(k)}-u^{(k-1)}$ (resp. $u^{(k),\loc}-u^{(k-1),\loc}$) is the finite element solution of \eqref{eqn:scalar} in $\W^{(k)}$ (resp. $\W^{(k),\loc}:=\Span\{\chi_j^{(k),\loc}\mid j\in \J^{(k)}\}$). The proof of (4) and (5) is similar to that of (2) and (3).
\end{proof}

\begin{Theorem}\label{thmdjjuud}
Let $k\in \{1,\ldots,q\}$. We have, $\er(k)\leq C \sum_{j=k}^q e^{-\rho_{j}/C} C^{j-k}H^{-\frac{d}{2}+k\frac{d}{2}-j3\frac{d}{2}}$.
\end{Theorem}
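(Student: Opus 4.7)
The plan is to prove the bound by backward induction on $k$, using the one-step recursion furnished by Theorem \ref{thmerrorpropagation} together with a base-case bound on $\er(q)$ obtained from Theorem \ref{thm:hieuhdds}. Specifically, I would first observe that at the finest scale the recursive definition of the local spaces $\V^{(k+1),\loc}_i$ bottoms out at $\V^{(q+1),\loc}_i = H^1_0(S^i_{\rho_q})$, so the problem \eqref{eqpsiikloc} defining $\psi_i^{(q),\loc}$ coincides exactly with the single-scale localized problem \eqref{eq:dwehhsiuhssq} at scale $H_q = H^q$ with localization radius $r = \rho_q H_q$. Applying Theorem \ref{thm:hieuhdds} at this scale gives
\[
\|\psi_i^{(q)} - \psi_i^{(q),\loc}\|_a \leq C\, H_q^{-d/2 - 2}\, e^{-\rho_q/(2l)},
\]
and summing squares over $i \in \I^{(q)}$ (with the elementary bound $|\I^{(q)}| \leq C H^{-qd}$) yields a base-case estimate of the form $\er(q) \leq C\, e^{-\rho_q/C} H^{-q(d+2)}$, which matches the $j=q$ term of the claimed formula up to absorption of $H$-polynomial factors into $C^{j-k}$.

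Next I would iterate the inequality of Theorem \ref{thmerrorpropagation},
\[
\er(k) \leq C H^{-d/2} \er(k+1) + C\, e^{-\rho_k/C}\, H^{d/2 - (k+1)(d+1)},
\]
backwards from $k+1$ up to $q$. Unfolding this recursion $q-k$ times gives
\[
\er(k) \leq (C H^{-d/2})^{q-k} \er(q) + \sum_{j=k}^{q-1} (C H^{-d/2})^{j-k}\, C\, e^{-\rho_j/C}\, H^{d/2 - (j+1)(d+1)},
\]
into which I substitute the base-case bound on $\er(q)$. A short exponent computation shows that the $j$-th summand has $H$-exponent
\[
-(j-k)\tfrac{d}{2} + \tfrac{d}{2} - (j+1)(d+1) = -\tfrac{d}{2} + k\tfrac{d}{2} - \tfrac{3jd}{2} + \text{lower order},
\]
and likewise the term from the base case yields the $j=q$ contribution. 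Collecting these terms and folding the geometric prefactors $(CH^{-d/2})^{j-k}$ together with the lower-order polynomial-in-$H^{-1}$ corrections into the single factor $C^{j-k}$ (the usual convention adopted in Subsection \ref{sechierarloc}) produces exactly the stated expression.

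The main obstacle is the careful exponent bookkeeping: Theorem \ref{thm:hieuhdds} contributes an $H^{-d/2-2}$ prefactor, the recursion contributes an $H^{-d/2}$ amplification factor per scale, and the source term in the recursion contributes $H^{d/2 - (k+1)(d+1)}$. Collapsing these into the clean form $H^{-d/2 + kd/2 - 3jd/2}$ (together with the combinatorial factor $C^{j-k}$) requires attention but is routine once the telescoping structure is in place. Apart from this bookkeeping, the induction step itself is an immediate consequence of Theorem \ref{thmerrorpropagation}, and no additional structural ingredient is needed.
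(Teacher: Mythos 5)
Your argument coincides with the paper's own proof: it uses the one-step recursion of Theorem \ref{thmerrorpropagation}, $\er(k)\leq a_k+b_k\,\er(k+1)$ with $a_k=Ce^{-\rho_k/C}H^{\frac{d}{2}-(k+1)(d+1)}$ and $b_k=CH^{-d/2}$, unfolds it down to the base case, bounds $\er(q)$ via Theorem \ref{thm:hieuhdds}, and then collects exponents ``after simplification'' exactly as you propose. Your bookkeeping (including the explicit $|\I^{(q)}|^{1/2}$ factor in the base case and the absorption of residual lower-order powers of $H^{-1}$ into the constants) is, if anything, slightly more explicit than the paper's equally terse treatment, so the proposal is correct and takes essentially the same route.
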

\begin{proof}
By Theorem \ref{thmerrorpropagation}, for $k\in \{1,\ldots,q-1\}$, we have
 $\er(k)\leq a_k+b_k \er(k+1)$ with $a_k=C  e^{-\rho_{k}/C}  H^{\frac{d}{2}-(k+1)(d+1)}$ and $b_k=C H^{-\frac{d}{2}}$. Therefore we obtain by induction that
$\er(k) \leq a_k + b_k a_{k+1}+ b_k b_{k+1} a_{k+2}+\cdots + b_k \cdots b_{q-2}a_{q-1}+ b_k \cdots b_{q-1} \er(q)$.
Using Theorem \ref{thm:hieuhdds} we have $\er(q)\leq C H^{-d/2-q(2+d/2)}e^{-\rho_q/C}$ and obtain the result after simplification.
\end{proof}

\begin{Theorem}\label{tmshjgeydg}
Let $\epsilon \in (0,1)$. It holds true that if $\rho_k\geq C \big((1+\frac{1}{\ln(1/H)})\ln \frac{1}{H^k}+\ln \frac{1}{\epsilon}\big)$ for $k\in \{1,\ldots,q\}$ then (1) for $k\in \{1,\ldots,q\}$ we have
$\|u^{(k)} - u^{(k),\loc}\|_a \leq   \epsilon \|g\|_{H^{-1}(\Omega)}$\\ and $\|u - u^{(k),\loc}\|_a \leq   C (H^k+\epsilon) \|g\|_{L^2(\Omega)}$ (2) $\Cond(A^{(1),\loc})\leq C H^{-2}$, and for $k\in \{2,\ldots,q\}$ we have (3)
 $\Cond(B^{(k),\loc})\leq C H^{-2-2d}$ and (4) $\|u^{(k)}-u^{(k-1)}-(u^{(k),\loc}-u^{(k-1),\loc})\|_a \leq  \frac{\epsilon}{2 k^2} \|g\|_{H^{-1}(\Omega)}$.
\end{Theorem}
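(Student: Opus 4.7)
The plan is to propagate the hypothesis on $\rho_j$ through Theorem \ref{thmdjjuud} into a sharp bound on the gamblet localization error $\er(k)$, then to transfer this bound to $\er(k,\chi)$ via Theorem \ref{thmdhdjh3}(1), and finally to invoke the remaining parts of Theorem \ref{thmdhdjh3} to establish (1)--(4) one at a time.

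First rewrite the hypothesis using the identity $(1+\tfrac{1}{\ln(1/H)})\ln\tfrac{1}{H^j} = j\ln(1/H)+j$ as $\rho_j \geq C(j\ln(1/H)+j+\ln(1/\epsilon))$. For the constant $C'$ appearing inside Theorem \ref{thmdjjuud}, this gives $e^{-\rho_j/C'}\leq \epsilon^{C/C'}H^{jC/C'}e^{-jC/C'}$. Substituting into Theorem \ref{thmdjjuud} turns the sum over $j\in\{k,\ldots,q\}$ into a geometric series with common ratio $C'H^{C/C'-3d/2}e^{-C/C'}$; choosing the constant $C$ in the hypothesis large enough (depending only on $d,\delta,\lambda_{\min}(a),\lambda_{\max}(a),\Omega$) this ratio is $\leq 1/2$ and the series is dominated by twice its $j=k$ term. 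After simplification, and absorbing polynomial-in-$k$ factors into the exponential gain $e^{-kC/C'}$ (possible by further enlarging $C$), one obtains
\begin{equation*}
\er(k) \;\leq\; \frac{\epsilon}{2 C'' k^2}\, H^{-k(1+d/2)+7d/2+3}
\end{equation*}
for any prescribed auxiliary constant $C''$.

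Applying Theorem \ref{thmdhdjh3}(1) then gives $\er(k,\chi)\leq C H^{-d/2}\er(k) \leq \tfrac{\epsilon}{2 C'' k^2}\,H^{-k(1+d/2)+3d+3}$, which for $C''$ large enough lies well below the threshold $C^{-1}H^{-k(1+d/2)+1+d}$ required to invoke parts (2) and (3) of Theorem \ref{thmdhdjh3}, establishing conditioning bounds (2) and (3) of the present theorem. Plugging the same bound on $\er(k,\chi)$ into Theorem \ref{thmdhdjh3}(3) yields
\begin{equation*}
\|u^{(k)}-u^{(k-1)}-(u^{(k),\loc}-u^{(k-1),\loc})\|_a \;\leq\; C\er(k,\chi)\|g\|_{H^{-1}(\Omega)}H^{k(1+d/2)-3d-3} \;\leq\; \frac{\epsilon}{2k^2}\|g\|_{H^{-1}(\Omega)},
\end{equation*}
which is (4); the $k=1$ case follows analogously from Theorem \ref{thmdhdjh3}(4)--(5) applied to the bound on $\er(1)$.

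For (1), I would telescope $u^{(k),\loc}-u^{(k)} = (u^{(1),\loc}-u^{(1)}) + \sum_{j=2}^k\big[(u^{(j),\loc}-u^{(j-1),\loc})-(u^{(j)}-u^{(j-1)})\big]$; summing the bounds already obtained, together with $\sum_{j\geq 1}(2j^2)^{-1}<1$, gives $\|u^{(k)}-u^{(k),\loc}\|_a \leq \epsilon\|g\|_{H^{-1}(\Omega)}$. The second estimate in (1) follows by the triangle inequality from this, from Theorem \ref{thmgugyug0} (which gives $\|u-u^{(k)}\|_a\leq C H^k\|g\|_{L^2(\Omega)}$), and from $\|g\|_{H^{-1}(\Omega)}\leq C\|g\|_{L^2(\Omega)}$ (Poincar\'e). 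The main obstacle is purely bookkeeping: one must check that a single constant $C$ in the hypothesis on $\rho_j$, depending only on the admissible parameters, can be chosen to simultaneously contract the sum in Theorem \ref{thmdjjuud}, absorb the $C'^{\,j-k}$ growth and $H^{-3jd/2}$ accumulation inside that sum, and beat the $k^{-2}$ and residual $H$-powers demanded by Theorem \ref{thmdhdjh3}(2)--(5). No new analytical input is needed beyond this careful constant tracking.
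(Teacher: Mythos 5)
Your proposal is correct and follows essentially the same route as the paper's proof: it feeds the hypothesis on $\rho_j$ into Theorem \ref{thmdjjuud} to get $\er(k)\leq C^{-1}H^{-k(1+d/2)+7d/2+3}\epsilon/k^2$, and then invokes Theorem \ref{thmdhdjh3} (together with Theorem \ref{thmgugyug0} and a telescoping sum with $\sum_j (2j^2)^{-1}<1$) to obtain (1)--(4); the paper compresses exactly this reduction into "we conclude after simplification," bounding the sum term-by-term with the $1/(k^2j^2)$ factors rather than by your geometric-series domination, which is an immaterial difference.
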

\begin{proof}
Theorems \ref{thmgugyug0} and \ref{thmdhdjh3} imply that the results of Theorem \ref{tmshjgeydg} hold true if for $k\in \{1,\ldots,q\}$
$\er(k)  \leq C^{-1}  H^{-k (1+d/2)+7d/2+3} \epsilon/k^2$. Using Theorem \ref{thmdjjuud} we deduce that the results of Theorem \ref{tmshjgeydg} hold true if for $k\in \{1,\ldots,q\}$ and $k\leq j \leq q$ we have $C  e^{-\rho_{j}/C} C^{j-k}H^{-\frac{d}{2}+k\frac{d}{2}-j3\frac{d}{2}} \leq  H^{-k (1+d/2)+7d/2+3} \epsilon/(k^2 j^2)$. We conclude after simplification.
\end{proof}

\section{The algorithm, its implementation and complexity}\label{secnumimple}

\subsection{The initialisation of the algorithm}\label{subsechierarlocnested}

To describe the practical implementation of the algorithm we consider the (finite-element) discretized version of \eqref{eqn:scalar}. Let $\T_h$ be a regular fine mesh  discretization of $\Omega$ of resolution $h$ with $0<h \ll 1$. Let $\N$ be the set of interior nodes $z_i$ and $N=|\mathcal{N}|$ be the number of interior nodes ($N= \mathcal{O}(h^{-d})$) of $\T_h$. Write $(\varphi_i)_{i\in \N}$ a set of regular nodal basis elements (of $H^1_0(\Omega)$) constructed from $\T_h$ such that
for each $i\in \N$,  $\supp(\varphi_i)\subset B(z_i, C_0 h)$ and for $y\in \R^N$,
\begin{equation}\label{eqhhgfff65f}
\ubar{\gamma} h^d |y|^2  \leq \|\sum_{i\in \N} y_i \varphi_i \|_{L^2(\Omega)}^2 \leq \bar{\gamma} h^d |y|^2
\end{equation}
for some constants $\ubar{\gamma}, \bar{\gamma}, C_0\approx \mathcal{O}(1)$. In addition to \eqref{eqhhgfff65f} the regularity of the finite elements is used to ensure the availability of the inverse Poincar\'{e} inequality
\begin{equation}\label{eqinvpoincdiscrete}
\|\nabla v\|_{L^2(\Omega)}\leq C_1 h^{-1} \| v\|_{L^2(\Omega)}
\end{equation}
 for $v\in \Span\{\varphi_i\mid i\in \N\}$ and some constant $C_1 \approx \mathcal{O}(1)$, used to generalize the proof of Theorem \ref{thmuuhiuhddu} to the discrete case.

Given $g=\sum_{i\in \N} g_i \varphi_i$ we want to find $u\in \Span\{\varphi_i \mid i\in \N\}$ such that for all $j\in \N$,
\begin{equation}\label{eqhuiuhiuhuiu}
 \<\varphi_j,u\>_a=\int_{\Omega} \varphi_j g  \text{ for all } j\in \N
\end{equation}
In practical applications
 $a$ is naturally assumed to be piecewise constant over the fine mesh (e.g. of constant value in each triangle or square of $\T_h$) and one purpose of the algorithm is the fast resolution of the linear system \eqref{eqhuiuhiuhuiu} up to accuracy $\epsilon \in (0,1)$.

 \begin{figure}[h!]
	\begin{center}
			\includegraphics[width=\textwidth]{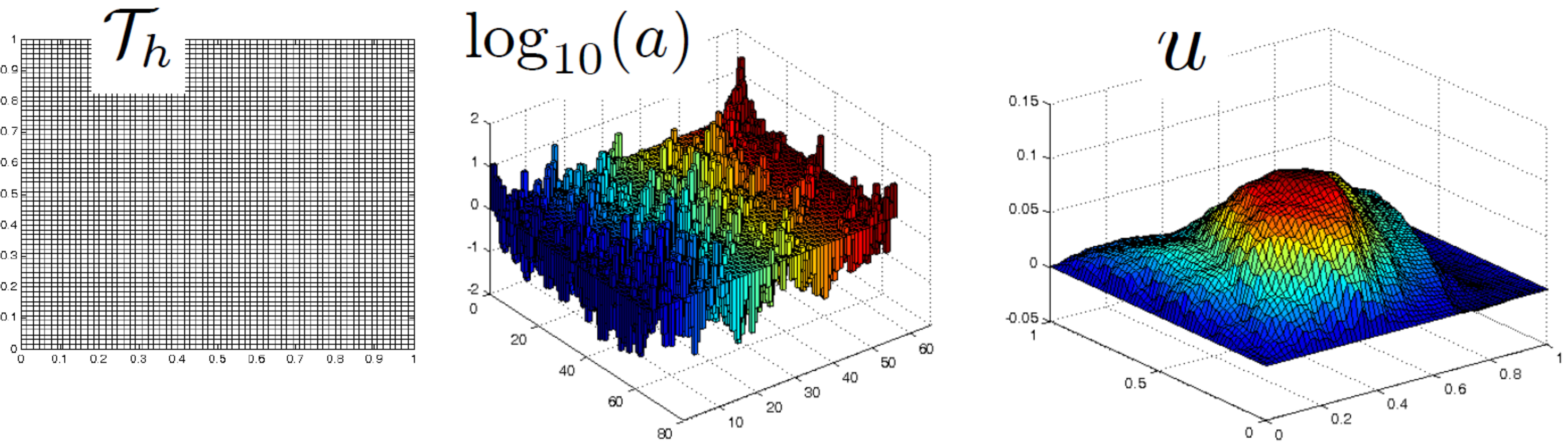}
		\caption{The (fine) mesh $\T_h$, $a$ (in $\log_{10}$ scale) and $u$.}\label{fig:tau}
	\end{center}
\end{figure}
\begin{Example}\label{ex1}
We will illustrate the presentation of the algorithm with a numerical example in which  $\T_h$ is a square grid of mesh size $h=(1+2^{q})^{-1}$ with $q=6$ and $64\times 64$ interior nodes (Figure \ref{fig:tau}). $a$ is piecewise constant on each square of $\T_h$ and given by
$a(x)=\prod_{k=1}^6 \Big(1+0.5 \cos\big(2^k \pi (\frac{i}{2^q+1}+\frac{j}{2^q+1})\big)\Big) \Big(1+0.5 \sin\big(2^k \pi (\frac{j}{2^q+1}-3\frac{i}{2^q+1})\big)\Big)$
for $x\in [\frac{i}{2^q+1},\frac{i+1}{2^q+1})\times [\frac{j}{2^q+1},\frac{j+1}{2^q+1})$. The contrast of $a$ (i.e., when $a$ is scalar, the ratio between its maximum and minimum value)  is  $1866$. The finite-element discretization  \eqref{eqhuiuhiuhuiu} is  obtained using
continuous nodal basis elements $\varphi_i$ spanned by $\{1,x_1,x_2,x_1 x_2\}$ in each square of $\T_h$. Writing $z_i$ the positions of the interior nodes of $\T_h$, we choose, for our numerical example, $g(x)=\sum_{i\in \N} \big(\cos(3z_{i,1}+z_{i,2})+\sin(3z_{i,2})+\sin(7z_{i,1}-5z_{i,2})\big) \varphi_i(x)$.
\end{Example}
 \begin{figure}[h!]
	\begin{center}
			\includegraphics[width=\textwidth]{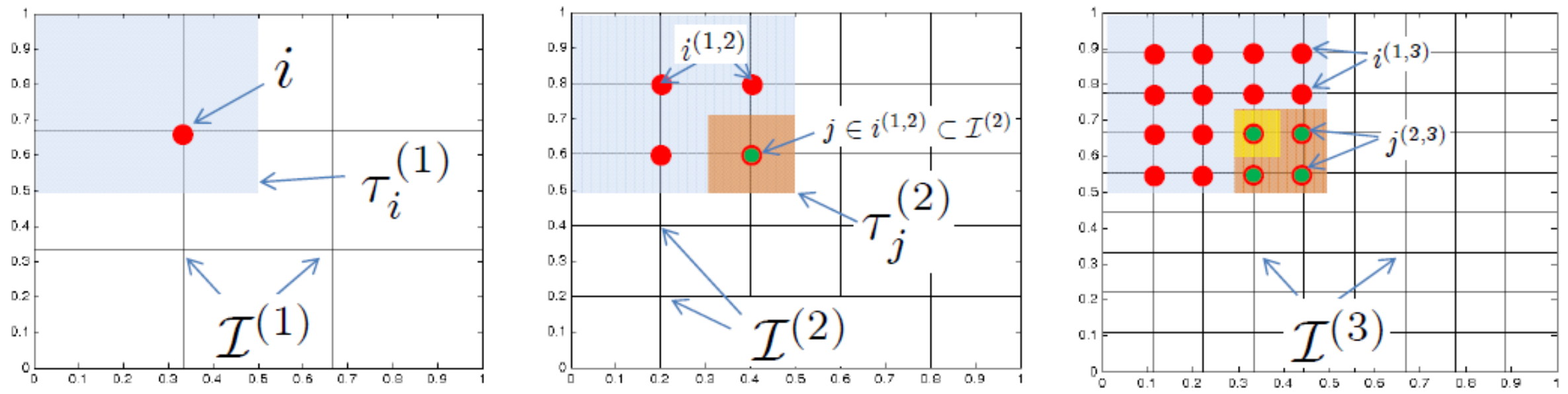}
		\caption{$\I^{(1)}$, $\I^{(2)}$ and $\I^{(3)}$.}\label{fig:Pi}
	\end{center}
\end{figure}
The first step of the proposed algorithm is the construction of the index tree $\I$ of Definition \ref{defmulires} describing the domain decomposition of Definition \ref{defmulires}. To ensure a uniform bound on the condition numbers of the stiffness matrices  \eqref{eqjgfytfjhyyyg} one must select the resolutions $H_k$ to form a geometric sequence (or simply such that $H_{k-1}/H_k$ is uniformly bounded), i.e. $H_k=H^k$ for some $H\in (0,1)$ {\it (for our numerical example $H=1/2$, $q=6$ and we identify $\I^{(k)}$ as the indices of the interior nodes of a square grid of  resolution $(1+2^{k})^{-1}$  as illustrated in
Figure \ref{fig:Pi})}. In this construction $H^q=h$ corresponds to the resolution of the fine mesh and each subset $\tau_i^{(q)}$ ($i\in \I^{(q)}$) contains one and only one element of $\N$ (interior node of the fine mesh). Using this one to one correspondence we use the elements of $\I=\I^{q}$ to (re)label the nodal elements $(\varphi_i)_{i\in \N}$ as $(\varphi_i)_{i\in \I}$.
  \begin{figure}[h!]
	\begin{center}
			\includegraphics[width=\textwidth]{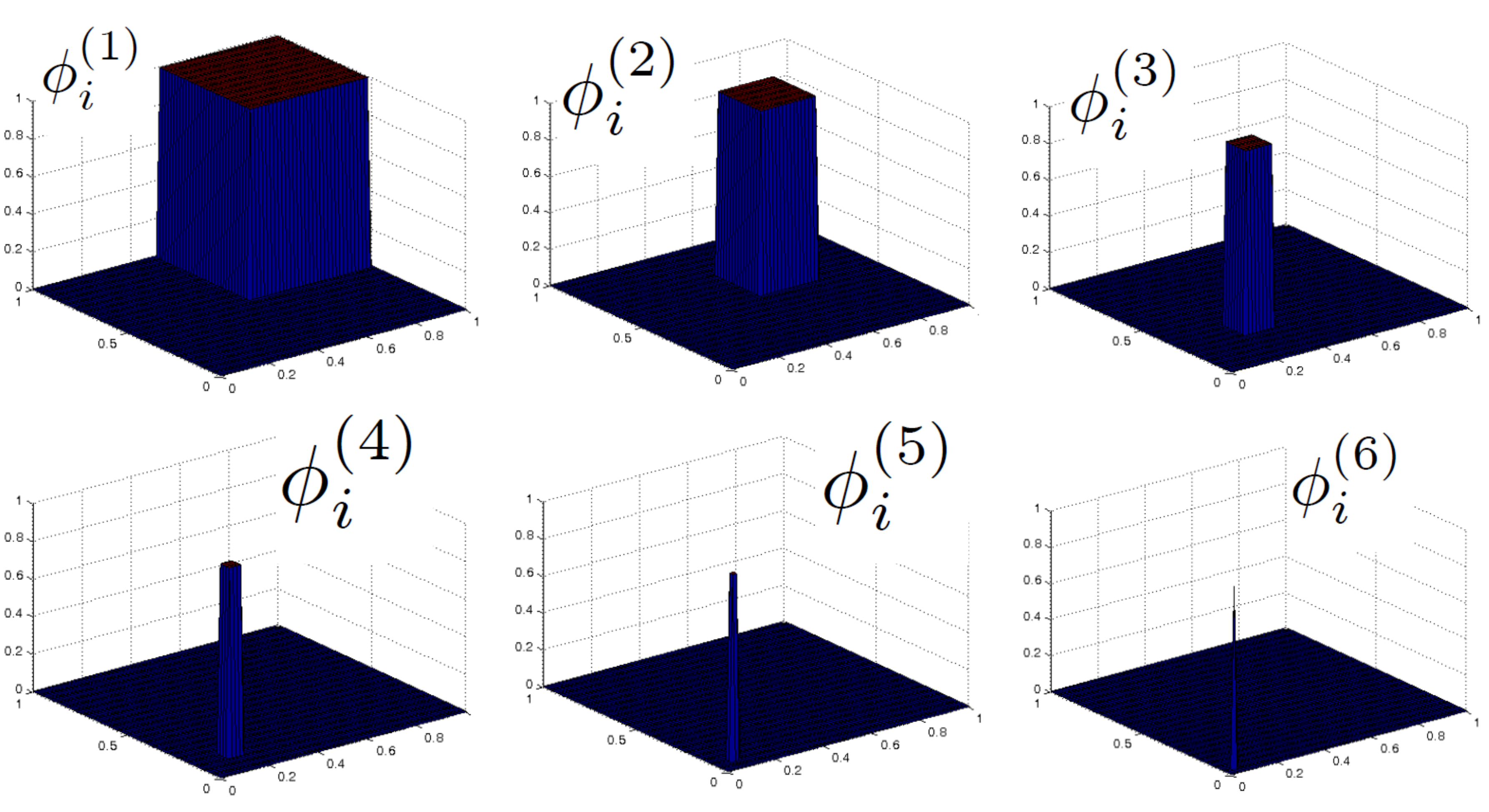}
		\caption{The functions $\phi^{k}_i$ with $k\in \{1,\ldots,q\}$ and $q=6$.}\label{fig:phi}
	\end{center}
\end{figure}
The measurement functions $(\phi_i^{(k)})_{i\in \I^{(k)}}$ are then identified  (1) by selecting $\phi_i^{(q)}=\varphi_i$ for $i\in \I^{(q)}$ and (2)
 via the nested aggregation \eqref{eq:eigdeiud3dd} of the nodal elements (as commonly done in AMG), i.e. $\phi^{(k)}_i=\sum_{j\in \I^{(k+1)}}\pi^{(k,k+1)}_{i,j}  \phi^{(k+1)}_j=\sum_{j \in i^{(k,k+1)}} \phi^{(k+1)}_j$ for $k\in \{1,\ldots,q-1\}$ and $i\in \I^{(k)}$.

\begin{Remark}
We refer to Figure \ref{fig:phi} for an illustration of these measurement functions for our numerical example.
Note that the support of each $\phi^{(k)}_i$ is only approximatively (and not exactly) $\tau^{(k)}_i$ and that the $\phi^{(k)}_i$ are only approximate set functions (and not exact ones). This does not affect the design, accuracy and localization of the  algorithm presented here because the frame inequalities \eqref{eqgam1}, and the Poincar\'e inequalities $\|\sum_{i\in \I^{(k)}}x_i \phi_i^{(k)}\|_{H^{-1}(\Omega)}\leq  C\,H^{k-1}\|\phi_i^{(k)}\|_{L^2(\Omega)}$ for $x\in \Ker(\pi^{(k-1,k)})$,  hold true. Indeed,  \eqref{eqhhgfff65f} and Construction \ref{defmulires} imply that the frame inequalities \eqref{eqgam1} with $\bar{\gamma}_k\leq \bar{\gamma}\delta^{-d}$ and $\ubar{\gamma}_k\geq \ubar{\gamma}\delta^{d}$, and the Poincar\'e inequalities are regularity/homogeneity conditions on the mesh and the aggregated elements. Although a fine mesh has been used to facilitate the presentation of the algorithm, the proposed method is meshless (it only requires the specification of the basis elements $(\varphi_i)_{i\in \I}$).
\end{Remark}

\begin{algorithm}[!ht]
\caption{Exact Gamblet transform/solve.}\label{gambletsolve}
\begin{algorithmic}[1]
\STATE\label{step1} For $i,j\in \I^{(q)}$, $M_{i,j}=\int_{\Omega} \varphi_i \varphi_j$  \COMMENT{Mass matrix}
\STATE\label{step2} For $i,j\in \I^{(q)}$, $A_{i,j}=\int_{\Omega} (\nabla \varphi_i)^T a \nabla \varphi_j$ \COMMENT{Stiffness matrix}
\STATE\label{step2a} Compute $M^{-1}$  \COMMENT{Mass matrix inversion}
\STATE\label{step3} For $i\in \I^{(q)}$, $\psi^{(q)}_i=\sum_{j \in  \I^{(q)}} M^{-1}_{i,j} \varphi_j$  \COMMENT{Level $q$ gamblets}
\STATE\label{step4} For $i\in \I^{(q)}$, $g^{(q)}_i=g_i$      \COMMENT{$g^{(q)}_i=\int_{\Omega} \psi_i^{(q)} g$ with $g=\sum_{i\in \I^{(q)}} g_i \varphi_i$}
\STATE\label{step5} For $i,j\in \I^{(q)}$, $A^{(q)}_{i,j}= \int_{\Omega} (\nabla \psi_i^{(q)})^T a \nabla \psi_j^{(q)}$   \COMMENT{$A^{(q)}=M^{-1} A M^{-1,T}$}
\FOR{$k=q$ to $2$}
\STATE\label{step7} $B^{(k)}= W^{(k)}A^{(k)}W^{(k),T}$ \COMMENT{Eq.~\eqref{eqjgfytfjhyyyg}}
\STATE\label{step8} $w^{(k)}=B^{(k),-1} W^{(k)} g^{(k)}$ \COMMENT{Eq.~\eqref{eqsdjoejddi1}}
\STATE\label{step9}  For $i\in \J^{(k)}$, $\chi^{(k)}_i=\sum_{j \in \I^{(k)}} W_{i,j}^{(k)} \psi_j^{(k)}$  \COMMENT{Eq.~\eqref{eqjkhdkdh}}
\STATE\label{step10} $u^{(k)}-u^{(k-1)}=\sum_{i\in \J^{(k)}}w^{(k)}_i \chi^{(k)}_i$ \COMMENT{Thm.~\ref{thddwedmgugyug}}
\STATE\label{step11}  $ D^{(k,k-1)}= -B^{(k),-1}W^{(k)}A^{(k)}\bar{\pi}^{(k,k-1)}$ \COMMENT{Eq.~\eqref{eqdkdjhdkhse}}
\STATE\label{step12} $R^{(k-1,k)}=\bar{\pi}^{(k-1,k)}+D^{(k-1,k)}W^{(k)}$ \COMMENT{Eq.~\eqref{eqhuhiddeuv}}
\STATE\label{step13} $A^{(k-1)}= R^{(k-1,k)}A^{(k)}R^{(k,k-1)}$ \COMMENT{Eq.~\eqref{eqhuhiuv}}
\STATE\label{step14} For $i\in \I^{(k-1)}$, $\psi^{(k-1)}_i=\sum_{j \in  \I^{(k)}} R_{i,j}^{(k-1,k)} \psi_j^{(k)}$ \COMMENT{Eq.~\eqref{eq:ftfytftfx}}
\STATE\label{step15} $g^{(k-1)}=R^{(k-1,k)} g^{(k)}$ \COMMENT{Eq.~\eqref{eqyguugy6t}}
\ENDFOR
\STATE\label{step16} $ U^{(1)}=A^{(1),-1}g^{(1)}$ \COMMENT{Eq.~\eqref{eqsdjoejddi2}}
\STATE\label{step17} $u^{(1)}=\sum_{i \in \I^{(1)}} U^{(1)}_i \psi^{(1)}_i$ \COMMENT{Thm.~\ref{thddwedmgugyug}}
\STATE\label{step18} $u=u^{(1)}+(u^{(2)}-u^{(1)})+\cdots+(u^{(q)}-u^{(q-1)})$    \COMMENT{Thm.~\ref{thmgugyug2} with $u=u^{(q)}$}
\end{algorithmic}
\end{algorithm}
\subsection{Exact gamblet transform and multiresolution operator inversion}
The near-linear complexity of the proposed multi-resolution algorithm  (Algorithm \ref{fastgambletsolve}) is based on three properties (i) nesting (ii) uniformly bounded condition numbers (iii) localization/truncation based on exponential decay. Truncation/localization levels/subsets are, a priori, functions of the desired level of accuracy $\epsilon \in (0,1)$ in approximating the solution of \eqref{eqhuiuhiuhuiu} and to distinguish the implementation of localization/truncation (and its consequences)
we will first describe this algorithm in its \emph{zero approximation error version} (i.e. $\epsilon=0$ and without using localization/truncation, Algorithm \ref{gambletsolve}). Although this \emph{error-free} version (Algorithm \ref{gambletsolve}) performs the decomposition of the resolution of the linear system \eqref{eqhuiuhiuhuiu} (whose condition number is of the order of $h^{-d-2}\gg 1$) into the resolutions of a nesting of linear systems with uniformly bounded condition numbers, it is not of near linear complexity due to the presence of dense matrices.  Algorithm \ref{fastgambletsolve} achieves near-linear complexity by truncating/localizing the dense matrices appearing in Algorithm \ref{gambletsolve} ($\epsilon$-accuracy is ensured using the off-diagonal exponential decay of these dense matrices).
Let us now describe Algorithm \ref{gambletsolve} in detail. Lines \ref{step1} and \ref{step2} correspond to the computation of the (sparse) mass and stiffness matrices of \eqref{eqhuiuhiuhuiu}. Line \ref{step3} corresponds to the calculation of level $q$ gamblets $\psi_i^{(q)}$ defined as the minimizer of $\|\psi\|_a$ subject to $\int_{\Omega} \psi \phi_j^{(q)}=\delta_{i,j}$ and $\psi \in \Span\{\varphi_l \mid l \in \I\}$, note that since the number of constraints is equal to the number of degrees of freedom of $\psi$, and since $\int_{\Omega} \varphi_l \phi_j^{(q)}=M_{l,j}$, level $q$ gamblets do not depend on $a$ and are obtained by inverting the mass matrix in Line \ref{step2a} (note that by \eqref{eqhhgfff65f}, the mass matrix is of $\mathcal{O}(1)$ condition number).
Although not done here, one can also initialize the algorithm (and its fast version) with $\psi_i^{(q)}=\varphi_i$ (which is equivalent to using $\sum_{j\in \I^{(q)}}M^{-1}_{i,j}\varphi_j^{(q)}$ as level $q$ measurement functions).
 Line \ref{step4} corresponds to initialization of the vector $g^{(q)}$ introduced above \eqref{eqyguugy6t}. Line \ref{step5} corresponds to the initialization of the stiffness matrix $A^{(q)}$ introduced in \eqref{eq:iwihud3de}.
The core of the algorithm is the nested computation performed (iteratively from $k=q$ down to $k=2$) in lines \ref{step7} to \ref{step15}.
Note that this nested computation takes $A^{(k)}, g^{(k)}$ and $(\psi_i^{(k)})_{i\in \I^{(k)}}$ as inputs and produces (1)
$A^{(k-1)}, g^{(k-1)}$ and $(\psi_i^{(k-1)})_{i\in \I^{(k)}}$ as outputs for the next iteration and (2) the subband $u^{(k)}-u^{(k-1)}$ of the solution and subband gamblets $(\chi_i^{(k)})_{i\in \J^{(k)}}$ (which, do not need to be explicitly computed/stored since Line \ref{step10} is equivalent to
$u^{(k)}-u^{(k-1)}=\sum_{i\in \I^{(k)}}(W^{(k),T} w^{(k)})_i \psi^{(k)}_i$). Note also that the gamblets $(\psi_i^{(k)})_{i\in \I^{(k)}}$ and $(\chi_i^{(k)})_{i\in \J^{(k)}}$  can be stored and displayed using the hierarchical structure \eqref{eq:ftfytftfx}. Through this section and the remaining part of the paper we assume that the matrices $W^{(k)}$ are obtained as in Construction \ref{const1} or \ref{const2}.
Note that the number of non-zero entries of $\pi^{(k-1,k)}$ and $W^{(k)}$ is $\mathcal{O}( |\I^{(k)}|)$ (proportional to $H^{-k}$ in our numerical example). Lines \ref{step8} corresponds to solving the well conditioned linear system $B^{(k)} w^{(k)}=W^{(k)} g^{(k)}$ and the $|\I^{(k-1)}|$ well conditioned linear systems $B^{(k)} D^{(k,k-1)}=-W^{(k)}A^{(k)}\pi^{(k,k-1)}$. Note that by Theorem \ref{thmodhehiudhehd} the matrices $B^{(k)}$ have uniformly bounded condition numbers and these linear systems can be solved efficiently using iterative methods (such as the Conjugate Gradient method recalled in Subsection \eqref{subseccg}).
$u^{(1)}$ is computed in lines \ref{step17} and \ref{step18} (recall that $A^{(1)}$ is also of uniformly bounded condition number) and the last step of the algorithm, is to obtain $u$  via simple addition of the subband/subscale solution $u^{(1)}$ and $(u^{(k)}-u^{(k-1)})_{2\leq k \leq q}$.
Observe that the operating diagram of Algorithm \ref{gambletsolve} is not  a V or W  but an inverted pyramid (or a comb).
More precisely, the basis functions $\psi_i^{(k)}$ are computed hierarchically from fine to coarse scales. Furthermore as soon as the elements $\psi_i^{(k)}$ have been computed, they can be applied (independently from the other scales) to the computation of $u^{(k)}-u^{(k-1)}$ (the projection of $u$ onto $\W^{(k)}$ corresponding to the bandwidth $[H^k,H^{k-1}]$).

 \begin{figure}[h!]
	\begin{center}
			\includegraphics[width=\textwidth]{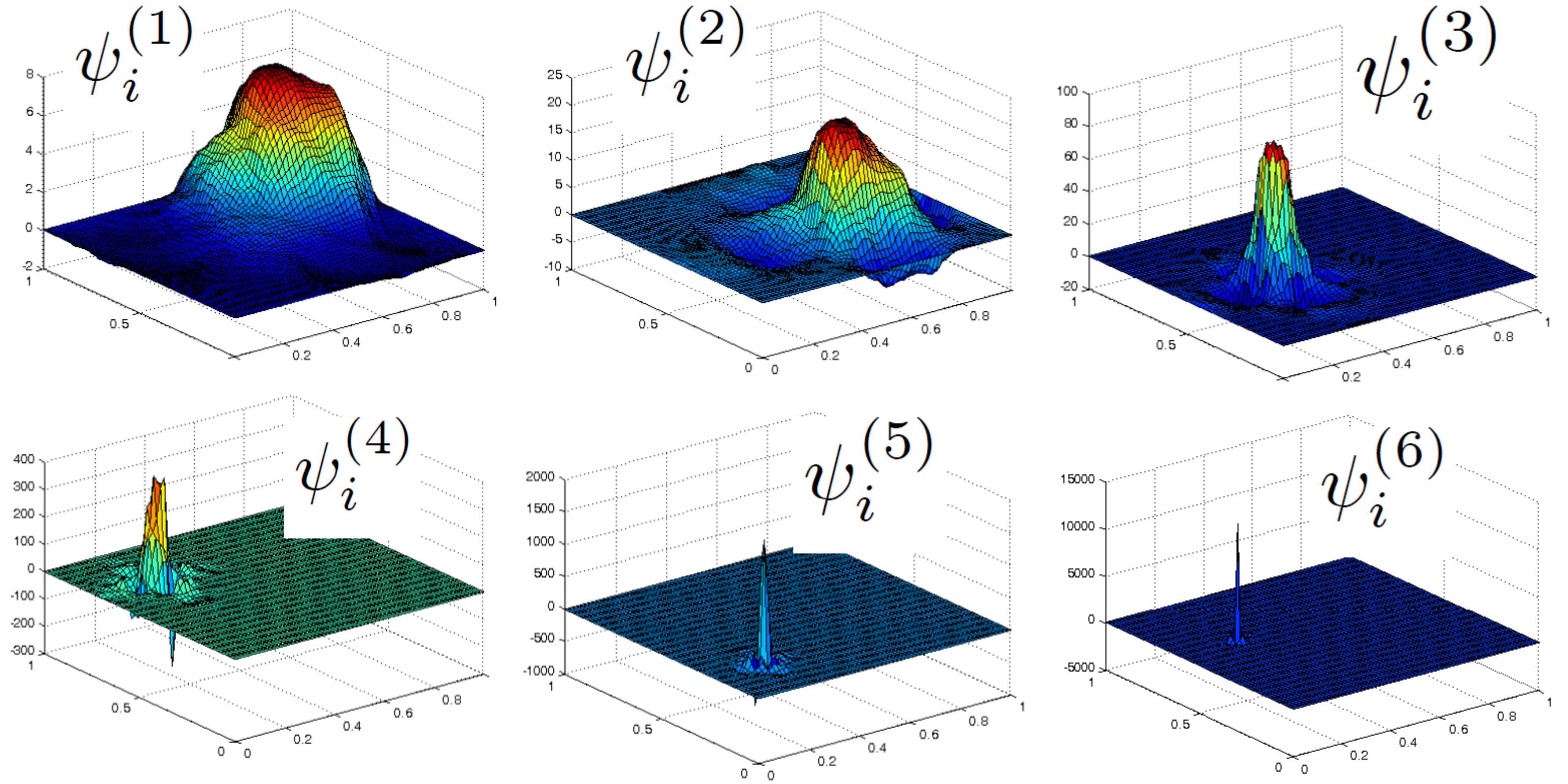}
		\caption{The basis elements $\psi^{k}_i$ with $k\in \{1,\ldots,6\}$.}\label{fig:psi}
	\end{center}
\end{figure}
 \begin{figure}[h!]
	\begin{center}
			\includegraphics[width=\textwidth]{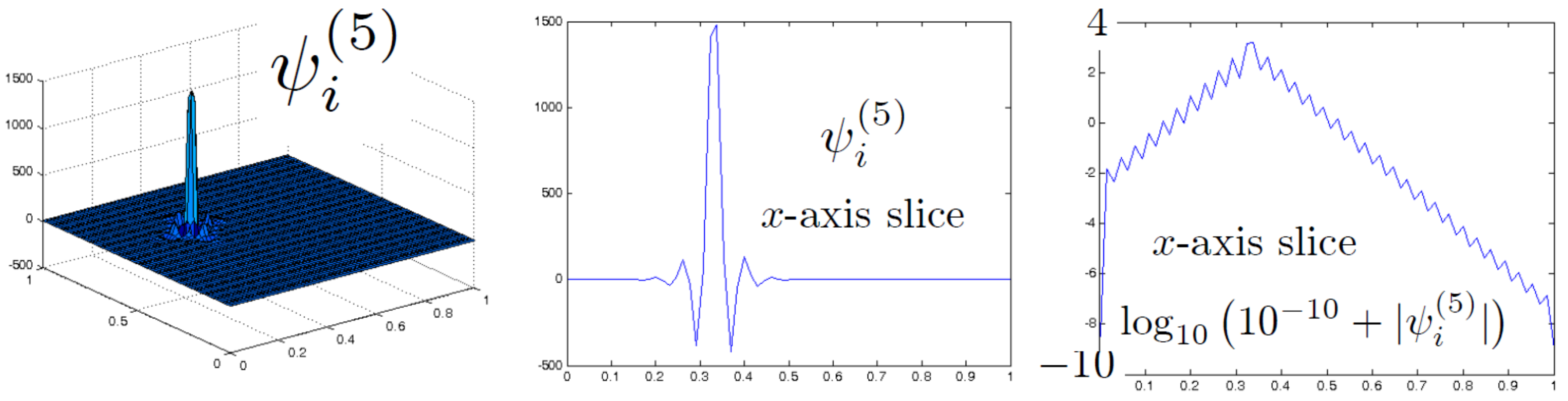}
		\caption{Exponential Decay.}\label{fig:expdecay}
	\end{center}
\end{figure}
 \begin{figure}[h!]
	\begin{center}
			\includegraphics[width=\textwidth]{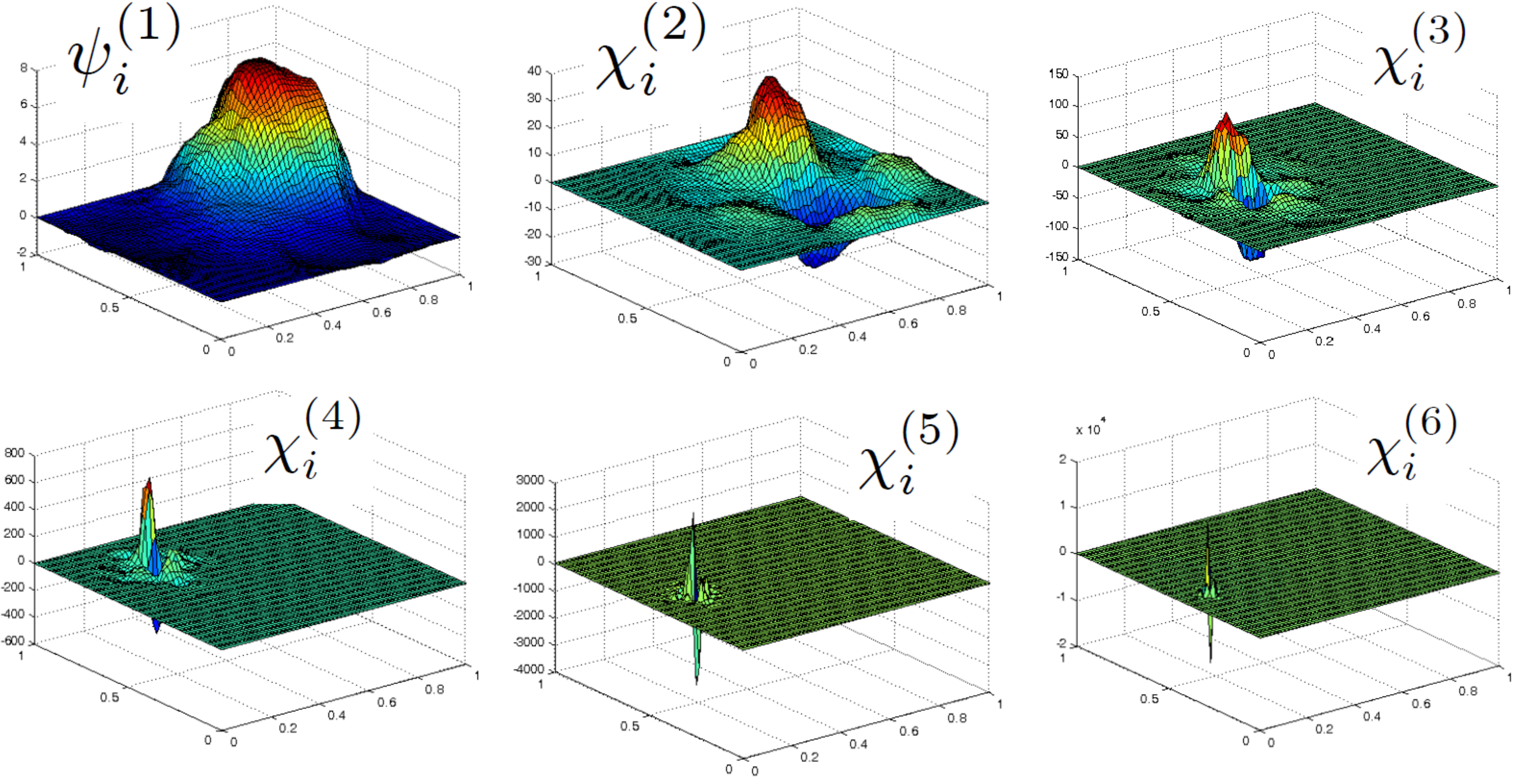}
		\caption{The basis elements $\psi^{1}_i$ and $\chi^{k}_i$ with $k\in \{2,\ldots,6\}$.}\label{fig:chi}
	\end{center}
\end{figure}
 \begin{figure}[h!]
	\begin{center}
			\includegraphics[width=\textwidth]{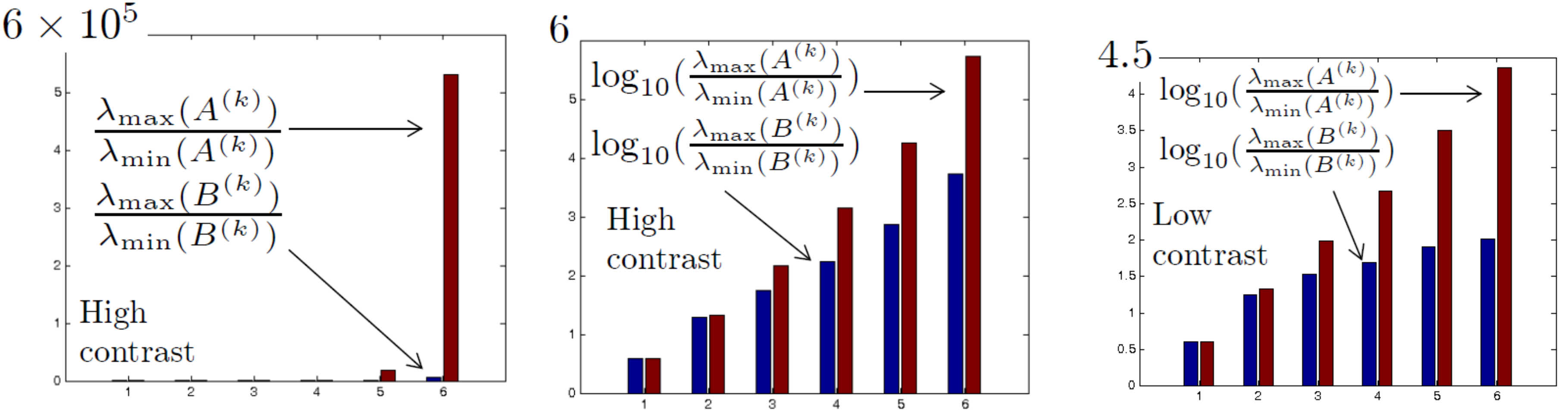}
		\caption{Condition numbers of $A^{(k)}$ and $B^{(k)}$.}\label{fig:conditionnumbers}
	\end{center}
\end{figure}
 \begin{figure}[h!]
	\begin{center}
			\includegraphics[width=\textwidth]{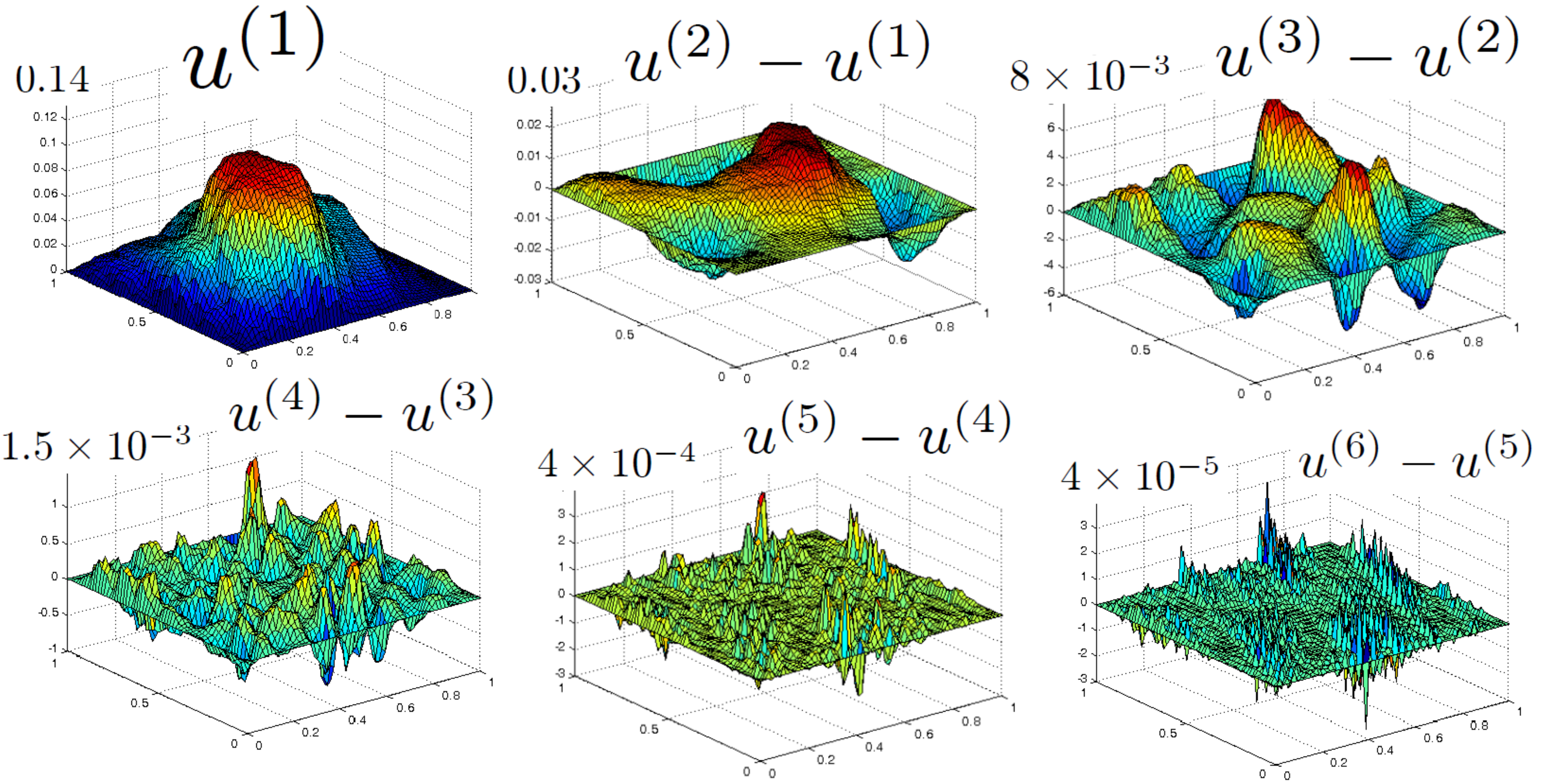}
		\caption{$u^{(1)}$, $u^{(2)}-u^{(1)}$,\ldots, and  $u^{(q)}-u^{(q-1)}$.}\label{fig:udiff}
	\end{center}
\end{figure}
 \begin{figure}[h!]
	\begin{center}
			\includegraphics[width=\textwidth]{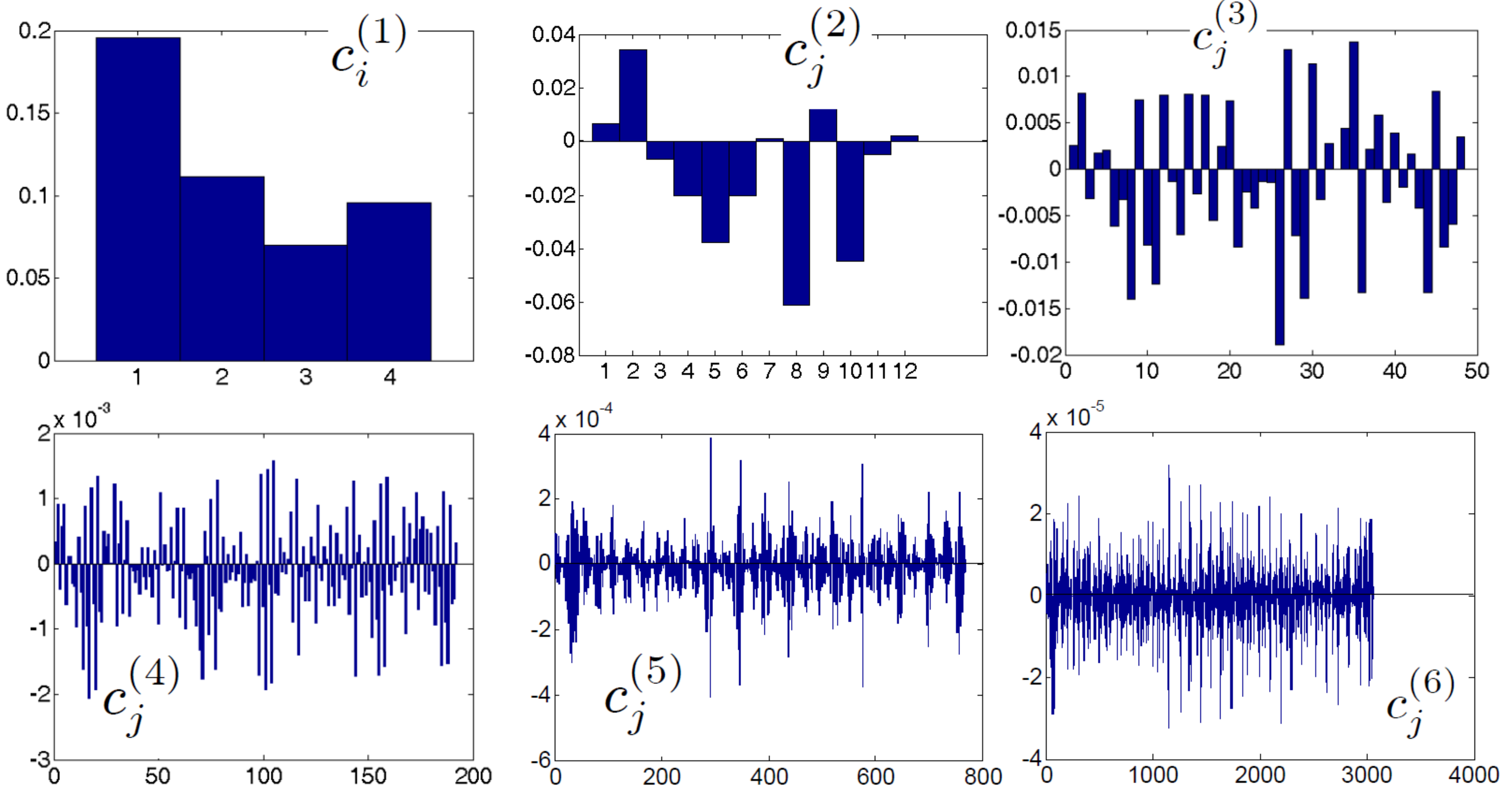}
		\caption{The coefficients of $u$ in the expansion $u=\sum_{i} c^{(1)}_i \frac{\psi^{(1)}_i}{\|\psi_i^{(1)}\|_a }+\sum_{k=2}^q \sum_j c^{(k)}_j \frac{\chi^{(k)}_j}{\|\chi_j^{(k)}\|_a }$.}\label{fig:coeffnormalized}
	\end{center}
\end{figure}
 \begin{figure}[h!]
	\begin{center}
			\includegraphics[width=\textwidth]{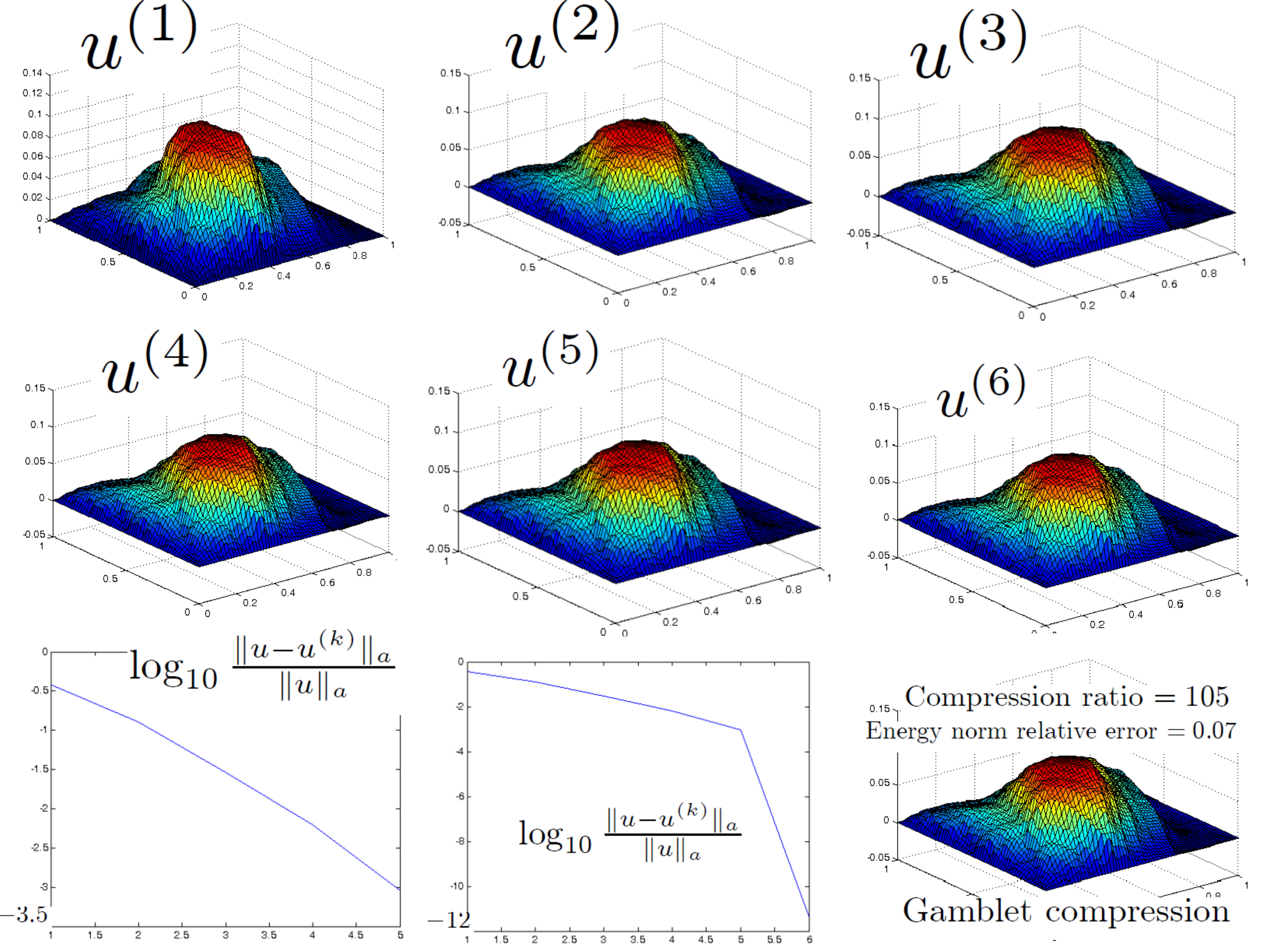}
		\caption{$u^{(1)}$, \ldots, $u^{(q)}$. Relative approximation error in energy norm in $\log_{10}$ scale. Compression of $u$ over the basis functions $\psi_i^{(1)}, \chi_i^{(2)}, \ldots, \chi_i^{(q)}$ by setting  $99$\% of the smallest coefficients to zero in the decomposition of Figure \ref{fig:coeffnormalized}.}\label{fig:uk}
	\end{center}
\end{figure}

\begin{Example}
We refer to figures \ref{fig:psi} and \ref{fig:chi} for an illustration of the gamblets $\psi_i^{(k)}$ and $\chi_j^{(k)}$  corresponding to Example \ref{ex1} with $W^{(k)}$ defined by Construction \ref{const1}. We refer to Figure \ref{fig:expdecay} for an illustration of the exponential decay of  the gamblets $\psi_i^{(k)}$.
 We refer to Figure \ref{fig:conditionnumbers} for an illustration of the condition numbers of $A^{(k)}$ and $B^{(k)}$ (with $W^{(k)}$ still defined by Construction \ref{const1}). Observe that the bound on the condition numbers of  $B^{(k)}$ depends on the contrast and the saturation of that bound occurs for smaller values of $k$ under low contrast. We refer to Figure \ref{fig:udiff} for an illustration of the subband solutions $u^{(1)}, u^{(2)}-u^{(1)},\ldots,u^{(q)}-u^{(q-1)}$ corresponding to Example \ref{ex1}. Observe that these (subband) solutions form a multiresolution decomposition of $u$ as a sum of functions characterising the behavior of $u$  at subscales $[H,1]$, $[H^2,H]$,\ldots,$[H^q,H^{q-1}]$.
Once the components $u^{(1)}$, $u^{(2)}-u^{(1)}$,\ldots, and  $u^{(q)}-u^{(q-1)}$ have been computed one obtains, via simple summation, $u^{(1)}$, \ldots, $u^{(q)}$, the finite-element approximation of $u$ at resolutions $H$, $H^2$, \ldots, $H^q$ illustrated in Figure \ref{fig:uk}. As described in Theorem \ref{thmgugyug0} the  error of the approximation of $u$ by $u^{(k)}$ is proportional to $H^k$ for $k\in \{1,\ldots,q-1\}$. For $k=q$, as illustrated in Figure \ref{fig:uk}, this approximation error drops down to zero because there is no gap between $H^q$ and the fine mesh (i.e., $\psi^{(q)}_i$ and $\varphi_i$ span the same linear space in the discrete case).
Moreover, as illustrated in Figure \ref{fig:coeffnormalized}, the representation of $u$ in the basis formed by the functions $\frac{\psi^{(1)}_i}{\|\psi_i^{(1)}\|_a}$ and $\frac{\chi^{(k)}_j}{\|\chi_j^{(k)}\|_a}$ is sparse. Therefore, as illustrated in Figure \ref{fig:uk} one can compress $u$, in this basis, by setting the smallest coefficients to zero without loss in energy norm.
\end{Example}

\begin{algorithm}[!ht]
\caption{Fast Gamblet transform/solve.}\label{fastgambletsolve}
\begin{algorithmic}[1]
\STATE\label{line1} For $i,j\in \I^{(q)}$, $M_{i,j}=\int_{\Omega} \varphi_i \varphi_j$  \COMMENT{$\mathcal{O}(N)$}
\STATE\label{line2} For $i,j\in \I^{(q)}$, $A_{i,j}=\int_{\Omega} (\nabla \varphi_i)^T a \nabla \varphi_j$ \COMMENT{$\mathcal{O}(N)$}
\STATE\label{line2a}  $M^{i,\rho_q} M^{-1,\rho_q}_{\cdot,i}=\delta_{\cdot,i}$  \COMMENT{Def.~\ref{deflocinvm}, Thm.~\ref{tmdiscreteaccuracy}, $\mathcal{O}\big(N \rho_q^{d} \ln \max( \frac{1}{\epsilon},q )\big)$}
\STATE\label{line3} For $i\in \I^{(q)}$, $\psi^{(q),\loc}_i=\sum_{j \in  i^{\rho_q}} M^{-1,\rho_q}_{j,i} \varphi_j$  \COMMENT{$\mathcal{O}(N \rho_q^d)$}
\STATE\label{line4} For $i\in \I^{(q)}$, $g^{(q),\loc}_i=\int_{\Omega} \psi^{(q),\loc}_i g$      \COMMENT{$\mathcal{O}(N \rho_q^d)$}
\STATE\label{line5} For $i,j\in \I^{(q)}$, $A^{(q),\loc}_{i,j}= \int_{\Omega} (\nabla \psi_i^{(q),\loc})^T a \nabla \psi_j^{(q),\loc}$   \COMMENT{ $\mathcal{O}(N \rho_q^{2d})$}
\FOR{$k=q$ to $2$}
\STATE\label{line7} $B^{(k),\loc}= W^{(k)}A^{(k),\loc}W^{(k),T}$ \COMMENT{$\mathcal{O}(|\I^{(k)}| \rho_k^{d})$}
\STATE\label{line8} $ w^{(k),\loc}= (B^{(k),\loc})^{-1}  W^{(k)} g^{(k),\loc}$ \COMMENT{Thm.~\ref{tmdiscreteaccuracy}, $\mathcal{O}(|\I^{(k)}| \rho_k^{d} \ln \frac{1}{\epsilon})$}
\STATE\label{line9}  For $i\in \J^{(k)}$, $\chi^{(k),\loc}_i=\sum_{j \in \I^{(k)}} W_{i,j}^{(k)} \psi_j^{(k),\loc}$  \COMMENT{$\mathcal{O}(|\I^{(k)}| \rho_k^{d})$}
\STATE\label{line10} $u^{(k),\loc}-u^{(k-1),\loc}=\sum_{i\in \J^{(k)}} w^{(k),\loc}_i \chi^{(k),\loc}_i$ \COMMENT{$\mathcal{O}(N \rho_k^{d})$}
\STATE\label{line11}  $\Inv(B^{(k),\loc} D^{(k,k-1),\loc}=   -W^{(k)}A^{(k),\loc}\bar{\pi}^{(k,k-1)},\rho_{k-1})$ \COMMENT{Def.~\ref{defb}, Thm.~\ref{tmdiscreteaccuracy},  $\mathcal{O}(|\I^{(k)}| \rho_{k-1}^{d} \rho_k^d \ln \frac{1}{\epsilon})$}
\STATE\label{line12} $R^{(k-1,k),\loc}=\bar{\pi}^{(k-1,k)}+D^{(k-1,k),\loc}W^{(k)}$ \COMMENT{Def.~\ref{defb}, $\mathcal{O}(|\I^{(k-1)}| \rho_{k-1}^{d})$}
\STATE\label{line13} $A^{(k-1),\loc}= R^{(k-1,k),\loc}A^{(k),\loc}R^{(k,k-1),\loc}$ \COMMENT{ $\mathcal{O}(|\I^{(k-1)}| \rho_{k-1}^{2d} \rho_{k}^{d})$}
\STATE\label{line14} For $i\in \I^{(k-1)}$, $\psi^{(k-1),\loc}_i=\sum_{j \in  \I^{(k)}} R_{i,j}^{(k-1,k),\loc} \psi_j^{(k),\loc}$ \COMMENT{$\mathcal{O}(|\I^{(k-1)}| \rho_{k-1}^{d} )$}
\STATE\label{line15} $g^{(k-1),\loc}=R^{(k-1,k),\loc} g^{(k),\loc}$ \COMMENT{$\mathcal{O}(|\I^{(k-1)}| \rho_{k-1}^{d} )$}
\ENDFOR
\STATE\label{line16} $ U^{(1),\loc}=A^{(1),\loc,-1}g^{(1),\loc}$ \COMMENT{$\mathcal{O}(|\I^{(1)}| \rho_{1}^{d} \ln \frac{1}{\epsilon})$}
\STATE\label{line17} $u^{(1),\loc}=\sum_{i \in \I^{(1)}} U^{(1)}_i \psi^{(1)}_i$ \COMMENT{$\mathcal{O}(N \rho_1^{d})$}
\STATE\label{line18} $u^{\loc}=u^{(1),\loc}+(u^{(2),\loc}-u^{(1),\loc})+\cdots+(u^{(q),\loc}-u^{(q-1),\loc})$    \COMMENT{ $\mathcal{O}(N q)$}
\end{algorithmic}
\end{algorithm}

\subsection{Fast Gamblet transform/solve}

Algorithm \ref{fastgambletsolve}  achieves near linear complexity (1) in approximating the solution of \eqref{eqhuiuhiuhuiu} to a given level of accuracy $\epsilon$ and (2) in performing an approximate Gamblet transform (sufficient to achieve that level of accuracy). This fast algorithm
is obtained by localizing/truncating the linear systems corresponding to lines \ref{line2a} and \ref{line11} in Algorithm \ref{gambletsolve}.
We define these localization/truncation steps as follows. For $k\in \{1,\ldots,q\}$ and $i\in \I^{(k)}$  define $i^{\rho}$ as in Subsection \ref{sechierarloc} (i.e. as the subset of indices $j\in \I^{(k)}$ whose corresponding subdomains $\tau_j^{(k)}$ are at distance at most $H_k \rho$ from $\tau_i^{(k)}$).

\begin{Definition}\label{deflocinvm}
  For $i\in \I^{(q)}$, let $M^{(i,\rho_q)}$ be the $i^{\rho_q} \times i^{\rho_q}$ matrix defined by $M^{(i,\rho_q)}_{l,j}=M_{l,j}$ for $l,j\in i^{\rho_q}$. Let $e^{(i,\rho_q)}$ be the $|i^{\rho_q}|$-dimensional vector defined by $e_j^{(i,\rho_q)}=\delta_{j,i}$ for $j\in i^{\rho_q}$. Let $y^{(i,\rho_q)}$ be the $|i^{\rho_q}|$-dimensional vector solution of $M^{(i,\rho_q)} y^{(i,\rho_q)}=e^{(i,\rho_q)}$.
We define the solution $M^{-1,\rho_q}_{\cdot,i}$ of the localized linear system $M^{i,\rho_q} M^{-1,\rho_q}_{\cdot,i}=\delta_{\cdot,i}$ (Line \ref{line2a} of Algorithm \ref{fastgambletsolve}) as the $i^{\rho_q}$-vector given by  $M^{-1,\loc}_{j,i}= y_j^{(i,\rho_q)}$ for $j\in i^{\rho_q}$.
\end{Definition}
Note that the associated  gamblet  $\psi^{(q),\loc}_i$ (Line \ref{line3} of Algorithm \ref{fastgambletsolve}) is also the solution of the problem of finding $\psi\in \Span\{\varphi_j \mid j\in i^{\rho_q}\}$ such that $\int_{\Omega}\psi \varphi_j=\delta_{i,j}$ for $j\in i^{\rho_q}$ (i.e. localizing the computation of the gamblet $\psi_i^{(q)}$ to a subdomain of size $H_q \rho_q$). Line \ref{line4} can  be replaced by  $g^{(q),\loc}_i=g_i$ without loss of accuracy ($g^{(q),\loc}_i=\int_{\Omega} \psi^{(q),\loc}_i g$ simplifies the presentation of the analysis).
Line  \ref{line11} of Algorithm \ref{fastgambletsolve} is defined in a similar way as follows.
\begin{Definition}\label{defb}
Let $k\in \{2,\ldots,q\}$ and $B$ be the positive definite $\J^{(k)}\times \J^{(k)}$ matrix $B^{(k),\loc}$ computed in Line \ref{line7} of Algorithm \ref{fastgambletsolve}.
For $i\in \I^{(k-1)}$, let $\rho=\rho_{k-1}$ and let $i^\chi$ be the subset of indices $j\in \J^{(k)}$ such that $j^{(k-1)}\in i^{\rho}$   (recall that if $j=(i_1,\ldots,i_k)\in \J^{(k)}$ then $j^{(k-1)}:=(j_1,\ldots,j_{k-1})\in \I^{(k-1)}$).
$B^{(i,\rho)}$ be the $i^{\chi}\times i^{\chi}$ matrix defined by $B^{(i,\rho)}_{l,j}=B_{l,j}$ for $l,j \in i^{\chi}$.
 Let $b^{(i,\rho)}$ be the $|i^{\chi}|$-dimensional vector defined by $b_j^{(i,\rho)}=-(W^{(k)}A^{(k),\loc}\bar{\pi}^{(k,k-1)})_{j,i}$ for $j\in i^{\chi}$. Let $y^{(i,\rho)}$ be the $|i^{\chi}|$-dimensional vector solution of
$B^{(i,\rho)} y^{(i,\rho)}=b^{(i,\rho)}$.
We define  the solution $D^{(k,k-1),\loc}$ of the localized linear system $\Inv(B^{(k),\loc} D^{(k,k-1),\loc}=   -W^{(k)}A^{(k),\loc}\bar{\pi}^{(k,k-1)},\rho_{k-1})$
 as the $\J^{(k)}\times \I^{(k-1)}$ sparse matrix  given by $D^{(k,k-1),\loc}_{j,i}=0$ for $j\not \in i^{\chi}$ and $D^{(k,k-1),\loc}_{j,i}= y_j^{(i,\rho)}$ for $j\in i^{\chi}$. $D^{(k-1,k),\loc}$ (Line \ref{line12} of Algorithm \ref{fastgambletsolve}) is then defined as the transpose of $D^{(k,k-1),\loc}$.
\end{Definition}

\begin{Remark}\label{rmklocalgfast} Definition \ref{defb} (Line \ref{line7} of Algorithm \ref{fastgambletsolve}) is equivalent to localizing the computation of each gamblet $\psi_i^{(k-1)}$ to a subdomain of size $H_{k-1} \rho_{k-1}$, i.e., the   gamblet  $\psi^{(k-1),\loc}_i$ computed in Line \ref{line14} of Algorithm \ref{fastgambletsolve} is  the solution of (1) the problem of finding $\psi$ in the affine space $\sum_{j\in \I^{(k)}}\bar{\pi}_{i,j}^{(k-1,k)}\psi_j^{(k),\loc}+\Span\{\chi_j^{(k),\loc} \mid j^{(k-1)}\in i^{\rho_{k-1}}\}$  such that $\psi$ is $\<\cdot,\cdot\>_a$ orthogonal to $\Span\{\chi_j^{(k),\loc} \mid j^{(k-1)}\in i^{\rho_{k-1}}\}$, and (2)
 the problem of minimizing $\|\psi\|_a$ in $\Span\{\psi_l^{(k),\loc} \mid l^{(k-1)}\in i^{\rho_{k-1}}\}$ subject to constraints $\int_{\Omega} \phi_j^{(k-1)} \psi=\delta_{i,j}$ for $j\in i^{\rho_{k-1}}$.
\end{Remark}

\subsection{Complexity vs accuracy of Algorithm \ref{fastgambletsolve} and choice of the localization radii $\rho_k$}\label{subseccomplexity}
The sizes of the localization radii $\rho_k$ (and therefore the complexity of Algorithm \ref{fastgambletsolve}) depend on whether Algorithm \ref{fastgambletsolve} is used as a pre-conditioner (as done with AMG) or as a direct solver.
Although it is natural to expect the complexity of Algorithm \ref{fastgambletsolve} to be significantly smaller if used as pre-conditioner (since pre-conditioning requires lower accuracy and therefore smaller localization radii) we will restrict our analysis and presentation to using Algorithm \ref{fastgambletsolve} as a direct solver.  Note that, when used as a direct solver,  Algorithm \ref{fastgambletsolve} is  parallel both in space (via localization) and in bandwith/subscale (subscales can be computed independently from each other and  $\psi^{(k-1),\loc}_i$ and  $u^{(k),\loc}-u^{(k-1),\loc}$ can be resolved in parallel).
We will base our analysis on the results of Subsection \ref{sechierarloc} and in particular Theorem \ref{tmshjgeydg}.
Although obtained in a continuous
setting, these results  can be generalized to the discrete setting without difficulty. Two small differences are worth mentioning.
(1) In this discrete setting, an alternative approach for obtaining localization error bounds in  the first step of the algorithm (the computation of the localized gamblets $\psi_i^{(q),\loc}$) is to use the exponential decay property of the inverse of symmetric well-conditioned banded matrices \cite{Demko1984}: since $M$ is banded and of uniformly bounded condition number \cite{Demko1984} (see also \cite[Thm~4.10]{Bebendorf:2008}) implies that $M_{i,j}^{-1}$ decays like $\exp\big(-\dist(\tau_i^{(q)},\tau_j^{(q)})/C\big)$ which guarantees that the bound $\er(q)\leq C H^{-d/2-q(2+d/2)}e^{-\rho_q/C}$ (used in Theorem \ref{thmdjjuud}) remains valid in the discrete setting. (2) Since the basis functions $\varphi_i$ are not exact set functions, neither are the resulting aggregates $\phi_i^{(k)}$. This implies that, in the discrete setting, $\int_{\Omega}\psi_i^{(k),\loc} \phi_j^{(k)}$ is not necessarily equal to zero if $\tau_j^{(k)}$ is adjacent to $S^i_{\rho_k}$ (with $j\not\in i^{\rho_k}$, using the notation  of Subsection \ref{sechierarloc}). This, however does not prevent the generalization of the proof because the  value of $\int_{\Omega}\psi_i^{(k),\loc} \phi_j^{(k)}$  (when $\tau_j^{(k)}$ is adjacent to $S^i_{\rho_k}$) can be controlled via the exponential decay of the basis functions (e.g. as done in the proof of Theorem \ref{thm:hieuhdds}). We will summarize this generalization in the following theorem (where the constant $C$ depends on the constants $C_1, C_0, \bar{\gamma}$ and $\ubar{\gamma}$ associated with the finite elements $(\varphi_i)$ in \eqref{eqhhgfff65f}, in addition to $d, \Omega, \lambda_{\min}(a), \lambda_{\max}(a), \delta$).

\begin{Theorem}\label{tmdiscrete}
Let $u$ be the solution of the discrete system \eqref{eqhuiuhiuhuiu}. Let $u^{(1),\loc}$, $u^{(k),\loc}-u^{(k-1),\loc}$, $u^{\loc}$, $A^{(k),\loc}$ and $B^{(k),\loc}$ be the outputs of Algorithm \ref{fastgambletsolve}. Let $u^{(1)}$ and $u^{(k)}-u^{(k-1)}$  be the outputs of Algorithm \ref{gambletsolve}.
For $k\in \{2,\ldots,q\}$, write $u^{(k),\loc}:=u^{(1),\loc}+\sum_{j=2}^k (u^{(j),\loc}-u^{(j-1),\loc})$.
Let $\epsilon \in (0,1)$. It holds true that if $\rho_k\geq C \big((1+\frac{1}{\ln(1/H)})\ln \frac{1}{H^k}+\ln \frac{1}{\epsilon}\big)$ for $k\in \{1,\ldots,q\}$ then (1) for $k\in \{1,\ldots,q-1\}$ we have
$\|u^{(k)} - u^{(k),\loc}\|_{H^1_0(\Omega)} \leq   \epsilon \|g\|_{H^{-1}(\Omega)}$ and $\|u^{(k)} - u^{(k),\loc}\|_{H^1_0(\Omega)} \leq  C (H^k+\epsilon) \|g\|_{L^2(\Omega)}$ (2) $\Cond(A^{(1),\loc})\leq C H^{-2}$, and for $k\in \{2,\ldots,q\}$ we have (3)
  $\Cond(B^{(k),\loc})\leq C H^{-2-2d}$ and (4) $\|u^{(k)}-u^{(k-1)}-(u^{(k),\loc}-u^{(k-1),\loc})\|_{H^1_0(\Omega)} \leq \frac{\epsilon}{2 k^2} \|g\|_{H^{-1}(\Omega)}$. Finally, (6) $\|u - u^{\loc}\|_{H^1_0(\Omega)} \leq  \epsilon \|g\|_{H^{-1}(\Omega)}$.
\end{Theorem}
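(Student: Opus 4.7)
The plan is to treat Theorem \ref{tmdiscrete} as the discrete counterpart of Theorem \ref{tmshjgeydg}, whose proof went through the localization-error recursion of Theorem \ref{thmerrorpropagation} and the perturbation estimate of Theorem \ref{thmdhdjh3}. Since both of those auxiliary theorems are essentially algebraic consequences of (i) the variational characterization of $\psi_i^{(k)}$ via \eqref{eq:dfddeytfewdaisq}, (ii) the exponential decay (Theorem \ref{thm:expdecay}), (iii) the frame bounds \eqref{eqgam1}, and (iv) the uniformly bounded condition numbers (Theorem \ref{thmodhehiudhehd}), the task is to verify that each of these four ingredients carries over to the discrete FEM setting, and then to handle the two extra technical points flagged in the paragraph preceding the theorem.

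First I would establish the initial (level $q$) localization error $\er(q)$, which in the continuous setting was controlled by Theorem \ref{thm:hieuhdds}. In the discrete setting the level-$q$ gamblets come from inverting the mass matrix $M$ rather than from solving a PDE, so I would invoke the Demko--Moss--Smith exponential decay estimate for inverses of symmetric positive definite banded matrices: because \eqref{eqhhgfff65f} gives $\Cond(M)=\mathcal{O}(1)$ and $M$ is banded with $\mathcal{O}(1)$ bandwidth (supports of $\varphi_i$ lie in $B(z_i,C_0 h)$), we get $|M^{-1}_{i,j}|\leq C\exp(-\dist(\tau_i^{(q)},\tau_j^{(q)})/C)$, which translates into $\er(q)\leq C H^{-d/2-q(2+d/2)}e^{-\rho_q/C}$, the same bound used in Theorem \ref{thmdjjuud}.

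Next I would generalize the error-propagation recursion. The proof of Theorem \ref{thmerrorpropagation} only uses that $\psi_i^{(k),\loc}$ minimizes $\|\psi\|_a$ subject to $\int_\Omega \psi\phi_j^{(k)}=\delta_{i,j}$ on the localization stencil, together with the algebraic identities \eqref{eq:ftfytftfx}--\eqref{eqhuhiuv} and the bounds of Lemma \ref{lembase}, all of which are algebraic/hierarchical facts about the matrices $\pi^{(k-1,k)}$, $W^{(k)}$, $A^{(k)}$ (established for any frame satisfying \eqref{eqgam1}). The only wrinkle is that in the discrete case $\phi_j^{(k)}$ is only an approximate indicator because the aggregates are built from nodal basis elements $\varphi_\ell$ that slightly leak across $\partial\tau_j^{(k)}$. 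I would absorb this by a cutoff argument of the kind used in the proof of Theorem \ref{thm:hieuhdds}: write the violated constraints on layers adjacent to the localization boundary and bound them by the exponential decay of $\psi_i^{(k)}$ (which in the discrete case still holds by the same proof, since the inverse Poincaré inequality \eqref{eqinvpoincdiscrete} substitutes for $\|\diiv(a\nabla \cdot)\|_{L^2}$ estimates in Lemma \ref{lemdhkedjhdkjh}). This gives the same recursion $\er(k)\leq CH^{-d/2}\er(k+1)+Ce^{-\rho_k/C}H^{d/2-(k+1)(d+1)}$ and hence the same summed bound $\er(k)\leq C\sum_{j=k}^q e^{-\rho_j/C}C^{j-k}H^{-d/2+kd/2-j3d/2}$ as in Theorem \ref{thmdjjuud}.

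With $\er(k)$ controlled, claims (2)--(4) of the theorem follow by applying the purely algebraic perturbation Lemma \ref{lemshgjhgdhg3e} (which transfers verbatim) together with the discrete versions of Lemma \ref{lembase} and Theorem \ref{thmdhdjh3}; claim (1) then follows by combining (4) with the a priori estimate $\|u-u^{(k)}\|_{H^1_0(\Omega)}\leq CH^k\|g\|_{L^2(\Omega)}$ of Theorem \ref{thmgugyug0} (still valid in the discrete case by the same construction). The hypothesis on $\rho_k$ is chosen exactly so that each summand in $\er(k)$ is at most $C^{-1}H^{-k(1+d/2)+7d/2+3}\epsilon/k^2$, which by Theorem \ref{thmdhdjh3} is the threshold for claims (2)--(4). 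Finally, item (6) follows by telescoping: in the discrete setting $u^{(q)}=u$ exactly (since $\psi_i^{(q)}$ and $\varphi_i$ span the same space), so
\begin{equation*}
\|u-u^{\loc}\|_{H^1_0(\Omega)}\leq \|u^{(1)}-u^{(1),\loc}\|_{H^1_0(\Omega)}+\sum_{k=2}^q\|(u^{(k)}-u^{(k-1)})-(u^{(k),\loc}-u^{(k-1),\loc})\|_{H^1_0(\Omega)}\leq \Big(\sum_{k\geq 1}\tfrac{1}{2k^2}\Big)\epsilon\|g\|_{H^{-1}(\Omega)}<\epsilon\|g\|_{H^{-1}(\Omega)}.
\end{equation*}
The main obstacle I expect is a careful bookkeeping of the approximate-indicator issue in the recursion step: ensuring that the small constraint violations introduced by the overlap of $\varphi_\ell$ with adjacent aggregates can be uniformly absorbed into the exponential decay rate, without degrading the constants $C$ enough to spoil the logarithmic choice of $\rho_k$.
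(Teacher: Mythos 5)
Your proposal is correct and follows essentially the same route as the paper: the paper itself offers no separate proof of Theorem \ref{tmdiscrete} beyond the preceding paragraph, which asserts that Theorem \ref{tmshjgeydg} and its supporting machinery (Theorems \ref{thmerrorpropagation}, \ref{thmdjjuud}, \ref{thmdhdjh3}, with the inverse Poincar\'e inequality \eqref{eqinvpoincdiscrete} replacing the continuous estimates behind Theorem \ref{thmuuhiuhddu}) carry over, and flags exactly the two points you treat — the level-$q$ bound on $\er(q)$ via the exponential decay of the inverse of the banded, well-conditioned mass matrix, and the approximate-indicator leakage of the aggregates $\phi_i^{(k)}$ absorbed by the exponential decay of the gamblets as in Theorem \ref{thm:hieuhdds}. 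Your telescoping derivation of item (6) using $u^{(q)}=u$ and $\sum_{k}\tfrac{1}{2k^2}<1$ is consistent with how the paper obtains the corresponding global bound.
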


Therefore, according to Theorem \ref{tmdiscrete} if the localization radii $\rho_k$ are chosen so that $\rho_k=\mathcal{O}\big(\ln \max(1/\epsilon, 1/H_k)\big)$ for $k\in \{1,\ldots,q\}$ then the condition numbers of the matrices $B^{(k),\loc}$ and $A^{(1),\loc}$ remain uniformly bounded and the algorithm achieves accuracy $\epsilon$ in a direct solve. The following theorem shows that the linear systems appearing in lines  \ref{line2a}, \ref{line8} and \ref{line11} of Algorithm \ref{fastgambletsolve} do not need to be solved exactly and provide bounds on the accuracy requirements (to simplify notations, we will from now on drop the superscripts of the vectors $y$ and $b$ appearing in definitions \ref{deflocinvm} and \ref{defb}).
\begin{Theorem}\label{tmdiscreteaccuracy}
The results of Theorem \ref{tmdiscrete} remain true if (1) $\rho_k\geq C \big((1+\frac{1}{\ln(1/H)})\ln \frac{1}{H^k}+\ln \frac{1}{\epsilon}\big)$ for $k\in \{1,\ldots,q\}$ (2) For each $i\in \I^{(q)}$ the localized linear system $M^{i,\rho_q} y=\delta_{\cdot,i}$ of Definition \ref{deflocinvm} and Line \ref{line2a} of Algorithm \ref{fastgambletsolve} is solved up to  accuracy $|y-y^{\app}|_{M^{i,\rho_q}}\leq C^{-1} H^{7d/2+3} \epsilon/q^2$ (using the notations of Subsection  \ref{subseccg}, i.e.  $|e|_A^2:=e^T A e$, and writing $y^{\app}$ the approximation of $y$) (3) For $k\in \{2,\ldots,q\}$ and each $i\in \I^{(k-1)}$, the localized linear system $B^{(i,\rho)} y=b$ of Definition \ref{defb} and Line \ref{line11} of Algorithm \ref{fastgambletsolve} is solved up to accuracy
$|y-y^{\app}|_{B^{(i,\rho)}} \leq C^{-1} H^{-k+7d/2+4}\epsilon/(k-1)^2$. (4) For $k\in \{2,\ldots,q\}$ the linear system $ B^{(k),\loc}y=  W^{(k)} g^{(k),\loc}$ of Line \ref{line8} of  Algorithm \ref{fastgambletsolve} is solved up to accuracy $|y-y^{\app}|_{B^{(k),\loc}}\leq  \epsilon \|g\|_{H^{-1}(\Omega)}/(2q)$.
\end{Theorem}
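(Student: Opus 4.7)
The strategy is to show that the three classes of inexact linear solves in Algorithm \ref{fastgambletsolve} contribute perturbations that are dominated by, or of the same order as, the localization errors already controlled in the proof of Theorem \ref{tmdiscrete} (equivalently, of Theorem \ref{tmshjgeydg} and Theorem \ref{thmdhdjh3}). Concretely, writing $\tilde\psi_i^{(k),\loc}$ for the basis elements actually produced by Algorithm \ref{fastgambletsolve} when the solves are performed inexactly, define the inexact gamblet error $\tilde\er(k):=\big(\sum_{i\in \I^{(k)}}\|\psi_i^{(k)}-\tilde\psi_i^{(k),\loc}\|_a^2\big)^\frac{1}{2}$. It suffices to establish, by induction on $k$ from $k=q$ down to $k=1$, that under the stated accuracies $\tilde\er(k)$ obeys the same upper bound as $\er(k)$ in Theorem \ref{thmdjjuud}, namely $\tilde\er(k)\leq C^{-1}H^{-k(1+d/2)+7d/2+3}\epsilon/k^2$. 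Once this is shown, the conclusions (1)--(4) follow from the proof of Theorem \ref{tmdiscrete} verbatim, and (6) follows after adding the subband solve error.

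\paragraph{Base step (line \ref{line2a}).}
For $i\in \I^{(q)}$ the inexact solve produces $\tilde y$ with $|\tilde y-y|_{M^{(i,\rho_q)}}\leq C^{-1}H^{7d/2+3}\epsilon/q^2$. Setting $\tilde\psi_i^{(q),\loc}=\sum_{j\in i^{\rho_q}}\tilde y_j \varphi_j$, the difference $\tilde\psi_i^{(q),\loc}-\psi_i^{(q),\loc}$ lies in $\Span\{\varphi_l\mid l\in i^{\rho_q}\}$ and the inverse Poincar\'e inequality \eqref{eqinvpoincdiscrete} together with the $L^2$-frame bound \eqref{eqhhgfff65f} translates the $M^{(i,\rho_q)}$-norm estimate into $\|\tilde\psi_i^{(q),\loc}-\psi_i^{(q),\loc}\|_a \leq C h^{-1}|\tilde y-y|_{M^{(i,\rho_q)}}$. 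Squaring, summing over $i\in\I^{(q)}$ (contributing a factor $|\I^{(q)}|\leq C H^{-qd}$) and adding the localization component $\er(q)\leq C H^{-d/2-q(2+d/2)}e^{-\rho_q/C}$ from Theorem \ref{thm:hieuhdds}, one sees that both terms fit into the induction bound for $\tilde\er(q)$ precisely because $\rho_q$ satisfies (1) and the accuracy threshold is $C^{-1}H^{7d/2+3}\epsilon/q^2$.

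\paragraph{Induction step (line \ref{line11}).}
By Remark \ref{rmklocalgfast}, $\psi_i^{(k-1),\loc}$ is the minimizer of $\|\psi\|_a$ on an affine subspace of $\Span\{\psi_l^{(k),\loc}\mid l^{(k-1)}\in i^{\rho_{k-1}}\}$, with coefficients determined by the solution $y$ of $B^{(i,\rho)} y = b$ in Definition \ref{defb}; the algorithm instead uses $\tilde y$. A Galerkin perturbation argument (in the spirit of Lemma \ref{lemshgjhgdhg3e}) expresses $\|\tilde\psi_i^{(k-1),\loc}-\psi_i^{(k-1),\loc}\|_a$ as a sum of two terms: a term inherited from finer scales, controlled by $\tilde\er(k)$ times $\|R^{(k-1,k),\loc}\|_2$ (bounded by $CH^{-d/2}$ via Lemma \ref{lembase}), and a term proportional to $|\tilde y-y|_{B^{(i,\rho)}}$ times the norm of $\bar\pi^{(k,k-1)}$ and $W^{(k)}$. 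The accuracy threshold $C^{-1}H^{-k+7d/2+4}\epsilon/(k-1)^2$ in (3) is calibrated (together with $|\I^{(k-1)}|\leq CH^{-(k-1)d}$ and the operator-norm bounds of Lemma \ref{lembase}) exactly so that, upon $\ell^2$-summation over $i\in\I^{(k-1)}$, the contribution is $\mathcal{O}(H^{-(k-1)(1+d/2)+7d/2+3}\epsilon/(k-1)^2)$. Combined with the localization part and the inductive estimate on $\tilde\er(k)$, this closes the induction.

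\paragraph{Subband solve (line \ref{line8}) and main difficulty.}
The solve in line \ref{line8} affects only $\tilde u^{(k),\loc}-\tilde u^{(k-1),\loc}=\sum_{i\in \J^{(k)}} \tilde w_i^{(k),\loc}\chi_i^{(k),\loc}$; by the definition of $B^{(k),\loc}$ we have $\|(\tilde u^{(k),\loc}-\tilde u^{(k-1),\loc})-(u^{(k),\loc}-u^{(k-1),\loc})\|_a=|\tilde w^{(k),\loc}-w^{(k),\loc}|_{B^{(k),\loc}}\leq \epsilon\|g\|_{H^{-1}(\Omega)}/(2q)$, so summing over $k\in\{2,\ldots,q\}$ and combining with the (now proved) statement of Theorem \ref{tmdiscrete} applied with $\epsilon/2$ gives (6). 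The main obstacle is the induction step: errors in $R^{(k-1,k),\loc}$ and in $A^{(k),\loc}$ couple multiplicatively through $A^{(k-1),\loc}=R^{(k-1,k),\loc}A^{(k),\loc}R^{(k,k-1),\loc}$, and the operator norms of $R$ and $A$ grow like negative powers of $H_k$ (Lemma \ref{lembase}). This is precisely what forces the $H^{-k}$ factor and the $1/(k-1)^2$ summability safety factor in the accuracy requirement (3), so that the cumulative error over all $q$ scales remains $\mathcal{O}(\epsilon)$.
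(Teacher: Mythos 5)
Your proposal is correct and follows essentially the same route as the paper: reduce everything to the per-scale requirement $\er(k)\leq C^{-1}H^{-k(1+d/2)+7d/2+3}\epsilon/k^2$ from the proof of Theorem \ref{tmshjgeydg}, convert the level-$q$ $M$-norm solver accuracy into energy norm via the inverse Poincar\'e inequality \eqref{eqinvpoincdiscrete}, and use the Gram-matrix identities $\|\sum_j(y-y^{\app})_j\chi_j^{(k),\loc}\|_a=|y-y^{\app}|_{B^{(i,\rho)}}$ (resp.\ $|y-y^{\app}|_{B^{(k),\loc}}$) to handle lines \ref{line11} and \ref{line8}. The only cosmetic difference is that your induction step carries extra factors of $\|\bar{\pi}^{(k,k-1)}\|_2$ and $\|W^{(k)}\|_2$, which the paper avoids because the solver error at each coarse level enters the energy norm \emph{exactly} through the $B^{(i,\rho)}$-norm of the coefficient error; these factors are harmless here, and your explicit $\tilde{\er}(k)$ induction simply makes explicit the propagation machinery (Theorems \ref{thmerrorpropagation}--\ref{tmshjgeydg}) that the paper's terser proof invokes implicitly.
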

\begin{proof}
From the proof of Theorem \eqref{tmshjgeydg} we need $\er(k)  \leq C^{-1}  H^{-k (1+d/2)+7d/2+3} \epsilon/k^2$ for $k\in \{1,\ldots,q\}$. By the inverse Poincar\'{e} inequality \eqref{eqinvpoincdiscrete} this inequality  is satisfied for $k=q$ for
$\|\psi_i^{(q)}-\psi_i^{(q),\loc}\|_{L^2(\Omega)} \leq C^{-1} H^{7d/2+3} \epsilon/q^2 $ for each $i\in \I^{(q)}$, which by the definition of  $M^{i,\rho_q}$ and Line \ref{line3} of Algorithm \ref{fastgambletsolve} leads to (2).
For $k\in \{2,\ldots,q\}$ the inequality $\er(k-1)  \leq C^{-1}  H^{-(k-1) (1+d/2)+7d/2+3} \epsilon/(k-1)^2$ is satisfied if for $i\in \I^{(k-1)}$,
$\|\psi_i^{(k-1)}-\psi_i^{(k-1),\loc}\|_{a} \leq C^{-1} H^{-(k-1) +7d/2+3}\epsilon/(k-1)^2$. Using the notations of Definition \ref{defb} we have,
$\psi^{(k-1),\loc}_i=\sum_{j\in \I^{(k)}}\bar{\pi}^{(k-1,k)}_{i,j}\psi_j^{(k),\loc} + \sum_{j \in  i^\chi} D_{i,j}^{(k-1,k),\loc} \chi_j^{(k),\loc}$ with
$\<\chi_j^{(k),\loc},\chi_l^{(k),\loc}\>_a=B^{(i,\rho)}_{j,l}$ which leads to (3) by lines \ref{line14}, \ref{line12}, \ref{line9} and \ref{line7}  of Algorithm \ref{fastgambletsolve}. For (4) we simply observe that for $y\in \J^{(k)}$, $\|\sum_{i\in \J^{(k)}} (y-y^\app)_i\chi_i^{(k),\loc}\|_a=|y-y^\app|_{B^{(k),\loc}}$.
\end{proof}

Let us now describe the complexity of Algorithm \ref{fastgambletsolve}. This complexity depends on the desired accuracy $\epsilon \in (0,1)$.
Lines \ref{line1} and \ref{line2} correspond to the computation of the (sparse) mass and stiffness matrices of \eqref{eqhuiuhiuhuiu}. Note since $A$ and $M$ are sparse and banded (of bandwidth $2d=4$ in our numerical example) this computation is of  $\mathcal{O}(N)$ complexity.
Line \ref{line2a} corresponds to the resolution of the localized linear system introduced in Definition \ref{deflocinvm} using $M^{i,\rho_q}$, the $i^{\rho_q}\times i^{\rho_q}$ sub-matrix of $M$. According to Theorem \ref{tmdiscrete}, the accuracy of each solve must be $|y-y^{\app}|_{M^{i,\rho_q}}\leq C^{-1} H^{7d/2+3} \epsilon/q^2$.
Since $|i^{\rho_q}|=\mathcal{O}(\rho_q^d)$ and since $M^{i,\rho_q}$ is of condition number bounded by that of $M$, for each $i$ the linear system of Line \ref{line2a} can be solved efficiently (to accuracy $\mathcal{O}(C^{-1} H^{7d/2+3} \epsilon/q^2)$ using $\mathcal{O}(\rho_q)=\mathcal{O}\big(\ln \max( \frac{1}{\epsilon},q)\big)$ iterations of the CG method (reminded in Subsection \ref{subseccg}) with a cost of $\mathcal{O}(\rho_q^d)$ per iteration, which results in a total  cost of $\mathcal{O}\big(N \rho_q^{d} \ln \max( \frac{1}{\epsilon},q )\big)$. Lines \ref{line3} and \ref{line4} are naturally of complexity $\mathcal{O}(N \rho_q^d)$. Since $A^{(q),\loc}_{i,j}=0$ if $\tau_i^{(q)}$ and $\tau_j^{(q)}$ are at a distance larger than $2 H^q \rho_q$ the complexity of Line \ref{line5} is $\mathcal{O}(N \rho_q^{2d})$. Note that $A^{(k),\loc}$ and $B^{(k),\loc}$ are banded and of bandwidth $\mathcal{O}(N \rho_k^{d})$. It follows that Line \ref{line7} is of complexity $\mathcal{O}(|\I^{(k)}| \rho_k^{d})$. According to Theorem \ref{tmdiscreteaccuracy} the linear system of Line \ref{line8} needs to be solved up to accuracy  $|y-y^{\app}|_{B^{(k),\loc}}\leq  \epsilon \|g\|_{H^{-1}(\Omega)}/2$. Since $B^{(k),\loc}$ is of uniformly bounded condition number this can be done using $\mathcal{O}(\ln \frac{1}{\epsilon})$ iterations of the CG method with a cost of $\mathcal{O}(|\I^{(k)}| \rho_k^d)$ per iteration (using $\mathcal{O}(|\J^{(k)}|)=\mathcal{O}(|\I^{(k)}|)$), which results in a total  cost of $\mathcal{O}(|\I^{(k)}| \rho_k^{d} \ln \frac{1}{\epsilon})$ for Line \ref{line8}. Storing the fine mesh values of   $u^{(k),\loc}-u^{(k-1),\loc}$  in Line \ref{line10} costs $\mathcal{O}(N \rho_k^{d})$ (since for each node $x$ on the fine mesh only $\mathcal{O}(\rho_k^{d})$ localized basis functions contribute to the value of $u^{(k),\loc}-u^{(k-1),\loc}$).
According to Theorem \ref{tmdiscreteaccuracy}, for each $i\in \I^{(k-1)}$ the linear system $B^{(i,\rho)} y=b$ of  Line \ref{line11} needs to be solved up to accuracy $|y-y^{\app}|_{B^{(i,\rho)}} \leq C^{-1} H^{-k+7d/2+4}\epsilon/(k-1)^2$. Since the matrix $B^{(i,\rho)}$ inherits the uniformly bounded condition number from $B^{(k),\loc}$ this can be done using $\mathcal{O}(\ln \frac{1}{\epsilon})$ iterations of the CG method with a cost of $\mathcal{O}(H^{-d}\rho_{k-1}^d \rho_k^d)=\mathcal{O}(\rho_{k-1}^d \rho_k^d)$ per iteration. This results in a total cost of $\mathcal{O}(|\I^{(k)}| \rho_{k-1}^{d} \rho_k^d \ln \frac{1}{\epsilon})$ for Line \ref{line11}. We obtain, using the sparsity structures of
$D^{(k-1,k),\loc}$ and $R^{(k-1,k),\loc}$  that  the complexity of Line \ref{line12} is
$\mathcal{O}(|\I^{(k-1)}| \rho_{k-1}^{d} H^{-d})=\mathcal{O}(|\I^{(k-1)}| \rho_{k-1}^{d} )$ and that of Line \ref{line13} is $\mathcal{O}(|\I^{(k-1)}| \rho_{k-1}^{2d} \rho_{k}^{d})$. The complexity of lines \ref{line14} to \ref{line15} is summarized in the display of Algorithm \ref{fastgambletsolve} and a simple consequence of the sparsity structure of $R^{(k-1,k),\loc}$. Line \ref{line16} is complexity $\mathcal{O}(|\I^{(1)}| \rho_{1}^{d} \ln \frac{1}{\epsilon})$ (using CG as in Line \ref{line8}). As in Line \ref{line10}, storing the values of $u^{(1),\loc}$ costs $\mathcal{O}(N \rho_{1}^{d} )$. Finally, obtaining $u^{\loc}$ in Line \ref{line18} costs $\mathcal{O}(N q)$ (observe that $q=\mathcal{O}(\ln N)$).

\begin{table}[!h]
\begin{center}
\begin{tabular}{ l || c | r }
  \hline			
 Compute and store $\psi_i^{(k),\loc}$, $\chi_i^{(k),\loc}$, $A^{(k),\loc}$, $B^{(k),\loc}$   & $\epsilon\leq H^{q}$ & $\epsilon\geq H^{q}$\\
 and $u^{\loc}$ s.t. $\|u - u^{\loc}\|_{H^1_0(\Omega)} \leq  \epsilon \|g\|_{H^{-1}(\Omega)}$ & &  \\\hline
  First solve & $N \ln^{3d} \frac{1}{\epsilon} $  & $N \ln^{3d} N$ \\\hline
  Subsequence solves & $N \ln^{d+1} \frac{1}{\epsilon}$  & $N \ln^d N \ln \frac{1}{\epsilon} $ \\ \hline  \hline
  Subsequent solves to compute  $u^{(k),\loc}$ s.t. & & $N \ln^{d+1}  \frac{1}{\epsilon}$ \\
   $\|u - u^{(k),\loc}\|_{H^1_0(\Omega)} \leq  C \epsilon \|g\|_{L^2(\Omega)}$ & & \\\hline \hline
  Subsequent solves to compute the coefficients $c_i^{(k)}$  & & \\ of $u^{(k),\hom}=\sum_{i\in \I^{(k)}} c_i^{(k)} \psi_i^{(k)}$  & &$\epsilon^{-d} \ln^{d+1}  \frac{1}{\epsilon}$ \\
  s.t.  $\|u - u^{(k),\loc}\|_{H^1_0(\Omega)} \leq  C \epsilon (\|g\|_{L^2(\Omega)}+\|g\|_{\textrm{Lip}})$  & &  \\
  \hline  \hline
  Subsequent solves to compute  $u^{(k),\hom}$ s.t. & &$\epsilon^{-d} \ln^{d+1}  \frac{1}{\epsilon}$ \\
  $\|u-u^{(k),\homo}\|_{L^2(\Omega)}\leq C \epsilon  (\|g\|_{L^2(\Omega)}+\|g\|_{\textrm{Lip}})$ & & \\ \hline \hline
  $a$ periodic/ergodic with mixing length $H^p\leq \epsilon$, & & $(N (\ln^{3d} N) H^p$ \\
  first solve of $u^{(k),\hom}$ s.t. & & $+\epsilon^{-d})$   \\
  $\|u-u^{(k),\homo}\|_{L^2(\Omega)}\leq C \epsilon  (\|g\|_{L^2(\Omega)}+\|g\|_{\textrm{Lip}})$ & & $ \ln^{d+1}  \frac{1}{\epsilon}$ \\ \hline
\end{tabular}
 \end{center}
  \caption{Complexity of Algorithm \ref{fastgambletsolve}.}
    \label{tabcomplexity}
\end{table}

\paragraph{Total computational complexity, first solve.}
 Summarizing we obtain that the complexity of Algorithm \ref{fastgambletsolve}, i.e. the cost of computing  the gamblets $(\psi_i^{(k),\loc})$, $(\chi_j^{(k),\loc})$, their stiffness matrices $(A^{(k),\loc},B^{(k),\loc})$, and the approximation   $u^{\loc}$  such that
 $\|u - u^{\loc}\|_{H^1_0(\Omega)} \leq  \epsilon \|g\|_{H^{-1}(\Omega)}$ is $\mathcal{O}\big(N \big(\ln \max (\frac{1}{\epsilon},N^\frac{1}{d})\big)^{3d}\big)$ (with Line \ref{line13} being the corresponding complexity bottleneck). The complexity of storing the gamblets $(\psi_i^{(1),\loc})$, $(\chi_j^{(k),\loc})$ and their stiffness matrices $(A^{(1),\loc},B^{(k),\loc})$ is $\mathcal{O}\big(N \big(\ln \max (\frac{1}{\epsilon},N^\frac{1}{d})\big)^{d}\big)$.

 \paragraph{Computational complexity of subsequent solves with $g\in H^{-1}(\Omega)$.}
If  \eqref{eqhuiuhiuhuiu} (i.e. \eqref{eqn:scalar}) needs to be solved for more than one $g$ then the gamblets $\psi_i^{(k),\loc}, \chi^{(k),\loc}_i$ and the stiffness matrices $B^{(k),\loc}$ do not need to be recomputed. The cost of subsequent solves is therefore that of Line \ref{line8} i.e. $\mathcal{O}\big(N \big(\ln \max (\frac{1}{\epsilon},N^\frac{1}{d})\big)^{d} \ln \frac{1}{\epsilon}\big)$  to achieve the approximation accuracy $\|u - u^{\loc}\|_{H^1_0(\Omega)} \leq  \epsilon \|g\|_{H^{-1}(\Omega)}$.

 \paragraph{Computational complexity of subsequent solves with $g\in L^2(\Omega)$ and $\epsilon \geq H^q$.}
If $g\in L^2(\Omega)$ (i.e. if $\|g\|_{L^2(\Omega)}$ is used to express the accuracy of the approximation)
and $\epsilon \in [H^k,H^{k-1}]$ then, by Theorem \ref{tmdiscrete}, $u^{(k),\loc}$ achieves the approximation accuracy $\|u - u^{(k),\loc}\|_{H^1_0(\Omega)} \leq  C \epsilon \|g\|_{L^2(\Omega)}$ (i.e. $u^{(j+1),\loc}-u^{(j),\loc}$ does not need to be computed for $j\geq k$) and the corresponding complexity is
 $\mathcal{O}\big(N (\ln  \frac{1}{\epsilon})^{d+1}\big)$ (if $g\in H^{-1}(\Omega)$ then the energy of the solution can be in the fine scales and $u^{(j+1),\loc}-u^{(j),\loc}$ do  need to be computed for $j\geq k$).

 \paragraph{Computational complexity of subsequent solves with $g$ Lipschitz continuous and $\epsilon \geq H^q$.}
Note that the computational complexity bottleneck for computing the coefficients of $u^{(k),\loc}$ in the basis $(\psi_i^{(k),\loc})$ when $g\in \L^2(\Omega)$ and $\epsilon \in [H^k,H^{k-1}]$ is in the computation of the vectors $g^{(j),\loc}$ for $j>k$. If $g$ is Lipschitz continuous then $g^{(k),\loc}_i$ be approximated $g(x_i^{(k)})$ where $x_i^{(k)}$ is any point in $\tau_i^{(k)}$ without loss of accuracy. Note that this approximation requires (only)  $\mathcal{O}(H^{-kd})$ evaluations of $g$ and leads to a corresponding $u^{(k),\loc}$ satisfying $\|u - u^{(k),\loc}\|_a \leq  C \epsilon (\|g\|_{L^2(\Omega)}+\|g\|_{\textrm{Lip}})$ (with $\|g\|_{\textrm{Lip}}=\sup_{x,y\in \Omega}|g(x)-g(y)|/|x-y|$). Therefore the computational complexity of subsequent solves to obtain the coefficients $c_i^{(k)}$ in the decomposition $u^{(k),\loc}=\sum_{i\in \I^{(k)}}c_i^{(k)} \psi_i^{(k),\loc}$ is $\mathcal{O}\big(\epsilon^{-d} (\ln  \frac{1}{\epsilon})^{d+1}\big)$ (i.e. independent from $N$ if $g$ is Lipschitz continuous).
Of course, obtaining an $H^1_0(\Omega)$-norm approximation of $u$ with accuracy $H^k$ requires expressing the values of $\psi_i^{(k),\loc}$ (and therefore $u^{(k),\loc}$) on the fine mesh, which leads to a total cost of  $\mathcal{O}(N (\ln \frac{1}{\epsilon})^d )$. However if one is only interested expressing the values of $u^{(k),\loc}$ on the fine mesh in a sub-domain of diameter $\epsilon$ then the resulting complexity is  $\mathcal{O}((N \epsilon^d+\epsilon^{-d}) (\ln \frac{1}{\epsilon})^d )$

\paragraph{Computational complexity of subsequent $L^2$-approximations with $g$ Lipschitz continuous and $\epsilon \geq H^q$.}
Let $(x_i^{(k)})_{i\in \I^{(k)}}$ be points of $(\tau_i^{(k)})_{i\in \I^{(k)}}$ forming a regular coarse mesh of $\Omega$ of resolution $H^k$ and write $\varphi_i^{(k)}$ the corresponding (regular and coarse) piecewise linear nodal basis elements.
If (as in classical homogenization or  HMM) one is only interested in an $L^2$-norm approximation of $u$ with accuracy $H^k$ then the coefficients $c_i^{(k)}$ defined above are sufficient to obtain the approximation
 $u^{\homo}=\sum_{i\in \I^{(k)}} \frac{c_i^{(k)}}{\int_{\Omega}\phi_i^{(k)}} \varphi_i^{(k)}$  that satisfies $\|u^{(k),\loc}-u^{\homo}\|_{L^2(\Omega)}\leq C H^k \|u^{(k),\loc}\|_{H^1_0(\Omega)}$ ($\int_{\Omega}u^{\homo}\phi_i^{(k)}=\int_{\Omega}u^{(k),\loc}\phi_i^{(k)}$) and therefore $\|u-u^{(k),\homo}\|_{L^2(\Omega)}\leq C \epsilon  (\|g\|_{L^2(\Omega)}+\|g\|_{\textrm{Lip}})$.
Note that the computational complexity of subsequent solves to obtain $u^{\homo}$ is $\mathcal{O}\big(\epsilon^{-d} (\ln  \frac{1}{\epsilon})^{d+1}\big)$.

\paragraph{Total computational complexity if $a$ is periodic or ergodic with mixing length $H^p$ and $\epsilon \approx H^{k}$ with $k\geq p$.}
Under the assumptions of classical homogenization or HMM \cite{EEngquist:2003} (e.g. $a$ is of period $H^p$ or $a$ is ergodic with $H^p$ as mixing length), if the sets $\tau_i^{(k)}$ are chosen to match the period of $a$ and the domain is rescaled so that $1/H$ is an integer, then  the entries of $A^{(k)}$ are invariant under periodic translations (or stationary if the medium is ergodic). Therefore, under these assumptions, as in classical homogenization, it sufficient to limit the computation of gamblets to  periodicity cells (or ergodicity cells with a tight control on mixing as in \cite{GloriaNeukmanOtto2015}).
The resulting cost of obtaining   $u^{(k),\homo}$ (in a first solve) such that  $\|u^{(k),\loc}-u^{(k),\homo}\|_{L^2(\Omega)}\leq C \epsilon  (\|g\|_{L^2(\Omega)}+\|g\|_{\textrm{Lip}})$, is $\mathcal{O}\big(N \ln^{3d}N \,H^p+\epsilon^{-d}) \ln^{d+1}  \frac{1}{\epsilon}\big)$.

\paragraph{Acknowledgements.}
The author gratefully acknowledges this work supported by  the Air Force Office of Scientific Research and the DARPA EQUiPS Program under
awards number  FA9550-12-1-0389 (Scientific Computation of Optimal Statistical Estimators) and number FA9550-16-1-0054 (Computational Information Games) and the U.S. Department of Energy Office of Science, Office of Advanced Scientific Computing Research, through the Exascale Co-Design Center for Materials in Extreme Environments (ExMatEx, LANL Contract No
DE-AC52-06NA25396, Caltech Subcontract Number 273448). The author also thanks  C. Scovel, L. Zhang, P. B. Bochev, P. S. Vassilevski, J.-L. Cambier, B. Suter, G. Pavliotis and an anonymous referee for valuable comments and suggestions.

\bibliographystyle{plain}
\bibliography{RPS}

\def\cprime{$'$} \def\cprime{$'$} \def\cprime{$'$}
\begin{thebibliography}{100}

\bibitem{Abdulle:2004}
A.~Abdulle and C.~Schwab.
\newblock Heterogeneous multiscale {FEM} for diffusion problems on rough
  surfaces.
\newblock {\em Multiscale Model. Simul.}, 3(1):195--220 (electronic), 2004/05.

\bibitem{AllBri05}
G.~Allaire and R.~Brizzi.
\newblock A multiscale finite element method for numerical homogenization.
\newblock {\em Multiscale Model. Simul.}, 4(3):790--812 (electronic), 2005.

\bibitem{Beylkin:1998b}
A.~Averbuch, G.~Beylkin, R.~Coifman, and M.~Israeli.
\newblock Multiscale inversion of elliptic operators.
\newblock In {\em Signal and image representation in combined spaces}, volume~7
  of {\em Wavelet Anal. Appl.}, pages 341--359. Academic Press, San Diego, CA,
  1998.

\bibitem{BaCaOs:1994}
I.~Babu{\v{s}}ka, G.~Caloz, and J.~E. Osborn.
\newblock Special finite element methods for a class of second order elliptic
  problems with rough coefficients.
\newblock {\em SIAM J. Numer. Anal.}, 31(4):945--981, 1994.

\bibitem{BaLip10}
I.~Babu{\v{s}}ka and R.~Lipton.
\newblock Optimal local approximation spaces for generalized finite element
  methods with application to multiscale problems.
\newblock {\em Multiscale Model. Simul.}, 9:373--406, 2011.

\bibitem{BaOs:1983}
I.~Babu{\v{s}}ka and J.~E. Osborn.
\newblock Generalized finite element methods: their performance and their
  relation to mixed methods.
\newblock {\em SIAM J. Numer. Anal.}, 20(3):510--536, 1983.

\bibitem{BaOs:2000}
I.~Babu{\v{s}}ka and J.~E. Osborn.
\newblock Can a finite element method perform arbitrarily badly?
\newblock {\em Math. Comp.}, 69(230):443--462, 2000.

\bibitem{BalJing10}
G.~Bal and W.~Jing.
\newblock Corrector theory for {MSFEM} and {HMM} in random media.
\newblock {\em Multiscale Model. Simul.}, 9(4):1549--1587, 2011.

\bibitem{BankYserentant88}
R.~E. Bank, T.~F. Dupont, and H.~Yserentant.
\newblock The hierarchical basis multigrid method.
\newblock {\em Numer. Math.}, 52(4):427--458, 1988.

\bibitem{Bebendorf2005}
M.~Bebendorf.
\newblock Efficient inversion of the {G}alerkin matrix of general second-order
  elliptic operators with nonsmooth coefficients.
\newblock {\em Math. Comp.}, 74(251):1179--1199 (electronic), 2005.

\bibitem{Bebendorf:2008}
M.~Bebendorf.
\newblock {\em Hierarchical matrices}, volume~63 of {\em Lecture Notes in
  Computational Science and Engineering}.
\newblock Springer-Verlag, Berlin, 2008.
\newblock A means to efficiently solve elliptic boundary value problems.

\bibitem{BenarousOwhadi:2003}
G.~Ben~Arous and H.~Owhadi.
\newblock Multiscale homogenization with bounded ratios and anomalous slow
  diffusion.
\newblock {\em Comm. Pure Appl. Math.}, 56(1):80--113, 2003.

\bibitem{BeLiPa78}
A.~Bensoussan, J.~L. Lions, and G.~Papanicolaou.
\newblock {\em Asymptotic analysis for periodic structure}.
\newblock North Holland, Amsterdam, 1978.

\bibitem{BeOw:2010}
L.~Berlyand and H.~Owhadi.
\newblock Flux norm approach to finite dimensional homogenization
  approximations with non-separated scales and high contrast.
\newblock {\em Archives for Rational Mechanics and Analysis}, 198(2):677--721,
  2010.

\bibitem{Beylkin:1998}
G.~Beylkin and N.~Coult.
\newblock A multiresolution strategy for reduction of elliptic {PDE}s and
  eigenvalue problems.
\newblock {\em Appl. Comput. Harmon. Anal.}, 5(2):129--155, 1998.

\bibitem{Brandt:1973}
A.~Brandt.
\newblock Multi-level adaptive technique (mlat) for fast numerical solutions to
  boundary value problems.
\newblock 1973.
\newblock Lecture Notes in Physics 18.

\bibitem{BraWu09}
L.~V. Branets, S.~S. Ghai, L.~L., and X.-H. Wu.
\newblock Challenges and technologies in reservoir modeling.
\newblock {\em Commun. Comput. Phys.}, 6(1):1--23, 2009.

\bibitem{Beylkin:1995}
M.~E. Brewster and G.~Beylkin.
\newblock A multiresolution strategy for numerical homogenization.
\newblock {\em Appl. Comput. Harmon. Anal.}, 2(4):327--349, 1995.

\bibitem{Briol2015}
F.-X. Briol, C.~J. Oates, M.~Girolami, M.~A. Osborne, and D.~Sejdinovic.
\newblock Probabilistic integration: A role for statisticians in numerical
  analysis?
\newblock {\em arXiv:1512.00933}, 2015.

\bibitem{CandesRombergTao2006}
E.~J. Cand{\`e}s, J.~Romberg, and T.~Tao.
\newblock Robust uncertainty principles: exact signal reconstruction from
  highly incomplete frequency information.
\newblock {\em IEEE Trans. Inform. Theory}, 52(2):489--509, 2006.

\bibitem{CandesTao2006}
E.~J. Candes and T.~Tao.
\newblock Near-optimal signal recovery from random projections: universal
  encoding strategies?
\newblock {\em IEEE Trans. Inform. Theory}, 52(12):5406--5425, 2006.

\bibitem{Chandrasekaran2011}
V.~Chandrasekaran, S.~Sanghavi, P.~A. Parrilo, and A.~S. Willsky.
\newblock Rank-sparsity incoherence for matrix decomposition.
\newblock {\em SIAM J. Optim.}, 21(2):572--596, 2011.

\bibitem{ChkrebtiiCampbell2015}
O.~A. Chkrebtii, D.~A. Campbell, B.~Calderhead, and M.~A. Girolami.
\newblock Bayesian solution uncertainty quantification for differential
  equations.
\newblock {\em Bayesian Analysis. arXiv:1306.2365}, 2016.

\bibitem{ChowVassilevski2003}
E.~Chow and P.~S. Vassilevski.
\newblock Multilevel block factorizations in generalized hierarchical bases.
\newblock {\em Numer. Linear Algebra Appl.}, 10(1-2):105--127, 2003.
\newblock Dedicated to the 60th birthday of Raytcho Lazarov.

\bibitem{Conrad2015}
P.~R. Conrad, M.~Girolami, S.~Särkkä, A.~Stuart, and K.~Zygalakis.
\newblock Probability measures for numerical solutions of differential
  equations.
\newblock {\em Statistics and Computing, arXiv:1512.00933}, pages 1--18, 2016.

\bibitem{Demko1984}
S.~Demko, W.~F. Moss, and P.~W. Smith.
\newblock Decay rates for inverses of band matrices.
\newblock {\em Math. Comp.}, 43(168):491--499, 1984.

\bibitem{Diaconis:1988}
P.~Diaconis.
\newblock Bayesian numerical analysis.
\newblock In {\em Statistical decision theory and related topics, {IV}, {V}ol.\
  1 ({W}est {L}afayette, {I}nd., 1986)}, pages 163--175. Springer, New York,
  1988.

\bibitem{Donoho:2006}
D.~L. Donoho.
\newblock Compressed sensing.
\newblock {\em IEEE Trans. Inform. Theory}, 52(4):1289--1306, 2006.

\bibitem{DorobantuEngquist1988}
M.~Dorobantu and B.~Engquist.
\newblock Wavelet-based numerical homogenization.
\newblock {\em SIAM J. Numer. Anal.}, 35(2):540--559 (electronic), 1998.

\bibitem{Duchon:1976}
J.~Duchon.
\newblock Interpolation des fonctions de deux variables suivant le principe de
  la flexion des plaques minces.
\newblock {\em Rev. Francaise Automat. Informat. Recherche Operationnelle Ser.
  RAIRO Analyse Numerique}, 10(R-3):5--12, 1976.

\bibitem{Duchon:1977}
J.~Duchon.
\newblock Splines minimizing rotation-invariant semi-norms in {S}obolev spaces.
\newblock In {\em Constructive theory of functions of several variables
  ({P}roc. {C}onf., {M}ath. {R}es. {I}nst., {O}berwolfach, 1976)}, pages
  85--100. Lecture Notes in Math., Vol. 571. Springer, Berlin, 1977.

\bibitem{Duchon:1978}
J.~Duchon.
\newblock Sur l'erreur d'interpolation des fonctions de plusieurs variables par
  les {$D^{m}$}-splines.
\newblock {\em RAIRO Anal. Num\'er.}, 12(4):325--334, vi, 1978.

\bibitem{EEngquist:2003}
W.~E and B.~Engquist.
\newblock The heterogeneous multiscale methods.
\newblock {\em Commun. Math. Sci.}, 1(1):87--132, 2003.

\bibitem{Efendiev2011}
Y.~Efendiev, J.~Galvis, and P.~S. Vassilevski.
\newblock Spectral element agglomerate algebraic multigrid methods for elliptic
  problems with high-contrast coefficients.
\newblock In {\em Domain decomposition methods in science and engineering
  {XIX}}, volume~78 of {\em Lect. Notes Comput. Sci. Eng.}, pages 407--414.
  Springer, Heidelberg, 2011.

\bibitem{EfGiHouEw:2006}
Y.~Efendiev, V.~Ginting, T.~Hou, and R.~Ewing.
\newblock Accurate multiscale finite element methods for two-phase flow
  simulations.
\newblock {\em J. Comput. Phys.}, 220(1):155--174, 2006.

\bibitem{EfHo:2007}
Y.~Efendiev and T.~Hou.
\newblock Multiscale finite element methods for porous media flows and their
  applications.
\newblock {\em Appl. Numer. Math.}, 57(5-7):577--596, 2007.

\bibitem{EngquistLuo:1997}
B.~Engquist and E.~Luo.
\newblock Convergence of a multigrid method for elliptic equations with highly
  oscillatory coefficients.
\newblock {\em SIAM J. Numer. Anal.}, 34(6):2254--2273, 1997.

\bibitem{EngquistRunborg02}
B.~Engquist and O.~Runborg.
\newblock Wavelet-based numerical homogenization with applications.
\newblock In {\em Multiscale and multiresolution methods}, volume~20 of {\em
  Lect. Notes Comput. Sci. Eng.}, pages 97--148. Springer, Berlin, 2002.

\bibitem{EnSou08}
B.~Engquist and P.~E. Souganidis.
\newblock Asymptotic and numerical homogenization.
\newblock {\em Acta Numerica}, 17:147--190, 2008.

\bibitem{Fasshauer:2005}
G.~E. Fasshauer.
\newblock Meshfree methods.
\newblock In {\em Handbook of Theoretical and Computational Nanotechnology}.
  American Scientific Publishers, 2005.

\bibitem{Fedorenko:1961}
R.~P. Fedorenko.
\newblock A relaxation method of solution of elliptic difference equations.
\newblock {\em \v Z. Vy\v cisl. Mat. i Mat. Fiz.}, 1:922--927, 1961.

\bibitem{Frieze98fastmontecarlo}
A.~Frieze, R.~Kannan, and S.' Vempala.
\newblock Fast monte-carlo algorithms for finding low-rank approximations.
\newblock In {\em In Proceedings of the 39th annual IEEE symposium on
  foundations of computer science}, pages 370--378, 1998.

\bibitem{Gilbert2002}
A.~C. Gilbert, S.~Guha, P.~Indyk, S.~Muthukrishnan, and M.~Strauss.
\newblock Near-optimal sparse {F}ourier representations via sampling.
\newblock In {\em Proceedings of the {T}hirty-{F}ourth {A}nnual {ACM}
  {S}ymposium on {T}heory of {C}omputing}, pages 152--161. ACM, New York, 2002.

\bibitem{Gilbert:2007}
A.~C. Gilbert, M.~J. Strauss, J.~A. Tropp, and R.~Vershynin.
\newblock One sketch for all: fast algorithms for compressed sensing.
\newblock In {\em S{TOC}'07---{P}roceedings of the 39th {A}nnual {ACM}
  {S}ymposium on {T}heory of {C}omputing}, pages 237--246. ACM, New York, 2007.

\bibitem{Gio75}
E.~De Giorgi.
\newblock Sulla convergenza di alcune successioni di integrali del tipo
  dell'aera.
\newblock {\em Rendi Conti di Mat.}, 8:277--294, 1975.

\bibitem{AnGlo06}
A.~Gloria.
\newblock Analytical framework for the numerical homogenization of elliptic
  monotone operators and quasiconvex energies.
\newblock {\em SIAM MMS}, 5(3):996--1043, 2006.

\bibitem{GloriaNeukmanOtto2015}
A.~Gloria, S.~Neukamm, and F.~Otto.
\newblock Quantification of ergodicity in stochastic homogenization: optimal
  bounds via spectral gap on {G}lauber dynamics.
\newblock {\em Invent. Math.}, 199(2):455--515, 2015.

\bibitem{GrGrSa2012}
L.~Grasedyck, I.~Greff, and S.~Sauter.
\newblock The al basis for the solution of elliptic problems in heterogeneous
  media.
\newblock {\em Multiscale Modeling \& Simulation}, 10(1):245--258, 2012.

\bibitem{GreengardRokhlin:1987}
L.~Greengard and V.~Rokhlin.
\newblock A fast algorithm for particle simulations.
\newblock {\em J. Comput. Phys.}, 73(2):325--348, 1987.

\bibitem{Hackbusch:1978}
W.~Hackbusch.
\newblock A fast iterative method for solving {P}oisson's equation in a general
  region.
\newblock In {\em Numerical treatment of differential equations ({P}roc.
  {C}onf., {M}ath. {F}orschungsinst., {O}berwolfach, 1976)}, pages 51--62.
  Lecture Notes in Math., Vol. 631. Springer, Berlin, 1978.

\bibitem{Hackbusch:1985}
W.~Hackbusch.
\newblock {\em Multigrid methods and applications}, volume~4 of {\em Springer
  Series in Computational Mathematics}.
\newblock Springer-Verlag, Berlin, 1985.

\bibitem{HackbuschGrasedyck:2002}
W.~Hackbusch, L.~Grasedyck, and S.~B{\"o}rm.
\newblock An introduction to hierarchical matrices.
\newblock In {\em Proceedings of {EQUADIFF}, 10 ({P}rague, 2001)}, volume 127,
  pages 229--241, 2002.

\bibitem{HalkoMartinssonTropp:2011}
N.~Halko, P.~G. Martinsson, and J.~A. Tropp.
\newblock Finding structure with randomness: probabilistic algorithms for
  constructing approximate matrix decompositions.
\newblock {\em SIAM Rev.}, 53(2):217--288, 2011.

\bibitem{Harder:1972}
R.~L. Harder and R.~N. Desmarais.
\newblock Interpolation using surface splines.
\newblock {\em J. Aircraft}, 9:189--191, 1972.

\bibitem{Hennig2015b}
P.~Hennig.
\newblock Probabilistic interpretation of linear solvers.
\newblock {\em SIAM Journal on Optimization}, 25(1):234--260, 2015.

\bibitem{Hennig2015}
P.~Hennig, M.~A. Osborne, and M.~Girolami.
\newblock Probabilistic numerics and uncertainty in computations.
\newblock {\em Proc. A.}, 471(2179):20150142, 17, 2015.

\bibitem{HestenesStiefel1952}
M.~R. Hestenes and E.~Stiefel.
\newblock Methods of conjugate gradients for solving linear systems.
\newblock {\em J. Research Nat. Bur. Standards}, 49:409--436 (1953), 1952.

\bibitem{HouLiu2015}
T.~H. Hou and P.~Liu.
\newblock Optimal local multi-scale basis functions for linear elliptic
  equations with rough coefficient.
\newblock {\em Discrete and Continuous Dynamical Systems}, 36(8):4451--4476,
  2016.

\bibitem{HoWu:1997}
T.~Y. Hou and X.~H. Wu.
\newblock A multiscale finite element method for elliptic problems in composite
  materials and porous media.
\newblock {\em J. Comput. Phys.}, 134(1):169--189, 1997.

\bibitem{Kimeldorf70}
G.~S. Kimeldorf and G.~Wahba.
\newblock A correspondence between {B}ayesian estimation on stochastic
  processes and smoothing by splines.
\newblock {\em Ann. Math. Statist.}, 41:495--502, 1970.

\bibitem{Kozlov:1979}
S.~M. Kozlov.
\newblock The averaging of random operators.
\newblock {\em Mat. Sb. (N.S.)}, 109(151)(2):188--202, 327, 1979.

\bibitem{Larkin1972}
F.~M. Larkin.
\newblock Gaussian measure in {H}ilbert space and applications in numerical
  analysis.
\newblock {\em Rocky Mountain J. Math.}, 2(3):379--421, 1972.

\bibitem{LeParker99}
D.~Le and D.~S. Parker.
\newblock Using randomization to make recursive matrix algorithms practical.
\newblock {\em Journal of Functional Programming}, 9:605--624, 11 1999.

\bibitem{MaPe:2012}
A.~Malqvist and D.~Peterseim.
\newblock Localization of elliptic multiscale problems.
\newblock {\em Mathematics of Computation}, 2014.

\bibitem{Mandel1999}
J.~Mandel, M.~Brezina, and P.~Van{\v{e}}k.
\newblock Energy optimization of algebraic multigrid bases.
\newblock {\em Computing}, 62(3):205--228, 1999.

\bibitem{MartinssonRokhlinTygert:2011}
P.-G. Martinsson, V.~Rokhlin, and M.~Tygert.
\newblock A randomized algorithm for the decomposition of matrices.
\newblock {\em Appl. Comput. Harmon. Anal.}, 30(1):47--68, 2011.

\bibitem{Muratb:1978}
F.~Murat.
\newblock Compacit\'e par compensation.
\newblock {\em Ann. Scuola Norm. Sup. Pisa Cl. Sci. (4)}, 5(3):489--507, 1978.

\bibitem{Mur78}
F.~Murat and L.~Tartar.
\newblock H-convergence.
\newblock {\em S\'{e}minaire d'Analyse Fonctionnelle et Num\'{e}rique de
  l'Universit\'{e} d'Alger}, 1978.

\bibitem{Myers:1992}
D.~E. Myers.
\newblock Kriging, co-{K}riging, radial basis functions and the role of
  positive definiteness.
\newblock {\em Comput. Math. Appl.}, 24(12):139--148, 1992.
\newblock Advances in the theory and applications of radial basis functions.

\bibitem{Nash:1951}
J.~Nash.
\newblock Non-cooperative games.
\newblock {\em Ann. of Math. (2)}, 54:286--295, 1951.

\bibitem{Nemirovsky1992}
A.~S. Nemirovsky.
\newblock Information-based complexity of linear operator equations.
\newblock {\em J. Complexity}, 8(2):153--175, 1992.

\bibitem{NoPaPi:2008}
J.~Nolen, G.~Papanicolaou, and O.~Pironneau.
\newblock A framework for adaptive multiscale methods for elliptic problems.
\newblock {\em Multiscale Model. Simul.}, 7(1):171--196, 2008.

\bibitem{Novak2010}
E.~Novak and H.~Wo{\'z}niakowski.
\newblock {\em Tractability of multivariate problems. {V}olume {II}: {S}tandard
  information for functionals}, volume~12 of {\em EMS Tracts in Mathematics}.
\newblock European Mathematical Society (EMS), Z\"urich, 2010.

\bibitem{Hagan:1991}
A.~O'Hagan.
\newblock Bayes-{H}ermite quadrature.
\newblock {\em J. Statist. Plann. Inference}, 29(3):245--260, 1991.

\bibitem{Hagan:1992}
A.~O'Hagan.
\newblock Some {B}ayesian numerical analysis.
\newblock In {\em Bayesian statistics, 4 ({P}e\~n\'\i scola, 1991)}, pages
  345--363. Oxford Univ. Press, New York, 1992.

\bibitem{Owhadi:2003}
H.~Owhadi.
\newblock Anomalous slow diffusion from perpetual homogenization.
\newblock {\em Ann. Probab.}, 31(4):1935--1969, 2003.

\bibitem{Owhadi:2014}
H.~Owhadi.
\newblock Bayesian numerical homogenization.
\newblock {\em Multiscale Model. Simul.}, 13(3):812--828, 2015.

\bibitem{OwhadiCandes2004}
H.~Owhadi and E.~J. Cand{\`e}s.
\newblock Private communication.
\newblock February 2004.

\bibitem{OwhadiScovel:2013}
H.~Owhadi and C.~Scovel.
\newblock Brittleness of {B}ayesian inference and new {S}elberg formulas.
\newblock {\em Communications in Mathematical Sciences}, 14:83--145, 2016.
\newblock arXiv:1304.7046.

\bibitem{OwhadiScovel:2015w}
H.~Owhadi and C.~Scovel.
\newblock {\em Toward {M}achine {W}ald}, pages 1--35.
\newblock Springer International Publishing, 2016.
\newblock arXiv:1508.02449.

\bibitem{OSS:2013}
H.~Owhadi, C.~Scovel, and T.~J. Sullivan.
\newblock Brittleness of {B}ayesian {I}nference under finite information in a
  continuous world.
\newblock {\em Electronic Journal of Statistics}, 9:1--79, 2015.
\newblock arXiv:1304.6772.

\bibitem{owhadiBayesiansirev2013}
H.~Owhadi, C.~Scovel, and T.~J. Sullivan.
\newblock On the {B}rittleness of {B}ayesian {I}nference.
\newblock {\em SIAM Review}, 57(4):566–--582, 2015.

\bibitem{ouq2010}
H.~Owhadi, C.~Scovel, T.~J. Sullivan, M.~McKerns, and M.~Ortiz.
\newblock Optimal uncertainty quantification.
\newblock {\em SIAM Rev.}, 55(2):271--345, 2013.

\bibitem{OwZh:2007a}
H.~Owhadi and L.~Zhang.
\newblock Metric-based upscaling.
\newblock {\em Comm. Pure Appl. Math.}, 60(5):675--723, 2007.

\bibitem{OwZh:2011}
H.~Owhadi and L.~Zhang.
\newblock Localized bases for finite dimensional homogenization approximations
  with non-separated scales and high-contrast.
\newblock {\em SIAM Multiscale Modeling \& Simulation}, 9:1373--1398, 2011.
\newblock arXiv:1011.0986.

\bibitem{OwZh:2016}
H.~Owhadi and L.~Zhang.
\newblock Gamblets for opening the complexity-bottleneck of implicit schemes
  for hyperbolic and parabolic odes/pdes with rough coefficients.
\newblock {\em arXiv:1606.07686}, 2016.

\bibitem{OwhadiZhangBerlyand:2014}
H.~Owhadi, L.~Zhang, and L.~Berlyand.
\newblock Polyharmonic homogenization, rough polyharmonic splines and sparse
  super-localization.
\newblock {\em ESAIM Math. Model. Numer. Anal.}, 48(2):517--552, 2014.

\bibitem{Packel1987}
E.~W. Packel.
\newblock The algorithm designer versus nature: a game-theoretic approach to
  information-based complexity.
\newblock {\em J. Complexity}, 3(3):244--257, 1987.

\bibitem{PalastiRenyi1956}
I.~Palasti and A.~Renyi.
\newblock On interpolation theory and the theory of games.
\newblock {\em MTA Mat. Kat. Int. Kozl}, 1:529--540, 1956.

\bibitem{PapanicolaouVaradhan:1981}
G.~C. Papanicolaou and S.~R.~S. Varadhan.
\newblock Boundary value problems with rapidly oscillating random coefficients.
\newblock In {\em Random fields, {V}ol. {I}, {II} ({E}sztergom, 1979)},
  volume~27 of {\em Colloq. Math. Soc. J\'anos Bolyai}, pages 835--873.
  North-Holland, Amsterdam, 1981.

\bibitem{PayneWeinberger:1960}
L.~E. Payne and H.~F. Weinberger.
\newblock An optimal {P}oincar\'e inequality for convex domains.
\newblock {\em Arch. Rational Mech. Anal.}, 5:286--292 (1960), 1960.

\bibitem{Poincare:1896}
H.~Poincar{\'e}.
\newblock {\em Calcul des probabilit\'es}.
\newblock Georges Carr{\'e}s, Paris, 1896.

\bibitem{Ritter2000}
K.~Ritter.
\newblock {\em Average-case analysis of numerical problems}, volume 1733 of
  {\em Lecture Notes in Mathematics}.
\newblock Springer-Verlag, Berlin, 2000.

\bibitem{RugeStuben1987}
J.~W. Ruge and K.~St{\"u}ben.
\newblock Algebraic multigrid.
\newblock In {\em Multigrid methods}, volume~3 of {\em Frontiers Appl. Math.},
  pages 73--130. SIAM, Philadelphia, PA, 1987.

\bibitem{Sard1963}
A.~Sard.
\newblock {\em Linear approximation}.
\newblock American Mathematical Society, Providence, R.I., 1963.

\bibitem{schober2014nips}
M.~Schober, D.~K. Duvenaud, and P.~Hennig.
\newblock Probabilistic {ODE} solvers with {R}unge-{K}utta means.
\newblock In Z.~Ghahramani, M.~Welling, C.~Cortes, N.D. Lawrence, and K.Q.
  Weinberger, editors, {\em Advances in Neural Information Processing Systems
  27}, pages 739--747. Curran Associates, Inc., 2014.

\bibitem{Shaw:1988}
J.~E.~H. Shaw.
\newblock A quasirandom approach to integration in {B}ayesian statistics.
\newblock {\em Ann. Statist.}, 16(2):895--914, 1988.

\bibitem{Shewchuk1994}
J.~R Shewchuk.
\newblock An introduction to the conjugate gradient method without the
  agonizing pain.
\newblock Technical report, Pittsburgh, PA, USA, 1994.

\bibitem{Skilling1992}
J.~Skilling.
\newblock {\em Maximum Entropy and Bayesian Methods: Seattle, 1991}, chapter
  Bayesian Solution of Ordinary Differential Equations.
\newblock Springer Netherlands, Dordrecht, 1992.

\bibitem{Spagnolo:1968}
S.~Spagnolo.
\newblock Sulla convergenza di soluzioni di equazioni paraboliche ed
  ellittiche.
\newblock {\em Ann. Scuola Norm. Sup. Pisa (3) 22 (1968), 571-597; errata,
  ibid. (3)}, 22:673, 1968.

\bibitem{Stuben:2001}
K.~St{\"u}ben.
\newblock A review of algebraic multigrid.
\newblock {\em J. Comput. Appl. Math.}, 128(1-2):281--309, 2001.
\newblock Numerical analysis 2000, Vol. VII, Partial differential equations.

\bibitem{Suldin1959}
A.~V. Sul{\cprime}din.
\newblock Wiener measure and its applications to approximation methods. {I}.
\newblock {\em Izv. Vys\v s. U\v cebn. Zaved. Matematika}, 1959(6
  (13)):145--158, 1959.

\bibitem{Sym12}
W.~Symes.
\newblock Transfer of approximation and numerical homogenization of hyperbolic
  boundary value problems with a continuum of scales.
\newblock {\em TR12-20 Rice Tech Report}, 2012.

\bibitem{Traub1988}
J.~F. Traub, G.~W. Wasilkowski, and H.~Wo{\'z}niakowski.
\newblock {\em Information-based complexity}.
\newblock Computer Science and Scientific Computing. Academic Press, Inc.,
  Boston, MA, 1988.
\newblock With contributions by A. G. Werschulz and T. Boult.

\bibitem{Tropp2005}
J.~A. Tropp.
\newblock Recovery of short, complex linear combinations via {$l_1$}
  minimization.
\newblock {\em IEEE Trans. Inform. Theory}, 51(4):1568--1570, 2005.

\bibitem{Vassilevski89}
P.~S. Vassilevski.
\newblock Multilevel preconditioning matrices and multigrid {$V$}-cycle
  methods.
\newblock In {\em Robust multi-grid methods ({K}iel, 1988)}, volume~23 of {\em
  Notes Numer. Fluid Mech.}, pages 200--208. Vieweg, Braunschweig, 1989.

\bibitem{Vassilevski1997sirev}
P.~S. Vassilevski.
\newblock On two ways of stabilizing the hierarchical basis multilevel methods.
\newblock {\em SIAM Rev.}, 39(1):18--53, 1997.

\bibitem{Vassilevski2010}
P.~S. Vassilevski.
\newblock General constrained energy minimization interpolation mappings for
  {AMG}.
\newblock {\em SIAM J. Sci. Comput.}, 32(1):1--13, 2010.

\bibitem{VassilevskiWang1997a}
P.~S. Vassilevski and J.~Wang.
\newblock Stabilizing the hierarchical basis by approximate wavelets. {I}.
  {T}heory.
\newblock {\em Numer. Linear Algebra Appl.}, 4(2):103--126, 1997.

\bibitem{VassilevskiWang1998}
P.~S. Vassilevski and J.~Wang.
\newblock Stabilizing the hierarchical basis by approximate wavelets. {II}.
  {I}mplementation and numerical results.
\newblock {\em SIAM J. Sci. Comput.}, 20(2):490--514 (electronic), 1998.

\bibitem{VNeumann28}
J.~Von~Neumann.
\newblock Zur {T}heorie der {G}esellschaftsspiele.
\newblock {\em Math. Ann.}, 100(1):295--320, 1928.

\bibitem{VonNeumann:1944}
J.~von Neumann and O.~Morgenstern.
\newblock {\em Theory of {G}ames and {E}conomic {B}ehavior}.
\newblock Princeton University Press, Princeton, New Jersey, 1944.

\bibitem{Wald:1945}
A.~Wald.
\newblock Statistical decision functions which minimize the maximum risk.
\newblock {\em Ann. of Math. (2)}, 46:265--280, 1945.

\bibitem{wan2000}
W.~L. Wan, Tony~F. Chan, and Barry Smith.
\newblock An energy-minimizing interpolation for robust multigrid methods.
\newblock {\em SIAM J. Sci. Comput.}, 21(4):1632--1649, 1999/00.

\bibitem{Wang:2012}
X.~Wang.
\newblock {\em Transfer-of-approximation approaches for subgrid modeling}.
\newblock PhD thesis, Rice University, 2012.

\bibitem{Wendland:2005}
H.~Wendland.
\newblock {\em Scattered data approximation}, volume~17 of {\em Cambridge
  Monographs on Applied and Computational Mathematics}.
\newblock Cambridge University Press, Cambridge, 2005.

\bibitem{WhHo87}
C.~D. White and R.~N. Horne.
\newblock Computing absolute transmissibility in the presence of finescale
  heterogeneity.
\newblock {\em SPE Symposium on Reservoir Simulation}, page 16011, 1987.

\bibitem{Woniakowski1986}
H.~Wo{\'z}niakowski.
\newblock Probabilistic setting of information-based complexity.
\newblock {\em J. Complexity}, 2(3):255--269, 1986.

\bibitem{Woniakowski2009}
H.~Wo{\'z}niakowski.
\newblock What is information-based complexity?
\newblock In {\em Essays on the complexity of continuous problems}, pages
  89--95. Eur. Math. Soc., Z\"urich, 2009.

\bibitem{WuSchback:93}
Z.~Min Wu and R.~Schaback.
\newblock Local error estimates for radial basis function interpolation of
  scattered data.
\newblock {\em IMA J. Numer. Anal.}, 13(1):13--27, 1993.

\bibitem{XuZhu2008}
J.~Xu and Y.~Zhu.
\newblock Uniform convergent multigrid methods for elliptic problems with
  strongly discontinuous coefficients.
\newblock {\em Math. Models Methods Appl. Sci.}, 18(1):77--105, 2008.

\bibitem{Xu2004}
J.~Xu and L.~Zikatanov.
\newblock On an energy minimizing basis for algebraic multigrid methods.
\newblock {\em Comput. Vis. Sci.}, 7(3-4):121--127, 2004.

\bibitem{Yavneh:2006}
I.~Yavneh.
\newblock Why multigrid methods are so efficient.
\newblock {\em Computing in Science and Engg.}, 8(6):12--22, November 2006.

\bibitem{Yserentant1986}
H.~Yserentant.
\newblock On the multilevel splitting of finite element spaces.
\newblock {\em Numer. Math.}, 49(4):379--412, 1986.

\end{thebibliography}

\end{document}